\documentclass[letterpaper,12pt,titlepage,final,oneside]{book}

%%%%%%%%%%%%% PACKAGES %%%%%%%%%%%%%%%%%%%%%%%%%%%%%%%%%%%%%%%%%%%%%%%

\usepackage{amsmath,amssymb,amstext,amsthm} 
\usepackage{amsfonts}
\usepackage{bbm}
\usepackage{caption}
\usepackage{cite}
\usepackage{commath}
\usepackage{enumerate}
\usepackage{epsfig}
\usepackage[latin1]{inputenc}
\usepackage{float}
\usepackage{graphicx}
\usepackage{geometry}\geometry{margin=1in}
\usepackage{leftidx}
\usepackage{mathrsfs}
\usepackage{mathtools}
\usepackage{titlesec}
\usepackage{undertilde}
\usepackage{verbatim}
\usepackage[all]{xy}
\usepackage{mathtools}
\usepackage{enumerate}
\usepackage{verbatim}
\usepackage{float}
\usepackage{hyperref}
\usepackage{titlesec}
\usepackage[subsection]{algorithm}
\usepackage{algorithmic}
\usepackage{bbm}
\usepackage{cite}
\usepackage[inline]{enumitem}
\usepackage{float}
\usepackage[rightcaption]{sidecap}
\usepackage{undertilde}
\usepackage{verbatim}

\usepackage{hyperref} 
\allowdisplaybreaks

%%%%%%%%%%%%% MATH OPERATORS & NEW COMMANDS %%%%%%%%%%%%%%%%%%%%%%%%%%%%%%
\DeclareMathOperator*{\argmax}{arg\,max}

\DeclareMathOperator{\sgn}{sgn}
\newcommand{\defeq}{\vcentcolon=}
\newcommand{\backdefeq}{=\vcentcolon}
\newcommand{\ve}{\varepsilon}
\newcommand{\grid}{G}

\newcommand{\Lpm}{\Lambda^{\pm}}
\newcommand{\Lp}{\Lambda^{+}}
\newcommand{\Lm}{\Lambda^{-}}
\newcommand{\lpm}{\lambda^{\pm}}
\newcommand{\lp}{\lambda^{+}}
\newcommand{\lm}{\lambda^{-}}
\newcommand{\delpm}{\delta^{\pm}}
\newcommand{\delp}{\delta^{+}}
\newcommand{\delm}{\delta^{-}}

\newcommand{\Npm}{N^{\pm}}
\newcommand{\Nm}{N^{-}}
\newcommand{\Np}{N^{+}}
\newcommand{\Hp}{H^{+}}
\newcommand{\Hm}{H^{-}}
\newcommand{\Hpm}{H^{\pm}}

\newcommand{\hpm}{h^{\pm}}

\newcommand{\apm}{a^{\pm}}

\newcommand{\bpm}{b^{\pm}}

\newcommand{\cpm}{c^{\pm}}

\newcommand{\pii}{\Pi^i}
\newcommand{\pij}{\Pi^j}
\newcommand{\piia}{\Pi^{\alpha,i}}
\newcommand{\pija}{\Pi^{\alpha,j}}

\newcommand{\pia}{\Pi^\alpha}
\newcommand{\pit}{\Pi^{\alpha,t,n,\pi}}
\newcommand{\Yt}{Y^{t,i}}

\newcommand{\hlpmt}{\widehat{\lpm}^{\alpha,t,n,\pi}}

\newcommand{\hlptu}{\widehat{\lp_u}^{\alpha,t,n,\pi}}
\newcommand{\hlmtu}{\widehat{\lm_u}^{\alpha,t,n,\pi}}

\newcommand{\Xdel}{X^{\delm,\delp}}

\newcommand{\Spm}{S^{\pm}}
\newcommand{\Sp}{S^{+}}
\newcommand{\Sm}{S^{-}}
\newcommand{\fipm}{f^{\pm}_i}
\newcommand{\Lipm}{\Lambda_i^{\pm}}
\newcommand{\Lip}{\Lambda_i^{+}}
\newcommand{\Lim}{\Lambda_i^{-}}
\newcommand{\Ljpm}{\Lambda_j^{\pm}}
\newcommand{\Ljp}{\Lambda_j^{+}}
\newcommand{\Ljm}{\Lambda_j^{-}}
\newcommand{\lmax}{\lambda^{*}}

\newcommand{\delmax}{\delta^{*}}
\newcommand{\delmin}{\delta_{*}}
\newcommand{\Nmax}{N^*}

\newcommand{\mPa}{\mathbb P^{\alpha}}
\newcommand{\mPt}{\mathbb P^{\alpha,t,n,\pi}}
\newcommand{\mPtfull}{\mathbb P^{\alpha,t,n,i}}
\newcommand{\mQa}{\mathbb Q^{\alpha}}
\newcommand{\mEdel}{\mathbb E^{\delm,\delp}}
\newcommand{\mEa}{\mathbb E^{\alpha}}
\newcommand{\mEt}{\mathbb E^{\alpha,t,n,\pi}}

\newcommand{\Dpm}{d^{\pm}}
\newcommand{\Dp}{d^{+}}
\newcommand{\Dm}{d^{-}}

\newcommand{\Xt}{X^{\alpha,t,x,s}}
\newcommand{\St}{S^{t,s}}
\newcommand{\Nt}{N^{t,n}}

%%%%%%%%%%%%% NEW THEOREMS & NEW ENVIRONMENTS %%%%%%%%%%%%%%%%%%%%%%%%%%%%%%%%%%%%%%%%%%%%%%%

\newtheorem{assumptions}{Assumptions}[subsection]

\newtheorem{theorem}[assumptions]{Theorem}
\newtheorem{lemma}[assumptions]{Lemma}
\newtheorem{corollary}[assumptions]{Corollary}
\newtheorem{proposition}[assumptions]{Proposition}

\theoremstyle{definition}
\newtheorem{definition}[assumptions]{Definition}
\newtheorem{definitions}[assumptions]{Definitions}

\theoremstyle{remark}
\newtheorem{example}[assumptions]{Example}
\newtheorem{examples}[assumptions]{Examples}
\newtheorem{remark}[assumptions]{Remark}
\newtheorem*{remark*}{Remark}

\newtheorem{notation}[assumptions]{Notation}
\newtheorem*{notation*}{Notation}
\newtheorem{convention}[assumptions]{Convention}
\newtheorem*{convention*}{Convention}

%%%%%%%%%%%%% SETUP %%%%%%%%%%%%%%%%%%%%%%%%%%%%%%%%%%%%%%%%%%%%%%%%%%%%%%%%%%

\numberwithin{equation}{subsection}
\setcounter{MaxMatrixCols}{10}
\setcounter{secnumdepth}{4}
\allowdisplaybreaks

%%%%%%%%%%%%%%%%%%%%%%%%%%%%%%%%%%%%%%%%%%%%%%%%%%%%%%%%%%%%%%%%%%%%%%%%%%%%%

\begin{document}

%%%%%%%%%%%%% TITLE  %%%%%%%%%%%%%%%%%%%%%%%%%%%%%%%%%%%%%%%%%%%%%%%%%%%%%%%

\pagestyle{empty}
\pagenumbering{roman}

\begin{titlepage}
        \begin{center}
        \vspace*{1.0cm}

        \Huge
        {Optimal market making under partial information\\ and numerical methods for impulse control games\\ with applications}

        \vspace*{1.0cm}

        \normalsize
        by \\

        \vspace*{1.0cm}

        \Large
        Diego Zabaljauregui\\
				
				\vspace*{1.0cm}
				
				\Large
				Department of Statistics\\London School of Economics and Political Science

        \vspace*{3.0cm}

        \normalsize
        A thesis \\
        submitted for the degree of\\
        Doctor of Philosophy\\
				
        \vspace*{2.0cm}

        London, United Kingdom, 2019 \\

        \vspace*{1.0cm}

        \copyright\ Diego Zabaljauregui 2019 \\
        \end{center}
\end{titlepage}

%%%%%%%%%%%%%%  DECLARATION  %%%%%%%%%%%%%%%%%%%%%%%%%%%%%%%%%%%%%%%%%%%%%%%%%%%%%%%%%%%%%%%%%%%%%%%%%%

\clearpage
\vspace*{3.0cm}
\begin{center}
\large
\textbf{Declaration}
\end{center}
\bigskip

\noindent
I certify that the thesis I have presented for examination for the PhD degree of the London
School of Economics and Political Science is solely my own work other than where I have
clearly indicated that it is the work of others (in which case the extent of any work carried
out jointly by me and any other person is clearly identified in it). The copyright of this thesis
rests with the author. Quotation from it is permitted, provided that full acknowledgement is
made. This thesis may not be reproduced without my prior written consent. I warrant that
this authorization does not, to the best of my belief, infringe the rights of any third party. I
declare that my thesis consists of less than 100,000 words.

I confirm that Chapter 1 was jointly co-authored with my supervisor, Professor Luciano Campi, and I contributed 70\% of this work.

I confirm that Chapter 2 was jointly co-authored with Professors Ren\'e A{\"\i}d, Francisco Bernal, Mohamed Mnif and J.P. Zubelli, and I contributed 40\% of this work.
\clearpage

%%%%%%%%%%%%  ACKNOWLEDGMENTS AND DEDICATION  %%%%%%%%%%%%%%%%%%%%%%%%%%%%%%%%%%%%%%%%%%%%%%%%%%%%%%%%%%%

\clearpage
\vspace*{1.0cm}
\begin{center}
\large
\textbf{Acknowledgments and dedication}
\end{center}
\bigskip

\noindent
I would like to thank my supervisor, my department, my family and friends; with a special dedication to my wife and parents for their unconditional support.

\clearpage
%%%%%%%%%%%%%  ABSTRACT  %%%%%%%%%%%%%%%%%%%%%%%%%%%%%%%%%%%%%%%%%%%%%%%%%%%%%%%%%%%%%%%%%%%%%%%%%%%%%%%%%%%

\vspace*{1.0cm}
\begin{center}
\large
\textbf{Abstract}
\end{center}

The topics treated in this thesis are inherently two-fold. The first part considers the problem of a market maker who wants to optimally set bid/ask quotes over a finite time horizon, to maximize her expected utility. The intensities of the orders she receives depend not only on the spreads she quotes, but also on unobservable factors modelled by a hidden Markov chain. This stochastic control problem under partial information is solved by means of stochastic filtering, control and piecewise-deterministic Markov processes theory. The value function is characterized as the unique continuous viscosity solution of its dynamic programming equation. Afterwards, the analogous full information problem is solved and results are compared numerically through a concrete example.  The optimal full information spreads are shown to be biased when the exact market regime is unknown, as the market maker needs to adjust for additional regime uncertainty in terms of P\&L sensitivity and observable order flow volatility.

The second part deals with numerically solving nonzero-sum stochastic differential games with impulse controls. These offer a realistic and far-reaching modelling framework for applications within finance, energy markets and other areas, but the difficulty in solving such problems has hindered their proliferation. Semi-analytical approaches make strong assumptions pertaining very particular cases. To the author's best knowledge, there are no numerical methods available in the literature. A policy-iteration-type solver is proposed to solve an underlying system of quasi-variational inequalities, and it is validated numerically with reassuring results. In particular, it is observed that the algorithm does not enjoy global convergence and a heuristic methodology is proposed to construct initial guesses.

Eventually, the focus is put on games with a symmetric structure and a substantially improved version of the former algorithm is put forward. A rigorous convergence analysis is undertaken with natural assumptions on the players strategies, which admit graph-theoretic interpretations in the context of weakly chained diagonally dominant matrices. A provably convergent single-player impulse control solver is also provided. The main algorithm is used to compute with high precision equilibrium payoffs and Nash equilibria of otherwise too challenging problems, and even some for which results go beyond the scope of the currently available theory.

\clearpage

%%%%%%%%%%% TABLE OF CONTENTS  %%%%%%%%%%%%%%%%%%%%%%%%%%%%%%%%%%%%%%%%%%%%%%%%%%%%%%%%%%%%%%%%%%%%%%%%%%%

\newpage
\phantomsection
\renewcommand\contentsname{Table of Contents}
\begingroup
\let\cleardoublepage\clearpage
\tableofcontents
\endgroup
\clearpage
\phantomsection

%%%%%%%%%%  LIST OF TABLES  %%%%%%%%%%%%%%%%%%%%%%%%%%%%%%%%%%%%%%%%%%%%%%%%%%%%%%%%%%%%%%%%%%%%%%%%%%%%%

\begingroup
\let\cleardoublepage\clearpage
\listoftables
\endgroup
\clearpage
\phantomsection

%%%%%%%%%%  LIST OF FIGURES  %%%%%%%%%%%%%%%%%%%%%%%%%%%%%%%%%%%%%%%%%%%%%%%%%%%%%%%%%%%%%%%%%%%%%%%%%%%%%

\begingroup
\let\cleardoublepage\clearpage
\listoffigures
\endgroup
\clearpage
\phantomsection	

%%%%%%%%%%%    MAIN BODY     %%%%%%%%%%%%%%%%%%%%%%%%%%%%%%%%%%%%%%%%%%%%%%%%%%%%%%%%%%%%%%%%%%%%%%%%%%%

\pagenumbering{arabic}
\pagestyle{headings}

%%%%%%%%%%%   INTRODUCTION   %%%%%%%%%%%%%%%%%%%%%%%%%%%%%%%%%%%%%%%%%%%%%%%%%%%%%%%%%%%%%%%%%%%%%%%%%

\setcounter{chapter}{0}
\chapter*{Introduction}
\label{c:introduction}
\addcontentsline{toc}{chapter}{Introduction}

This thesis is divided in two main parts. The first one (Chapter \ref{c:1}) deals with the well-known and very relevant problem of \textit{optimal market making} under a novel and realistic framework of \textit{partial information}. The second one (Chapters \ref{c:2} and \ref{c:3}), focuses on the development of numerical methods for a very general class of far-reaching models known as \textit{nonzero-sum stochastic differential games with impulse controls}. From a mathematical viewpoint, \textit{stochastic control} (in a broad sense) is the unifying underlying topic, spanning from classical `continuous' controls, to impulse controls and impulse games; and covering the different levels of the problems: from the applications and motivation to the most technical aspects, from the theoretical analysis to the numerical methods and their implementation. 

However, the problems, models and techniques used vary so widely, that a unique and comprehensive introduction is not only difficult, but likely counterproductive. For this reason, each chapter starts with a detailed introduction of its own. This section is therefore intended only to briefly motivate the problems to be studied, comment on the techniques used, and list the main resulting scientific contributions. 

The optimal market making problem (Chapter \ref{c:1}) consists on determining how a liquidity provider for a given asset should (continuously) choose to set her bid/ask quotes if she wishes to maximize her expected utility. In doing so, she faces a complicated problem with both dynamic and static components, and several risks ranging from adverse price movements, exposure, execution costs and more. An increasingly popular approach, both academically and in practice, is the stochastic optimal control framework proposed by Avellaneda and Stoikov \cite{AS}, where the liquidity taken from the market maker decays as a function of the spreads she quotes. Different variants of the first model have been proposed and thoroughly studied since, both for market making and optimal liquidation \cite{AS,BL,CDJ,CJ,CJR,FL1,FL2,G,GL,GLFT}. 

Motivated by empirical evidence \cite{CJ0}, we consider a \textit{hidden Markov-chain} model that unifies and generalizes features of all the previously mentioned ones, and admits the possibility that liquidity may also vary due to unobservable market conditions. We solve this combined control and partial information problem in full, using stochastic control, \textit{stochastic filtering} and \textit{piecewise-deterministic Markov processes} (PDMPs) theory \cite{DF}. The analogous and idealized problem in which the market maker can observe the market state is also solved, and results are compared numerically through a concrete example. Our main contributions are:
\begin{itemize}
\item A general and realistic Avellaneda--Stoikov-type model for optimal market making, incorporating for the first time the liquidity dependence on hidden market conditions and allowing for very general intensity (liquidity) shapes.
\item A novel approach, within this framework, to characterize the value function of the market maker. The complexity of the dynamic programming equation in our model prevents us from using the most standard technique: proposing an ansatz and
proving it valid by a verification theorem. Instead, we explicitly find the ansatz decomposition in terms of the value function of a new, diffusion-free, problem. This result is interesting in itself, as it opens the door to probabilistic and PDMPs numerical techniques when the dimension of the problem renders partial differential equations (PDEs) methods prohibitive.
\item Ultimately, we characterize the `reduced' value function as the unique continuous \textit{viscosity solution} \cite{FS} of its formally derived equation. 
\item Having solved the idealized full information problem with a general verification theorem, we show that the corresponding optimal strategies become suboptimal when the market regime is unknown, as the market maker needs to manage a higher regime risk. We interpret the adjustment needed in terms of observable order flow volatility and sensitivity of the expected profit to observable regime changes. Ultimately we see and interpret why the bias of the full information optimal strategies becomes higher, the longer the waiting time in between orders.
\end{itemize}

Chapters \ref{c:2} and \ref{c:3} consider very general models known as nonzero-sum stochastic impulse control games \cite{ABCCV,BCG}, in which players interact by shifting a certain continuous-time stochastic process at discrete (usually random) points in time. Such models have motivation and wide applicability within finance, energy markets, insurance and many other fields. Unfortunately, they are utterly challenging, what has rendered their study far too underdeveloped. In light of the evident need for numerical methods, and the nonexistence of any such one, Chapter \ref{c:2}:
\begin{itemize}
\item puts forward the very first iterative algorithm to solve nonzero-sum stochastic impulse games. The method is of \textit{policy-iteration-type} \cite{AF,BMZ} and solves an underlying system of differential \textit{quasi-variational inequalities}, involving highly nonlinear, nonlocal and noncontractive operators, as well as non-smooth solutions.  
\end{itemize}
The proposed algorithm is admittedly heuristic and no convergence analysis is carried out in this chapter. Instead, it is validated numerically over a series of experiments, considering different games over both fixed and refining grids, performing simulations and comparing results with the only (almost) fully analytically tractable game in the literature \cite{ABCCV}. The results suggest that the algorithm can effectively be used to gain insight into general nonzero-sum stochastic impulse games.

In Chapter \ref{c:3}, we specialize our study by putting the focus on \textit{symmetric} games. This subclass is broad enough to include plenty of interesting applications, such as competition between central banks \cite{ABCCV,AF,CZ,J,MO}, cash management problems \cite{BCG} and the generalization of many impulse control problems to the two-player instance. 

For this type of games, an iterative algorithm which substantially improves the one in Chapter \ref{c:2} is presented and rigorously studied, using techniques from \textit{policy iteration for impulse control} \cite{AF}, \textit{fixed-point policy-iteration} \cite{HFL1} and graph-theoretic notions relating to \textit{weakly chained diagonally dominant (WCDD) matrices} and their recently introduced matrix sequence counterpart \cite{A}. The main contributions of this chapter are:
\begin{itemize}
\item An iterative algorithm for the effective solution of symmetric nonzero stochastic impulse games, which improves the general one in terms of simplicity, efficiency, precision and stability.
\item Providing the missing convergence analysis, which is performed under very natural assumptions on the players strategies and interpreted in terms of WCDD matrices and their extension in \cite{A}. The latter is applied to impulse control for the first time, to the author's best knowledge. 
\item Proof of properties of contractiveness, boundedness of iterates and convergence to solutions, as well as sufficient conditions for convergence.
\item As a by-product, a novel impulse control solver is also provided, and its convergence is proved. 
\item An extensive numerical validation is carried out, considering different performance metrics and addressing practical matters of implementation. 
\item Ultimately, equilibrium payoffs and Nash equilibria are computed with high precision, for games seemingly too challenging for the available analytical approaches, and even some for which results go beyond the scope of the currently available theory (including discontinuous impulses and very irregular discontinuous payoffs). 
The latter also motivate further research into this field, with an emphasis on the need for a viscosity solutions framework.
\end{itemize}

%%%%%%%%%%%   CHAPTER ONE    %%%%%%%%%%%%%%%%%%%%%%%%%%%%%%%%%%%%%%%%%%%%%%%%%%%%%%%%%%%%%%%%%%%%%%%%

\chapter[Optimal market making with partial information]{Optimal market making under partial information with general intensities}
\chaptermark{Optimal market making with partial information}
\label{c:1}

%%%  INTRODUCTION  %%%%%%%
\phantomsection
\section*{Introduction}
\addcontentsline{toc}{section}{Introduction}
Given a financial market, a market maker (MM) can be understood as someone who provides liquidity for a certain asset. That is, she (almost) continuously posts bid/ask quotes for the asset, in the hope to profit from the bid-ask spread. In choosing how to do so, the MM faces a complicated problem on several levels; namely: the instantaneous margin/volume trade-off (the further away she quotes from the `fair price', the less she gets executed, and vice-versa), adverse price movements and inventory risk (exposure), execution costs, and many others. 

An increasingly popular mathematical approach to the MM problem, both in academia and in practice, is by means of stochastic optimal control. In particular, a line of research has focused on the modelling framework proposed by Avellaneda and Stoikov \cite{AS} (rooted in turn in \cite{HS}). Although widely motivated in the literature by order-driven markets such as equity markets, the shape of the limit order book is not explicitly taken into account. Thus, the framework is more easily understood and applied to over-the-counter (OTC) quote-driven markets such as the foreign exchange (FX) market, and we therefore choose to present it in this setting.

In this framework, the MM gives firm bid-ask quotes during a finite time interval by choosing bid/ask spreads with respect to a certain \textit{reference price}.\footnote{Also referred to by some authors as \textit{micro-price} \cite{CJP} or \textit{efficient price} \cite{DRR} in the martingale case.} Depending on the market, this price could be for example an aggregated mid-price or a dealer-to-dealer price, and it is frequently assumed to behave as an arithmetic Brownian motion. To explicitly model the margin/volume trade-off, the probability that the MM gets executed decays as a function of the corresponding spread. More precisely, it is assumed that the MM receives market orders according to counting processes of stochastic intensity. Most typically in the mathematical literature, intensities are assumed to decay exponentially on the spreads (mainly for tractability reasons). The goal of the MM is to find a strategy that allows her to maximize her expected terminal utility, which is taken as a constant absolute risk aversion (CARA) utility. The problem is then translated into a deterministic one: solving the associated Hamilton--Jacobi--Bellman (HJB) equation, which is a partial-integro differential equation (PIDE) for the MM's value function, and ultimately retrieving the optimal strategy in feedback form. 

Within the described framework, a lot variants have been put forward and extensively studied. In \cite{BL, GLFT0}, Bayraktar and Ludkovski, and Gu{\'e}ant, Lehalle and Fernandez-Tapia, apply a one-trading-side version to optimal liquidation in the risk-neutral and risk averse contexts, respectively. The introduction of a constraint on the inventory in \cite{GLFT}, allowed the authors to rigorously solve the original problem of \cite{AS} with exponential intensities, by means of a verification theorem. This constraint has been widely used moving forward, with the exception of \cite{FL1, FL2}, where strategies are derived without a verification theorem.

Gu{\'e}ant and Lehalle continued to actively contribute to the area. In \cite{GL}, they revisit the risk averse optimal liquidation problem for general intensities satisfying a certain ordinary differential inequality, and in \cite{G} the same is done for market making.\footnote{The latter paper also considers the multi-asset case.} This assumption had been firstly introduced in \cite[Sect.5.3]{BL} under risk-neutrality.

Other prolific contributors in the field are Cartea, Jaimungal and their coauthors \cite{CJ, CJP, CJR}, who introduced a quadratic running penalty on the inventory to manage the `accumulated' inventory risk for an otherwise risk-neutral MM. Constraints on the MM's spreads have also been considered in some of them. It is worth noting that \cite{G} shows how the two seemingly different subclasses of models (risk averse and `risk-neutral' with running penalty) can ultimately be characterized by a unique system of ordinary differential equations (ODEs) when considering two appropriate ansatz.

A relevant issue in practice that has not been included in the previous models is the following one. Empirical evidence (see, e.g., \cite{CJ0}) suggests that liquidity taken by clients depends not only on the quoted spreads but also on other unobservable factors. Indeed, the confluence of factors such as market sentiment towards the asset and the competition with other market makers also affects the intensities at which the MM receives orders. This complicated effect can be modelled in a simplified fashion by making the intensities depend as well on a hidden finite-state Markov chain, effectively reflecting the regime or state of the market (different levels from very slow to very active). To the best of our knowledge, this has only been briefly done in the Avellaneda--Stoikov framework by proposing an approximation for the optimal strategy with exponential intensities \cite[Sect.5.1]{CJ0} or studying a simple two-states version with power-law intensities \cite[Sect.5.4]{BL}. Both of these papers deal only with the `risk-neutral' (possibly penalized) case and make the unrealistic assumption that the current market regime is known by the MM.\footnote{\cite[Sect.3.3]{BL0} (arXiv version) also studies optimal trade execution under partial information, albeit with uncontrolled intensity.} 

When the state of the market is unknown, the problem becomes significantly more challenging for several reasons:
\begin{itemize}
\item It becomes a combined problem of stochastic control and filtering. The MM needs to dynamically make her best possible prediction of the market regime (or more precisely, its distribution), based on the information she has (i.e., the orders she has received so far and the evolution of the reference price), and adjust her spreads accordingly. This prediction is known as a filter.
\item The associated HJB PIDE has higher dimension and more non-linearities.
\item The standard approach used in all of the previously cited papers relies on reducing the HJB PIDE to a system of ODEs by means of an ansatz, proving that such a system has a classical (smooth) solution, and recovering the value function via a verification theorem. Under partial information however, the reduced HJB equation is still a complicated PIDE such that, in general, a classical solution may not exist (or it may be too difficult to prove otherwise). Hence, the ansatz argument breaks down. 
\item As a consequence, one needs to resort to the concept of viscosity solutions \cite{FS}. In addition, the numerical resolution unfailingly becomes a lot more involved than for simple systems of ODEs.
\item On a technical level, the construction of the model is not straightforward. The MM needs to adjust her strategy based on her observable information, such as the arriving orders, but this flow of information is in turn affected by the MM's actions.
\end{itemize}

In this chapter (based on \cite{CZ1}), we solve the problem of the MM under partial information. First, we start by unifying in one single formulation all the modelling features described so far. This allows us to simultaneously tackle all the models at once with a single approach, while generalising them at the same time. Indeed, our formulation allows for the interaction of any CARA utility (whether risk-neutral or risk averse) with running inventory penalty, terminal execution cost, inventory constraints and spread constraints. Further, motivated by practitioners' needs, we strongly generalize the intensity shapes to any continuous, decreasing to zero functions, adding modelling flexibility.\footnote{This is done at the expense of renouncing to uniqueness in the optimal strategy. We also assume the decay to be `fast enough' in certain cases.} We even allow for the inventory to be unconstrained when no penalties are present. (This scenario is considered mainly for the sake of completeness and comparison, at almost no extra cost.)

Secondly, we let the intensities depend on a $k$-dimensional hidden Markov chain. Following \cite{CEFS}, we use a weak formulation\footnote{That is, with controlled probability measures.} to construct a well-defined model (with exogenous information) and solve the filtering problem by means of the reference probability approach of stochastic filtering \cite[Chpt.VI]{B}. The rigorous setting of the model and its full characterization are carried out in Section \ref{s:model}, while the filtering problem is solved in Section \ref{s:filtering}.

The optimization problem of the MM is then reformulated in terms of the usual state variables together with the $k$-dimensional observable distribution of the Markov chain (Section \ref{s:viscosity}). At this point, the problem is too involved both analytically and numerically. We proceed by showing that, formally, there is an ansatz for the value function that reduces the dimensionality of the problem (i.e., the standard approach). However, as this methodology is not valid any more, we rigorously prove that the ansatz decomposition holds true (Theorem \ref{main_theorem_1}), and that the reduced value function can be characterized as the unique continuous viscosity solution of its formally derived equation (Theorem \ref{main_theorem_2}). The latter is done by harnessing results of piecewise-deterministic Markov process (PDMPs) \cite{CEFS, DF}, as first defined in \cite{D1}.

Thirdly, we solve the idealized problem of a MM with full information, who can observe the Markov chain (Section \ref{s:full_info}). We show that a similar ansatz and the standard approach work out in this case. We prove that the MM's value function is a classical solution of its HJB equation via a general verification theorem (Theorem \ref{verif_full_info}) and we recover the well known strategies for one regime as particular cases. 

Finally, we compare the optimal strategies under full and partial information through a concrete example, by numerical analysis (Section \ref{s:numerics}). In particular, we show that the optimal full information spreads are biased when the exact regime is unknown, and using them becomes suboptimal. We interpret the adjustment needed in terms of observable order flow volatility and sensitivity of the expected profit to observable regime changes; and we show how this effect becomes higher, the longer the waiting time in between orders (leading to higher uncertainty for the MM).

%%%%%%%%%%%%%%%%%% SECTIONS  %%%%%%%%%%%%%%%%%%%

\section{Setting and main assumptions}
\label{s:model}
\setcounter{assumptions}{0}
\setcounter{subsection}{0}

\subsection{Preliminaries on the probability spaces}
\label{s:preliminaries}

We start by setting up our framework in an abstract fashion, deferring the question of existence of a model to Proposition \ref{Girsanov1}, and a characterization of all such models to Proposition \ref{Girsanov2}. As mentioned in the Introduction, we seek to construct a model under a weak formulation, so that the information flow remains exogenous (i.e., unaffected by the MM's actions).

Let $T>0$ be a given finite horizon and $(\Omega,\mathcal F,\mathbb F=(\mathcal F_t)_{0\leq t\leq T})$ a right-continuous filtered measurable space with $\mathcal F_T=\mathcal F$. Suppose it supports three adapted stochastic processes $W,\Np,\Nm$. (More assumptions to be added in the sequel.) Let $\mathbb F^{W,\Np,\Nm}=(\sigma(W_u,\Np_u,\Nm_u:0\leq u\leq t)\vee\mathcal F_0)_{0\leq t\leq T}$ be the natural filtration of these processes enlarged (`completed') by $\mathcal F_0$ and define the set of \textit{admissible spreads} as
\begin{equation}
\label{A}
\mathcal A\defeq\{\delta:[0,T]\times\Omega\to\overline{(\delmin,\delmax)}: \delta\mbox{ is }\mathbb F^{W,\Np,\Nm}\mbox{-predictable and bounded}\},
\end{equation}
where $-\infty\leq\delmin<\delmax\leq+\infty$ are fixed constants and $\overline{(a,b)}=\mathbb R\cap [a,b]$ denotes the closure of the interval $(a,b)$ in $\mathbb R$, for any $-\infty\leq a<b\leq+\infty$. Note that for $\delmin=-\infty$ and $\delmax=+\infty$, the admissible spreads are not uniformly bounded. The self-imposed constraints $\delmin,\delmax$ for the MM's spreads can be taken to be different for the bid and the ask if wanted, without any additional effort.

Consider on the former space a family $(\mPa)_{\alpha\in{\mathcal A}^2}$, $\alpha=(\delm,\delp)$, of equivalent probability measures such that the sigma algebra generated by their null sets is $\mathcal F_0$. Note that for each $\alpha$, $(\Omega, \mathcal F, \mathbb F, \mPa)$ is under the usual conditions and $\mathcal F_0$ is the completed trivial sigma algebra. We refer to $\mPa$ as the \textit{physical} or \textit{historical} probability given the \textit{admissible strategy} (or \textit{control}) $\alpha$, and we write $\mEa$ for the expectation under $\mPa$. We will drop the `$\alpha$' from the notation when there is no room for ambiguity. 

Henceforth, all the processes (resp. properties) considered are supposed to be defined (resp. hold) on the space $(\Omega, \mathcal F, \mathbb F, \mPa)$ for all $\alpha\in{\mathcal A}^2$, unless otherwise stated. For example, a `Brownian motion' is a process on $[0,T]\times\Omega$ that is an $(\mathbb F,\mPa)$-Brownian motion for all $\alpha$. All subfiltrations are understood to have been augmented to satisfy the usual conditions (which in the case of the natural filtrations of Feller processes, it amounts simply to completing them). C\`adl\`ag versions of the processes are used whenever available. 

\subsection{Description of the model}

A market maker in a quote-driven market gives binding bid/ask quotes $\Sm, \Sp$ resp. for a certain financial asset, during the time interval $[0,T]$. She receives unitary size market orders\footnote{Any constant size would work in the same way, but this convention simplifies the notation. See, e.g., \cite{G} for some formulas with arbitrary constant size under full information.} to buy/sell according to counting processes $\Nm, \Np $ starting at 0, with a.s. no common jumps and stochastic intensities $\lm, \lp$ resp. (see \cite[p.27, Def. D7]{B}). Each intensity at time $t$ depends on the corresponding spread $\delpm_t\defeq\pm\big(\Spm_t - S_t\big)$ between the MM's quotes and a reference price $S$. $S$ may be interpreted, e.g., as an aggregated mid-price or a dealer-to-dealer price, depending on the market. We assume $S$ is a Brownian motion with drift, of the form 
\begin{equation}
\label{S}
S_t= s_0 + \mu t + \sigma W_t,
\end{equation}
where $W$ is a Wiener process, $s_0,\mu\in\mathbb R$ and $\sigma\geq 0$. Note that the MM can fully specify her quote $\Spm_t$ by choosing the spread $\delpm_t$, since $\Spm_t = S_t \pm\delpm_t$. We assume $\delm,\delp\in\mathcal A$. In particular, the MM chooses her spreads based on the observation of $W,\ \Nm\mbox{ and }\Np$. We refer to $\mathbb F^{\Nm,\Np,W}$ as the \textit{observable filtration}.

In addition, the intensities also depend on a hidden Markov chain $Y$ with state space $\{1,\dots,k\}$, initial distribution $\mu_0$ and deterministic generator matrix $Q=(q^{ij}_t)_{1\leq i,j\leq k}$. We assume $Q$ is a continuous, stable and conservative $Q(t)$-matrix, i.e.: 
\begin{assumptions}[\textbf{Generator matrix}]
\label{assumptions_Q}
For all $t\in [0,T]$ and $1\leq i\leq k$:

$0\leq q^{ij}_t<+\infty\mbox{ for all }\ 1\leq j\leq k,\ j\neq i,\qquad \displaystyle\sum_{j=1}^k q^{ij}_t =0\quad\mbox{and}\quad Q\in C[0,T].$
\end{assumptions}
\noindent The previous are standing assumptions in the literature of Markov chains and they guarantee in particular the existence of a unique (up to evanescence) c\`adl\`ag  version of $Y$, and the existence of the predictable intensity kernel for its jump measure, as given in (\ref{intensity_kernels}) below. In addition, we suppose that 
\begin{equation}
\label{no_common_jumps}
Y\mbox{ has a.s. no common jumps with }\Nm\mbox{ and }\Np. 
\end{equation}
In this model, $Y_t$ represents the \textit{market regime} at time $t$ and results from the interaction of a range of different factors, such as market sentiment towards the asset and varying levels of competition with other liquidity providers. These effects are almost never explicitly modelled in the mathematical literature of market making. To this end, it is natural to assume that $Y$ is not directly observable by the MM who can only see $\Nm,\Np$ and $W$. (As often assumed in filtering and control with hidden Markov chain models, the parameters $k,Q,\mbox{ and }\mu_0$ of $Y$ are known to the MM, who needs to model/estimate them in practice.)

We define the MM's inventory process 
\begin{equation}
\label{N}
N_t\defeq n_0 + \Nm_t - \Np_t
\end{equation}
and the cash account process
\begin{equation}
\label{X}
\Xdel_t\defeq x_0 + \int_0^t \left(S_u+\delp_u\right)d\Np_u - \int_0^t \left(S_u-\delm_u\right)d\Nm_u,
\end{equation}
for some fixed initial values $n_0\in\mathbb Z$ and $x_0\in \mathbb R$. Note that 
$$\mathbb F^{\Nm,\Np} = \mathbb F^N\quad\mbox{ and }\quad\mathbb F^{\Nm,\Np,W}=\mathbb F^{N,W},$$ 
since $\Nm,\Np$ have a.s. no common jumps.

We shall make some very natural modelling assumptions on the intensities. These strongly generalize those in \cite{BL, G, GL} to a context with several market regimes, without the need of any conditions on the derivatives or even smoothness.

\begin{assumptions}[\textbf{Orders intensities}]
\label{assumptions_intensities}
There exist functions $\Lipm:\overline{(\delmin,\delmax)}\to (0,+\infty)$ for $i=1,\dots,k$ and $\Nmax\in \mathbb N \cup\{+\infty\}$ with $-\Nmax\leq n_0\leq\Nmax$, such that:
\begin{enumerate}[label=(\roman*)] 
\item \label{assumptions_intensities_lambda}
$\lpm_t = \lambda_t^{\pm,\alpha} = \Lpm_{Y_{t^-}}(\delpm_t)\mathbbm 1_{\{\mp N_{t^-}<\Nmax\}}$ if $\alpha=(\delm,\delp)$. 
\item \label{assumptions_intensities_Lambda}
$\Lipm$ is continuous, decreasing (not necessarily strictly) and $\displaystyle\lim_{\delta\to+\infty}\delta^{\mathbbm 1_{\{\gamma=0\}}}\Lipm(\delta)=0$ if $\delmax=+\infty$, for all $1\leq i\leq k$.
\end{enumerate}
\end{assumptions}

\begin{notation*}
For the sake of readability, we will often write $\mathbbm 1^{\pm}_t$ instead of $\mathbbm 1_{\{\mp N_{t^-}<\Nmax\}}$.
\end{notation*}

\noindent $\Nmax$ is a self-imposed constraint on the MM's inventory (whichever its sign) as originally proposed in \cite{GLFT}. When $\Nmax=\infty$ we are in the unconstrained context, while a finite $\Nmax$ effectively means that the MM will not buy (resp. sell) whenever $N_{t^-}=\Nmax$ (resp. $N_{t^-}=-\Nmax$). In practice, the MM could achieve this either by abstaining from quoting or by quoting with an excessively large spread that prevents any transaction (i.e., a `stub' or `placeholder quote').\footnote{For simplicity and without loss of generality, only the first alternative is formalized in our model (as done in \cite{G, GL, GLFT}) and this is reflected in the admissible spreads being real-valued. We could also allow for different constraints depending on the sign of the inventory, but we refrain from this to simplify the notation.} 

\begin{remark}
\label{expected_margin}
The last assumption states in particular that the intensities should decrease to zero when the spreads grow arbitrarily large. Furthermore, when $\gamma=0$ we require that they decay faster than $1/\delta$. Loosely speaking, this states that for any $\gamma\geq 0$ the `expected' utility of the MM's instantaneous margin, $\digamma^\pm_i(\delta)\defeq\Lambda_i(\delta) U_\gamma(\delta)$, should vanish for `stub quotes'. This is a reasonable assumption in practice, as the opposite could lead to unrealistic optimal strategies such as continuously quoting `infinite spreads'.
\end{remark}

\begin{remark*}
Note that the above intensities are predictable, and since any admissible spreads $\delm,\delp$ are bounded, $\lm,\lp$ are in turn bounded. Accordingly, $\Nm,\Np, N$ and $X$ are non-explosive (see, e.g., \cite[p.27 T8]{B}). Furthermore, for any constant $\lmax>0$ such that $\lm + \lp\leq\lmax$, it is easy to see that for all $p\in\mathbb N,\ t\in[0,T]$ and $r\geq 0$,
\begin{equation}
\label{bounded_moments}
\mEa[|N_t|^p]\leq \mEa[(n_0 +\Nm_t+\Np_t)^p]\leq\mEa[(n_0 + M)^p],\ {\mathcal M}^\alpha_{|N_t|}(r)\leq {\mathcal M}^\alpha_{n_0 +\Nm_t+\Np_t}(r) \leq {\mathcal M}^\alpha_{n_0+M}(r),
\end{equation}
where $M\sim\mbox{Poisson}(\lmax T)$ and ${\mathcal M}^\alpha_R$ is the moment generating function of a random variable $R$.\footnote{By possibly enlarging the space, one can consider a counting process $Z$ with no common jumps with $\Nm,\Np$ and stochastic intensity $\lmax-\lm-\lp\geq 0$. Then the process $Z+\Nm+\Np$ is a Poisson with intensity $\lmax$ that dominates $\Np+\Nm$. The claim follows immediately.}
\end{remark*}

The following are some examples which we shall come to back in Section \ref{s:computing_spreads}, when revisiting the standard assumptions in the literature. We remark that exponential, power-law and logistic intensities are the explicit ones most commonly used in the mathematical literature, for tractability reasons.
\begin{examples}
\label{ex}
\begin{enumerate}
\item \label{ex1}
$\Lipm(\delta)=\apm_i e^{-\bpm_i \delta}$, with $\apm_i,\bpm_i>0,\ -\infty\leq\delmin<\delmax\leq+\infty$.
\item \label{ex2}
$\Lipm(\delta)=\frac{\apm_i}{1+\cpm_i e^{\bpm_i \delta}}$, with $\apm_i,\bpm_i,\cpm_i>0,\ -\infty\leq\delmin<\delmax\leq+\infty$.
\item \label{ex3}
$\Lipm(\delta)=\apm_i \big(\pi/2 + \arctan(-\bpm_i\delta+\cpm_i)\big)$, with $\apm_i,\bpm_i>0,\ \cpm_i\in\mathbb R,\ -\infty\leq\delmin<\delmax\leq+\infty$. 
\item \label{ex4}
$\Lipm(\delta)=\apm_i (\bpm_i\delta+\cpm_i)^{-\Dpm_i}$, with $\apm_i,\bpm_i,\Dpm_i>0,\ \cpm_i\in\mathbb R,\ \max_i\{-\cpm_i/\bpm_i\}<\delmin<\delmax\leq+\infty$.
\end{enumerate}
\end{examples}
To fix ideas, consider Example \ref{ex1} and let us observe that if $Y$ represents decreasing levels of competition for the MM, the values of $\apm_i$ (resp. $\bpm_i$) should increase (resp. decrease) as $Y$ increases. The same is true if $Y$ represents increasing levels of positive sentiment (or `bullishness') towards the asset. In practice, $Y$ will result from the combination of these effects and many more.

Suppose the MM can hedge any remaining inventory at time $T$ at the reference price $S_T$ minus a certain execution cost and that she has a terminal CARA utility function
\begin{equation*}
U_\gamma(c)=\frac{1-e^{-\gamma c}}{\gamma},\mbox{ for }c\in\mathbb R,
\end{equation*}
when $\gamma> 0$ and $U_0=Id$. The parameter $\gamma$ is known as absolute risk aversion.

Neglecting discounting between $0$ and $T$, we consider the optimization problem faced by the MM who tries to maximize the expected utility of her terminal \textit{penalized profit and loss} (P\&L),

\begin{equation}
\label{problem}
\sup_{\delm,\delp\in\mathcal A}\mEdel\left[U_\gamma\Big(\Xdel_T+S_T N_T - \ell(N_T)-\frac{1}{2}\sigma^2\zeta\int_0^TN_t^2dt \Big)\right],
\end{equation}
where: 
\begin{enumerate}[label=(\roman*)]
\item $\zeta\geq 0$. When $\zeta>0$ we are including in the model a \textit{running penalty}, as firstly done in \cite{CJ}, for the MM to further control her \textit{accumulated inventory risk}. This is the same as subtracting the variance of the mark-to-market value of the inventory, weighted by $\zeta$.
\item $\ell:\mathbb R\to\mathbb [0,+\infty)$ represents the final execution cost and is increasing on $[0,+\infty)$, decreasing on $(-\infty,0]$ and $\ell(0)=0$. (And usually convex in practice.)
\item If $\Nmax=+\infty$, we set $\gamma=\zeta=0\equiv\ell$.
\end{enumerate}
The last restriction on the parameters states that the only case we will consider in which the inventory could be arbitrary large (whichever its sign) is that of a completely risk-neutral MM with negligible costs. The risk averse case is far more challenging and has not been treated in full mathematical detail even in the complete information case (see discussion in \cite{GLFT}). The model given by (iii) is of secondary interest in practice, yet, it will allow us to further understand the general problem and methodology and to have a more holistic view at almost no extra cost.

\begin{remark*}
Previous works in optimal market making do not consider both penalty and CARA utility (with $\gamma>0$) in unison, and instead treat the two families of models separately \cite{G}. Besides the obvious interest in unifying the approach and generalizing existing models, allowing for $\gamma>0$ and $\zeta>0$ simultaneously adds flexibility to the risk managing capabilities of the MM. Indeed, in \cite{G} the author derives some HJB-type systems of ODEs for each problem with a unique risk aversion parameter $\bar\gamma$, and then relates them by the introduction of an auxiliary parameter $0\leq \bar\xi\leq \bar\gamma$. The later is afterwards interpreted as measuring risk aversion to non-execution risk only and it is carried forward to the formulae of the `optimal strategies'. However, such strategies are not shown to result from any particular optimization problem for $0<\bar\xi<\bar\gamma$. By looking at the equations of the full information version of our model (Section \ref{s:full_info}) with a single market regime, one can immediately see that the equations in \cite{G} are obtained via the reparametrization $\gamma=\bar\xi$ and $\zeta = \bar\gamma - \bar\xi$. Thus, the formulation in (\ref{problem}) allows us, in particular, to establish the optimality of the strategies in \cite{G} for any value of the non-execution risk aversion and, in general, to differentiate between aversion to different types of risks. We can say that, in our model, $\gamma$ represents aversion to all types of risks, while the penalty $\zeta$ is used to further increase the aversion to price risk only.
\end{remark*}

\subsection{Jump measures}

It will be useful in the sequel to understand which are the jump measures and intensity kernels involved in our model. Let $\mu^N(dt,dz)$ (or simply $\mu^N$) be the jump (or counting) measure of $N$ (see def. in \cite[p.234]{B} or \cite[p.69]{JS}). We use a similar notation for the jump measures of $Y$ and the pair $(Y,N)$. These are all finite random measures since the processes are non-explosive for any $\delp,\delm\in\mathcal A$ (see (\ref{bounded_moments})), and they admit predictable intensity kernels (see \cite[p.235 D2]{B}). If $m_{z_0}(dz)$ is the Dirac measure at a point $z_0\in \mathbb R$, then the $(\mPa,\mathbb F)$-predictable intensity kernels of $\mu^N,\mu^Y$ and $\mu^{(Y,N)}$ are respectively given by:

\begin{equation}
\label{intensity_kernels}
\begin{split}
\eta^{\alpha,N}_t(dz) & =\eta^N(Y_{t^-}, \delm_t,\delp_t,N_{t^-},dz)\\
																		& = \lm_t m_{1}(dz) + \lp_t m_{-1}(dz)\\
																		& = \Lm_{Y_{t^-}}(\delm_t) \mathbbm 1_{\{N_{t^-}<\Nmax\}} m_{1}(dz)+ \Lp_{Y_{t^-}}(\delp_t)\mathbbm 1_{\{-N_{t^-}<\Nmax\}} m_{-1}(dz),\\
				\eta^Y_t(dh)  & = \eta^Y(Y_{t^-},dh)=\sum_{j\neq Y_{t^-}}q^{Y_{t^-},j}_t m_{j-Y_{t^-}}(dh)												
\end{split}
\end{equation} 
and 
\begin{equation}
\label{intensity_kernel_pair}
\eta^{\alpha,(Y,N)}_t(dh,dz) =\eta^{\alpha,N}_t(dz)\otimes m_0(dh) + \eta^Y_t(dh)\otimes m_0(dz).
\end{equation}

\noindent We denote by ${\bar\mu}^N_\alpha(dt,dz)$, $\bar{\mu}^Y_\alpha(dt,dh)$ and ${\bar\mu}^{(Y,N)}_\alpha(dt,dh,dz)$ the corresponding $(\mathbb F,\mathbb P^\alpha)$-compen\-sated measures.
\begin{remark*}
Note that equation (\ref{intensity_kernel_pair}) is a consequence of assuming that $Y,N$ have a.s. no common jumps (which in turn is equivalent to (\ref{no_common_jumps})). Indeed, it is easy to see that $Y,N$ have a.s. no common jumps if and only if
\begin{equation}
\mu^{(Y,N)}(dt,dh,dz)= \mu^{N}(dt,dz)\otimes m_0(dh) + \mu^Y(dt,dh)\otimes m_0(dz),
\end{equation}
which implies (\ref{intensity_kernel_pair}).
\end{remark*}

\subsection{Construction and characterization of the model}

We now prove the existence of a model within our framework (Proposition \ref{Girsanov1}). We construct it by means of a reference probability (an overview of which can be found, e.g., in \cite[Chpt.VI]{B}) and, in particular, the Girsanov theorem for point processes.

For the following proposition, let us consider a filtered probability space $(\Omega,\mathcal F,\mathbb F,\mathbb Q)$ under the usual conditions, with $\mathcal F=\mathcal F_T$ and $\mathcal F_0$ the completed trivial sigma algebra. We refer to $\mathbb Q$ as the \textit{reference probability}. Suppose this space supports a two-dimensional counting process $(\Nm,\Np)$ (see def. in \cite[Chapt.II]{B}), a Wiener process $W$ and a Markov chain $Y$ with finite state space $\{1,\dots,k\}$, initial distribution $\mu_0$ and time-dependent generator matrix $Q=(q^{ij}_t)_{1\leq i,j\leq k}$ satisfying Assumptions \ref{assumptions_Q}, such that $\Nm,\Np,Y$ verify (\ref{no_common_jumps}). We set $N=\Nm-\Np$ as before and suppose $\Npm$ has intensity $\mathbbm 1^{\pm}_t=\mathbbm 1_{\{\mp N_{t^-}<\Nmax\}}$. ($\Nmax\in\mathbb N\cup \{+\infty\}$ is given.) Such a simpler model can be constructed for example as a product of canonical spaces, with the existence of the counting processes with the right intensities proved in \cite[Thm.24 and Cor.31]{JP}.\footnote{The finite-dimensional result has the same proof as in one dimension, starting from independent Poisson measures.} Let $\Lipm:\overline{(\delmin,\delmax)}\to\mathbb R$ for $i=1,\dots,k$ be functions under the Assumptions \ref{assumptions_intensities} \textit{(\ref{assumptions_intensities_Lambda})},for given $-\infty\leq\delmin<\delmax\leq+\infty$ and $\Nmax\in \mathbb N \cup\{+\infty\}$ with $-\Nmax\leq n_0\leq\Nmax$, and define $\mathcal A$ as in (\ref{A}).

\begin{proposition}
\label{Girsanov1}
Let $\alpha=(\delm,\delp)\in{\mathcal A}^2$ and define the process $Z^\alpha$ as the \textit{stochastic exponential}
$$Z^\alpha\defeq \mathcal E\left( \int_0^\cdot\big(\Lm_{Y_{u^-}}(\delm_u)-1\big)\big(d\Nm_u- \mathbbm 1^-_u du\big)+\big(\Lp_{Y_{u^-}}(\delp_u)-1\big)\big(d\Np_u- \mathbbm 1^+_u du\big)\right).$$ 

Then,
\begin{enumerate}[label=(\roman*)]
\item $Z^\alpha$ is a strictly positive uniformly integrable (UI) martingale. In particular, $\mathbb E^{\mathbb Q}[Z^\alpha_T]=1$.
\item $\mPa$ defined by $\frac{d\mPa}{d\mathbb Q} = Z^\alpha_T$ is an equivalent probability measure such that, for $(\mathbb F,\mPa)$, $\Npm$ is a counting process with intensity $\lpm_t = \Lpm_{Y_{t^-}}(\delpm_t)\mathbbm 1^{\pm}_t$, $W$ is a Wiener process and $Y$ is a Markov chain with state space $\{1,\dots,k\}$, initial distribution $\mu_0$ and generator matrix $Q$. 
(That is, Assumptions \ref{assumptions_Q}, \ref{assumptions_intensities} and (\ref{no_common_jumps}) are all verified.)
\end{enumerate}
\end{proposition}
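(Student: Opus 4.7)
My plan is to apply the Girsanov-type change-of-measure theory for point processes (as developed in \cite{B}), exploiting the fact that $Z^\alpha$ only jumps at the jumps of $\Nm,\Np$, while $W$ is continuous and $Y$ has no common jumps with $\Nm,\Np$.

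For part (i), I first unfold the stochastic exponential in the standard product form,
\[
Z^\alpha_t=\exp\!\left(-\!\int_0^t\!\bigl(\Lm_{Y_{u^-}}(\delm_u)-1\bigr)\mathbbm 1^-_u\,du-\!\int_0^t\!\bigl(\Lp_{Y_{u^-}}(\delp_u)-1\bigr)\mathbbm 1^+_u\,du\right)\!\prod_{u\leq t}\bigl(1+\Delta L^\alpha_u\bigr),
\]
where $L^\alpha$ denotes the driving local martingale. Since $L^\alpha$ jumps only at jumps of $\Nm$ or $\Np$ and at such times $\Delta L^\alpha_u=\Lpm_{Y_{u^-}}(\delpm_u)-1>-1$ by Assumption \ref{assumptions_intensities}\ref{assumptions_intensities_Lambda}, strict positivity follows. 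For the UI martingale property the key observation is that, for \emph{any fixed} $\alpha\in{\mathcal A}^2$, $\delm$ and $\delp$ are bounded by admissibility, $Y_{t^-}$ takes values in the finite set $\{1,\dots,k\}$, and each $\Lipm$ is continuous; hence there is a deterministic constant $\lmax=\lmax(\alpha)$ with $\lm+\lp\leq\lmax$. A standard criterion for stochastic exponentials of point-process martingales with bounded compensator (see \cite[Chpt.VI]{B}) then yields that $Z^\alpha$ is a UI $\mathbb Q$-martingale, so $\mathbb E^{\mathbb Q}[Z^\alpha_T]=1$ and $\mPa$ is a well-defined equivalent probability measure.

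For part (ii), I handle $\Npm$, $W$ and $Y$ in turn. Concerning $\Npm$, the form of $Z^\alpha$ is precisely the density prescribed by Girsanov's theorem for point processes, so by \cite[Chpt.VI]{B} the $(\mathbb F,\mPa)$-intensity of $\Npm$ is $\Lpm_{Y_{t^-}}(\delpm_t)\mathbbm 1^\pm_t$, as required. Concerning $W$, since $Z^\alpha$ is purely discontinuous with jumps supported on the set of jump times of $\Nm+\Np$, while $W$ is continuous, one has $[Z^\alpha,W]\equiv 0$ under $\mathbb Q$; integration by parts gives that $Z^\alpha W$ is a $\mathbb Q$-local martingale, hence $W$ is a $\mPa$-local martingale, and since its quadratic variation $\langle W\rangle_t=t$ is unchanged under equivalent change of measure, L\'evy's characterization yields that $W$ is an $(\mathbb F,\mPa)$-Brownian motion. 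Concerning $Y$, the no-common-jumps condition (\ref{no_common_jumps}) ensures that $\mu^Y$ and $Z^\alpha$ have no jumps in common, so the classical result on change of compensator under an equivalent measure shows that the $(\mathbb F,\mPa)$-compensator of $\mu^Y$ remains $\eta^Y(Y_{t^-},dh)\,dt$. Combined with the preservation of $Y_0\sim\mu_0$ (since $\mathcal F_0$ is trivial up to null sets), this characterizes $Y$ under $\mPa$ as a Markov chain with the same state space $\{1,\dots,k\}$, initial distribution $\mu_0$ and generator $Q$.

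The main technical obstacle I anticipate is establishing the true UI martingale property in step (i) rather than merely the local martingale property, since stochastic exponentials driven by jump processes can readily fail to be martingales. The key point is that the boundedness of admissible controls, the finiteness of the state space of $Y$ and the continuity of $\Lipm$ conspire to give a uniform bound on the compensator of $L^\alpha$, which is exactly the hypothesis needed for the standard sufficient conditions to apply. The remaining verifications for $W$ and $Y$ then follow routinely from the structural decoupling implied by the no-common-jumps hypothesis.
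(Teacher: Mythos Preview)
Your proposal is correct and follows essentially the same route as the paper: bounded intensities (via admissibility and finiteness of the state space) to get the UI martingale property, Girsanov for point processes for $\Npm$, and orthogonality $[Z^\alpha,W]=0$ plus L\'evy's characterization for $W$. The only notable difference is in the treatment of $Y$: you argue via preservation of the compensator of $\mu^Y$, whereas the paper phrases it through the martingale problem, showing that $M_t=f(Y_t)-f(Y_0)-\int_0^t\mathcal A^Y_uf(Y_u)\,du$ remains a $\mPa$-martingale (since $[M,Z^\alpha]=0$ by the FV and no-common-jumps properties) and then explicitly invokes well-posedness of the martingale problem \cite[Thm.4.2]{EK} to recover the Markov property and the generator $Q$. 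Your compensator argument is equivalent in content, but the paper is more explicit about the uniqueness step needed to pass from ``same compensator'' to ``Markov chain with generator $Q$''; you may wish to add a sentence making that inference precise.
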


\begin{proof}
$\Nm,\Np$ are finite variation (FV) processes with no common jumps, hence $[\Nm,\Np]=0$ $\mathbb Q$-a.s. Therefore, the multiplicativity of the stochastic exponential \cite[p.138]{JS} and the exponential formula for FV processes \cite[p.337 T4]{B} yield the explicit expression $Z^\alpha= Z^{\alpha,-} Z^{\alpha,+}$, with
\begin{equation}
\label{exponential_explicit}
Z^{\alpha,\pm}_t=\exp\left(\int_0^t \big(1-\Lpm_{Y_{u^-}}(\delp_u)\big)\mathbbm 1^{\pm}_u du\right)\prod_{\substack{u\leq t:\\ \Delta\Npm_u\neq 0}}\Lpm_{Y_{u^-}}(\delp_u).
\end{equation}
Then \textit{(i)} follows from the strict positivity and boundedness of $\Lpm_i(\delpm)$ for $1\leq i\leq k$, as in \cite[p.168 T4]{B} with terminal time $T$. The only difference is that in our case the $\mathbb Q$-intensity of $\Npm$ is $\mathbbm 1_{\{\mp N_{t^-}<\Nmax\}}\leq 1$ instead of $1$. The proof remains the same though, simply recalling that the $\mathbb Q$-moment generating function of $\Npm_T$ is dominated by the one of a standard Poisson random variable (see (\ref{bounded_moments})). Uniform integrability is immediate.

\textit{(i)} guarantees that $\mPa$, as defined in \textit{(ii)}, is an equivalent probability measure. The shape of the $\mPa$-intensities of $\Nm,\Np$ is due to \cite[p.166 T3]{B}. 

The fact that $W$ is still a Wiener process under $\mPa$ is a consequence of the Girsanov--Meyer Theorem \cite[p.132 Thm.35]{P} and Levy's characterization Theorem.

As for $Y$, note first that its initial distribution does not change as $Z^\alpha_0=1$, and the same goes for its infinitesimal generator operator. To see this, consider the $\mathbb Q$-generator operator $\mathcal A^Y_t:\mathbb R^E\to\mathbb R^E$, $\mathcal A^Y_tf(i)= \sum_{j=1}^k q_t^{ij}f(j)$. We know the process $M_t=f(Y_t)-f(Y_0) - \int_0^t\mathcal A^Y_uf(Y_u)du$ is a $\mathbb Q$-local martingale. Once again, by the Girsanov--Meyer Theorem., $M$ is also a $\mPa$-local martingale ($[M,Z]=0$ since $Y,\Nm,\Np$ are FV processes satisfying (\ref{no_common_jumps})). Moreover, being bounded, $M$ is a true  $\mPa$-martingale and $Y$ solves, for $(\mathbb F,\mPa)$, a well-posed martingale problem for $(\mathcal A^Y_t, \mu_0)$. This implies $Y$ is a $\mPa$-Markov chain with a uniquely determined law \cite[p.184 Thm.4.2]{EK}.\footnote{Although our martingale problem is non-homogeneous in time, $(Q_t)$ is deterministic, so this does not represent a problem.} That $Q$ is also the $\mPa$-generator matrix follows from uniqueness.    

(\ref{no_common_jumps}) clearly remains unchanged under an equivalent change of probability measure.

\end{proof}

Reciprocally, suppose we start with a family of filtered probability spaces $\{(\Omega, \mathcal F, \mathbb F = (\mathcal F_t)_{0\leq t\leq T}, \mPa)\}_{\alpha\in{\mathcal A}^2}$ as in Section \ref{s:preliminaries}, supporting a counting process $(\Nm,\Np)$ and a Markov chain $Y$ satisfying condition (\ref{no_common_jumps}), together with a Wiener process $W$. Suppose also that $Y$ has finite state space $\{1,\dots,k\}$, initial distribution $\mu_0$ and time-dependent generator matrix $Q$, and that our Assumptions \ref{assumptions_Q} and \ref{assumptions_intensities} are in place. We put $N= \Nm-\Np$ as always. We would like to characterize any such model in terms of a reference probability as in Proposition \ref{Girsanov1} by an inverse change of measure. However, we only claim the uniqueness of the reference probability on $\mathbb F^N$. We will come back to this result in the sequel.

\begin{proposition}
\label{Girsanov2}
Let $\alpha=(\delm,\delp)\in{\mathcal A}^2$ and define 
$$\bar{Z}^\alpha\defeq \mathcal E\left(\int_0^\cdot\Big(1/\Lm_{Y_{u^-}}(\delm_u)-1\Big)\big(d\Nm_u-\lm_u du\big)+\Big(1/\Lp_{Y_{u^-}}(\delp_u)-1\Big)\big(d\Np_u-\lp_u du\big)\right).$$  

Then,
\begin{enumerate}[label=(\roman*)]
\item $\bar{Z}^\alpha$ is a strictly positive UI martingale. In particular, $\mEa[\bar{Z}_T]=1$. 
\item $\mQa$ defined by $\frac{d\mQa}{d\mPa} = \bar{Z}^\alpha_T$ is a probability measure equivalent to $\mPa$ such that, for $(\mathbb F,\mQa)$, $\Npm$ is a counting process with intensity $\mathbbm 1^{\pm}_t$, $W$ is a Wiener process and $Y$ is a Markov chain with state space $\{1,\dots,k\}$, initial distribution $\mu_0$ and generator matrix $Q$. Furthermore, $Y, N, W$ are independent.
\item If we define $Z^\alpha$ as in Lemma \ref{Girsanov1} under $\mathbb Q^\alpha$, then $Z^\alpha=1/\bar{Z}^\alpha$ (i.e., the two changes of measure are inverse of each other).
\item For all $\tilde\alpha\in{\mathcal A}^2$, $\mathbb Q^{\tilde\alpha}\equiv\mQa$ on $\mathbb F^N_T$. 
\end{enumerate}
\end{proposition}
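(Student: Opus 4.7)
The strategy is to mirror the proof of Proposition \ref{Girsanov1}, swapping the roles of $\mPa$ and the new probability, and then to extract the independence of $Y, N, W$ and part \emph{(iv)} from the decoupled structure of the $\mQa$-characteristics.

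First, for \emph{(i)}, the point-process content of \emph{(ii)}, and \emph{(iii)}: exactly as in Proposition \ref{Girsanov1}, since $\Nm,\Np$ have no common jumps under $\mPa$ by (\ref{no_common_jumps}) we have $[\Nm,\Np]=0$, so multiplicativity of the stochastic exponential together with the exponential formula for FV processes \cite[p.337 T4]{B} yields the factorization $\bar Z^\alpha=\bar Z^{\alpha,-}\bar Z^{\alpha,+}$ with
\begin{equation*}
\bar Z^{\alpha,\pm}_t = \exp\left(\int_0^t \big(\Lpm_{Y_{u^-}}(\delpm_u)-1\big)\mathbbm 1^{\pm}_u\, du\right)\prod_{\substack{u\leq t:\\ \Delta\Npm_u\neq 0}}\frac{1}{\Lpm_{Y_{u^-}}(\delpm_u)}.
\end{equation*}
Strict positivity is immediate from $\Lipm>0$. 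Since each bounded $\delpm$ takes values in a compact subset of $\overline{(\delmin,\delmax)}$ on which continuity and strict positivity of $\Lipm$ force $1/\Lipm(\delpm)$ to be bounded, I would then invoke \cite[p.168 T4]{B} verbatim as in Proposition \ref{Girsanov1}, using that the $\mPa$-intensities $\lpm_t$ are bounded so the Poisson dominator of (\ref{bounded_moments}) is available, to obtain the UI martingale property. Girsanov for point processes \cite[p.166 T3]{B} then gives the $\mQa$-intensity $(1/\Lpm_{Y_{t^-}}(\delpm_t))\lpm_t = \mathbbm 1^{\pm}_t$. Part \emph{(iii)} is a direct check: writing $Z^\alpha$ from Proposition \ref{Girsanov1} under $\mQa$ (with the just-derived intensities $\mathbbm 1^{\pm}_t$) in the same factored form, the exponential parts combine via $(1-\Lpm)\mathbbm 1^{\pm}+(\Lpm-1)\mathbbm 1^{\pm}=0$ and the jump products telescope as $\Lpm\cdot(1/\Lpm)=1$, giving $Z^\alpha\bar Z^\alpha\equiv 1$.

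For the Wiener and Markov-chain parts of \emph{(ii)}, $\bar Z^\alpha$ is of finite variation, so $[W,\bar Z^\alpha]=0$ and the Girsanov--Meyer theorem \cite[p.132 Thm.35]{P} keeps $W$ a continuous $\mQa$-local martingale with $\langle W\rangle_t=t$, whence $W$ is $\mQa$-Wiener by L\'evy's characterization. For $Y$ I would repeat the martingale-problem argument of Proposition \ref{Girsanov1}: for any bounded $f$ on $\{1,\dots,k\}$, the process $M^f_t=f(Y_t)-f(Y_0)-\int_0^t \mathcal A^Y_u f(Y_u)\,du$ is a bounded $\mPa$-martingale, and by (\ref{no_common_jumps}) it has no common jumps with $\bar Z^\alpha$, so $[M^f,\bar Z^\alpha]=0$ and Girsanov--Meyer preserves its martingale property under $\mQa$; well-posedness of the martingale problem for $(\mathcal A^Y,\mu_0)$ then identifies the $\mQa$-law of $Y$ as the required Markov chain law.

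For the independence claim in \emph{(ii)} and for \emph{(iv)}, the key observation is that the $\mQa$-characteristics of $Y, N, W$ are then fully decoupled: the $Y$-generator depends only on $(t,Y)$, the $(\Nm,\Np)$-intensities depend only on $N_{t^-}$, and $W$ has deterministic quadratic variation $t$. Thus $(Y,N,W)$ solves under $\mQa$ a well-posed joint martingale problem for the block-diagonal generator $\mathcal L_t f=\mathcal A^Y_t f+\mathcal A^N_t f+\tfrac12\partial^2_w f$ on bounded test functions, and since the product measure $\nu^Y\otimes\nu^N\otimes\nu^W$ of the individual $\mQa$-marginals trivially also solves it, uniqueness forces $\mQa|_{\mathbb F^{Y,N,W}_T}$ to coincide with this product measure, giving the claimed independence. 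Part \emph{(iv)} is then immediate: under $\mQa$, $N=\Nm-\Np$ is an autonomous birth--death process on $\{-\Nmax,\dots,\Nmax\}\cap\mathbb Z$ with intensities $\mathbbm 1^{\pm}_t$ depending only on $N_{t^-}$, so its law on $\mathbb F^N_T$ is uniquely determined by $(n_0,\Nmax)$ and hence does not depend on $\alpha$. The main obstacle I foresee is the rigorous well-posedness of the joint martingale problem when $\Nmax=+\infty$: the state space is then unbounded in the $n$-direction, so one must localize via the Poisson dominator of (\ref{bounded_moments}) or restrict to compactly-$n$-supported test functions before invoking standard uniqueness results for sums of bounded-jump and Brownian generators.
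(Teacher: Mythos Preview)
Your treatment of parts \emph{(i)}, \emph{(iii)}, and the Wiener/Markov-chain/point-process portions of \emph{(ii)} matches the paper's approach exactly (mirror Proposition~\ref{Girsanov1} with $1/\Lpm_i(\delpm)$ bounded and positive). For part \emph{(iv)}, your birth--death argument is precisely the content of the reference \cite[p.64 T8]{B} that the paper invokes.

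The only genuine divergence is the independence of $Y,N,W$ under $\mQa$. You argue via well-posedness of a block-diagonal joint martingale problem and comparison with the product measure; the paper instead applies a structural lemma \cite[p.543 Lem.9.5.4.1]{JYC}: the three $(\mQa,\mathbb F)$-martingales $Y-\int_0^\cdot\sum_j q^{Y_{u^-},j}_u j\,du$, $N-\int_0^\cdot(\mathbbm 1^-_u-\mathbbm 1^+_u)\,du$, and $W$ each enjoy the predictable representation property in their own natural filtration (for the counting processes this is \cite[p.239 T8]{B}) and are pairwise $(\mathbb F,\mQa)$-orthogonal, which directly forces the three filtrations to be $\mQa$-independent. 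This route completely sidesteps the well-posedness verification you correctly flag as delicate when $\Nmax=+\infty$: your argument would go through after the localization you sketch, but the orthogonality-plus-PRP lemma handles both the bounded and unbounded inventory cases uniformly and with no extra work. The trade-off is that your approach is more self-contained (relying only on martingale-problem uniqueness already used for $Y$ alone), whereas the paper's is shorter but imports a somewhat specialized result.
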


\begin{proof}
\textit{(i)} and \textit{(ii)} are proved just as in Proposition \ref{Girsanov1} (the processes $1/\Lpm_i(\delpm)$ are strictly positive and bounded for $1\leq i\leq k$, and the intensities $\lp,\lpm$ are bounded). The $\mQa$-independence of $Y,N,W$ is a consequence of \cite[p.543 Lem.9.5.4.1]{JYC}. That is, the $(\mQa,\mathbb F)$-martingales $Y - \int_0^\cdot\sum_{j=1}^k q_u^{Y_{u^-},j}j du,\ N-\int_0^\cdot(\mathbbm 1^-_u - \mathbbm 1^+_u)du\mbox{ and }W$ have the predictable representation property with respect to $\mathbb F^Y,\ \mathbb F^N\mbox{ and }\mathbb F^W$ resp. (see \cite[p.239 T8]{B} for compensated counting measures) and they are $(\mathbb F,\mQa)$-orthogonal, which implies their independence.

The explicit expressions for the stochastic exponentials (see (\ref{exponential_explicit})) show straightforwardly that $Z^\alpha\bar{Z}^\alpha=1$, proving \textit{(iii)}.

\textit{(iv)} is due to \cite[p.64 T8]{B}.
\end{proof}

\section{Filtering problem}
\label{s:filtering}
\setcounter{assumptions}{0}
\setcounter{subsection}{1}

Since the MM cannot directly observe all the information in $\mathbb F$ but only $\mathbb F^{N,W}$ (in particular, she cannot observe $Y$), in order to solve the optimization problem (\ref{problem}) under partial information we want to reduce it first to an equivalent one under full information. Throughout this section we work under $\mPa$ with $\alpha=(\delm,\delp)\in{\mathcal A}^2$ fixed. We sometimes omit $\alpha$ from the notation for simplicity.

Recall that for any c\`adl\`ag bounded process $M$ (not necessarily adapted) on a filtered probability space $(\Sigma,\mathcal H,\mathbb H,\mathbb P)$ satisfying the usual conditions, the \textit{optional projection} of $M$ on $\mathbb H$ is the unique c\`adl\`ag process $^oM^{(\mathbb P,\mathbb H)}$ such that $ ^oM^{(\mathbb P,\mathbb H)}_t = \mathbb E^{\mathbb P}[M_t|\mathcal H_t]$ a.s. for each $t$. Its existence is guaranteed by the Optional Projection Theorem (see, e.g., \cite[p.264]{JYC} or \cite[p.357-358]{N}). (We omit $\mathbb P$ and/or $\mathbb H$ when clear from the context.) Let us consider the optional projections
$$\piia \defeq^o(\mathbbm 1_{\{Y_t=i\}})^{(\mPa,\mathbb F^{N,W})}, 1\leq i\leq k,\quad\mbox{ and }\quad\pia\defeq (\Pi^{\alpha,1},\dots,\Pi^{\alpha,k}).$$ 
In other words, $\pia$ is the unique c\`adl\`ag version of the conditional distribution of $Y$ given the observable information, i.e., $\piia_t = \mPa(Y_t=i|\mathcal F^{N,W}_t)$.  We now characterize the observable (that is, the $\mathbb F^{N,W}$-) predictable intensities of $\Nm,\Np\mbox{ and }\mu^N$ in terms of $\pia$. Loosely speaking, the observable intensity of, say $\Np$, is obtained projecting: $\mEa[\lp_t|\mathcal F^{W,N}_t]$ \cite[p.32 Comment and Pseudo-Proof of T14]{B}. The only technical difficulty is that of finding a predictable version of such process. $ ^o(\lp)$ has the desired projective property but it is not predictable in general. $ ^o(\lp)_{t^-}$, on the other hand, is predictable but does not normally enjoy the projective property. In fact, the process we are looking for is `in between' these two.

\begin{proposition}[\textbf{Observable intensities}]
\label{observable_intensities}
The $(\mPa,\mathbb F^{N,W})$-predictable intensities of $\Npm$ and $\mu^N$, resp., are
\begin{equation*}
\widehat{\lpm_t}^\alpha\defeq\mathbbm 1^{\pm}_t\sum_{i=1}^k\piia_{u^-}\Lipm(\delpm_u)\quad\mbox{ and }\quad
\widehat{\eta}^{\alpha,N}_t(dz) \defeq \widehat{\lm_t}^\alpha m_{1}(dz) + \widehat{\lp_t}^\alpha m_{-1}(dz).
\end{equation*}
\end{proposition}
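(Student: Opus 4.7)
The plan is to apply the projection theorem for counting-process intensities: if $\Npm$ is adapted to a sub-filtration $\mathbb G\subset\mathbb F$ and admits an $(\mathbb F,\mPa)$-intensity $\lpm$, then its $(\mathbb G,\mPa)$-intensity is the $\mathbb G$-predictable projection of $\lpm$ (Br\'emaud, Ch.\,II). Since $\Npm$ is obviously $\mathbb F^{N,W}$-adapted and $\lpm_t=\mathbbm 1^{\pm}_t\Lpm_{Y_{t^-}}(\delpm_t)$ by Assumption \ref{assumptions_intensities}, all that is needed is to identify the $\mathbb F^{N,W}$-predictable projection of this process.

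First I would factor the intensity as
\[
\lpm_t=\mathbbm 1^{\pm}_t\sum_{i=1}^k\Lpm_i(\delpm_t)\,\mathbbm 1_{\{Y_{t^-}=i\}}.
\]
Here $\mathbbm 1^{\pm}_t$ is $\mathbb F^{N,W}$-predictable (since $N_{t^-}$ is), and $\delpm_t$ is $\mathbb F^{N,W}$-predictable by admissibility, so $\Lpm_i(\delpm_t)$ is $\mathbb F^{N,W}$-predictable by continuity of $\Lpm_i$. Hence, by the standard ``pull out'' property of predictable projections, it is enough to compute $^p\bigl(\mathbbm 1_{\{Y_{t^-}=i\}}\bigr)$ with respect to $\mathbb F^{N,W}$ and identify it with $\piia_{t^-}$.

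The main obstacle---and the technical heart of the argument---is exactly this identification, because the optional projection $\piia$ is only c\`adl\`ag and not predictable. My plan is to use the characterization by predictable stopping times: let $T$ be an $\mathbb F^{N,W}$-predictable stopping time announced by a sequence $(T_n)$. Then $\mathcal F^{N,W}_{T^-}=\sigma(\cup_n\mathcal F^{N,W}_{T_n})$ and, by L\'evy's martingale convergence theorem applied to the uniformly bounded family $\bigl(\piia_{T_n}\bigr)=\bigl(\mEa[\mathbbm 1_{\{Y_{T_n}=i\}}\mid\mathcal F^{N,W}_{T_n}]\bigr)$,
\[
\piia_{T^-}=\lim_{n}\piia_{T_n}=\mEa\bigl[\lim_{n}\mathbbm 1_{\{Y_{T_n}=i\}}\,\big|\,\mathcal F^{N,W}_{T^-}\bigr]=\mEa\bigl[\mathbbm 1_{\{Y_{T^-}=i\}}\,\big|\,\mathcal F^{N,W}_{T^-}\bigr]\quad\text{a.s.,}
\]
where the last equality uses that $Y$ is c\`adl\`ag so $Y_{T_n}\to Y_{T^-}$ surely and the indicator converges accordingly. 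Since $\piia_{t^-}$ is also left-continuous and $\mathbb F^{N,W}$-adapted (hence $\mathbb F^{N,W}$-predictable), this pins it down as the predictable projection we sought.

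Combining these steps gives the announced formula for $\widehat{\lpm_t}^\alpha$. For the jump measure $\mu^N$, the assumption that $\Nm,\Np$ have no common jumps yields the decomposition $\mu^N(dt,dz)=d\Nm_t\otimes m_{1}(dz)+d\Np_t\otimes m_{-1}(dz)$, so its $(\mPa,\mathbb F)$-predictable intensity kernel $\eta^{\alpha,N}_t(dz)=\lm_t m_1(dz)+\lp_t m_{-1}(dz)$ projects coordinate-wise onto $\mathbb F^{N,W}$ to $\widehat{\lm_t}^\alpha m_1(dz)+\widehat{\lp_t}^\alpha m_{-1}(dz)=\widehat{\eta}^{\alpha,N}_t(dz)$, completing the proof.
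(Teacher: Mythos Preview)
Your proof is correct and takes a genuinely different route from the paper. The paper verifies the defining integral identity directly: for nonnegative $\mathbb F^{N,W}$-predictable $\psi$, it shows $\mEa[\int_0^T\psi_t\widehat{\lpm_t}\,dt]=\mEa[\int_0^T\psi_t\,d\Npm_t]$ via a chain of equalities, the key trick being that $\pii$ and $\mathbbm 1_{\{Y_\cdot=i\}}$ are c\`adl\`ag with only countably many jumps, so they may be swapped with their left limits inside $dt$-integrals; this together with Fubini and the tower property does the job without ever invoking predictable projections. Your route is more structural (projection theorem plus the predictable-stopping-time characterisation), but note that your convergence step is not literally L\'evy's theorem, since both the random variable $\mathbbm 1_{\{Y_{T_n}=i\}}$ and the $\sigma$-field $\mathcal F^{N,W}_{T_n}$ vary with $n$; you need the easy auxiliary fact that $\mEa[X_n\mid\mathcal F_n]\to\mEa[X\mid\mathcal F_\infty]$ in $L^1$ when $X_n\to X$ boundedly and $\mathcal F_n\uparrow\mathcal F_\infty$, then identify this with the a.s.\ limit you already have from c\`adl\`ag paths. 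The paper's argument is more elementary and self-contained; yours makes the conceptual content (the predictable projection coincides with the left-limit of the optional projection for this process) more transparent.
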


\begin{notation*}
We set $\widehat{\Lpm}(\pi,\delta)=\sum_{i=1}^k\pi^i\Lipm(\delta)$; thus, $\widehat{\lpm_t}^\alpha=\mathbbm 1^{\pm}_t\widehat{\Lpm}(\pia_{u^-},\delpm_u)$.
\end{notation*}

\begin{proof}
We prove it just for $\Np$, the others being analogous, and we omit $\alpha$ for simplicity. It is clear that $\widehat{\lp}$ is predictable. We need to check that for any $\mathbb F^{W,N}$-predictable process $\psi\geq 0$, 
$$
\mathbb E\left[\int_0^T\psi_t\widehat{\lp_t}dt\right] = \mathbb E\left[\int_0^T\psi_td\Np_t\right].
$$
For any $1\leq i\leq k$, each path of the c\`adl\`ag processes $\pii$ and $\mathbbm 1_{\{Y_\cdot=i\}}$ has only countably many jumps. We can therefore interchange these processes and their left limits when integrating with respect to $dt$. By properties of the conditional expectation and Fubini's Theorem,

\begin{equation*}
\begin{split}
&\mathbb E\left[\int_0^T\psi_t\widehat{\lp_t}dt\right]=\mathbb E\left[\int_0^T \psi_t\mathbbm 1^+_t\sum_{i=1}^k\pii_{t^-}\Lip(\delp_t) dt\right] = \mathbb E\left[\int_0^T \psi_t\mathbbm 1^+_t\sum_{i=1}^k\pii_t\Lip(\delp_t) dt\right]\\
																																& = \mathbb E\left[\int_0^T \mathbb \psi_t\mathbbm 1^+_t\sum_{i=1}^k\mathbb E\big[\mathbbm 1_{\{Y_t=i\}}\big|\mathcal F^{N,W}_t\big]\Lip(\delp_t)dt\right] =\int_0^T \mathbb E\left[\mathbb E\Big[\psi_t\mathbbm 1^+_t\sum_{i=1}^k\Lip(\delp_t) \big|\mathcal F^{N,W}_t\Big]\right]dt\\
																																& = \int_0^T \mathbb E\Big[\psi_t\mathbbm 1^+_t\sum_{i=1}^k\mathbbm 1_{\{Y_t=i\}}\Lip(\delp_t)  \Big]dt= \mathbb E\left[\int_0^T \psi_t\mathbbm 1^+_t\sum_{i=1}^k\mathbbm 1_{\{Y_{t^-}=i\}}\Lip(\delp_t) dt\right] =\mathbb E\left[\int_0^T \psi_t\lp_t dt\right]\\
																																&= \mathbb E\left[\int_0^T \psi_td\Np_t\right].
\end{split}
\end{equation*} 
\end{proof}

We give now the \textit{filtering (or Kushner-Stratonovich) equations} for the observable distribution of $Y$. These are coupled stochastic differential equations (SDEs) governing the dynamics of $\pia$. 

\begin{notation*}
We denote by $\Delta\subset\mathbb R^k$ the $(k-1)$-simplex (i.e., $\Delta=\{\pi\in\mathbb R^k: 0\leq\pi^i\leq 1\ \mbox{for all }i\mbox{, and }\sum_{i}\pi^i=1\}$ and by $\Delta^\circ$ its interior relative to the hyperplane $\{\pi\in\mathbb R^k: \sum_{i}\pi^i=1\}$ (i.e., $\Delta^o=\{\pi\in\Delta: 0<\pi^i< 1\mbox{ for all }i\}$).
\end{notation*}

\begin{proposition}[\textbf{Observable distribution of $Y$}]
\label{proposition_Pi}
The process $\pia=(\Pi^{\alpha,1},\dots,\Pi^{\alpha,k})$ is the unique strong solution of the constrained system of SDEs
\begin{equation}
\label{Pi}
\begin{split}
d\pii_t = \sum_{j=1}^k q_t^{ji}\pij_t dt & + \pii_{t^-}\left(\frac{\Lim(\delm_t)}{\sum_{j=1}^k\pij_{t^-}\Ljm(\delm_t)}-1 \right)\left(d\Nm_t-\mathbbm 1^-_t\sum_{																						j=1}^k\pij_t\Ljm(\delm_t)dt\right) \\
																				 & + \pii_{t^-}\left(\frac{\Lip(\delp_t)}{\sum_{j=1}^k\pij_{t^-}\Ljp(\delp_t)}-1 \right)\left(d\Np_t-\mathbbm 1^+_t\sum_{																						j=1}^k\pij_t\Ljp(\delp_t)dt\right),
\end{split}
\end{equation}
such that $\Pi_0 = \mu_0$ and $\Pi_t\in\Delta$ for all $t\in[0,T]$ a.s. Equivalently,
\begin{equation}
\label{Pi_equivalent}
\begin{split}
d\pii_t &= \sum_{j=1}^k\left( q_t^{ji}\pij_t+\pii_t\pij_t\Big(\mathbbm 1^-_t(\Ljm-\Lim)(\delm_t) + \mathbbm 1^+_t(\Ljp- \Lip)(\delp_t) \Big) \right)dt\\ 
				& + \pii_{t^-}\left(\frac{\Lim(\delm_t)}{\sum_{j=1}^k\pij_{t^-}\Ljm(\delm_t)}-1 \right) d\Nm_t + \pii_{t^-}\left(\frac{\Lip(\delp_t)}{\sum_{j=1}^k\pij_{t^-}\Ljp(\delp_t)}-1 \right) d\Np_t,
\end{split}
\end{equation}
with $\Pi_0 = \mu_0$ and $\Pi_t\in\Delta$ for all $t\in[0,T]$ a.s.
\end{proposition}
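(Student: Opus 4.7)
The plan is to use the reference probability approach based on Proposition \ref{Girsanov2}: under $\mQa$ the processes $Y$, $N$ and $W$ are independent, and $Y$ retains its initial law $\mu_0$ and generator $Q$. Writing $Z^\alpha=1/\bar Z^\alpha$ for the density $d\mPa/d\mQa$, the Kallianpur--Striebel (abstract Bayes) formula yields
$$\piia_t \;=\; \frac{\rho^{\alpha,i}_t}{\sum_{j=1}^k\rho^{\alpha,j}_t},\qquad \rho^{\alpha,i}_t\defeq \mathbb E^{\mQa}\bigl[\mathbbm 1_{\{Y_t=i\}}Z^\alpha_t\,\big|\,\mathcal F^{N,W}_t\bigr],$$
so it suffices to derive an SDE (the Zakai equation) for the unnormalised filter $\rho^{\alpha,i}$ and then normalise via an Itô calculation.

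For the Zakai step I would apply the Itô product formula to $Z^\alpha_t\mathbbm 1_{\{Y_t=i\}}$, combining the martingale-problem decomposition $\mathbbm 1_{\{Y_t=i\}}=\mathbbm 1_{\{Y_0=i\}}+\int_0^t\sum_{j=1}^k q_u^{ji}\mathbbm 1_{\{Y_u=j\}}du+M^i_t$ (where $M^i$ is a $\mQa$-martingale) with the SDE satisfied by $Z^\alpha$, recalling that under $\mQa$ the intensity of $\Npm$ is $\mathbbm 1^\pm_t$. Condition (\ref{no_common_jumps}) forces $[Z^\alpha,\mathbbm 1_{\{Y=i\}}]=0$, while $\mQa$-independence of $Y$ from $(N,W)$ kills the $\mathcal F^{N,W}_t$-projection of $\int Z^\alpha_{u^-}dM^i_u$. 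Conditioning on $\mathcal F^{N,W}_t$ then gives
$$d\rho^{\alpha,i}_t = \sum_{j=1}^k q_t^{ji}\rho^{\alpha,j}_t\,dt + \rho^{\alpha,i}_{t^-}\bigl(\Lim(\delm_t)-1\bigr)\bigl(d\Nm_t-\mathbbm 1^-_tdt\bigr) + \rho^{\alpha,i}_{t^-}\bigl(\Lip(\delp_t)-1\bigr)\bigl(d\Np_t-\mathbbm 1^+_tdt\bigr).$$
Summing over $i$ and using conservativity $\sum_i q_t^{ji}=0$ gives $dS_t=S_{t^-}\sum_\pm(\widehat{\Lpm}(\pia_{t^-},\delpm_t)-1)(d\Npm_t-\mathbbm 1^\pm_tdt)$ for $S_t\defeq\sum_j\rho^{\alpha,j}_t$. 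Applying Itô to $\piia=\rho^{\alpha,i}/S$ and separating continuous and jump contributions produces the stated system: at a jump time of $\Npm$, $\Delta\piia_t=\piia_{t^-}\bigl(\Lipm(\delpm_t)/\widehat{\Lpm}(\pia_{t^-},\delpm_t)-1\bigr)$, matching the jump coefficient in (\ref{Pi}), while the continuous part rearranges via the algebraic identity $\widehat{\Lpm}(\pi,\delta)-\Lipm(\delta)=\sum_j\pi^j\bigl(\Ljpm(\delta)-\Lipm(\delta)\bigr)$ into (\ref{Pi_equivalent}). The observable compensators of Proposition \ref{observable_intensities} are precisely those appearing inside the resulting innovation martingales.

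For uniqueness I would restrict to $\Delta$-valued solutions. There $\widehat{\Lpm}(\pi,\delta)\geq\min_i\Lipm(\delta)>0$ is bounded below (by Assumptions \ref{assumptions_intensities} \textit{(\ref{assumptions_intensities_Lambda})} together with boundedness of admissible spreads and continuity of $\Lipm$ on the compact range of $\delpm$), so the drift and jump coefficients of (\ref{Pi}) are Lipschitz in $\pi$ on $\Delta$; coupled with boundedness of the $\Npm$ intensities, pathwise uniqueness follows from standard theory for SDEs driven by marked point processes. That the Kallianpur--Striebel candidate is $\Delta$-valued is automatic from its definition, which confirms a posteriori that the denominators never vanish. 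The main obstacle is the careful bookkeeping of the jump terms in the two Itô computations and the justification of interchanging conditional expectation with the stochastic integrals; the latter is handled by the boundedness of all integrands and the $\mathbb F^{N,W}$-predictable projection formula already obtained in Proposition \ref{observable_intensities}, exactly as in the standard reference-probability derivation of the Zakai equation.
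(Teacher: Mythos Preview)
Your approach to existence is the same as the paper's. The paper cites \cite[Prop.3.3]{CEFS} for the derivation of the filtering equations, a result whose proof is precisely the reference-probability/Zakai/normalisation route you spell out, and then verifies that the assumptions of \cite{CEFS} carry over to the present setting (diffusion component, time-dependent generator, vanishing intensities). Your sketch is in that sense more self-contained than the paper, which delegates the computation.

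Where you and the paper diverge is uniqueness. The paper explicitly remarks that the jump coefficients are not globally Lipschitz, so off-the-shelf SDE results such as \cite[p.253 Thm.7]{P} do not directly apply; it therefore gives an elementary pathwise argument. Between successive jump times $\tau_m<\tau_{m+1}$ of $N$, any $\Delta$-valued solution satisfies an ODE with right-hand side Lipschitz in $\pi$ (merely measurable in $u$, since the spread paths need not be continuous between jumps of $N$), so Gr\"onwall gives uniqueness on $[\tau_m,\tau_{m+1})$; at $\tau_{m+1}$ the update $\pi\mapsto R_m(\pi)$ is a deterministic function of the left limit, and induction over the finitely many jumps finishes. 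Your appeal to ``standard theory for SDEs driven by marked point processes'' is correct in spirit---once you restrict to $\Delta$-valued solutions the coefficients are Lipschitz on the compact set $\Delta$ and can be extended to globally Lipschitz ones, after which any standard result applies---but this extension step is not stated, and the ``standard theorem'' you would cite for bounded-intensity point processes would in any case unpack into the same piecewise-ODE argument. The paper's direct route is cleaner and avoids having to name or adapt an external result.
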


\begin{proof}
The equivalence between the constrained systems of SDEs (\ref{Pi}) and (\ref{Pi_equivalent}) results from rearranging the terms and using $\sum_{j=1}^k\pij_t=1$.

Let us check that $\pia$ solves (\ref{Pi_equivalent}). Clearly, the constraint and the initial condition are satisfied. The verification of the SDEs is due to \cite[Prop.3.3]{CEFS} (with more details in \cite[App.A, Lemma A.2 and Prop.3.3]{CEFSv3}), albeit some considerations need to be made. 

On the one hand, the authors work with a pure jump model, with strategies adapted to the natural filtration of the driving jump process only (i.e., there is no diffusion) and constant generator matrix for the Markov chain. However, \textit{mutatis mutandis} the former differences yield no major change in the proofs. 

On the other hand, the main assumption of the authors, \cite[Asm.2.1]{CEFS}, postulates the existence of some deterministic measure $\tilde{\eta}^{N}(dz)$ on $\mathbb R$ with compact support\footnote{In \cite{CEFS} the support is assumed to be a subset of $(-1,\infty)$, but this is only for a `return (or yield) process' as in their case.} such that for all $ i\in E,\ \delp,\delm\in \overline{(\delmin,\delmax)},\ n\in [-\Nmax,\Nmax]\cap\mathbb Z$, the measure $\eta^N(i, \delp,\delm,n,dz)$ is equivalent to $\tilde{\eta}^{N}(dz)$ and the Radon-Nikodym derivative $d\eta^{N}(i, \delp,\delm,n,\cdot)/d\tilde{\eta}^N$ is uniformly bounded and bounded away from zero $\tilde{\eta}^{N}(dz)$-a.s. 

Since the spread processes are fixed and bounded, we can assume without loss of generality that $\delmin$ is finite. Setting $\tilde{\eta}^N(dz) \defeq m_{1}(dz)  + m_{-1}(dz)$ we see straightforwardly that the two measures are equivalent with derivative 
\begin{equation}
\frac{d\eta^{N}(i, \delm,\delp,n,\cdot)}{d\tilde{\eta}^N}(z) = \Lim(\delm) \mathbbm 1_{\{n<\Nmax\}} \mathbbm 1_{\{z= 1\}}+ \Lip(\delp)\mathbbm 1_{\{-n<\Nmax\}} \mathbbm 1_{\{z=-1\}},
\end{equation} 
uniformly bounded by $\Lim(\delmin) + \Lip(\delmin)$. However, our model allows for $d\eta^{N}/d\tilde{\eta}^N(z)=0$, which is a consequence of having vanishing intensities $\lpm$. This poses no issue nonetheless, as $d\eta^{N}/d\tilde{\eta}^N(z)>0$ is only used in \cite[Prop.3.3]{CEFS} to guarantee $\bar{Z}^\alpha>0$ (see Proposition \ref{Girsanov2}). This condition, also satisfied in our model,\footnote{This was ultimately a consequence of the decomposition in Assumptions \ref{assumptions_intensities} \textit{(\ref{assumptions_intensities_lambda})}, that allowed for the vanishing factors of the intensities to be secluded as the reference probability intensities.} allows to go from the physical probability $\mPa$ to a reference probability $\mQa$ and backwards.

We turn now to the proof of uniqueness. We remark first that the jump height coefficients in (\ref{Pi_equivalent}) will typically not be Lipschitz (classical results for SDEs such as \cite[p.253 Thm.7]{P} cannot be applied) and the paths of the spreads need not be continuous between the jump times of $N$ (ruling out the most classical results of ODEs \cite{CL}). Nevertheless, we can still follow a pathwise ODEs approach. Let us fix a path and alternatively verify uniqueness inductively on the intervals $[\tau_m,\tau_{m+1})$, where $\tau_0\defeq 0$ and $0<\tau_1<\tau_2<\dots<\tau_M=T$ are the jump times of $N$ (including the terminal time $T$ even if there is no jump at that point). Then $A^{ji}$ is bounded for all $1\leq i,j\leq k$. Now observe that any c\`adl\`ag process $\tilde\Pi$, solving the constrained system of SDEs (\ref{Pi_equivalent}), must solve pathwise for $m=0,1,\dots,M-1$ the following system of ODEs in integral form, for $t\in [\tau_m,\tau_{m+1})$:
\begin{equation}
\label{Pi_ODE}
\tilde\Pi^i_t = R^i_m\big(\tilde\Pi_{\tau_m^-}\big) + \int_{\tau_m}^t\sum_{j=1}^k \left(q_u^{ji}\tilde\Pi^j_u+\tilde\Pi^i_u\tilde\Pi^j_u A^{ji}_u \right)du,
\end{equation}
with $R^i_m\big(\tilde\Pi_{\tau_m^-}\big)\defeq\tilde\Pi^i_{\tau_m^-}\Lipm(\delpm_{\tau_m})/\sum_{j=1}^k\tilde\Pi^j_{\tau_m^-}\Ljpm(\delpm_{\tau_m})$ if $\Delta N_{\tau_m}=\mp 1,\ m>0$, and $R^i_0\big(\tilde\Pi_{\tau_0^-}\big) = \mu^i_0$. Elementary algebra of bounded Lipschitz functions shows that $f^i:[\tau_m,\tau_{m+1})\times\Delta\to\mathbb R$, defined by $f^i(u,\pi)= \sum_{j=1}^k (q_u^{ji}\pi^j+\pi^i\pi^j A^{ji}_u)$ is Lipschitz in $\pi$ (uniformly in $u$). Let $K$ be the maximum Lipschitz constant of $f^l$ for $1\leq l\leq k$ and suppose $\pia_{\tau_m^-}=\tilde\Pi_{\tau_m^-}$ (clearly satisfied for $m=0$). Then (\ref{Pi_ODE}) yields $\|\pia_t-\tilde\Pi_t\|\leq K\int_{\tau_m}^t\|\pia_u-\tilde\Pi_u\|du$, implying $\pia_t=\tilde\Pi_t$ on $[\tau_m,\tau_{m+1})$ by Gr\"onwall's inequality. As a consequence, the equality on $[0,T)$ follows by induction. It must clearly hold at time $T$ as well, either by continuity or (if there is a jump) because $\pia_T=R_M\big(\pia_{\tau_M^-}\big)=R_M\big(\tilde\Pi_{\tau_M^-}\big)= \tilde\Pi_T$. 
\end{proof}

\begin{remark}
\label{identification}
Consider the identification $\Delta\simeq [0,1]^{k-1}$ (resp. $\Delta^\circ\simeq (0,1)^{k-1}$) obtained by the substitution $\pi^k=1-\sum_{j<k}\pi^j$ (where the choice of the $k$-th coordinate over the rest is completely arbitrary). Then the constrained system of SDEs (\ref{Pi}) (or equivalently, (\ref{Pi_equivalent})) for $(\Pi^{\alpha,1}\dots,\Pi^{\alpha,k})$ becomes an `unconstrained' system for $(\Pi^{\alpha,1}\dots,\Pi^{\alpha,k-1})\in[0,1]^{k-1}$. Henceforth, we shall use this identification whenever convenient.  
\end{remark}

We finish this section with a short lemma. It states that the conditional distribution $\pia$ can never reach the relative border of the simplex $\Delta$, provided it starts from the relative interior. This amounts to saying that all regimes have some positive probability at time zero.
\begin{lemma}
\label{open_simplex}
If $\mu_0\in\Delta^\circ$, then $\pia_t\in\Delta^\circ$ for all $0\leq t\leq T$ a.s.
\end{lemma}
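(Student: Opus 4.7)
The plan is to work component-wise with the SDE (\ref{Pi}) and separate the analysis into (a) what happens at jump times of $N$ and (b) what happens between consecutive jumps, showing in each case that strict positivity of every coordinate is preserved. Since $\pia_t \in \Delta$ a.s.\ by Proposition \ref{proposition_Pi}, it suffices to prove $\pii_t > 0$ a.s.\ for every $1 \leq i \leq k$ and every $t \in [0,T]$, whenever $\mu_0^i > 0$ for all $i$.

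First I would isolate the jumps. From (\ref{Pi}), at any jump time $\tau$ of $\Nm$ (respectively $\Np$), the purely jump term gives
\[
\pii_\tau \;=\; \pii_{\tau^-}\,\frac{\Lipm(\delpm_\tau)}{\sum_{j=1}^k \pij_{\tau^-}\,\Ljpm(\delpm_\tau)},
\]
where the sign choice matches the jumping process. By Assumptions \ref{assumptions_intensities} each $\Ljpm$ is strictly positive on $\overline{(\delmin,\delmax)}$; since $\pia_{\tau^-} \in \Delta$ the denominator lies in $\bigl[\min_j \Ljpm(\delpm_\tau),\max_j \Ljpm(\delpm_\tau)\bigr] \subset (0,\infty)$, and the multiplicative factor is finite and strictly positive. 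Hence $\pii_{\tau^-} > 0$ implies $\pii_\tau > 0$ for every $i$.

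Next I would handle the behaviour between consecutive jump times $\tau_m < \tau_{m+1}$ of $N$. On this interval (\ref{Pi_equivalent}) reduces to the pathwise ODE system (\ref{Pi_ODE}) from the proof of Proposition \ref{proposition_Pi}. Writing it out for coordinate $i$,
\[
\dot{\pii}_u \;=\; \sum_{j \neq i} q^{ji}_u \pij_u \;+\; q^{ii}_u\,\pii_u \;+\; \pii_u\,B^\alpha_u,
\qquad
B^\alpha_u \defeq \sum_{j=1}^k \pij_u\bigl[\mathbbm 1^-_u(\Ljm-\Lim)(\delm_u) + \mathbbm 1^+_u(\Ljp-\Lip)(\delp_u)\bigr].
\]
The first sum is nonnegative by Assumptions \ref{assumptions_Q}. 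Since admissible spreads are bounded, each $\Lipm$ is continuous on a compact set, so $B^\alpha_u$ is bounded; and $q^{ii}$ is bounded by continuity of $Q$ on $[0,T]$. Thus there exists a deterministic constant $C > 0$, depending only on $T$, $Q$, and the a priori spread bounds, such that $\dot{\pii}_u \geq -C\,\pii_u$ on $[\tau_m,\tau_{m+1})$. Gr\"onwall's inequality then gives $\pii_t \geq \pii_{\tau_m} e^{-C(t-\tau_m)} > 0$ for all $t \in [\tau_m, \tau_{m+1})$, whenever $\pii_{\tau_m} > 0$.

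Combining the two steps inductively over the (a.s.\ finitely many, by non-explosiveness (\ref{bounded_moments})) jump times $0 = \tau_0 < \tau_1 < \dots$ in $[0,T]$, starting from $\pii_0 = \mu_0^i > 0$ for every $i$, yields $\pii_t > 0$ a.s.\ for all $t \in [0,T]$, which together with $\pia_t \in \Delta$ gives $\pia_t \in \Delta^\circ$ as claimed. The only subtlety I anticipate is organising the deterministic bound $C$ cleanly (it depends on $\|\delpm\|_\infty$, hence on $\alpha$, but not on the path), and ensuring that the multiplicative jump factor is controlled uniformly so that $\pii_{\tau_m} > 0$ propagates; both are straightforward given Assumptions \ref{assumptions_Q}--\ref{assumptions_intensities} and the admissibility constraints on $\alpha$.
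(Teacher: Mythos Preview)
Your proof is correct and follows essentially the same approach as the paper: induction over jump times, showing positivity is preserved both at jumps (via the strictly positive multiplicative factor) and between jumps (via a differential inequality for each coordinate). The only cosmetic difference is that the paper packages the between-jumps step as a contradiction argument via the absolute continuity of $\log\pii$, whereas you apply Gr\"onwall directly to $\dot\pii_u\geq -C\pii_u$; both yield the same exponential lower bound.
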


\begin{proof}
We want to show that $\piia_t>0$ for all $1\leq i\leq k,\ 0\leq t\leq T$ a.s. We proceed by induction on the jump times of $N$, for each path, as at the end of the proof of Proposition \ref{proposition_Pi}. Using the same notations, let $1\leq i \leq k$ and suppose $\pija_{\tau_m^-}>0$ for all $1\leq j\leq k$ (satisfied for $m=0$ by assumption). Then (\ref{Pi_ODE}), Assumptions \ref{assumptions_Q} and the fact that $A^{ii}\equiv 0$ by definition, show that $\piia$ is absolutely continuous on the interval $[\tau_m,\tau_{m+1})$ and satisfies 
\begin{equation}
\label{aux1}
\begin{split}
(\piia_t)' & = \sum_{j=1}^k \left(q_t^{ji}\pija_t+\piia_t\pija_t A^{ji}_t \right) = \piia_t\Big(q^{ii}_t + \sum_{j\neq i}\pija_t A^{ji}_t \Big) + \sum_{j\neq i}q^{ji}_t\pija_t\\
					 & \geq\piia_t\Big(q^{ii}_t + \sum_{j\neq i}\pija_t A^{ji}_t\Big),
\end{split}
\end{equation}
for $dt$-a.e. $t\in [\tau_m,\tau_{m+1})$, subject to $\piia_{\tau_m}=R^i_m(\pia_{\tau_m^-})>0$. Let us set $s\defeq \sup ([\tau_m,\tau_{m+1}) \cap \{t\in [0,T] : \piia_t >0\})$. We need to prove that $s=\tau_{m+1}$. By the continuity of $\piia$, it must be $s>\tau_m$. Consequently, (\ref{aux1}) and the absolute continuity of $\log\piia$ on $[\tau_m,t]\subset[\tau_m,s)$ yield 
$$
\piia_t\geq R^i_m(\pia_{\tau_m^-})\exp\Big(\int_{\tau_m}^t\big(q^{ii}_u + \sum_{j\neq i}\pija_u A^{ji}_u \big)du\Big)\quad\mbox{ for }dt\mbox{-a.e. }t\in [\tau_m,s).
$$ 
If it were $s<\tau_{m+1}$, the continuity of $\piia$ again and the former inequality would imply $0=\piia_s>0$, proving by contradiction that $s=\tau_{m+1}$ and $\piia$ is positive on the whole interval $[\tau_m,\tau_{m+1})$. Positivity on $[0,T)$ now follows by induction, and it must clearly hold at time $T$ as well, as either there is a jump or we can reason as we just did.
\end{proof}
In light of the previous lemma, we will assume from here onwards that 
\begin{equation}
\label{assumption_mu_0}
\mu_0\in\Delta^\circ,
\end{equation}
and therefore work with $\Delta^\circ$ instead of $\Delta$.

%%%%%%%%%%%%%%%%%%%%%%%%%%%%%%%%%%%%%%%%%%%%%%%%%%%%%%%%

\section{Value function and HJB equation}
\label{s:viscosity}
\setcounter{assumptions}{0}
\setcounter{subsection}{1}
In this section, we tackle the control problem of the MM with the filter as an additional state variable. We define the MM's value function and we aim to characterize it by means of an HJB equation. 

Let $t\in[0,T]$. We consider our model `starting at $t$' instead of $0$. Whenever a process is defined from time `$t^-$ onwards' (i.e., from time $t$ onwards and decreeing its left-limit value at $t$) this implicitly means it is constant on $[0,t)$. In particular, we use this convention for all integrals (stochastic or not) of the form $\int_t^\cdot$ and (with a slight abuse of notation) for the processes $W - W_t$ and $\Npm-\Npm_{t^-}$. We work with $t^-$ instead of $t$ due to the jumps of the processes $\Nm,\Np$. However, since $\Nm,\Np$ are quasi-left continuous,\footnote{For example, because they are increasing c\`adl\`ag processes admitting continuous compensators for (any one of) the physical probabilities \cite[p.70 Prop.1.19 or p.77 Prop.2.9]{JS}.} for most intended purposes one can drop the left limit with no harm. We define $\mathbb F^{t,W,N}=(\sigma(W_r-W_t,N_r-N_{t^-}: t\leq r\leq u\vee t)\vee \mathcal F_0)_u$ and $\mathbb F^{t,N}$ analogously. 

Let $s,x\in\mathbb R,\ n\in\mathbb Z\cap [-\Nmax,\Nmax]$ and $\pi\in\Delta^\circ\subset\mathbb R^k$. The set $\mathcal A_t$ of admissible spreads starting at $t$ is the set of $\delta\in\mathcal A$ which are independent of $\mathcal F^{N,W}_{t^-}$ (equivalently, the $\delta\in\mathcal A$ which are $\mathbb F^{t,W,N}$-predictable). Consider for each $\alpha=(\delm,\delp)\in\mathcal A^2_t$ the processes $\St, \Xt, \Nt, \pit$ defined pathwise, outside some set $A\in\mathcal F_0$, by (\ref{S}), (\ref{X}), (\ref{N}), (\ref{Pi}) resp., replacing the initial conditions $s_0,x_0,n_0,\mu_0$ at time $0$ by $s,x,n,\pi$ resp. at time $t^-$. We remark that $\mathbb F^{t,W,N} = \mathbb F^{t,W,\Nt}$ (since $\Nt = n+\Nm -\Np - (\Nm_{t^-} - \Np_{t^-})$) and all the processes defined in this section are adapted to this filtration. We further assume there exists a family of `physical' probabilities $(\mPt)_{\alpha\in\mathcal A_t^2}$ such that their null sets generate $\mathcal F_0$, and for $(\mPt,\mathbb F^{t,W,N})$ it holds that $W - W_t$ is a Wiener process and $\Npm-\Npm_{t^-}$ has predictable intensity $\widehat{\lpm}^{\alpha,t,n,\pi}$, as defined in Proposition \ref{observable_intensities} in terms of $\Nt$ and $\pit$. 

We define the penalized P\&L from $t$ to $T$ (see (\ref{problem}) for parameter restrictions) as
\begin{equation}
\label{PnL}
P^{\alpha,s,x,n}_{t,T}\defeq \Xt_T+\St_T \Nt_T- \ell(\Nt_T) - \frac{1}{2}\sigma^2\zeta\int_t^T(\Nt_u)^2du,
\end{equation}
and the \textit{value function} of problem (\ref{problem}) as
\begin{equation}
\label{value_function}
V(t,s,x,n,\pi)\defeq\sup_{\alpha\in \mathcal A^2_t}\mEt\left[U_\gamma\Big(P^{\alpha,s,x,n}_{t,T}\Big)\right].
\end{equation}
Our goal is to compute optimal or `close to optimal' strategies.\footnote{By `close to optimal' we mean that for each $\ve>0$ there exists a strategy such that the supremum in (\ref{value_function}) is attained up to $\ve$.} The Dynamic Programming Principle and Ito's Lemma allow us to formally derive (see, e.g., \cite{Bo}) the \textit{Hamilton--Jacobi--Bellman} (or \textit{dynamic programming}) partial-integro differential equation associated to $V$: 

\begin{equation}
\label{HJB_V}
\begin{split}
0 & = v_t + \mu v_s +  \frac{1}{2}\sigma^2 v_{ss} + \sum_{i,j=1}^k q_t^{ji}\pi^j v_{\pi^i} +  \frac{1}{2}\sigma^2\zeta n^2 (\gamma v -1)\\
  & + \mathbbm 1_{\{n<\Nmax\}}\sup_{\delm\in\overline{(\delmin,\delmax)}}\Big\{\sum_{i,j=1}^k (\Ljm-\Lim)(\delm)\pi^j\pi^i v_{\pi^i} + \Dm_{\delm}(v) \sum_{i=1}^k\pi^i\Lim(\delm)\Big\} \\
	& + \mathbbm 1_{\{-n<\Nmax\}}\sup_{\delp\in\overline{(\delmin,\delmax)}}\Big\{\sum_{i,j=1}^k(\Ljp-\Lip)(\delp)\pi^j\pi^i v_{\pi^i}+ \Dp_{\delp}(v) \sum_{i=1}^k\pi^i\Lip(\delp)\Big\},
\end{split}
\end{equation}
with terminal condition $v(T,s,x,n,\pi) = U_\gamma(x+sn -\ell(n))$, where:

\begin{equation*}
\begin{split}
&\Dpm_{\delta}(v)(t,s,x,n,\pi)\\
        & = v\Big(t,s,x \pm(s \pm \delta),n\mp 1, \frac{1}{\sum_{j=1}^k\pi^j\Ljpm(\delta)}\big(\pi^1\Lpm_1(\delta),\dots,\pi^k\Lpm_k(\delta)\big)\Big)- v(t,s,x,n,\pi),
\end{split}
\end{equation*}
and we convene the following:
\begin{notation*}
\textit{(i)} The derivatives with respect to $\pi=(\pi^1,\dots,\pi^k)$ should be understood via the identification of Remark \ref{identification}. 
\textit{(ii)} Although it is not meaningful to evaluate $v$ on the inventories $\pm\Nmax\pm 1$, this only happens in equation (\ref{HJB_V}) when the corresponding term vanishes. This slight abuse of notation can be found throughout previous works and we will be using it as well. 
\end{notation*}

Equation (\ref{HJB_V}) can also be seen as a coupled system of PIDEs indexed in $n\in\mathbb Z\cap [-\Nmax,\Nmax]$. (We will talk about system of equations or simply `equation' indistinctly). Nonlinearity aside, (\ref{HJB_V}) is rather complex, in particular due to being of second order, high-dimensional and with derivatives in almost all of these dimensions. Tackling it directly (either analytically or numerically) is utterly challenging. Consequently, it has become common practice for optimal market making and optimal liquidation models \textit{\`a la} Avellaneda--Stoikov \cite{AS} to propose an ansatz for the solution \cite{AS,BL,CDJ,CJ,CJR,FL1,FL2,G,GL,GLFT}. This approach however, relies heavily on the existence of a classical solution for the resulting simplified equation, so that the ansatz is ultimately proved valid by a suitable verification theorem. (See Section \ref{s:full_info} for more details.) When the simplified equation does not admit (or cannot be guaranteed to admit) a classical solution, and a viscosity approach needs to be used instead, the previous argument breaks down. 

If we attempted to solve our problem by the standard approach, a plausible ansatz for the value function could be
\begin{equation}
\label{decomposition}
V(t,s,x,n,\pi) = U_\gamma(x+sn + \Theta(t,n,\pi)).
\end{equation}
\noindent Formal substitution yields the following equation for $\Theta$:

\begin{equation}
\label{HJB_Theta}
\begin{split}
0 & = \theta_t + \mu n - \frac{1}{2}\sigma^2 n^2 (\gamma+\zeta)+ \sum_{i,j=1}^k q_t^{ji}\pi^j \theta_{\pi^i}\\
  & + \mathbbm 1_{\{n<\Nmax\}}\sup_{\delm\in\overline{(\delmin,\delmax)}}\Big\{\sum_{i,j=1}^k (\Ljm-\Lim)(\delm)\pi^j\pi^i \theta_{\pi^i} + U_\gamma\big(\delm+\Dm_{\delm}(\theta)\big) \sum_{i=1}^k\pi^i\Lim(\delm)\Big\}\\
	& + \mathbbm 1_{\{-n<\Nmax\}}\sup_{\delp\in\overline{(\delmin,\delmax)}}\Big\{\sum_{i,j=1}^k(\Ljp-\Lip)(\delp)\pi^j\pi^i \theta_{\pi^i}+ U_\gamma\big(\delp+\Dp_{\delp}(\theta)\big) \sum_{i=1}^k\pi^i\Lip(\delp)\Big\},
\end{split}
\end{equation}
with terminal condition $\theta(T,n,\pi) = -\ell(n)$, where:
\begin{equation*}
\begin{split}
\Dpm_{\delta}(\theta)(t,n,\pi) = \theta\Big(t,n\mp 1, \frac{1}{\sum_{j=1}^k\pi^j\Ljpm(\delta)}\big(\pi^1\Lpm_1(\delta),\dots,\pi^k\Lpm_k(\delta)\big)\Big) - \theta(t,n,\pi).
\end{split}
\end{equation*}
The new system of PIDEs is of first order and no longer depends on the variables $s$ and $x$ (there is no diffusion anymore). This is a considerable simplification; one that will permit effective numerical solution in Section \ref{s:numerics}. But it is not good enough for us to assert existence of a classical solution. Notwithstanding, we are able to rigorously prove the decomposition (\ref{decomposition}) and explicitly find $\Theta$ as a new `value function' (Theorem \ref{main_theorem_1}). When the control space is compact, this ultimately allows us to characterize $\Theta$ as the unique solution of the terminal condition PIDE (\ref{HJB_Theta}) in the viscosity sense (Theorem \ref{main_theorem_2}), further simplified in the unconstrained inventory case. These two theorems constitute the main theoretical results of this chapter. They allow us to safely postulate reasonable candidates for optimal (or $\epsilon$-optimal) strategies for the MM, i.e., those given by spreads that (at least approximately) realize the suprema in (\ref{HJB_Theta}).

\begin{theorem}
\label{main_theorem_1}
There exists a unique function $\Theta:[0,T]\times(\mathbb Z\cap[-\Nmax,\Nmax])\times\Delta^\circ\to\mathbb R$ such that the decomposition (\ref{decomposition}) holds true. Furthermore, there exists a family of equivalent probability measures $\tilde{\mathbb P}^{\alpha,t,n,\pi}\sim\mPt$, $\alpha=(\delm,\delp)\in\mathcal A_t^2$, such that
\begin{enumerate}[label=(\roman*)]
\item $\pit$ is the unique strong solution of (\ref{Pi}) with initial condition $(t^-,\pi)$ under $\tilde{\mathbb P}^{\alpha,t,n,\pi}$.
\item $\mu^{\Nt}$ has $(\tilde{\mathbb P}^{\alpha,t,n,\pi},\mathbb F^{t,W,N})$-predictable intensity kernel 
$$\tilde{\eta}_u^N(dz) \defeq e^{-\gamma\delm_u}\hlmtu m_{1}(dz) + e^{-\gamma\delp_u}\hlptu m_{-1}(dz).$$
\item $\Theta=U_\gamma^{-1}\circ\Psi = -\frac{1}{\gamma}\log(1-\gamma\Psi)$ with 
\begin{equation}
\label{Psi}
\Psi(t,n,\pi)\defeq\sup_{\alpha\in\tilde{\mathcal A}^2_t}\tilde{\mathbb E}^{\alpha,t,n,\pi}\left[U_\gamma\Big(\tilde P_{t,T}^{\alpha,n,\pi}\Big)\right],
\end{equation}
\end{enumerate}
where $\tilde P_{t,T}^{\alpha,n,\pi}\defeq\int_t^T\big\{ U_\gamma(\delm_u) \hlmtu + U_\gamma(\delp_u) \hlptu + \mu \Nt_u -\frac{1}{2}\sigma^2(\gamma+\zeta)(\Nt_u)^2\big\}du - \ell(\Nt_T)$ and $\tilde{\mathcal A}_t\defeq\{\delta\in\mathcal A:\delta\mbox{ is }\mathbb F^{t,\Nt}\mbox{-predictable}\}$.
\end{theorem}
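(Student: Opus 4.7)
The guiding identity is $U_\gamma(a+b)=U_\gamma(a)+e^{-\gamma a}U_\gamma(b)$ for all $a,b\in\mathbb R$ (with the convention $U_0=\mathrm{Id}$ making it trivial when $\gamma=0$), which decouples $V$ from $(x,s)$ as soon as the linear-in-$(x,s)$ contribution to the penalized P\&L is isolated. Integration by parts applied to $\St_T\Nt_T$ (using $[\St,\Nt]=0$, since $\St$ is continuous and $\Nt$ pure jump) combined with (\ref{PnL})--(\ref{X}) yields
\begin{equation*}
P^{\alpha,s,x,n}_{t,T}=x+sn+\hat P^{\alpha,n,\pi}_{t,T},
\end{equation*}
where
\begin{equation*}
\hat P^{\alpha,n,\pi}_{t,T}=\int_t^T\delp_u\,d\Np_u+\int_t^T\delm_u\,d\Nm_u+\mu\int_t^T\Nt_{u^-}du+\sigma\int_t^T\Nt_{u^-}dW_u-\ell(\Nt_T)-\tfrac12\sigma^2\zeta\int_t^T(\Nt_u)^2du
\end{equation*}
is $\mathbb F^{t,W,N}$-adapted and independent of $(x,s)$. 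Inserting into (\ref{value_function}) and invoking the CARA identity reduces the whole statement to showing that $\sup_{\alpha\in\mathcal A_t^2}\mEt[U_\gamma(\hat P^{\alpha,n,\pi}_{t,T})]=\Psi(t,n,\pi)$.

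To pass from $\hat P$ to the much simpler $\tilde P$ of (iii), I introduce the combined Brownian and jump-intensity Girsanov tilt
\begin{equation*}
\rho^\alpha_T\defeq\mathcal E\left(-\gamma\sigma\int_t^\cdot\Nt_u\,dW_u\right)_T\cdot\mathcal E\left(\int_t^\cdot(e^{-\gamma\delm_u}-1)\,d\tilde M^-_u+\int_t^\cdot(e^{-\gamma\delp_u}-1)\,d\tilde M^+_u\right)_T,
\end{equation*}
where $\tilde M^\pm\defeq\Npm-\Npm_{t^-}-\int_t^\cdot\hlpmtu\,du$ are the $(\mathbb F^{t,W,N},\mPt)$-compensated observable martingales. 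After verifying that $\rho^\alpha$ is a strictly positive UI $\mPt$-martingale (see below for the delicate point), set $d\tilde{\mathbb P}^{\alpha,t,n,\pi}/d\mPt\defeq\rho^\alpha_T$; Girsanov for jump measures and Girsanov--Meyer for the continuous part immediately yield the intensities $e^{-\gamma\delpm_u}\hlpmtu$ claimed in (ii) and identify $\tilde W\defeq W-W_t+\gamma\sigma\int_t^\cdot\Nt_u\,du$ as the new Brownian motion. Expanding $\rho^\alpha_T$ via the exponential formula for FV processes \cite[p.337 T4]{B} and computing $e^{-\gamma\hat P^{\alpha,n,\pi}_{t,T}}/\rho^\alpha_T$ produces exact cancellations on three fronts: the two $\sigma\int\Nt\,dW$ stochastic integrals annihilate, leaving the Itô correction $\tfrac12\gamma^2\sigma^2\int(\Nt)^2du$ that combines with $\gamma\cdot\tfrac12\sigma^2\zeta\int(\Nt)^2du$ into $\tfrac12\gamma\sigma^2(\gamma+\zeta)\int(\Nt)^2du$; the $\int\delpm\,d\Npm$ integrals cancel against the jump Girsanov product factors $e^{-\gamma\int\delpm d\Npm}$; and the compensator remainders $-\sum_\pm\int(1-e^{-\gamma\delpm_u})\hlpmtu\,du$ become $-\gamma\sum_\pm\int U_\gamma(\delpm_u)\hlpmtu\,du$ via $1-e^{-\gamma\delta}=\gamma U_\gamma(\delta)$. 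The outcome is the algebraic identity $e^{-\gamma\hat P^{\alpha,n,\pi}_{t,T}}=\rho^\alpha_T\,e^{-\gamma\tilde P^{\alpha,n,\pi}_{t,T}}$, so that $\mEt[U_\gamma(\hat P)]=\tilde{\mathbb E}^{\alpha,t,n,\pi}[U_\gamma(\tilde P)]$.

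Taking the supremum and unwinding through the CARA identity gives the decomposition (\ref{decomposition}) with $\Theta=U_\gamma^{-1}\circ\Psi$; uniqueness of $\Theta$ is immediate from strict monotonicity of $U_\gamma$ on its range. For (i), equation (\ref{Pi}) is a pathwise SDE driven by $\Npm$ whose uniqueness was established pathwise in Proposition \ref{proposition_Pi}, and such pathwise properties are invariant under equivalent change of measure, so $\pit$ remains the unique strong solution under $\tilde{\mathbb P}^{\alpha,t,n,\pi}$. Reducing the supremum over $\mathcal A_t^2$ to one over $\tilde{\mathcal A}_t^2$ in the definition of $\Psi$ is a standard Markovianity/feedback-reduction step, since $\tilde P$ is $\mathbb F^{t,\Nt}$-adapted (with $\pit$ driven pathwise by $\Nm,\Np$) and the reduced problem is Markov in $(t,\Nt,\pit)$. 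The single technical obstacle I expect is the uniform integrability of $\rho^\alpha$: its Brownian Girsanov integrand $-\gamma\sigma\Nt$ is in general unbounded, so Novikov's criterion is not automatic. The structural hypothesis after (\ref{problem}) that $\gamma=0$ whenever $\Nmax=+\infty$ defuses this---in the constrained case $|\Nt|\le\Nmax$ makes Novikov trivial, and in the unconstrained case the Brownian factor is identically one and only the jump factor remains, handled exactly as in Proposition \ref{Girsanov1} using boundedness of $\delpm$ and the moment bound (\ref{bounded_moments}).
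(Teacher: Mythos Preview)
Your approach is essentially identical to the paper's: integration by parts to isolate $x+sn$, the CARA identity $U_\gamma(a+b)=U_\gamma(a)+e^{-\gamma a}U_\gamma(b)$, a combined Brownian-plus-jump Girsanov change (the paper does the two tilts sequentially as $A^\alpha$ and $B^\alpha$, you as a single product $\rho^\alpha$, but this is cosmetic since $[W,\Npm]=0$), and the same algebraic cancellation yielding $e^{-\gamma\hat P}=\rho^\alpha_T e^{-\gamma\tilde P}$. Your treatment of the UI obstacle via the structural dichotomy $\gamma>0\Rightarrow\Nmax<\infty$ matches the paper exactly.

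The one place where you are lighter than the paper is the reduction of the supremum from $\mathcal A_t^2$ to $\tilde{\mathcal A}_t^2$. Calling this a ``standard Markovianity/feedback-reduction step'' hides the actual mechanism: $\tilde P$ is indeed $\mathbb F^{t,N}$-adapted, but under $\tilde{\mathbb P}^{\alpha,t,n,\pi}$ the \emph{law} of $\Nt$ still depends on $\alpha$, which is $\mathbb F^{t,W,N}$-predictable, so a direct Markov argument does not immediately apply. The paper resolves this by reverting to a reference probability $\tilde{\mathbb Q}^\alpha$ (as in Proposition~\ref{Girsanov2}) under which $W-W_t$ and $\Nt$ are independent with $\alpha$-free laws, writing $\alpha_u=f_u(W-W_t,\Nt)$ measurably, and applying Fubini so that for $\mathbb P^W$-a.e.\ Brownian path $w$ the frozen control $f_\cdot(w,\Nt)$ lies in $\tilde{\mathcal A}_t^2$ and the inner expectation is bounded by $\tilde\Psi$. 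This reference-probability decoupling is precisely the step you have glossed over; without it the reduction is not justified in the present weak-formulation setup.
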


\begin{proof}
For shortness, we make an abuse of notation and omit $(t,n,\pi)$ from the probability measures and expectations. Let us start by proving (\ref{decomposition}) and finding $(\tilde{\mathbb P}^\alpha)_{\alpha\in\mathcal A_t^2}$ with the desired properties.  
Using integration by parts we can re-write the penalized P\&L (\ref{PnL}) as
\begin{equation}
\label{Pbar}
\begin{split}
P^{\alpha,s,x,n}_{t,T}& = x +sn + \int_t^T\left\{\mu\Nt_u -\frac{1}{2}\sigma^2(\gamma+\zeta)(\Nt_u)^2\right\}du - \ell(\Nt_T)\\
					 &  +\sigma\int_t^T \Nt_u dW_u +  \frac{1}{2}\sigma^2\gamma\int_t^T (\Nt_u)^2du +\int_t^T \delm_u d\Nm_u + \delp_u d\Np_u \\
					& \backdefeq x+ sn + \overline P_{t,T}^{\alpha,n},
\end{split}
\end{equation}
Consider first the case $\gamma=0$. The integrals with respect to $W,\Nm,\Np$ all have bounded integrands (and predictable for $\Nm,\Np$), except in the case of unconstrained inventory: $\Nmax=+\infty$ and $\gamma=\zeta=0\equiv\ell$ (see (\ref{problem})). Regardless, we still have
$
\mEa\left[\int_t^T (\Nt_u)^2 du\right]=\int_t^T \mEa\left[(\Nt_u)^2\right] du<+\infty\mbox{ by (\ref{bounded_moments})}.
$
Choosing $\tilde{\mathbb P}^\alpha\defeq\mPa$ the conclusion follows by taking expectation and by Propositions \ref{observable_intensities} and \ref{proposition_Pi}. 

Consider now $\gamma>0$. Hence, we are in the case $|N|\leq\Nmax<+\infty$. We define 
$$
A^\alpha\defeq\mathcal E\left( -\gamma\sigma\int_t^\cdot \Nt_udW_u\right)=\exp\left(-\gamma\sigma\int_t^\cdot \Nt_ud W_u-  \frac{1}{2}\sigma^2\gamma^2\int_t^\cdot (\Nt_u)^2du \right).
$$
By Novikov's condition, $A^\alpha$ is a strictly positive UI martingale with $\mEa[A_T]=1$, and therefore defines an equivalent probability measure $\mathbb A^\alpha\sim\mPa$ via $\frac{d\mathbb A^\alpha}{d\mPa}=A^\alpha_T$. Note that the Girsanov--Meyer Theorem. \cite[p.132 Thm.35]{P} ensures the $\mathbb F^{t,W,N}$-intensities of $\Npm - \Npm_{t^-}$ remain the same when changing to $\mathbb A^\alpha$. Let us set 
\begin{equation*}
\begin{split}
B^\alpha & \defeq\mathcal E\left(-\gamma\int_t^\cdot U_\gamma(\delm_u)d\overline{\Nm_u}^\alpha + U_\gamma(\delp_u)d\overline{\Np_u}^\alpha \right)\\
				 & =\mathcal E\left(\int_t^\cdot \big(e^{-\gamma\delm_u} -1\big)d\overline{\Nm_u}^\alpha + \big(e^{-\gamma\delp_u} -1\big)d\overline{\Np_u}^\alpha\right),
\end{split}
\end{equation*}
where $\overline{\Npm_u}^\alpha$ denote the corresponding $(\mPa,\mathbb F^{t,W,N})$-compensated (or equivalently, $(\mathbb A^\alpha,\mathbb F^{t,W,N})$-compensated) processes. By the same arguments of Propositions \ref{Girsanov1} and \ref{Girsanov2}, $B^\alpha$ is a strictly positive UI martingale with $\mathbb E^{\mathbb A^\alpha}[B^\alpha_T]=1$ and defines an equivalent probability measure $\tilde{\mathbb P}^\alpha\sim\mathbb A^\alpha\sim\mPa$ via $\frac{d\tilde{\mathbb P}^\alpha}{d\mathbb A^\alpha}=B^\alpha_T$, such that \textit{(ii)} holds true. Note that \textit{(i)} is also trivially verified due to the equivalence of the probability measures. 

Suppose for the time being that $\Psi$ is defined as in (\ref{Psi}) but taking supremum over the whole set of admissible controls $\mathcal A_t^2\supseteq \tilde{\mathcal A}_t^2$ instead. We will see afterwards that this makes no difference. To see (\ref{decomposition}), observe that the identity $U_\gamma(a+b)=U_\gamma(b)e^{-\gamma a}+U_\gamma(a)$ and (\ref{Pbar}) yield 
$
U_\gamma(P^{\alpha,s,x,n}_{t,T}) = U_\gamma(\overline P^{\alpha,n}_{t,T})e^{-\gamma(x+sn)} + U_\gamma(x+sn),
$
giving
$$
V(t,s,x,n,\pi)= \sup_{\alpha\in\mathcal A_t^2}\mEa\left[ U_\gamma(\overline P^{\alpha,n,\pi}_{t,T})\right]e^{-\gamma(x+sn)} + U_\gamma(x+sn).
$$
On the other hand, by the same identity, 
$$
U_\gamma(x+sn+\Theta)= (U_\gamma\circ\Theta) e^{-\gamma(x+sn)} + U_\gamma(x+sn)= \Psi e^{-\gamma(x+sn)} + U_\gamma(x+sn).
$$
As a consequence, (\ref{decomposition}) is equivalent to the equality $\Psi(t,n,\pi)=\sup_{\alpha\in\mathcal A_t^2}\mEa\left[ U_\gamma(\overline P^{\alpha,n}_{t,T})\right]$. We check instead the stronger statement
\begin{equation}
\label{aux}
\mEa\left[\exp\Big(-\gamma\overline P^{\alpha,n}_{t,T} \Big) \right]= \tilde{\mathbb E}^{\alpha}\left[\exp\Big(-\gamma \tilde P_{t,T}^{\alpha,n} \Big) \right],\mbox{ for all }\alpha\in\mathcal A_t^2.
\end{equation} 

Using the explicit exponential formula (see equation (\ref{exponential_explicit})) and by straightforward computations:
\begin{equation*}
\begin{split}
\exp\Big(-\gamma\overline P^{\alpha,n}_{t,T} \Big) & = \exp\left(-\gamma \Big(\int_t^T\left\{\mu\Nt_u -\frac{1}{2}\sigma^2(\gamma+\zeta)(\Nt_u)^2\right\}du - \ell(\Nt_T) \Big)\right)\\
& \times A^\alpha_T\exp\left(-\gamma\int_t^T \delm_u d\Nm_u + \delp_u d\Np_u\right)\\
& = \exp\Big(-\gamma \tilde P_{t,T}^{\alpha,n} \Big)A^\alpha_T
\exp\left(\gamma\int_t^T\big\{ U_\gamma(\delm_u) \hlmtu + U_\gamma(\delp_u) \hlptu\big\}du\right)\\
& \times \prod_{\substack{t\leq u \leq T:\\ \Delta\Nm_u\neq 0}}\exp\left(-\gamma\delm_u \right)\prod_{\substack{t\leq u \leq T:\\ \Delta\Np_u\neq 0}}\exp\left(-\gamma\delp_u \right) \\
& = \exp\Big(-\gamma \tilde P_{t,T}^{\alpha,n}\Big)A^\alpha_T B^\alpha_T,
\end{split}
\end{equation*}
which yields (\ref{aux}) after taking $\mPa$-expectation. 

It remains to see that 
$$\Psi(t,n,\pi)\defeq\sup_{\alpha\in\mathcal A^2_t}\tilde{\mathbb E}^{\alpha}\left[U_\gamma\Big(\tilde P_{t,T}^{\alpha,n,\pi}\Big)\right] = \sup_{\alpha\in\tilde{\mathcal A}^2_t}\tilde{\mathbb E}^{\alpha}\left[U_\gamma\Big(\tilde P_{t,T}^{\alpha,n,\pi}\Big)\right]\backdefeq \tilde{\Psi}(t,n,\pi).$$
Clearly $\Psi\geq \tilde{\Psi}$. Let us check $\Psi\leq\tilde{\Psi}$. As done in Proposition \ref{Girsanov2} we can define a family of `reference' equivalent probability measures $\tilde{\mathbb Q}^\alpha\sim\tilde{\mathbb P}^\alpha$, $\alpha\in\mathcal A^2_t$, such that for $(\tilde{\mathbb Q}^\alpha,\mathbb F^{t,W,N})$ it holds: $W - W_t$ is a Wiener process independent of the counting process $(\Nm-\Nm_{t^-},\Np-\Np_{t^-})$ and $\Npm-\Npm_{t^-}$ has predictable intensity $\mathbbm 1^{\pm,t,n}_u\defeq\mathbbm 1_{\{\mp \Nt_{u^-}<\Nmax\}}$ (in particular, its law does not depend on $\alpha$). Furthermore, the inverse change of measure is given by $d\tilde{\mathbb P}^\alpha/d\tilde{\mathbb Q}^\alpha=\tilde{Z}^\alpha_T$ with
\begin{equation*}
\begin{split}
& \tilde{Z}^\alpha\defeq\\
& \mathcal E\left(\int_t^\cdot \big(\widehat{\Lm}(\pit_{u^-},\delm_u)-1\big)\big(d\Nm_u- \mathbbm 1^{-,t,n}_u du\big)+\big(\widehat{\Lp}(\pit_{u^-},\delp_u)-1\big)\big(d\Np_u- \mathbbm 1^{+,t,n}_u du\big)\right).
\end{split}
\end{equation*}
Let us fix $\alpha\in\mathcal A_t^2$. Denote by $\mathcal D=\mathcal D([t,T],\mathbb R)$ the Skorokhod space of c\`adl\`ag functions with its usual sigma algebra and by $\mathbb P^W, \mathbb P^N$ the laws (or pushforward measures) induced on $\mathcal D$ by $W-W_t,\Nt$ resp. when starting from $\mathbb Q^\alpha$. These laws do not depend on $\alpha$ and characterize the joint law of $(W-W_{t},\Nt)$ on $\mathcal D^2$ as $\mathbb P^W\otimes\mathbb P^N$, due to the independence of the two processes. Since $\alpha$ is $\mathbb F^{t,W,N}$-predictable, by a monotone class argument one can show there exists a jointly measurable process $f:[t,T]\times\mathcal D^2\to\mathbb R$ such that $\alpha_u=f_u(W-W_t, \Nt)$ and for $\mathbb P^W$-almost every $w\in\mathcal D$, the process $\tilde{\alpha}_u\defeq f_u(w, \Nt)$ is in $\tilde{\mathcal A}_t^2$. Note also that we can write $\tilde{Z}^\alpha_T U_\gamma\Big(\tilde P_{t,T}^{\alpha,n,\pi}\Big)=g(\alpha, \Nt) = g(f(W-W_t,\Nt),\Nt)$ for some function $g$. By Fubini's theorem,
\begin{equation*}
\begin{split}
\tilde{\mathbb E}^{\alpha}\left[U_\gamma\Big(\tilde P_{t,T}^{\alpha,n,\pi}\Big)\right] & = \tilde{\mathbb E}^{\mathbb Q^\alpha}\left[\tilde{Z}^\alpha_T U_\gamma\Big(\tilde P_{t,T}^{\alpha,n,\pi}\Big)\right] = \int \mathbb E^{\mathbb P^N}\left[g(f(w,\cdot),\cdot)\right]d\mathbb P^W(w)\\
& \leq \int\tilde{\Psi} d\mathbb P^W(w)=\tilde{\Psi}.
\end{split} 
\end{equation*}
Since $\alpha\in\mathcal A_t^2$ was arbitrary, we conclude that $\Psi=\tilde{\Psi}$.
\end{proof}
 
Just as it occurs under full information (see Section \ref{s:full_info}), for a fully risk-neutral MM with negligible costs (i.e. $\Nmax=+\infty,\ \gamma=\zeta=0\equiv\ell$), $\Theta$ can be further decomposed. Note, from their definition, that in this case $\hlpmt$ and $\pit$ do not depend on $n$. As it was proved in Theorem \ref{main_theorem_1}, when $\gamma=0$ the family $(\tilde{\mathbb P}^{\alpha,t,n})$ can be taken as the original physical probabilities, and these do not depend on $n$ either. The following corollary is now immediate. 
\begin{corollary}
\label{coro_main_theorem_1}
If $\Nmax=+\infty$ and $\gamma=\zeta=0\equiv\ell$ then 
$$V(t,s,x,n,\pi) = x+sn+\mu n(T-t)+\Phi(t,\pi),$$ 
with $\Phi(t,\pi)=\sup_{\alpha\in\tilde{\mathcal A}_t^2}\mathbb E^{t,\pi}\left[\int_t^T \big\{\delm_u\widehat{\lm_u}^{\alpha,t,\pi} + \delp_u\widehat{\lp_u}^{\alpha,t,\pi} + \mu (\widehat{\lm_u}^{\alpha,t,\pi} - \widehat{\lp_u}^{\alpha,t,\pi})\big\} du\right].$
\end{corollary}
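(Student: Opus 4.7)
The plan is to obtain the claim as a direct specialization of Theorem \ref{main_theorem_1}. First, with $\gamma=0$ one has $U_0=\mathrm{Id}$, so item (iii) of that theorem gives $\Theta=\Psi$ and the decomposition (\ref{decomposition}) becomes
\[V(t,s,x,n,\pi)=x+sn+\Psi(t,n,\pi).\]
Next I would exploit the hypotheses $\gamma=\zeta=0\equiv\ell$ and $\Nmax=+\infty$ to simplify $\Psi$. Because $\Nmax=+\infty$ the indicator $\mathbbm{1}^\pm_u=\mathbbm{1}_{\{\mp N_{u^-}<\Nmax\}}$ is identically $1$, so the filter SDE system (\ref{Pi}) no longer depends on $n$, and one can write $\pit=\pi^{\alpha,t,\pi}$ and $\widehat{\lpm_u}^{\alpha,t,n,\pi}=\widehat{\lpm_u}^{\alpha,t,\pi}$. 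Because $\gamma=0$, the Girsanov densities $A^\alpha$ and $B^\alpha$ appearing in the proof of Theorem \ref{main_theorem_1} are trivial and one may take $\tilde{\mathbb P}^{\alpha,t,n,\pi}=\mPt$. Finally, the quadratic running-penalty and terminal-cost contributions to $\tilde P^{\alpha,n,\pi}_{t,T}$ vanish, leaving
\[\Psi(t,n,\pi)=\sup_{\alpha\in\tilde{\mathcal A}^2_t}\mathbb E^{t,\pi}\!\left[\int_t^T\!\big\{\delm_u\widehat{\lm_u}^{\alpha,t,\pi}+\delp_u\widehat{\lp_u}^{\alpha,t,\pi}+\mu\Nt_u\big\}\,du\right].\]

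It remains to isolate the $n$-dependent piece, which is entirely contained in the factor $\mu\int_t^T\Nt_u du$. Proposition \ref{observable_intensities} ensures that $\Nm_\cdot-\int_0^\cdot\widehat{\lm_r}^{\alpha,t,\pi}dr$ and $\Np_\cdot-\int_0^\cdot\widehat{\lp_r}^{\alpha,t,\pi}dr$ are $(\mPt,\mathbb F^{t,W,N})$-local martingales; since admissible spreads are bounded and the intensity functions $\Lipm$ are continuous on $\overline{(\delmin,\delmax)}$, the observable compensators are bounded, and by the moment estimate (\ref{bounded_moments}) these are in fact true $L^1$-martingales. Writing $\Nt$ via its Doob--Meyer decomposition, taking $\mathbb E^{t,\pi}$, and applying Fubini extracts the deterministic summand $\mu n(T-t)$ from $\mathbb E^{t,\pi}[\mu\int_t^T\Nt_u du]$. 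Since this summand does not depend on $\alpha$, it factors out of the supremum; defining $\Phi(t,\pi)$ as the supremum of what remains yields $\Psi(t,n,\pi)=\mu n(T-t)+\Phi(t,\pi)$, and substituting into the displayed identity for $V$ completes the proof.

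The only delicate step I would have to check carefully is the true-martingale property invoked in the Fubini computation; this is routine given the boundedness of admissible spreads, the continuity of $\Lipm$, and the moment bound (\ref{bounded_moments}), so no serious obstacle is anticipated. The conceptual content of the proof is simply that, once $\Nmax=+\infty$ and $\gamma=0$, every quantity entering $\Psi$ other than the additive $n$ itself becomes $n$-free, so the only residual inventory dependence of the value function is the deterministic drift contribution $\mu n(T-t)$.
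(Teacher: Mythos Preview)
Your approach is correct and coincides with the paper's, which simply declares the corollary ``immediate'' after noting that for $\gamma=0$ one may take $\tilde{\mathbb P}^\alpha=\mathbb P^\alpha$ and that for $\Nmax=+\infty$ the observable intensities and the filter are $n$-free, so the only residual $n$-dependence in $\Psi$ is the additive $\mu n(T-t)$ from $\Nt_u=n+(\Nm_u-\Nm_{t^-})-(\Np_u-\Np_{t^-})$.

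One point you leave implicit is worth carrying out: you define $\Phi$ as ``the supremum of what remains'' without checking the displayed integrand. If you actually execute the Fubini/compensator step you describe, writing $\int_t^T(\Nt_u-n)\,du=\int_t^T(T-u)\,d(\Nm-\Np)_u$ and taking expectations, the martingale part vanishes and you obtain $\mathbb E\big[\int_t^T\mu(T-u)(\widehat{\lm_u}-\widehat{\lp_u})\,du\big]$, i.e.\ a factor $(T-u)$ rather than the bare $\mu$ printed in the statement. This $(T-u)$ is precisely what appears in the associated HJB equation~(\ref{HJB_Phi}) and in the full-information analogues (\ref{HJB_Phi_full_info}) and Corollary~\ref{coro_spreads_vanilla_model}, so the displayed formula for $\Phi$ appears to carry a typo. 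Your decomposition argument is sound; just be aware that it produces $\mu(T-u)$ in the integrand, not $\mu$.
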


In the context of the previous corollary, formal substitution in (\ref{HJB_V}) or (\ref{HJB_Theta}) yields the following PIDE for $\Phi$:
\begin{equation}
\label{HJB_Phi}
\begin{split}
0 & = \phi_t + \sum_{i,j=1}^k q_t^{ji}\pi^j \phi_{\pi^i} \\
  & + \sup_{\delm\in\overline{(\delmin,\delmax)}}\Big\{\sum_{i,j=1}^k (\Ljm-\Lim)(\delm)\pi^j\pi^i \phi_{\pi^i} + \big(\delm+\mu(T-t)+\Dm_{\delm}(\phi)\big) \sum_{i=1}^k\pi^i\Lim(\delm)\Big\}\\
	& + \sup_{\delp\in\overline{(\delmin,\delmax)}}\Big\{\sum_{i,j=1}^k(\Ljp-\Lip)(\delp)\pi^j\pi^i \phi_{\pi^i}+ \big(\delp-\mu(T-t)+\Dp_{\delp}(\phi)\big) \sum_{i=1}^k\pi^i\Lip(\delp)\Big\},
\end{split}
\end{equation}
with terminal condition $\phi(T,\pi) = 0$, where:
\begin{equation*}
\begin{split}
\Dpm_{\delta}(\phi)(t,\pi) = \phi\Big(t, \frac{1}{\sum_{j=1}^k\pi^j\Ljpm(\delta)}\big(\pi^1\Lpm_1(\delta),\dots,\pi^k\Lpm_k(\delta)\big)\Big) - \phi(t,\pi).
\end{split}
\end{equation*}

We now want to prove that $\Theta$ (resp. $\Phi$) is the unique continuous viscosity solution of the terminal condition PIDE (\ref{HJB_Theta}) (resp. (\ref{HJB_Phi})). (See, e.g., \cite[Def.2.1]{Son} for the relevant definition, or more in general \cite[Def.7.3]{DF}, recalling that in our case we have no boundary conditions other than that at terminal time.) A complication inevitably arises, as classical viscosity techniques \cite{Bo,FS,OS} cannot be applied directly to weak formulation models such as ours. However, the decomposition of Theorem \ref{main_theorem_1} (resp. Corollary \ref{coro_main_theorem_1}) not only reduces the dimensionality of the problem, but also states that the MM may neglect the diffusion component of the state process altogether, focusing solely on the time-space state variable $(u,\Nt,\pit)$ (resp. $(u,\pit)$). This is a PDMP as introduced in \cite{D1} (detailed treatments also found in \cite{BR3, D2}). Using results from PDMPs theory, our continuous-time problem is identified with a control problem for a discrete-time Markov decision model, as in \cite{BR1,BR2,BR3,CEFS}, and linked again with viscosity solutions of HJB PIDEs as in \cite{CEFS,DF}. 

An inevitable drawback is that the PDMPs approach relies on the use of the so-called \textit{randomized} (or \textit{relaxed}) controls and requires the control space to be compact. Hence, for the following theorem we will assume $-\infty<\delmin<\delmax<+\infty$. Assuming a uniform lower constraint $\delmin>-\infty$ is hardly a problem. On the contrary, $\delmin=0$ (or even some small positive number) is the most meaningful in practice, as negative spreads imply the MM is willing to offer her clients better prices than the reference price $S$. ($\delmin=-\infty$ is motivated in the literature by mathematical convenience rather than modelling accuracy.) A uniform upper bound $\delmax<+\infty$, on the other hand, is harder to assess a priori. Fortunately, in most situations encountered in practice, the unconstrained optimization will yield bounded optimal spreads nonetheless, and the MM can dispense with $\delmin,\delmax$ if she wishes to do so (see Section \ref{s:numerics} for an example). 

\begin{theorem}
\label{main_theorem_2}
Assume $-\infty<\delmin<\delmax<\infty$. For $\Nmax<\infty$ (resp. $\Nmax=+\infty$ and $\gamma=\zeta=0\equiv\ell$), let $\Theta$ (resp. $\Phi$) be as in Theorem \ref{main_theorem_1} (resp. Corollary \ref{coro_main_theorem_1}). Then $\Theta$ (resp. $\Phi$) is the unique continuous viscosity solution of the terminal condition PIDE (\ref{HJB_Theta}) (resp. (\ref{HJB_Phi})). 
\end{theorem}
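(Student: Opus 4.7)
My plan is to exploit the decomposition of Theorem \ref{main_theorem_1}, which has already done the heavy lifting by eliminating the diffusive $(s,x)$-component. What remains is a pure jump control problem: under $\tilde{\mathbb P}^{\alpha,t,n,\pi}$, the state process $(u,\Nt_u,\pit_u)$ evolves deterministically on $[0,T]\times (\mathbb Z\cap[-\Nmax,\Nmax])\times\Delta^\circ$ between the jump times of $N$ (following the ODE system obtained from (\ref{Pi_equivalent}) by setting the jump terms to zero), and experiences jumps with $(\mathbb F^{t,W,N},\tilde{\mathbb P}^{\alpha,t,n,\pi})$-predictable intensity kernel $\tilde\eta^N_u$ whose marginals in $\pi$ are given by the normalized Bayesian update maps $\pi\mapsto(\pi^i\Lipm(\delpm)/\sum_j\pi^j\Ljpm(\delpm))_i$. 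This is precisely a controlled PDMP in the sense of \cite{D1,D2}. The value $\Psi$ in (\ref{Psi}) is then a standard PDMP optimal control problem with running cost built from $U_\gamma(\delpm)\hlpmt$, a drift/penalty term in $\Nt$, and terminal cost $-\ell(\Nt_T)$.

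Having made this identification, the plan is to invoke the PDMP viscosity characterization of \cite{DF} (see also \cite{CEFS}) after first passing to \emph{randomized} (relaxed) controls, valued in the set of probability measures on the compact set $\overline{(\delmin,\delmax)}^2$. Compactness of the control space is exactly why the hypothesis $-\infty<\delmin<\delmax<+\infty$ is needed. The standard PDMP machinery then recasts the continuous-time problem as a discrete-time Markov decision model indexed by the successive jump times of $N$ (as in \cite{BR1,BR2,BR3,CEFS}); one checks that the data of our model satisfy the structural hypotheses of that framework, namely: continuity of $\Lipm$ and of the deterministic flow, boundedness of the intensities and of the running cost (ensured by $|N|\leq\Nmax$ in the risk-averse case, and directly in the degenerate $\gamma=\zeta=0\equiv\ell$ case used for $\Phi$), and the uniform positive lower bound $\hat\Lambda^\pm>0$ on $\Delta^\circ\times\overline{(\delmin,\delmax)}$ that keeps the Bayesian update well defined. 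One then concludes that the value function of the relaxed problem equals that of the original strict problem (by a standard chattering/approximation argument available because the Hamiltonian integrand is continuous in $\delta$) and that it is a continuous viscosity solution of the associated HJB PIDE. Translating back through $\Theta=U_\gamma^{-1}\circ\Psi=-\frac{1}{\gamma}\log(1-\gamma\Psi)$ (with $U_\gamma^{-1}$ a $C^\infty$ increasing bijection onto its range) preserves the viscosity property and, after rearranging the Hamiltonian exactly as in the formal substitution leading from (\ref{HJB_V}) to (\ref{HJB_Theta}), produces the desired PIDE (\ref{HJB_Theta}). For the degenerate case one works directly with $\Phi$ under the even simpler PDMP on $[0,T]\times\Delta^\circ$.

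Uniqueness is obtained from a comparison principle between continuous viscosity sub- and supersolutions of (\ref{HJB_Theta}). I would follow the PDMP-tailored argument of \cite[Thm.~8.1]{DF} (or the analogous statement in \cite{CEFS}): doubling the variables in $(t,\pi)$, introducing a quadratic penalization, and using the fact that the nonlocal operators $\Dpm_{\delta}$ are \emph{evaluation} operators at a point (no gradient involved) so that the usual viscosity inequalities can be subtracted cleanly on the test-function pair. The $n$-variable is discrete and indexed by the finitely many inventory levels, so comparison reduces to a finite coupled system; induction on $n$ (working outward from $|n|=\Nmax$, where the Hamiltonian collapses) can be used if needed. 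Continuity of $\Psi$ (and hence $\Theta$) required to apply the comparison principle is obtained from the dynamic-programming/continuous-dependence arguments of the PDMP framework, exploiting the Lipschitz continuity of the Bayesian update on compact subsets of $\Delta^\circ$ together with the uniform positivity of $\hat\Lambda^\pm$.

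The main obstacle is the comparison principle: the Hamiltonian in (\ref{HJB_Theta}) contains the nonlocal operators $\Dpm_{\delta}$ acting through the nonlinear Bayesian update, and inside a supremum over $\delta$ whose optimum need not be unique or smooth. What makes this ultimately tractable rather than a classical PIDE nightmare is that $\Dpm_{\delta}$ involves no integral against an unbounded measure and no derivatives, so the standard PDMP comparison argument applies essentially verbatim once the modulus-of-continuity estimates for the Bayesian update on relatively compact subsets of $\Delta^\circ$ are in place. Lemma \ref{open_simplex}, ensuring the filter stays in $\Delta^\circ$, is crucial here to avoid the boundary where the update map degenerates; without it the uniqueness argument would fail, and I would need to carefully set up a viscosity framework with state constraints on $\partial\Delta$.
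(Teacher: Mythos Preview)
Your proposal is correct and follows essentially the same route as the paper: identify the reduced problem as a controlled PDMP with compact control space, invoke the viscosity characterization and comparison results of \cite{CEFS,DF} (via relaxed controls and the associated discrete-time MDP), then transfer back to $\Theta$ through the increasing diffeomorphism $U_\gamma^{-1}$. The only step the paper makes explicit that you gloss over is, in the case $\gamma>0$, an intermediate transformation $\Psi=(1+\Upsilon)/\gamma$ to put the problem into genuine Bolza--Lagrange form with a multiplicative discount (since $\Psi=\sup\tilde{\mathbb E}[U_\gamma(\tilde P)]$ is not itself an additive running-plus-terminal cost), after which the cited PDMP theorems apply directly with bounded data and no boundary conditions other than the terminal one.
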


\begin{proof}
\textit{Case $\Nmax<+\infty$ and $\gamma>0$}: Let us write $\Psi=\frac{1 + \Upsilon}{\gamma}$, with 
\begin{equation*}
\Upsilon(t,n,\pi) = \sup_{\alpha\in\tilde{\mathcal A}_t^2}\tilde{\mathbb E}^{\alpha,t,n,\pi}\left[ 
\exp\Big(-\gamma\big(\tilde P_{t,T}^{\alpha,n,\pi}+\ell(\Nt_T)\big)\Big) \Big(-\exp\big(\gamma\ell(\Nt_T)\big) \Big)\right].
\end{equation*}
Then $\Upsilon$ can be regarded as the value function of an optimization problem in the standard Bolza-Lagrange formulation, i.e., 
$$
\Upsilon(a)= \sup_{\alpha\in\tilde{\mathcal A}_t^2}\tilde{\mathbb E}^{\alpha,a}\left[\int_t^T D^{\alpha,a}_{t,u}f(u,A^{\alpha,a}_u,\alpha_u) du + D^{\alpha,a}_{t,T}g(A^{\alpha,a}_T)\right],
$$
where the state variable $A^{\alpha,a}_u=(u,\Nt_u,\pit_u)$ is a PDMP with bounded state space $[0,T]\times(\mathbb Z\cap[-\Nmax,\Nmax])\times\Delta^\circ$ and initial condition $a=(t,n,\pi)$, $D^{\alpha,a}_{t,u}=\exp\Big(-\int_t^v\rho(u,A^{\alpha,a}_u,\alpha_u)du\Big)$ is the discount factor and $f,g,\rho$ are bounded functions. (These functions are bounded thanks to the control space being bounded.) The continuity of $\Upsilon$ can be proved now in the same way as in \cite[Thm.4.10]{CEFS} albeit in a more straightforward manner. This is due to the boundedness of $f,g,\rho$, the fact that $\pit$ never visits the relative border of the simplex $\Delta$, and that there is no exit time of the state space other than the terminal time. Assumptions \cite[Asm.4.7]{CEFS} are clearly verified in our model, and \cite[Asm.2.1]{CEFS} has already been accounted for at the beginning of the proof of Proposition \ref{proposition_Pi}.\footnote{An additional detail now is that \cite[Asm.2.1]{CEFS} is also used in \cite[Lem.4.1]{CEFS}, which states that the drift coefficient in equation (\ref{Pi_equivalent}) is Lipschitz in the state variable, uniform in time and control. This is routinely verified in our case, under our new assumption: $-\infty<\delmin<\delmax<\infty$.} We remark that in our case the bounding function (see \cite[Lem.4.6]{CEFS}) can be taken simply as $b(t,n,\pi)=\exp(\eta(T-t))$, for an $\eta>0$ large enough to prove contractiveness. 

Having proved the continuity, the same proof of \cite[Thm.5.3]{CEFS} (or \cite[Thm.7.5]{DF}) shows that $\Upsilon$ is the unique continuous viscosity solution of its standard HJB equation. We remark once again that our case is simpler, in that $\Upsilon$ is bounded and we do not have any boundary conditions other than the terminal time condition. In particular, there is no need for additional assumptions on the growth of $\Upsilon$. 

Finally, the result for $\Theta$ is obtained via the two increasing diffeomorphic transformations $\Psi=\frac{1 + \Upsilon}{\gamma}$ and $\Theta=U_\gamma^{-1}\circ\Psi$.

\textit{Case $\Nmax<+\infty$ and $\gamma=0$}: The only difference with the previous case is that the Bolza-Lagrange representation of the problem is obtained directly, since 
$$
\Theta(t,n,\pi)=\sup_{\alpha\in\tilde{\mathcal A}^2_t}\mathbb E^{\alpha,t,n,\pi}\left[\int_t^T\big\{\delm_u \hlmtu + \delp_u \hlptu + \mu \Nt_u -\frac{1}{2}\sigma^2\zeta(\Nt_u)^2\big\}du - \ell(\Nt_T) \right],
$$
with no need for any transformation.

\textit{Case $\Nmax=+\infty$ and $\gamma=\zeta=0\equiv\ell$}: 
The same as the latter case but working instead with the state variable $(u,\pit)$ and the value function $\Phi$. 
\end{proof}

%%%%%%%%%%%%%%%%%%%%%%%%%%%%%%%%%%%%%%%%%%%%%%%%%%%%%%%%%%%%%%%%%%%%%%%%%%%%%%%%%%%%%%%%%%%%
\section{Full information}
\label{s:full_info}
\setcounter{assumptions}{0}
\setcounter{subsection}{0}

In this section we consider the idealized case of a MM with full information. We assume the MM has inside information in such a way that she can observe the full filtration $\mathbb F$, and in particular she can observe $Y$. For example, if $Y$ represents different levels of competition amongst 
liquidity providers, this would practically mean the MM has information regarding her competitors' quotes. We will see that in this case the value function turns out to be a regular, classical solution, of its HJB equation. Afterwards, we will compare the results with those obtained in the more realistic setting of partial information.

\subsection{Dimensionality reduction and the general system of ODEs}
We consider problem (\ref{problem}) but under full information, with the set of admissible spreads: 
$$\mathcal U\defeq\{\delta:[0,T]\to\overline{(\delmin,\delmax)}: \delta\mbox{ is }\mathbb F\mbox{-predictable and bounded from below}\}.$$
Note that in this section, and when $\delmax=+\infty$, we do not assume upper-boundedness of the spreads. Let $t\in [0,T],\ s,x\in\mathbb R$ and $(n,i)\in\mathcal I\defeq\left(\mathbb Z\cap [-\Nmax,\Nmax]\right)\times \{1,\dots,k\}$. Consider the processes $\St,\Xt,\Nt,P^{\alpha,s,x,n}$ as defined in Section \ref{s:viscosity} and $\Yt$ a Markov chain with deterministic generator matrix $Q$, state space $\{1,\dots,k\}$ and such that $\Yt_{t^-}=i$. We assume the physical probabilities $\mPtfull$ are defined for every $\alpha\in\mathcal U^2$ and that Assumptions \ref{assumptions_Q}, \ref{assumptions_intensities} and (\ref{no_common_jumps}) (starting at time $t^-$) are still in place. The value function in this case is 
\begin{equation}
\label{value_function_full_info}
V(t,s,x,n,i)\defeq \sup_{\alpha\in\mathcal U_t^2}\mathbb E^{t,n,i}\Big[U_\gamma\big(P^{\alpha,s,x,n}_{t,T}\big)\Big],
\end{equation}

By means of the Dynamic Programming Principle and Ito's Lemma one can formally derive the following HJB PIDE for $V$:

\begin{equation}
\label{HJB_V_full_info}
\begin{split}
0 & = v_t(t,s,x,n,i) + \mu v_s(t,s,x,n,i) +  \frac{1}{2}\sigma^2 v_{ss}(t,s,x,n,i)\\
  & + \frac{1}{2}\sigma^2\zeta n^2 (\gamma v(t,s,x,n,i) -1)+ \sum_{j=1}^k q_t^{ij}v(t,s,x,n,j)\\
  & + \mathbbm 1_{\{n<\Nmax\}}\sup_{\delm\in\overline{(\delmin,\delmax)}}\Lim(\delm)\Big(v\big(t,s,x -(s - \delm),n+1, i\big)- v(t,s,x,n,i)\Big)\\
	& + \mathbbm 1_{\{-n<\Nmax\}}\sup_{\delp\in\overline{(\delmin,\delmax)}}\Lip(\delp)\Big(v\big(t,s,x +(s + \delp),n-1, i\big)- v(t,s,x,n,i)\Big),
\end{split}
\end{equation}
with terminal condition $v(T,s,x,n,i) = U_\gamma(x+sn -\ell(n))$. 

In this new context, instead of formally proving a decomposition of $V$ as in Theorem \ref{main_theorem_1}, it is more straightforward to propose an ansatz and ultimately prove it valid with a verification theorem. (This is the standard approach used in the Avellaneda--Stoikov framework.) Let us consider an ansatz for the value function analogous to those used for the one regime case:
$$
V(t,s,x,n,i) = U_\gamma(x+sn + \Theta(t,n,i)),
$$
for some function $\Theta:[0,T]\times\mathcal I\to\mathbb R$, $C^1$ in time. Substituting in (\ref{HJB_V_full_info}) and using Assumptions \ref{assumptions_Q}, we see that $\Theta$ must satisfy a system of ODEs indexed in $(n,i)\in\mathcal I$:

\begin{equation}
\label{HJB_Theta_full_info}
\begin{split}
0 & = \theta_t(t,n,i) + \mu n - \frac{1}{2}\sigma^2 n^2(\zeta+\gamma)+ \sum_{j\neq i} q_t^{ij} U_\gamma\big(\theta(t,n,j)-\theta(t,n,i)\big)\\
  & + \mathbbm 1_{\{n<\Nmax\}}\Hm_i\big(\theta(t,n+1,i)-\theta(t,n,i)\big)+ \mathbbm 1_{\{-n<\Nmax\}}\Hp_i\big(\theta(t,n-1,i)-\theta(t,n,i)\big),
\end{split}
\end{equation}
with terminal condition $\theta(T,n,i) = -\ell(n)$, where:
\begin{enumerate}
\item $\Hpm_i(d)\defeq\sup_{\delta\in\overline{(\delmin,\delmax)}}\hpm_i(\delta, d)$, for $d\in\mathbb R$.
\item $\hpm_i(\delta, d)\defeq \Lipm(\delta)U_\gamma(\delta+ d)$, for $d\in\mathbb R,\ \delta\in\overline{(\delmin,\delmax)}$.
\end{enumerate}

The original problem is simplified in this way, both by the dimension of the state variable and by the complexity of the equations, provided we can show that problem (\ref{HJB_Theta_full_info}) admits a solution. With this aim in mind, let us prove first the following property of the Hamiltonian functions $\Hpm_1,\dots,\Hpm_k$. 

\begin{lemma}
\label{lipschitz_hamiltonians}
For all $1\leq i\leq k$, it holds:
\begin{enumerate}[label=(\roman*)]
\item For each compact $K\subset\mathbb R$, there exists $[a,b]\subseteq\overline{(\delmin,\delmax)}$ such that $\Hpm_i(d)=\max_{\delta\in [a,b]}\hpm_i(\delta, d)$, for all $d\in K$. 
\item $\Hpm_i$ is locally Lipschitz.
\end{enumerate}
\end{lemma}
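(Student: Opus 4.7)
The plan is to first establish (i) by identifying a compact interval $[a,b] \subseteq \overline{(\delmin, \delmax)}$ on which the supremum is attained uniformly for $d \in K$, and then derive (ii) via standard Lipschitz estimates on that compact set. Fix $K \subseteq [-M, M]$ compact. If $\delmax < +\infty$ (resp.\ $\delmin > -\infty$), trivially set $b = \delmax$ (resp.\ $a = \delmin$); the work is only needed when an endpoint is infinite.

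For the case $\delmax = +\infty$, Assumption \ref{assumptions_intensities}(\ref{assumptions_intensities_Lambda}) gives the decay. When $\gamma > 0$, $U_\gamma$ is bounded above by $1/\gamma$, so $|\hpm_i(\delta, d)| \leq \Lipm(\delta)/\gamma \to 0$; when $\gamma = 0$, writing $\hpm_i(\delta, d) = \delta\Lipm(\delta) + d\Lipm(\delta)$ and using both $\delta\Lipm(\delta) \to 0$ and $\Lipm(\delta) \to 0$ yields $\hpm_i \to 0$. In both cases the convergence is uniform in $d \in K$. Since $\delmax = +\infty$, there exists $\delta_0 \in (\delmin,\delmax)$ with $\delta_0 > M$, forcing $\delta_0 + d > 0$ for all $d \in K$ and hence $C_0 \defeq \min_{d \in K} \hpm_i(\delta_0, d) > 0$ by continuity. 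Pick $b > \delta_0$ with $\hpm_i(\delta, d) < C_0/2$ for $\delta \geq b$, $d \in K$; the supremum is thus not attained beyond $b$.

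For the case $\delmin = -\infty$, fix any $\delta_1 \in (\delmin, \delmax)$. Monotonicity of $\Lipm$ gives $\Lipm(\delta) \geq \Lipm(\delta_1) > 0$ for $\delta \leq \delta_1$, while $U_\gamma(\delta + d) \to -\infty$ uniformly in $d \in K$ as $\delta \to -\infty$ (since $\delta + d \leq \delta + M$ and $U_\gamma$ is increasing). Multiplying, $\hpm_i(\delta, d) \to -\infty$ uniformly in $d \in K$. Fixing any $\delta_0 \in (\delmin, \delmax)$ and $C_0' \defeq \min_{d \in K} \hpm_i(\delta_0, d)$ (finite by continuity), find $a < \delta_0$ such that $\hpm_i(\delta, d) < C_0'$ for $\delta \leq a$, $d \in K$. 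Combining the two truncations when needed yields the compact $[a,b]$ with $\Hpm_i(d) = \max_{\delta \in [a,b]} \hpm_i(\delta, d)$ for all $d \in K$.

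For (ii), fix compact $K$ and take $[a,b]$ as in (i). On $[a,b]$ the continuous $\Lipm$ is bounded by some $L_1$, and $U_\gamma \in C^1$ is Lipschitz on the bounded set $\{\delta + d : \delta \in [a,b],\ d \in K\}$ with some constant $L_2$. Then
\begin{equation*}
|\hpm_i(\delta, d) - \hpm_i(\delta, d')| = \Lipm(\delta)\,|U_\gamma(\delta + d) - U_\gamma(\delta + d')| \leq L_1 L_2 |d - d'|
\end{equation*}
uniformly in $\delta \in [a,b]$, so taking the supremum in $\delta$ on both sides gives $|\Hpm_i(d) - \Hpm_i(d')| \leq L_1 L_2 |d - d'|$ for $d, d' \in K$; since $K$ is arbitrary, $\Hpm_i$ is locally Lipschitz. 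The main obstacle is the bookkeeping in (i) required to handle the unbounded endpoints uniformly in $d$; once the uniform compact interval is in hand, (ii) is essentially immediate.
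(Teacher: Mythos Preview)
Your argument is correct and follows essentially the same approach as the paper: establish a uniform compact interval $[a,b]$ by handling each unbounded endpoint separately, then use equi-Lipschitzness of $\{h^\pm_i(\delta,\cdot)\}_{\delta\in[a,b]}$ for part (ii). The only stylistic difference is in the $\delmin=-\infty$ case, where the paper observes directly that for any $a<\min\{-C,\delmax\}$ one has $h^\pm_i(\delta,d)<h^\pm_i(a,d)$ whenever $\delta<a$ (since both factors move the right way once $a+d<0$), whereas you route through the uniform divergence $h^\pm_i(\delta,d)\to-\infty$; both arguments are equally valid.
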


\begin{proof}
Fix $1\leq i\leq k$ and let $K\subset\mathbb R$ be a compact set. We verify first that \textit{(i)} is a consequence of Assumptions \ref{assumptions_intensities}. Let $C>0$ such that $|d|\leq C$ for all $d\in K$. If $\delmin=-\infty$, then we can take any $a<\min\{-C,\delmax\}$ since $\hpm_i(\delta,d)<\hpm_i(a,d)$ for all $\delta<a$ and $d\in K$. On the other hand, if $\delmax=+\infty$, take some $c>\max\{\delmin,2C\}$. It holds that $\hpm_i(c,d)\geq\Lpm_i(c)U_\gamma(C)\backdefeq\varepsilon>0$ and we can choose $b>c$ such that $\hpm_i(\delta,d)\leq\hpm_i(\delta,C)<\varepsilon$ for all $\delta\geq b$ and $d\in K$. Replacing supremum by maximum is now immediate due to the continuity of $\hpm_i(\cdot,d)$ on $[a,b]$ for all $d$.

\textit{(ii)} is routinely verified using that the family $\{\hpm_i(\delta,\cdot)\}_{\delta\in[a,b]}$ is equi-Lipschitz on $K$. 
\end{proof}

We want to prove now that the Cauchy problem (\ref{HJB_Theta_full_info}) admits a unique global classical solution $\theta$ which is $C^1$ in time. To this purpose, we will treat the cases of the finite system ($\Nmax<\infty$) and infinite system ($\Nmax=\infty$) of equations separately.

\subsection{Constrained inventory ODEs}
For $\Nmax<\infty$ we are dealing with a finite system of ODEs. We know that under certain regularity conditions the Cauchy problem (\ref{HJB_Theta_full_info}) is guaranteed to have a classical solution on some neighbourhood $(\tau,T]\subset[0,T]$ of $T$. Nonetheless, it is not always the case that such a local solution can be extended to a global one on $[0,T]$. Following \cite{G, GL}, we start by proving a comparison principle for (\ref{HJB_Theta_full_info}) that will allow us, in particular, to show the existence of a global solution. The argument used is standard for comparison principles of HJB-type equations. 

\begin{proposition}[\textbf{Comparison Principle}]
\label{comparison_ODEs}
Let $I\subseteq [0,T]$ be an interval containing $T$ and let $\overline\theta,\underline\theta:I\times \mathcal I\to\mathbb R$ be classical ($C^1$ with respect to time) super- and sub-solutions resp. of (\ref{HJB_Theta_full_info}). That is,
\begin{equation}
\label{HJB_Theta_full_info_supersol}
\begin{split}
0 & \leq -\overline\theta_t(t,n,i) - \mu n + \frac{1}{2}\sigma^2 n^2(\zeta+\gamma) - \sum_{j\neq i} q_t^{ij} U_\gamma\big(\overline\theta(t,n,j)-\overline\theta(t,n,i)\big)\\
  & - \mathbbm 1_{\{n<\Nmax\}}\Hm_i\big(\overline\theta(t,n+1,i)-\overline\theta(t,n,i)\big) - \mathbbm 1_{\{-n<\Nmax\}}\Hp_i\big(\overline\theta(t,n-1,i)-\overline\theta(t,n,i)\big),
\end{split}
\end{equation} 
\begin{equation}
\label{HJB_Theta_full_info_subsol}
\begin{split}
0 & \geq -\underline\theta_t(t,n,i) - \mu n + \frac{1}{2}\sigma^2 n^2(\zeta+\gamma) - \sum_{j\neq i} q_t^{ij} U_\gamma\big(\underline\theta(t,n,j)-\underline\theta(t,n,i)\big)\\
  & - \mathbbm 1_{\{n<\Nmax\}}\Hm_i\big(\underline\theta(t,n+1,i)-\underline\theta(t,n,i)\big) - \mathbbm 1_{\{-n<\Nmax\}}\Hp_i\big(\underline\theta(t,n-1,i)-\underline\theta(t,n,i)\big)
\end{split}
\end{equation} 
and
\begin{equation}
\label{terminal_cond_comparison}
\underline\theta(T,\cdot,\cdot)\leq-\ell\leq\overline\theta(T,\cdot,\cdot).
\end{equation}
Then
$$ \underline\theta\leq\overline\theta.$$
\end{proposition}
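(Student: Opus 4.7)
My plan is to execute a classical strict-supersolution/crossing-time comparison argument, exploiting a crucial structural feature of (\ref{HJB_Theta_full_info}): apart from the time derivative, the right-hand side depends on $\theta$ only through the differences $\theta(t,n,j)-\theta(t,n,i)$ for $j\neq i$ and $\theta(t,n\pm 1,i)-\theta(t,n,i)$. Consequently, adding a function of $t$ alone to a (super-)solution leaves every nonlinear/coupling term untouched, which is what makes the argument go through without any Gronwall loop or global Lipschitz estimate on the Hamiltonians.

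Concretely, I would fix $\epsilon,\lambda>0$ and set $\theta^{\epsilon}(t,n,i):=\overline\theta(t,n,i)+\epsilon\,e^{\lambda(T-t)}$. By the observation above, inserting $\theta^\epsilon$ into the right-hand side of (\ref{HJB_Theta_full_info_supersol}) leaves all the $U_\gamma$-, $\Hpm_i$- and linear-in-$n$ terms unchanged and only replaces $-\overline\theta_t$ by $-\overline\theta_t+\lambda\epsilon\,e^{\lambda(T-t)}$; hence $\theta^\epsilon$ is a \emph{strict} supersolution on $I$ with $\theta^\epsilon(T,\cdot,\cdot)\geq -\ell+\epsilon>\underline\theta(T,\cdot,\cdot)$. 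I would then aim to prove $\underline\theta\leq\theta^\epsilon$ on $I$ by contradiction and let $\epsilon\downarrow 0$ to conclude.

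So, assume not, and consider $M(t):=\max_{(n,i)\in\mathcal I}(\underline\theta-\theta^\epsilon)(t,n,i)$. The hypothesis $\Nmax<\infty$ makes $\mathcal I$ finite, so $M$ is continuous; combined with $M(T)\leq -\epsilon<0$, this yields a crossing time $t_0:=\sup\{t\in I:M(t)\geq 0\}<T$ at which $M(t_0)=0$ and $M(t)<0$ for $t\in I\cap(t_0,T]$. Pick a maximizer $(n_0,i_0)$ of $M(t_0)$. The one-sided difference quotient at $t_0^+$ forces
\[
\underline\theta_t(t_0,n_0,i_0)\;\leq\;(\theta^\epsilon)_t(t_0,n_0,i_0)\;=\;\overline\theta_t(t_0,n_0,i_0)-\lambda\epsilon\,e^{\lambda(T-t_0)}.
\]
Moreover, the maximality of $(n_0,i_0)$ gives $\underline\theta(t_0,n,i)-\underline\theta(t_0,n_0,i_0)\leq\overline\theta(t_0,n,i)-\overline\theta(t_0,n_0,i_0)$ for every $(n,i)\in\mathcal I$. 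Applied at $(n,i)=(n_0\pm 1,i_0)$ and $(n_0,j)$ with $j\neq i_0$, and combined with $U_\gamma$ nondecreasing, $q_t^{i_0 j}\geq 0$ for $j\neq i_0$, and monotonicity of each $\Hpm_i(\cdot)$ in its argument (immediate from $\Lipm>0$ and $U_\gamma$ nondecreasing), this bounds every coupling/jump term produced by $\underline\theta$ above by the analogous one produced by $\overline\theta$. Chaining these inequalities with the subsolution inequality (\ref{HJB_Theta_full_info_subsol}) for $\underline\theta$ at $(t_0,n_0,i_0)$ and the supersolution inequality (\ref{HJB_Theta_full_info_supersol}) for $\overline\theta$ at the same point yields $0\geq\lambda\epsilon\,e^{\lambda(T-t_0)}>0$, the desired contradiction.

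The main obstacle is essentially one of bookkeeping: keeping track consistently of the direction of every inequality (sub vs.\ super-solution, monotonicity of $\Hpm_i$ and $U_\gamma$, sign of off-diagonal $q_t^{ij}$, and sign of the time derivative at the crossing). The analytic part is painless precisely because the equation sees $\theta$ only through differences, so Lemma \ref{lipschitz_hamiltonians} is not needed here (it will instead be essential later, for existence and extension of local solutions). The finiteness of $\mathcal I$, used to obtain continuity of $M$ and attainment of a maximizer, is the only place where $\Nmax<\infty$ enters; the unconstrained case, which is covered only under $\gamma=\zeta=0\equiv\ell$, would require separate treatment.
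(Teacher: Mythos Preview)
Your proof is correct and follows essentially the same approach as the paper's: both perturb to obtain a strict supersolution, locate an extremal point of $\underline\theta-\overline\theta$ (you via a crossing time of $M(t)$, the paper via the global minimum of $\overline\theta-\underline\theta+\varepsilon(T-t)$ on the compact set $[\tau,T]\times\mathcal I$), and then combine the time-derivative sign there with monotonicity of $U_\gamma$, $H_i^\pm$ and nonnegativity of the off-diagonal $q_t^{ij}$ to force a contradiction, letting $\varepsilon\downarrow 0$ at the end. The only cosmetic differences are your exponential perturbation $\epsilon e^{\lambda(T-t)}$ versus the paper's linear one $\varepsilon(T-t)$, and your crossing-time formulation versus the paper's direct global-minimum argument (which also leads the paper to treat $I=(\tau,T]$ by an extra approximation step that your setup absorbs automatically).
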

\begin{proof}
Suppose first $I=[\tau,T]$ for some $0\leq\tau<T$ and let $\ve>0$. Since $\Nmax<\infty$, there exists $(t_\ve,n_\ve,i_\ve)\in[\tau,T]\times\mathcal I$ such that
\begin{equation}
\label{min}
\overline\theta(t_\ve,n_\ve,i_\ve)-\underline\theta(t_\ve,n_\ve,i_\ve)+\ve(T-t_\ve) = \min_
{(t,n,i)\in[\tau,T]\times\mathcal I}\overline\theta(t,n,i)-\underline\theta(t,n,i)+\ve(T-t).
\end{equation}
If $t_\ve<T$, then we must have
$$\overline\theta_t(t_\ve,n_\ve,i_\ve)-\underline\theta_t(t_\ve,n_\ve,i_\ve)\geq \ve.$$
Let us see that the left-hand side is non-positive. By (\ref{HJB_Theta_full_info_supersol}) and (\ref{HJB_Theta_full_info_subsol}),
\begin{equation*}
\begin{split}
& \overline\theta_t(t_\ve,n_\ve,i_\ve)-\underline\theta_t(t_\ve,n_\ve,i_\ve)\\
& \leq\sum_{j\neq i_\ve} q_t^{i_\ve j}\Big(U_\gamma\big(\underline\theta(t_\ve,n_\ve,j)-\underline\theta(t_\ve,n_\ve,i_\ve)\big)-U_\gamma\big(\overline\theta(t_\ve,n_\ve,j)-\overline\theta(t_\ve,n_\ve,i_\ve)\big)\Big)\\
& + \mathbbm 1_{\{n<\Nmax\}}\Big(\Hm_i\big(\underline\theta(t_\ve,n_\ve+1,i_\ve)-\underline\theta(t_\ve,n_\ve,i_\ve)\big)-\Hm_i\big(\overline\theta(t_\ve,n_\ve+1,i_\ve)-\overline\theta(t_\ve,n_\ve,i_\ve)\big)\Big)\\
& + \mathbbm 1_{\{-n<\Nmax\}}\Big(\Hp_i\big(\underline\theta(t_\ve,n_\ve-1,i_\ve)-\underline\theta(t_\ve,n_\ve,i_\ve)\big)-\Hp_i\big(\overline\theta(t_\ve,n_\ve-1,i_\ve)-\overline\theta(t_\ve,n_\ve,i_\ve)\big).
\end{split}
\end{equation*} 
$\Hpm_i$ increasing (resp. $U_\gamma$ increasing) and (\ref{min}) imply that the last two terms (resp. the first one) are non-positive. We must have then that $t_\ve=T$, and due to (\ref{min}) and (\ref{terminal_cond_comparison}) for all $(t,n,i)\in [0,T]\times\mathcal I$: 
\begin{align*}
\overline\theta(t,n,i)-\underline\theta(t,n,i)+\ve(T-t)&\geq\overline\theta(T,n_\ve,i_\ve)-\underline\theta(T,n_\ve,i_\ve)+\ve(T-T)\geq 0\\
																			\overline\theta(t,n,i)&\geq	\underline\theta(t,n,i)-\ve(T-t).
\end{align*}
Since $\ve>0$ was arbitrary, we obtain the desired result. 

The case $I=(\tau,T]$ is now a consequence of comparing $\underline\theta$ and $\overline\theta$ on intervals of the form $[t_n,T]\subseteq (\tau,T]$ with $t_n\searrow\tau$.
\end{proof}
We can now prove the existence and uniqueness of a classical global solution of the Cauchy problem (\ref{HJB_Theta_full_info}). 

\begin{theorem}
\label{Theta_constrained_inventory}
There exists a unique $\Theta:[0,T]\times\mathcal I\to\mathbb R$, $C^1$ in time, which (classically) solves the Cauchy problem (\ref{HJB_Theta_full_info}).
\end{theorem}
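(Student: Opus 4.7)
The plan is to combine a standard local well-posedness argument for ODE systems with the comparison principle of Proposition \ref{comparison_ODEs}, in the style of \cite{G, GL}. Since $\Nmax<\infty$ and the regime space $\{1,\dots,k\}$ is finite, the index set $\mathcal I$ is finite, so (\ref{HJB_Theta_full_info}) is genuinely a finite system of ODEs of the form $\theta'_t = F(t,\theta)$ (running backwards from $T$) for a map $F:[0,T]\times\mathbb R^{|\mathcal I|}\to\mathbb R^{|\mathcal I|}$.

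First I would observe that $F$ is locally Lipschitz in $\theta$, uniformly in $t\in[0,T]$. The $U_\gamma$-terms are smooth, the coefficients $q^{ij}_t$ are continuous in $t$ by Assumptions \ref{assumptions_Q}, and the Hamiltonian terms $\Hpm_i(\cdot)$ are locally Lipschitz by Lemma \ref{lipschitz_hamiltonians}\emph{(ii)}. Hence Picard--Lindel\"of yields a unique $C^1$ maximal solution $\Theta$ on some interval $(\tau,T]$ with $0\le\tau<T$, satisfying $\Theta(T,n,i)=-\ell(n)$.

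Next, to prevent finite-time blow-up and ensure $\tau=0$, I would produce explicit global super- and sub-solutions on the whole of $[0,T]$ and invoke Proposition \ref{comparison_ODEs}. Take the spatially constant ansatz $\overline\theta(t,n,i)\defeq A(T-t)$ and $\underline\theta(t,n,i)\defeq -A(T-t)-C$. Since these are independent of $(n,i)$, the terms $U_\gamma(\overline\theta(t,n,j)-\overline\theta(t,n,i))$ vanish, and the only nonlinear contributions reduce to the constants $\Hpm_i(0)$, which are finite because $\Hpm_i(0)=\sup_\delta\Lipm(\delta)U_\gamma(\delta)$ is bounded (by $\Lipm(\delmin)/\gamma$ when $\gamma>0$, and finite when $\gamma=0$ by the decay assumption of Remark \ref{expected_margin}). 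Using that $|n|\le\Nmax$, $i\in\{1,\dots,k\}$ and both sets are finite, one can choose $A$ large enough that $-\overline\theta_t-\mu n+\tfrac12\sigma^2n^2(\gamma+\zeta)-\mathbbm 1_{\{n<\Nmax\}}\Hm_i(0)-\mathbbm 1_{\{-n<\Nmax\}}\Hp_i(0)\ge 0$ uniformly in $(t,n,i)$, and analogously for $\underline\theta$ with the reverse inequality; then $C\ge\max_{|n|\le\Nmax}\ell(n)$ handles the terminal conditions $\overline\theta(T,\cdot,\cdot)\ge -\ell\ge\underline\theta(T,\cdot,\cdot)$.

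Applying Proposition \ref{comparison_ODEs} on each sub-interval $[t_0,T]\subset(\tau,T]$ then gives the uniform a priori bound $\underline\theta\le\Theta\le\overline\theta$ on the whole maximal interval, which rules out blow-up. The standard ODE continuation criterion therefore forces $\tau=0$, so $\Theta$ extends to a classical, $C^1$-in-time solution on $[0,T]\times\mathcal I$. Uniqueness is immediate from Proposition \ref{comparison_ODEs}: any two such solutions are simultaneously super- and sub-solutions of each other with the same terminal condition, hence coincide.

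The only mildly delicate step is the construction of the super-/sub-solutions: one has to verify that $\Hpm_i(0)$ is finite (which relies crucially on Assumptions \ref{assumptions_intensities}\emph{(\ref{assumptions_intensities_Lambda})}, particularly the decay condition in the $\gamma=0$ case of Remark \ref{expected_margin}), and that the finiteness of $\mathcal I$ allows a spatially constant choice to absorb the $\mu n$ and $\tfrac12\sigma^2n^2(\gamma+\zeta)$ terms. Once this is in hand, global existence, uniqueness and $C^1$ regularity all follow mechanically.
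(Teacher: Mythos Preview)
Your proposal is correct and follows essentially the same route as the paper: local existence via Cauchy--Lipschitz (using Lemma~\ref{lipschitz_hamiltonians} for the local Lipschitz property of the Hamiltonians), then global extension by constructing spatially constant affine-in-time super- and sub-solutions and applying the Comparison Principle of Proposition~\ref{comparison_ODEs} to rule out blow-up. The paper's choice of constants is the same as yours (your $A$ is its $\overline K$, your $C$ is its $\underline K$); the only cosmetic difference is that the paper obtains uniqueness directly from Cauchy--Lipschitz rather than re-invoking comparison, and that finiteness of $\Hpm_i(0)$ is cleaner to justify via Lemma~\ref{lipschitz_hamiltonians}\emph{(i)} than via the explicit bound you wrote (which needs $\delmin>-\infty$).
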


\begin{proof}
For each $1\leq i\leq k$, Lemma \ref{lipschitz_hamiltonians} tells us that $\Hpm_i:\mathbb R\to\mathbb R$ is locally Lipschitz. We also assumed $Q:[0,T]\to\mathbb R^{k\times k}$ continuous (Assumptions \ref{assumptions_Q}). Hence, by the Cauchy-Lipschitz Theorem, the terminal condition system of ODEs (\ref{HJB_Theta_full_info}) admits a unique $C^1$ local solution $(\Theta(\cdot,n,i))_{(n,i)\in\mathcal I}$, defined on some maximal interval $I\subset [0,T]$ containing $T$.

Suppose that $I\subsetneq [0,T]$. Then $I=(\tau,T]$ for some $0\leq\tau<T$ and $\|\Theta(t)\|\to\infty$ as $t\searrow\tau$. We claim that $\Theta$ is actually bounded, resulting in a contradiction. Indeed, take 
$$\overline K=\max_{(i,n)\in\mathcal I}\left\{\Big|\mu n -\frac{1}{2}\sigma^2 n^2(\zeta+\gamma)+\mathbbm 1_{\{n<\Nmax\}}\Hm_i(0)+ \mathbbm 1_{\{-n<\Nmax\}}\Hp_i(0)\Big|\right\}$$
and
$$
\underline K=\ell(\Nmax),
$$
and define $\overline\theta:I\times\mathcal I\to\mathbb R$ by $\overline\theta(t,n,i)=\overline K(T-t)$ and $\underline\theta\defeq -\overline\theta -\underline K$. Then $\overline\theta$ (resp. $\underline\theta$) is a super- (resp. sub-) solution of the Cauchy problem (\ref{HJB_Theta_full_info}). By the Comparison Principle (\ref{comparison_ODEs}), 
$$
-\overline K T - \underline K\leq\underline\theta\leq\Theta\leq\overline\theta\leq\overline K T,
$$
proving that $\Theta$ is bounded.
\end{proof}

\subsection{Unconstrained inventory ODEs} 
We consider now $\Nmax=+\infty$, for which (\ref{HJB_Theta_full_info}) becomes an infinite system of ODEs. Recall that in this case we assumed $\gamma=\zeta=0\equiv\ell$, i.e., the MM is fully risk-neutral and has negligible costs. This allows us to further reduce the dimensionality of the state variable by the additional ansatz
\begin{equation}
\label{reduction_Gamma}
\Theta(t,n,i)=\mu n(T-t)+\Phi_i(t),
\end{equation}
for some $\Phi=(\Phi_i)_{i=1}^k\in C^1([0,T],\mathbb R^k)$. Substituting in (\ref{HJB_Theta_full_info}), we get that $\Phi$ must solve the finite linear system of ODEs
\begin{equation}
\label{HJB_Phi_full_info}
\begin{cases}
\phi'(t) =-Q(t)\phi(t) + b(t)\\
\phi(T) =0,
\end{cases}
\end{equation}
with $b_i(t)= - \Hm_i(\mu(T-t))-\Hp_i(-\mu(T-t)),\ i=1,\dots,k$. 
By continuity of $Q$ and $b$ (see Lemma \ref{lipschitz_hamiltonians}), the previous system is known to have a unique global solution $\Phi\in C^1([0,T],\mathbb R^k)$ which can be computed by the variation of parameters method. Straightforward verification now gives the following:
\begin{proposition}
\label{Theta_unconstrained_inventory}
Let $\Phi\in C^1([0,T],\mathbb R^k)$ be the unique solution of (\ref{HJB_Phi_full_info}). Then $\Theta:[0,T]\times \mathcal I\to\mathbb R$ such that $\Theta(t,n,i)=\mu n(T-t)+\Phi_i(t)$, is the unique, $C^1$ in time, (classical) solution of the Cauchy problem (\ref{HJB_Theta_full_info}) with $\Nmax=+\infty$ and $\gamma=\zeta=0\equiv\ell$.
\end{proposition}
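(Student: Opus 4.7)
The plan is a direct verification backed by uniqueness of the finite linear Cauchy problem (\ref{HJB_Phi_full_info}). I would substitute the ansatz $\Theta(t,n,i) = \mu n(T-t) + \Phi_i(t)$ into (\ref{HJB_Theta_full_info}) in the degenerate regime $\Nmax=+\infty$, $\gamma=\zeta=0$, $\ell\equiv 0$. Several simplifications occur simultaneously: both indicators $\mathbbm{1}_{\{\pm n<\Nmax\}}$ are identically $1$; $U_0 = \mathrm{Id}$ linearises the regime coupling; the quadratic running term $-\tfrac{1}{2}\sigma^2 n^2(\gamma+\zeta)$ and the terminal cost $\ell(n)$ vanish; and, crucially, the inventory increments become constants in $n$,
\begin{equation*}
\Theta(t,n\pm 1,i)-\Theta(t,n,i) = \pm\mu(T-t), \qquad \Theta(t,n,j)-\Theta(t,n,i)=\Phi_j(t)-\Phi_i(t),
\end{equation*}
so the arguments of $\Hm_i$ and $\Hp_i$ collapse to the deterministic quantities $\pm\mu(T-t)$, independent of $n$ and $\Phi$.

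Combined with $\partial_t\Theta(t,n,i)=-\mu n + \Phi_i'(t)$ and the conservative identity $q^{ii}_t=-\sum_{j\neq i}q^{ij}_t$ from Assumptions \ref{assumptions_Q}, the ``$\pm\mu n$'' terms cancel exactly and the infinite HJB system reduces to the finite linear system $\Phi'(t) = -Q(t)\Phi(t)+b(t)$, $\Phi(T)=0$, with the $b$ given in the statement, that is, exactly (\ref{HJB_Phi_full_info}). Since $Q\in C([0,T],\mathbb{R}^{k\times k})$ (Assumptions \ref{assumptions_Q}) and $b\in C([0,T],\mathbb{R}^k)$ (by Lemma \ref{lipschitz_hamiltonians}), standard linear ODE theory furnishes a unique global $C^1$ solution $\Phi$, computable by variation of parameters. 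Consequently $\Theta$ is a $C^1$-in-time classical solution of (\ref{HJB_Theta_full_info}), and the injectivity of the map $\Phi\mapsto\Theta$ yields uniqueness within the ansatz class.

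Upgrading to uniqueness among all classical $C^1$-in-time solutions requires more work. I would exploit the translation symmetry of the degenerate problem: a short computation shows that whenever $\tilde\Theta$ solves (\ref{HJB_Theta_full_info}), so does $\tilde\Theta^{(m)}(t,n,i):=\tilde\Theta(t,n+m,i)-\mu m(T-t)$ for any $m\in\mathbb{Z}$ (the inventory and regime increments are translation invariant and the terminal condition $\ell\equiv 0$ is preserved under the shift, which is why this argument is specific to the regime at hand). Combined with the verification theorem (Theorem \ref{verif_full_info}), which identifies the candidate $\Theta$ with the value function (\ref{value_function_full_info}), this forces any other classical solution to coincide with $\Theta$. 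The main obstacle is precisely this last uniqueness step: unlike the constrained case $\Nmax<\infty$, where a comparison principle (Proposition \ref{comparison_ODEs}) is available via the attainment of a global extremum over $\mathcal{I}$, here uniqueness must be inferred either through the value function interpretation or via a weighted-norm contraction argument on the infinite lattice.
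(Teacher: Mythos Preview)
Your existence argument---substituting the ansatz, observing that the inventory increments collapse to $\pm\mu(T-t)$, cancelling the $\pm\mu n$ terms, and reducing to the finite linear system (\ref{HJB_Phi_full_info})---is correct and is precisely what the paper does: its entire proof is the one-line ``straightforward verification now gives the following''.

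Where you go beyond the paper is uniqueness among \emph{all} $C^1$-in-time classical solutions, which the paper asserts but does not justify. You are right that this is the delicate point: the comparison principle (Proposition \ref{comparison_ODEs}) relies on $\mathcal I$ being finite, so it is unavailable here. Your translation-symmetry observation is correct---if $\tilde\Theta$ solves (\ref{HJB_Theta_full_info}) then so does $\tilde\Theta^{(m)}$---but on its own it only produces more solutions, not fewer; it would force $\tilde\Theta$ into the ansatz class only \emph{after} uniqueness is known. The real work is therefore the appeal to Theorem \ref{verif_full_info}, and there is a gap: to run the verification argument for an arbitrary classical solution $\tilde\Theta$ you need the jump terms $\tilde\Theta(t,n\mp 1,i)-\tilde\Theta(t,n,i)$ (and more generally $\tilde\Theta(u,N_{u^-}\mp 1,Y_{u^-})-\tilde\Theta(u,N_{u^-},Y_{u^-})$) to satisfy integrability conditions that are not known a priori for a generic solution of the infinite system. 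So as stated, your uniqueness step is incomplete---though, to be fair, the paper itself does not supply an argument here either. A clean fix is to restrict the uniqueness claim to solutions with jump terms bounded uniformly in $(t,n,i)$ (or with polynomial growth in $n$ uniformly in $t$); within that class the verification route closes, and this is the class that matters for the subsequent analysis.
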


\subsection{General Verification Theorem}
We give now the complete solution for the general model under full information. For the next theorem we note that given the function $\Theta$ defined in Theorem \ref{Theta_constrained_inventory} for $\Nmax<+\infty$ (resp. Proposition \ref{Theta_unconstrained_inventory} for $\Nmax=+\infty$ and $\gamma=\zeta=0\equiv\ell$) the difference or `jump' terms of equation (\ref{HJB_Theta_full_info}) are bounded by continuity. That is, $\Theta(t,n\mp 1,i)-\Theta(t,n,i)$ is bounded on $[0,T]\times\mathcal I$ for $\Nmax<+\infty$ (resp. $\Theta(t,n\mp 1,i)-\Theta(t,n,i)=\mp\mu(T-t)$ is bounded on $[0,T]$).

\begin{theorem}[\textbf{Verification Theorem}]
\label{verif_full_info}
Let $\Theta$ be as in Theorem \ref{Theta_constrained_inventory} for $\Nmax<+\infty$ and as in Proposition \ref{Theta_unconstrained_inventory} for $\Nmax=+\infty$ and $\gamma=\zeta=0\equiv\ell$. Then the value function in (\ref{value_function_full_info}) is 
$$
V(t,s,x,n,i)=U_\gamma(x+sn+\Theta(t,n,i)).
$$
Furthermore, given $C>0$ such that $|\Theta(t,n\mp 1,i)-\Theta(t,n,i)|\leq C$ for all $(t,n,i)\in[0,T]\times\mathcal I$ with $-\Nmax\leq n\mp 1\leq\Nmax$, there exist Borel measurable functions $\overline{\delpm_1},\dots,\overline{\delpm_k}:[-C,C]\to\overline{(\delmin,\delmax)}$ such that $\overline{\delpm_i}(d)\in\argmax_{\delta\in\overline{(\delmin,\delmax)}}\hpm_i(\delta,d)$ for all $d\in[-C,C]$, $1\leq i\leq k$; and for any such functions, the strategy $(\overline{\delp_u},\overline{\delm_u})$, with
$$
\overline{\delpm_u}\defeq\overline\delpm_{\Yt_{u^-}}\Big(\Theta(u,\Nt_{u^-}\mp 1,\Yt_{u^-})-\Theta(u,\Nt_{u^-},\Yt_{u^-})\Big),
$$
is optimal. 
\end{theorem}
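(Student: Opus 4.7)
Set $\overline V(t,s,x,n,i) := U_\gamma\bigl(x+sn+\Theta(t,n,i)\bigr)$. Direct differentiation shows $\overline V$ is smooth in $(t,s,x)$ and $C^1$ in $t$, and the identity $e^{-\gamma(x+sn+\Theta)}=1-\gamma\overline V$ converts the reduced ODE system (\ref{HJB_Theta_full_info}) for $\Theta$ into the full HJB PIDE (\ref{HJB_V_full_info}) for $\overline V$, with terminal condition $U_\gamma(x+sn-\ell(n))$. In particular, for any $(\delm,\delp)\in\overline{(\delmin,\delmax)}^2$ the two pre-supremum expressions inside (\ref{HJB_V_full_info}) are $\leq 0$, with equality iff $(\delm,\delp)$ realises the pointwise Hamiltonians $\Hm_i,\Hp_i$.

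Fix $(t,s,x,n,i)$ and $\alpha=(\delm,\delp)\in\mathcal U_t^2$. Itô's formula applied to $\overline V(u,\St_u,\Xt_u,\Nt_u,\Yt_u)$ on $[t,T]$, using the jump-measure decomposition (\ref{intensity_kernel_pair}), gives
\begin{equation*}
\overline V(T,\St_T,\Xt_T,\Nt_T,\Yt_T) = \overline V(t,s,x,n,i) + \int_t^T \mathcal L^\alpha_u \overline V\,du + M^\alpha_T,
\end{equation*}
where $\mathcal L^\alpha_u\overline V\leq 0$ by the HJB equation, and $M^\alpha$ combines the $dW_u$ stochastic integral with the integrals against the compensated jump measures $\bar\mu^N_\alpha$ and $\bar\mu^Y_\alpha$. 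A standard localisation argument, together with boundedness of $\overline V$ and of its jumps when $\Nmax<\infty$ (using $\overline V\leq 1/\gamma$ for $\gamma>0$ together with Theorem \ref{Theta_constrained_inventory}), or with the affine form $\overline V=x+sn+\mu n(T-t)+\Phi_i(t)$ combined with the moment bound (\ref{bounded_moments}) when $\Nmax=+\infty$ and $\gamma=\zeta=0\equiv\ell$, upgrades $M^\alpha$ to a true martingale. Taking expectation and using the terminal condition yields $\overline V(t,s,x,n,i) \geq \mathbb E^{t,n,i}\bigl[U_\gamma(P^{\alpha,s,x,n}_{t,T})\bigr]$, hence $\overline V\geq V$.

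For the reverse inequality I first build the selectors. Let $C>0$ bound $|\Theta(\cdot,n\mp 1,\cdot)-\Theta(\cdot,n,\cdot)|$ on $[0,T]\times\mathcal I$ (finite by Theorem \ref{Theta_constrained_inventory} and Proposition \ref{Theta_unconstrained_inventory}). Lemma \ref{lipschitz_hamiltonians}\textit{(i)} confines the supremum defining $\Hpm_i$ on $d\in[-C,C]$ to a compact interval $[a_i,b_i]\subset\overline{(\delmin,\delmax)}$; since $\hpm_i$ is continuous on $[a_i,b_i]\times[-C,C]$, the Kuratowski--Ryll-Nardzewski measurable selection theorem supplies Borel maps $\overline{\delpm_i}:[-C,C]\to[a_i,b_i]$ with $\overline{\delpm_i}(d)\in\argmax_{\delta\in\overline{(\delmin,\delmax)}}\hpm_i(\delta,d)$. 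The induced feedback $\overline\alpha$ defined in the statement is bounded and $\mathbb F$-predictable, through the left-limits of $\Yt,\Nt$ and Borel regularity of the selectors, so $\overline\alpha\in\mathcal U_t^2$. Rerunning the Itô computation with $\overline\alpha$ makes $\mathcal L^{\overline\alpha}_u\overline V\equiv 0$ by construction, so the inequality of the previous paragraph becomes an equality, giving $\overline V(t,s,x,n,i)=\mathbb E^{t,n,i}\bigl[U_\gamma(P^{\overline\alpha,s,x,n}_{t,T})\bigr]\leq V(t,s,x,n,i)$ and completing the verification.

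The main obstacle is the martingale step: promoting $M^\alpha$ from a local to a true martingale uniformly in $\alpha$, especially in the unconstrained-inventory case where $\Nt$ is unbounded, and in the constrained risk-averse case where $\Xt$ can blow up through its jumps. Both cases are handled by the specific structure of $\overline V$ together with the exponential moment bound (\ref{bounded_moments}) on $\Nm_T+\Np_T$, as indicated. A secondary subtlety is the existence of Borel selectors valued in a compact subset of $\overline{(\delmin,\delmax)}$, which is exactly what Lemma \ref{lipschitz_hamiltonians}\textit{(i)} was designed to provide.
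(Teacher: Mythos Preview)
Your overall architecture matches the paper's---verify via It\^o plus the HJB, then build measurable selectors and rerun with the feedback control---but there are two genuine gaps in the risk-averse constrained case $\gamma>0$, $\Nmax<\infty$ (where $\zeta$ may be positive).

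First, the running penalty. Applying It\^o to $\overline V(u,S_u,X_u,N_u,Y_u)$ alone yields a drift $\overline V_t+\mu\overline V_s+\tfrac12\sigma^2\overline V_{ss}+\text{(jump compensators)}$; by (\ref{HJB_V_full_info}) this equals $-\tfrac12\sigma^2\zeta N_u^2(\gamma\overline V-1)$ minus a nonnegative slack, which is \emph{not} $\leq 0$ when $\zeta>0$. Moreover $\overline V(T,\dots)=U_\gamma(X_T+S_TN_T-\ell(N_T))$ is not $U_\gamma(P^{\alpha,s,x,n}_{t,T})$, so ``using the terminal condition'' does not give the objective. The paper fixes both issues by applying It\^o to $Z_{t,u}\overline V(u,\dots)$ with $Z_{t,u}:=\exp(-\gamma R_{t,u})$, $R_{t,u}:=-\tfrac12\sigma^2\zeta\int_t^u N_v^2\,dv$, and invoking $U_\gamma(a+b)=U_\gamma(a)e^{-\gamma b}+U_\gamma(b)$ so that $Z_{t,T}\overline V(T,\dots)+U_\gamma(R_{t,T})=U_\gamma(P_{t,T})$; the product-rule term from $Z$ supplies exactly the missing $\tfrac12\sigma^2\zeta n^2(\gamma\overline V-1)$.

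Second, the martingale step. For $\gamma>0$ the function $\overline V$ is bounded above by $1/\gamma$ but not below, and the Brownian integrand $\overline V_s(u,\dots)=N_u\,e^{-\gamma(X_u+S_uN_u+\Theta)}$ depends on the unbounded processes $X_u,S_u$. Your ``boundedness of $\overline V$'' argument therefore fails. The paper rewrites $X_u+S_uN_u$ by integration by parts, uses that admissible spreads are bounded \emph{below}, and applies H\"older together with (\ref{bounded_moments}) and Novikov to get $\int_t^T\mathbb E[\overline V_s(u,\dots)^2]\,du<\infty$; analogous integrability is checked for the $d\Npm$ and $d\mu^Y$ integrands before passing to compensators. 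Your treatment of the $\gamma=0$ cases and of the selectors (Kuratowski--Ryll-Nardzewski in place of the Measurable Maximum Theorem) is fine.
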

\begin{notation*}
Note that we make a slight abuse of notation, writing $\overline{\delpm_i}$ for the Borel functions in the theorem and $(\overline{\delpm_u})$ for the spread processes. 
\end{notation*}

\begin{proof}
Let $1\leq i\leq k$. We check first that we can choose a maximizer of $\hpm_i(\cdot,d)$ in a measurable way with respect to $d$. Lemma \ref{lipschitz_hamiltonians} tells us that there exists $[a_i,b_i]\subseteq\overline{(\delmin,\delmax)}$ such that $\hpm_i(\delta,d)$ attains its maximum in $[a_i,b_i]$ for all $d\in[-C,C]$. 
Due to Assumptions \ref{assumptions_intensities} \textit{(\ref{assumptions_intensities_Lambda})}, $\hpm_i$ is continuous and, in particular, a Carath\'eodory function \cite[Def.4.50]{AB}. Thus, the Measurable Maximum Theorem \cite[Thm.18.19]{AB} guarantees the existence of a Borel selector of the $\argmax$, $\overline{\delpm_i}:[-C,C]\to[a_i,b_i]\subseteq\overline{(\delmin,\delmax)}$. (Note that the weak measurability assumption is trivially verified in this case.)

From here onwards, let $\overline{\delpm_1},\dots,\overline{\delpm_k}:[-C,C]\to\overline{(\delmin,\delmax)}$ be some Borel selectors as above. By Lemma \ref{lipschitz_hamiltonians} again, these functions must be bounded from below, and the spread processes $(\overline{\delpm_u)}$ defined as in the theorem are clearly admissible.

We define now $\tilde V(t,s,x,n,i)\defeq U_\gamma(x+ns+\Theta(t,n,i))$ and we want to show that $\tilde V=V$.
Let us fix the initial time and values, $t\in[0,T],\ s,x\in\mathbb R\mbox{ and }(n,i)\in\mathcal I$, and consider an arbitrary strategy $\alpha=(\delm_u,\delp_u)\in\mathcal U_t^2$. For shortness, we omit these initial conditions and strategy from the notation of the processes and the expectation. We denote by $\mathcal S\defeq \{j-l:\ 1\leq l,j\leq k,\ l\neq j\}$, the set of jump heights of $Y$, and
$$
R_{t,v}\defeq -\frac{1}{2}\sigma^2\zeta \int_t^v N_u^2 du,\qquad Z_{t,v}\defeq \exp\big(-\gamma R_{t,v}\big).
$$
By the identity $U_\gamma(a+b)=U_\gamma(a)e^{-\gamma b}+U_\gamma(b)$, we can rewrite the utility of the MM's penalized P\&L as 
\begin{equation}
\begin{split}
\label{eq1}
U_\gamma\big(P_{t,T}\big) & = U_\gamma\left(X_T+S_TN_T-\ell(N_T)\right) Z_{t,T} + U_\gamma\big( R_{t,T}\big)\\
& = \tilde V\big(T,S_T,X_T, N_T,Y_T\big) Z_{t,T}+U_\gamma\big( R_{t,T}\big).
\end{split}
\end{equation}
Using integration by parts and Ito's Lemma (recalling (\ref{no_common_jumps})), we re-express the last two terms as 
\begin{align*}
U_\gamma\big( R_{t,T}\big) = -\frac{1}{2}\sigma^2\zeta\int_t^T Z_{t,u}N_u^2du
\end{align*}
and
\begin{align*}
& \qquad\tilde V\big(T,S_T,X_T, N_T,Y_T\big) Z_{t,T}\\
& =\tilde V(t,s,x,n,i)+\int_t^T \tilde V (u,S_u,X_u, N_u,Y_u )d Z_{t,u} + \int_t^T Z_{t,u}d\tilde V (u,S_u,X_u, N_u,Y_u )\\
& = \tilde V(t,s,x,n,i)+\frac{1}{2}\sigma^2\zeta\gamma\int_t^T  V (u,S_u,X_u, N_u,Y_u ) Z_{t,u} N_u^2du+\int_t^T  Z_{t,u}\tilde V_t (u, S_u, X_{u^-}, N_{u^-},Y_{u^-} ) du\\
& + \mu \int_t^T  Z_{t,u}\tilde V_s (u, S_u, X_{u^-}, N_{u^-},Y_{u^-} )du+ \sigma \int_t^T  Z_{t,u}\tilde V_s (u, S_u, X_{u^-}, N_{u^-},Y_{u^-} ) dW_u\\
& + \frac{1}{2}\sigma^2\int_t^T  Z_{t,u}\tilde V_{ss} (u, S_u, X_{u^-}, N_{u^-},Y_{u^-} ) du\\
& +\int_t^T  Z_{t,u}\big( \tilde V(u, S_u, X_{u^-} - (S_u - \delm_u), N_{u^-}+1,Y_{u^-}) - \tilde V(u, S_u, X_{u^-}, N_{u^-},Y_{u^-})\big) d\Nm_u\\
& + \int_t^T  Z_{t,u}\big( \tilde V(u, S_u, X_{u^-} + S_u + \delp_u, N_{u^-}-1,Y_{u^-}) - \tilde V(u, S_u, X_{u^-}, N_{u^-},Y_{u^-})\big) d\Np_u \\
& + \int_t^T Z_{t,u}\int_{\mathcal S}\big( \tilde V (u, S_u, X_{u^-}, N_{u^-},Y_{u^-}+h ) - \tilde V (u, S_u, X_{u^-}, N_{u^-},Y_{u^-} ) \big)\mu^Y(dt,dh),
\end{align*}
where $\mu^Y$ is the jump measure of $Y$. Substituting in (\ref{eq1}),
\begin{equation}
\label{eq2}
\begin{split}
& U_\gamma\big(P_{t,T}\big) = \tilde V(t,s,x,n,i) + \sigma \int_t^T  Z_{t,u}\tilde V_s (u, S_u, X_{u^-}, N_{u^-},Y_{u^-} ) dW_u\\
&+\int_t^T  Z_{t,u} \Big\{\tilde V_t (u, S_u, X_{u^-}, N_{u^-},Y_{u^-} )du+\mu \tilde V_s (u, S_u, X_{u^-}, N_{u^-},Y_{u^-} )du\\
&+\frac{1}{2}\sigma^2\tilde V_{ss} (u, S_u, X_{u^-}, N_{u^-},Y_{u^-} )du + \frac{1}{2}\sigma^2\zeta N_u^2
\big(\gamma V(u,S_u,X_u, N_u,Y_u )-1\big)du\\
& +\int_{\mathcal S}\big( \tilde V(u, S_u, X_{u^-}, N_{u^-},Y_{u^-}+h) - \tilde V(u, S_u, X_{u^-}, N_{u^-},Y_{u^-}) \big)\mu^Y(dt,dh)\\
& +\big( \tilde V(u, S_u, X_{u^-} - (S_u - \delm_u), N_{u^-}+1,Y_{u^-}) - \tilde V(u, S_u, X_{u^-}, N_{u^-},Y_{u^-})\big) d\Nm_u \\
& + \big( \tilde V(u, S_u, X_{u^-} + S_u + \delp_u, N_{u^-}-1,Y_{u^-}) - \tilde V(u, S_u, X_{u^-}, N_{u^-},Y_{u^-})\big) d\Np_u\Big\}.
\end{split}
\end{equation}
Next, we want to verify that the process $\Big(\int_t^v  Z_{t,u}\tilde V_s (u, S_u, X_{u^-}, N_{u^-},Y_{u^-} ) dW_u\Big)_{t\leq v\leq T}$ is a zero mean martingale. Since $Z_{t,u}$ is bounded (either $\Nmax<\infty$ or $\gamma=0$) it suffices to check
\begin{equation}
\label{eq3}
\int_t^T \mathbb E\left[ \tilde V_s (u, S_u, X_u, N_u,Y_u )^2\right] du<+\infty.
\end{equation}
If $\gamma=0$, 
$$
\tilde V_s (u, S_u, X_u, N_u,Y_u )^2={N_u}^2
$$
and (\ref{eq3}) is a consequence of (\ref{bounded_moments}). 
\bigskip

\noindent If $\gamma\neq 0$ (hence $\Nmax<\infty$), let $C$ be a lower bound for $(\delm_u)$ and $(\delp_u)$. Integration by parts and H\"older inequality yield
\begin{align*}
&\mathbb E\left[\tilde V_s (u, S_u, X_u, N_u,Y_u )^2\right] =\gamma^2\mathbb E\left[\exp\big(-2\gamma\big(X_u+S_uN_u + \theta(u,N_u,Y_u)\big)\big)N_u^2 \right]\\
& =\gamma^2\exp(-2\gamma(x+sn))\\
&\times\mathbb E\left[\exp\Big(-2\gamma\int_t^u\delm_wd\Nm_w+ \delp_wd\Np_w +\mu N_wdw+\sigma N_wdW_w+\theta(w,N_w,Y_w)\Big) N_u^2 \right]\\
& \leq \gamma^2{\Nmax}^2\exp\Big(2\gamma\big(x+sn+\mu\Nmax (T-t)+\|\theta\|_{L^\infty([t,T]\times\mathcal I)}+2\sigma^2\gamma{\Nmax}^2(T-t)\big)\Big)\\
&\times{\mathbb E\left[\exp\Big(-4\gamma  C  \big(\Nm_T+\Np_T-\Nm_{t^-}-\Np_{t^-}\big)\Big)\right]}^{\frac{1}{2}}{\mathbb E\left[\exp\Big(-4\gamma\sigma\int_t^T N_u dW_u - 8\gamma^2\sigma^2\int_t^T N_u^2 du\Big)\right]}^{\frac{1}{2}}
\end{align*}
and (\ref{eq3}) is again consequence of (\ref{bounded_moments}) and Novikov's condition (trivially satisfied).
\bigskip

\noindent In the same way, recalling that $(Z_{t,u})$ is bounded, $(\delpm_u)$ is bounded from below and that $Q$ is bounded by continuity, one can check that 
$$
\int_t^T \mathbb E\Big[ Z_{t,u}\big| \tilde V(u, S_u, X_{u^-} \pm (S_u \pm \delpm_u), N_{u^-}\mp 1,Y_{u^-}) - \tilde V(u, S_u, X_{u^-}, N_{u^-},Y_{u^-})\big|\lpm_u\Big] du<+\infty
$$
and 
$$
\int_t^T \mathbb E\Big[Z_{t,u}\sum_{j\neq Y_{u^-}}q_t^{Y_{u^-},j}\big| \tilde V(u, S_u, X_{u^-}, N_{u^-},j) - \tilde V(u, S_u, X_{u^-}, N_{u^-},Y_{u^-}) \big|\Big] du<+\infty.
$$

\noindent Taking expectation in (\ref{eq2}), the Brownian term vanishes and integration with respect to $d\Nm,d\Np$ and $d\mu^Y$ is replaced by integration with respect to their dual predictable projections (see, e.g., \cite[p.27 T8 and p.235 C4]{B}). That is,
 
\begin{equation*}
\label{eq4}
\begin{split}
& \mathbb E\left[U_\gamma\big(P_{t,T}\big)\right] = \tilde V(t,s,x,n,i)+ \mathbb \int_t^T \mathbb E\Big[ Z_{t,u} \Big\{\tilde V_t(u, S_u, X_{u^-}, N_{u^-},Y_{u^-} )\\
& +\mu \tilde V_s (u, S_u, X_{u^-}, N_{u^-},Y_{u^-} ) + \frac{1}{2}\sigma^2\tilde V_{ss} (u, S_u, X_{u^-}, N_{u^-},Y_{u^-} )\\
& + \frac{1}{2}\sigma^2\zeta N_u^2\big(\gamma \tilde V(u,S_u,X_u, N_u,Y_u)-1\big)+\sum_{j=1}^k q^{Y_{u^-}j}_t \tilde V(u, S_u, X_{u^-}, N_{u^-},j )\\
& +\mathbbm 1^-_u\Lm_{Y_{u^-}}(\delm_u)\big( \tilde V(u, S_u, X_{u^-} - (S_u - \delm_u), N_{u^-}+1,Y_{u^-}) - \tilde V(u, S_u, X_{u^-}, N_{u^-},Y_{u^-})\big)\\
& +\mathbbm 1^+_u\Lp_{Y_{u^-}}(\delp_u)\big( \tilde V(u, S_u, X_{u^-} + S_u + \delp_u, N_{u^-}-1,Y_{u^-}) - \tilde V(u, S_u, X_{u^-}, N_{u^-},Y_{u^-})\big) \Big\}\Big] du\\
& \leq  \tilde V(t,s,x,n,i),
\end{split}
\end{equation*}
as $\tilde V$ solves (\ref{HJB_V_full_info}), with the equality attained for $(\delm_u)=(\overline{\delm_u}) \mbox{ and } (\delp_u)=(\overline{\delp_u})$  by definition.
We conclude that $V=\tilde V$ and that the pair $(\overline{\delm_u}),(\overline{\delp_u})$ is optimal.
\end{proof}

\begin{remark*}
For $\Nmax<\infty$, since the MM will not buy (resp. sell) whenever $(N_{u^-})$ hits $\Nmax$ (resp. $-\Nmax$), the value of $(\overline{\delm_u})$ (resp. $(\overline{\delp_u})$) at these stopping times is essentially irrelevant. From a strict mathematical perspective, the only constraint is that whichever the value we choose, the process needs to remain admissible.  
\end{remark*}

\subsection{Computing the optimal spreads: some particular cases}
\label{s:computing_spreads}
We have shown in Theorem \ref{verif_full_info} that the optimal spreads for the full information problem (\ref{value_function_full_info}) can be computed in feedback form in terms of $(t,n,i)=(t,N_{t^-},Y_{t^-})$ at each time $t\in[0,T]$. Practically, this means finding $\Theta$ (typically, numerically) that solves the terminal condition system of ODEs (\ref{HJB_Theta_full_info}), and finding the spreads by maximization:
\begin{equation} 
\label{eq1_argmax}
\overline{\delpm}(t,n,i)\in\argmax_{\delta\in\overline{(\delmin,\delmax)}}\Lipm(\delta)U_\gamma\left(\delta +\Theta(t,n\mp 1,i)-\Theta(t,n,i)\right).
\end{equation}
(See the proof of Lemma \ref{lipschitz_hamiltonians} to see how to reduce the maximization to a compact domain in the cases of $\delmin=-\infty$ or $\delmax=+\infty$.)
The functions in (\ref{eq1_argmax}) may admit multiple maximizers in general. In \cite{BL, G, GL} stronger assumptions are imposed on the orders intensities which guarantee, in particular, the uniqueness of the maximizers. 

We now extend these assumptions to our context and give the corresponding characterization of the spreads. Henceforth, Assumption \ref{assumptions_intensities} \textit{(\ref{assumptions_intensities_Lambda})} is replaced by the following:

\begin{assumptions}
\label{assumptions_intensities_strong}
$\Lipm\in C^2\big(\overline{(\delmin,\delmax)}\big),\ {\Lipm}'<0,\ \Lipm{\Lipm}''<c({\Lipm}')^2$ for some $0<c<2$ and $\displaystyle\lim_{\delta\to+\infty}\Lipm(\delta)=0\mbox{ if }\delmax=+\infty$, for all $1\leq i\leq k$.\footnote{Lateral derivatives are considered on the domain border. The strict inequality $c<2$ is needed when $\gamma=0$ and spreads are unconstrained.}
\end{assumptions}

\begin{remark*}
The intensity functions of Examples \ref{ex} \ref{ex1}, \ref{ex2} and \ref{ex3} all verify these stronger assumptions, while Example \ref{ex4} only does so for $\Dpm_i>1$. As a matter of fact, by solving the differential inequality in Assumptions \ref{assumptions_intensities_strong}, one sees that any $\Lipm$ within this new framework has a strict upper bound of the form of Example \ref{ex4}. In particular, for $\delmax=+\infty$, $\lim_{\delta\to+\infty}\delta\Lipm(\delta)=0$ is necessarily satisfied. 
\end{remark*}

Under Assumptions \ref{assumptions_intensities_strong}, the maximization of (\ref{eq1_argmax}) (for fixed $(t,n,i)$) can be replaced essentially by solving a contractive fixed point equation and flooring and capping the results at $\delmin$ and $\delmax$ respectively. To make this precise, even when the intensity functions are not defined beyond $\overline{(\delmin,\delmax)}$, let us set	 
$$
\Dpm_{i*}\defeq 
\begin{cases}
-U_\gamma^{-1}\left(\frac{\Lipm(\delmin)}{{\Lipm}'(\delmin)}\right) - \delmin &\mbox{ if }\delmin>-\infty\\
+\infty &\mbox{ if }\delmin=-\infty,
\end{cases}
\quad 
{\Dpm_i}^*\defeq 
\begin{cases}
-U_\gamma^{-1}\left(\frac{\Lipm(\delmax)}{{\Lipm}'(\delmax)}\right) - \delmax &\mbox{ if }\delmax<+\infty\\
-\infty &\mbox{ if }\delmax=+\infty. 
\end{cases}
$$
The following result shows how, up to constraints, the optimal spreads are given by a first term that maximizes the `expected' utility of the MM's instantaneous margin (see Remark \ref{expected_margin}), plus an additional risk adjustment taking into account the inventory held and the prospect of the market shifting. Note how the first term depends on ${\Lipm}'/\Lipm$, the percentage sensitivity of the liquidity to spread changes.
\begin{proposition}
\label{prop_flooring_capping}
Under Assumptions \ref{assumptions_intensities_strong}, the functions $\overline{\delpm_1},\dots,\overline{\delpm_k}$ of Theorem \ref{verif_full_info} are uniquely characterized by
\begin{equation}
\label{flooring_capping}
\overline{\delpm_i}(d)= \delmax\mbox{ if }\ d<{\Dpm_i}^*,\qquad\overline{\delpm_i}(d)= \delmin\mbox{ if }\ d>\Dpm_{i*}
\end{equation}
and if $d\in\overline{({\Dpm_i}^*,\Dpm_{i*})}$, then $\overline{\delpm_i}(d)$ is the unique solution of the fixed point equation
\begin{equation}
\label{eq2_fixed_point}
\overline{\delpm_i}(d)= -U_\gamma^{-1}\left(\frac{\Lipm(\overline{\delpm_i}(d))}{{\Lipm}'(\overline{\delpm_i}(d))}\right) - d.
\end{equation}
Additionally, the Hamiltonians of equation (\ref{HJB_Theta_full_info}) can be expressed as
\begin{equation*}
\label{H_constrained_explicit}
\Hpm_i(d)=
\begin{cases}
\hpm_i(\delmax,d)\qquad\qquad\qquad\qquad\mbox{ if }d\leq{\Dpm_i}^*\\
-\frac{{\Lipm}^2(\widehat\delpm(d))}{{\Lipm}'(\widehat\delpm(d))-\gamma\Lipm(\widehat\delpm(d))}\qquad\,\mbox{ if }{\Dpm_i}^*< d<\Dpm_{i*}\\
\hpm_i(\delmin,d)\qquad\qquad\qquad\qquad\mbox{ if }d\geq\Dpm_{i*}.
\end{cases}
\end{equation*}
\end{proposition}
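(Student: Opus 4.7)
The plan is to reduce the scalar maximization defining $\Hpm_i(d)=\sup_\delta\hpm_i(\delta,d)$ to analyzing its first-order condition and then to sorting out interior versus boundary optima. Throughout, I will exploit two elementary identities: $U_\gamma(x)/U_\gamma'(x) = -U_\gamma(-x)$ and $U_\gamma'(x) = 1-\gamma U_\gamma(x)$, both trivial for $\gamma=0$ and immediate from $U_\gamma(x)=(1-e^{-\gamma x})/\gamma$ otherwise.

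\emph{Step 1 (first-order condition as fixed point).} Differentiating in $\delta$,
\begin{equation*}
\partial_\delta \hpm_i(\delta,d) = {\Lipm}'(\delta)\,U_\gamma(\delta+d) + \Lipm(\delta)\,U_\gamma'(\delta+d).
\end{equation*}
Setting this to zero and using the first identity, the FOC rewrites as $U_\gamma(-(\delta+d)) = \Lipm(\delta)/{\Lipm}'(\delta)$; since $\Lipm/{\Lipm}'<0$ by Assumptions \ref{assumptions_intensities_strong}, inverting $U_\gamma$ yields exactly the fixed-point equation (\ref{eq2_fixed_point}).

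\emph{Step 2 (strict concavity at critical points).} A direct computation, combined with the FOC, gives at any interior critical point
\begin{equation*}
\partial_\delta^2 \hpm_i = \frac{U_\gamma'(\delta+d)}{{\Lipm}'(\delta)}\bigl(2({\Lipm}'(\delta))^2 - \Lipm(\delta){\Lipm}''(\delta)\bigr) + \Lipm(\delta)\,U_\gamma''(\delta+d).
\end{equation*}
With ${\Lipm}'<0$, $U_\gamma'>0$, $\Lipm>0$, $U_\gamma''\le 0$, and $\Lipm{\Lipm}''<c({\Lipm}')^2$ with $c<2$, every term is non-positive and the first is strictly negative. Hence any critical point is a strict local maximum; since two strict local maxima on an interval would require an intermediate critical point of a different type, this rules out multiplicity.

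\emph{Step 3 (flooring/capping).} Plugging $\delta=\delmax$ (resp.\ $\delmin$) into $\partial_\delta \hpm_i = 0$ and solving for $d$ gives exactly $d=\Dpm_i^*$ (resp.\ $d=\Dpm_{i*}$) by definition of those thresholds. Since $\partial_d\partial_\delta \hpm_i = {\Lipm}'U_\gamma' + \Lipm U_\gamma''<0$, the boundary derivatives $\partial_\delta \hpm_i(\delmax,d)$ and $\partial_\delta \hpm_i(\delmin,d)$ are strictly decreasing in $d$. Combining with Step 2: for $d\in(\Dpm_i^*,\Dpm_{i*})$ an intermediate-value argument produces a (necessarily unique) interior critical point; for $d<\Dpm_i^*$ (resp.\ $d>\Dpm_{i*}$) the derivative is positive (resp.\ negative) throughout $(\delmin,\delmax)$, so the maximum sits at the upper (resp.\ lower) constraint. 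The unconstrained cases $\delmax=+\infty$ or $\delmin=-\infty$ are ruled out by the decay $\Lipm\to 0$ at infinity, which forces $\hpm_i\to 0$ there.

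\emph{Step 4 (closed-form Hamiltonian).} Using the FOC $U_\gamma(\widehat\delpm+d)=-\Lipm U_\gamma'/\Lipm'$ together with $U_\gamma'=1-\gamma U_\gamma$, solving algebraically gives $U_\gamma'(\widehat\delpm+d) = {\Lipm}'/({\Lipm}'-\gamma\Lipm)$; substituting into $\hpm_i=\Lipm\, U_\gamma(\widehat\delpm+d) = -({\Lipm})^2 U_\gamma'/{\Lipm}'$ produces the stated formula on the interior region, while the two boundary formulae $\Hpm_i(d)=\hpm_i(\delmax,d)$ and $\Hpm_i(d)=\hpm_i(\delmin,d)$ are immediate from Step 3.

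The main obstacle is the second-order step: the subtle point is that $\hpm_i$ need not be globally concave in $\delta$, so one must settle for strict concavity at critical points; the precise role of the curvature inequality $\Lipm{\Lipm}''<c({\Lipm}')^2$ with $c<2$ is to make this negativity survive simultaneously for $\gamma=0$ (where $U_\gamma''=0$, so the whole negative contribution must come from the first bracket) and for $\gamma>0$ (where $U_\gamma''<0$ helps but does not by itself suffice if $c\ge 2$).
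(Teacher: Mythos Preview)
Your proof is correct and essentially self-contained. The approach differs from the paper's in the device used to secure uniqueness of the interior maximizer. The paper introduces the auxiliary function
\[
\fipm(\delta,d)=\delta+U_\gamma^{-1}\!\left(\frac{\Lipm(\delta)}{{\Lipm}'(\delta)}\right)+d,
\]
verifies (citing \cite{G}) that $\sgn(\partial_\delta\hpm_i)=-\sgn(\fipm)$ and that $\fipm$ is strictly increasing in \emph{each} variable; uniqueness of the critical point and the flooring/capping then follow from one-line monotonicity comparisons with $\fipm(\delmax,{\Dpm_i}^*)=0$ and $\fipm(\delmin,\Dpm_{i*})=0$. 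You instead bypass the auxiliary function and work directly with the second derivative at critical points, which is arguably more transparent about why the constant $c<2$ matters (your closing remark makes this explicit), and your mixed-partial argument $\partial_d\partial_\delta\hpm_i<0$ replaces the paper's monotonicity of $\fipm$ in $d$. Both routes are short; the paper's has the advantage that one monotone object controls everything, while yours avoids relying on computations deferred to \cite{G}.

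One minor imprecision: in Step~3 your sentence about the unconstrained endpoints invokes ``$\Lipm\to 0$ at infinity'' for both cases, but this decay is only assumed as $\delta\to+\infty$. For $\delmin=-\infty$ the correct reason the supremum is not attained at $-\infty$ is that $U_\gamma(\delta+d)\to-\infty$ while $\Lipm$ is bounded below by a positive constant on any left half-line (being decreasing and positive), so $\hpm_i(\delta,d)\to-\infty$. This is an easy fix and does not affect the rest of the argument.
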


\begin{proof}
For each $1\leq i\leq k$, define 
$$
\fipm(\delta,d)=\delta+U_\gamma^{-1}\left(\frac{\Lipm(\delta)}{{\Lipm}'(\delta)}\right)+d,\quad\mbox{with }\delta\in\overline{(\delmin,\delmax)},\ d\in\mathbb R
.$$
By Assumptions \ref{assumptions_intensities_strong} and straightforward computations (see computations, e.g., in \cite[Lemma 3.1]{G}), one verifies that $\sgn\big(\frac{\partial\hpm_i}{\partial\delta}\big)=-\sgn(\fipm)$ and $\fipm$ is strictly increasing on each variable.\footnote{$\sgn$ denotes the sign function, with $\sgn(0)\defeq 0$.} It follows that for any $d<{\Dpm_i}^*$ and $\delta<\delmax$ (resp. $d>\Dpm_{i*}$ and $\delta>\delmin$), $\fipm(\delta,d)<\fipm(\delmax,{\Dpm_i}^*)=0$ (resp. $\fipm(\delta,d)>\fipm(\delmin,\Dpm_{i*})=0$), which proves (\ref{flooring_capping}). 

In the same way, if $d\in({\Dpm_i}^*,\Dpm_{i*})$ and $-\infty<\delmin<\delmax<+\infty$, then $\fipm(\delmin,d)<0$ and $\fipm(\delmax,d)>0$. Hence, the continuity of $\fipm(\cdot,d)$ implies (\ref{eq2_fixed_point}) (the uniqueness of the solution being due to the strict monotonicity of $\fipm(\cdot,d)$). The cases of unconstrained spreads are proved as in \cite{G}. 

Lastly, the cases $d={\Dpm_i}^*>-\infty$ and $d=\Dpm_{i*}<+\infty$ follow in the same manner; and the new expressions for the Hamiltonians are immediate, putting $\Hpm_i(d) = \hpm_i\big(\overline{\delpm_i}(d),d\big)$.
\end{proof}

As done in \cite[Lemma 3.1]{G}, (\ref{eq2_fixed_point}) can be replaced by the explicit formula 
$$
\overline{\delpm_i}(d) = (\Lipm)^{-1}\left(\gamma \Hpm_i(d) -{\Hpm_i}'(d) \right), 
$$
but the computation of $(\Lipm)^{-1}$, $\Hpm_i$ and ${\Hpm_i}'$ still has to be carried out numerically, in general. We state now a few simplifications that arise in some particular cases, i.e., when the intensities are exponential or when $\Nmax=\infty$, $\gamma=\zeta=0\equiv\ell$. These are all derived by straightforward substitution. We refer to \cite[Sect.4]{G} for some asymptotic approximations when $t<<T$ in the one regime case, with unconstrained spreads and $\Nmax<\infty$. 

\begin{corollary}
\label{coro_spreads_vanilla_model}
If $\Nmax=+\infty$ and $\gamma=\zeta=0\equiv\ell$, then 
\begin{equation}
\overline{\delpm}(t,i)= \delmax\mbox{ if }\ \mp\mu(T-t)<{\Dpm_i}^*,\qquad\overline{\delpm_i}(d)= \delmin\mbox{ if }\ \mp\mu(T-t)>\Dpm_{i*},
\end{equation}
and if $\mp\mu(T-t)\in\overline{({\Dpm_i}^*,\Dpm_{i*})}$, then $\overline{\delpm}(t,i)$ is the unique solution of the fixed point equation
\begin{equation}
\label{eq3_fixed_point}
\overline{\delpm}(t,i) = -\frac{\Lipm(\overline{\delpm}(t,i))}{{\Lipm}'(\overline{\delpm}(t,i))} \pm \mu (T-t).
\end{equation}
If additionally $\Lipm(\delta)=\apm_i e^{-\bpm_i\delta}$, with $\apm_i,\bpm_i>0$ for each $1\leq i\leq k$, then 
$$ \overline{\delpm}(t,i) = \delmin\vee\left(\frac{1}{\bpm_i}\pm\mu (T-t)\right)\wedge\delmax.$$
\end{corollary}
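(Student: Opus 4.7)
The plan is to observe that this is a direct specialization of Theorem \ref{verif_full_info} and Proposition \ref{prop_flooring_capping} to the setting $\Nmax=+\infty$, $\gamma=\zeta=0\equiv\ell$, using the explicit dimensionality reduction already established in Proposition \ref{Theta_unconstrained_inventory}. So no new analytical work is required; everything reduces to carefully tracking how the quantities on which the feedback law depends collapse.

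First I would use Proposition \ref{Theta_unconstrained_inventory} to write $\Theta(t,n,i)=\mu n(T-t)+\Phi_i(t)$ and compute the argument of $\overline{\delpm_i}$ appearing in Theorem \ref{verif_full_info}:
\begin{equation*}
\Theta(t,n\mp 1,i)-\Theta(t,n,i) = \mp\mu(T-t),
\end{equation*}
which is, crucially, independent of $n$ and of $\Phi_i(t)$. Consequently the optimal feedback spreads reduce to $\overline{\delpm}(t,i)=\overline{\delpm_i}\bigl(\mp\mu(T-t)\bigr)$, where $\overline{\delpm_i}$ are the Borel selectors of Theorem \ref{verif_full_info}.

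Next I would apply Proposition \ref{prop_flooring_capping} with $d=\mp\mu(T-t)$. Because $\gamma=0$ forces $U_\gamma=\mathrm{Id}=U_\gamma^{-1}$, the definitions of ${\Dpm_i}^*$ and $\Dpm_{i*}$ and the fixed-point equation (\ref{eq2_fixed_point}) simplify exactly to the flooring/capping thresholds and the identity (\ref{eq3_fixed_point}) stated in the corollary. Uniqueness of the solution of the fixed-point equation on the interior regime carries over verbatim from Proposition \ref{prop_flooring_capping}, since Assumptions \ref{assumptions_intensities_strong} are in force. This settles the first part.

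For the exponential case, plug $\Lipm(\delta)=\apm_i e^{-\bpm_i \delta}$ into (\ref{eq3_fixed_point}): since $-\Lipm(\delta)/{\Lipm}'(\delta)=1/\bpm_i$ is constant, the fixed-point equation is trivially solved by $\overline{\delpm}(t,i)=\frac{1}{\bpm_i}\pm\mu(T-t)$. A direct computation of ${\Dpm_i}^*=\frac{1}{\bpm_i}-\delmax$ and $\Dpm_{i*}=\frac{1}{\bpm_i}-\delmin$ then shows that the three cases (below ${\Dpm_i}^*$, inside the interval, above $\Dpm_{i*}$) amount precisely to capping at $\delmax$, taking the interior value, or flooring at $\delmin$, which is exactly the compact expression $\delmin\vee\bigl(\frac{1}{\bpm_i}\pm\mu(T-t)\bigr)\wedge\delmax$. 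There is no real obstacle here: once the reduction $d=\mp\mu(T-t)$ is in hand, everything is bookkeeping — the only mild care needed is to match the signs when translating between the $\pm$ convention for bid/ask and the sign of the drift contribution.
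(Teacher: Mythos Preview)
Your proposal is correct and matches the paper's approach: the paper simply states that ``these are all derived by straightforward substitution,'' and your argument carries out precisely that substitution, using Proposition~\ref{Theta_unconstrained_inventory} to reduce the argument of $\overline{\delpm_i}$ to $\mp\mu(T-t)$ and then invoking Proposition~\ref{prop_flooring_capping} with $\gamma=0$.
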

In other words, if $\Nmax=\infty$ and $\gamma=\zeta=0\equiv\ell$, then solving (\ref{HJB_Theta_full_info}) is no longer necessary and one only needs to solve the fixed point equations (\ref{eq3_fixed_point}). Further, the MM's spreads do not depend on $n$. This is to be expected, since in this case she is neutral to all types of inventory risks and the terminal execution cost is neglected. However, she does need to re-adjust for changes in the regime as these impact on her probability of getting orders. If additionally, the orders intensities are exponential, then the spreads are computed straightforwardly avoiding the need of any numerical scheme. Note also how this simple model makes evident a second component of the optimal spreads through equation (\ref{eq3_fixed_point}): a drift adjustment by which the MM takes into account the overall tendency of the asset's price.  

Now we look at the results obtained for exponential intensities in general, and in particular for $\Nmax<+\infty$. Equation (\ref{eq2_fixed_point}) becomes an explicit formula in this case, as the percentage liquidity sensitivities,  ${\Lipm}'/\Lipm$, are constant.
\begin{corollary}
If $\Lipm(\delta)=\apm_i e^{-\bpm_i\delta}$, with $\apm_i,\bpm_i>0$ for each $1\leq i\leq k$, then
$$\overline{\delpm}(t,n,i) =\delmin\vee \left( -U_\gamma^{-1}\left(-1/\bpm_i\right)-\big(\Theta(t,n\mp 1,i)-\Theta(t,n,i)\big)\right)\wedge\delmax.$$
\end{corollary}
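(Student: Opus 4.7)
The approach is to apply Proposition \ref{prop_flooring_capping} directly, exploiting the fact that for exponential intensities $\Lipm(\delta)=\apm_i e^{-\bpm_i\delta}$, the logarithmic derivative ${\Lipm}'/\Lipm\equiv -\bpm_i$ is constant. First I would verify that Assumptions \ref{assumptions_intensities_strong} are satisfied, which is routine: ${\Lipm}'(\delta)=-\bpm_i\Lipm(\delta)<0$, the decay at $+\infty$ is immediate, and $\Lipm{\Lipm}''=({\Lipm}')^2$, giving $c=1<2$ in the differential inequality. Hence Proposition \ref{prop_flooring_capping} applies and the optimal spreads $\overline{\delpm_i}(d)$ are characterized by (\ref{flooring_capping})--(\ref{eq2_fixed_point}).

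The constancy of ${\Lipm}/{\Lipm}'=-1/\bpm_i$ reduces the fixed point equation (\ref{eq2_fixed_point}) to the explicit formula
$$\overline{\delpm_i}(d)=-U_\gamma^{-1}(-1/\bpm_i)-d,\qquad d\in\overline{({\Dpm_i}^*,\Dpm_{i*})},$$
while the thresholds become ${\Dpm_i}^*=-U_\gamma^{-1}(-1/\bpm_i)-\delmax$ and $\Dpm_{i*}=-U_\gamma^{-1}(-1/\bpm_i)-\delmin$ (extended to $\pm\infty$ when the corresponding bound is infinite). Note in particular that $-U_\gamma^{-1}(-1/\bpm_i)$ is well-defined: when $\gamma>0$ we need $-1/\bpm_i<1/\gamma$, which is automatic since the left-hand side is negative and the right-hand side positive, and when $\gamma=0$ one has $U_0^{-1}=\mathrm{Id}$ so the expression is simply $1/\bpm_i$.

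Next I would check that the three regimes of (\ref{flooring_capping}) coincide exactly with the three regimes of the clipping expression $\delmin\vee(-U_\gamma^{-1}(-1/\bpm_i)-d)\wedge\delmax$. Indeed, $d<{\Dpm_i}^*$ is equivalent to $-U_\gamma^{-1}(-1/\bpm_i)-d>\delmax$, in which case the clipping returns $\delmax$; $d>\Dpm_{i*}$ is equivalent to $-U_\gamma^{-1}(-1/\bpm_i)-d<\delmin$, giving $\delmin$; and on the intermediate interval the clipping is inactive and returns the fixed-point value itself. So for all $d\in\mathbb R$,
$$\overline{\delpm_i}(d)=\delmin\vee\big(-U_\gamma^{-1}(-1/\bpm_i)-d\big)\wedge\delmax.$$

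Finally, substituting $d=\Theta(t,n\mp 1,i)-\Theta(t,n,i)$ as prescribed by Theorem \ref{verif_full_info} yields the stated formula. There is essentially no obstacle here; the content of the corollary is purely a simplification afforded by the exponential shape, and the only mild point to keep track of is the consistent handling of the unconstrained boundary cases $\delmin=-\infty$ or $\delmax=+\infty$, which is already built into the statement of Proposition \ref{prop_flooring_capping}.
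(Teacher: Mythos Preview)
Your proposal is correct and follows exactly the approach the paper intends: it states that these corollaries ``are all derived by straightforward substitution'' into Proposition~\ref{prop_flooring_capping} and Theorem~\ref{verif_full_info}, and your write-up is simply a careful spelling-out of that substitution. One cosmetic point: since $\Lipm{\Lipm}''=({\Lipm}')^2$ with equality, you should take any $c\in(1,2)$ rather than $c=1$ to satisfy the strict inequality in Assumptions~\ref{assumptions_intensities_strong}, but this does not affect the argument.
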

\noindent This means that for exponential intensities in general, it is no longer necessary to solve any fixed point equations but only the system of ODEs (\ref{HJB_Theta_full_info}). The latter still needs to be solved numerically in general. (Alternatively, see \cite{G} for some asymptotic approximations.) 

\begin{remark*}
Apart from the case $\Nmax=+\infty,\ \gamma=\zeta=0\equiv\ell$, a noteworthy scenario in which (\ref{HJB_Theta_full_info}) can be further simplified to a linear system of ODEs (with constant coefficients) is when $k=1,\ \Nmax<+\infty,\ \delmin=-\infty,\ \delmax=+\infty$ and $\Lpm(\delta)=a e^{-b\delta}$, with $a,b>0$. The reduction is achieved via the transformation 
$
\Theta(t,n)=\frac{1}{b}\log\Psi(t,n).
$
See \cite{GLFT} for more details.
\end{remark*}

\section{Numerical analysis}
\label{s:numerics}
\setcounter{equation}{0}
\setcounter{subsection}{0}

In this section we present our numerical results, focusing on the difference in optimal behaviours between the partial and full information frameworks, and the intuition behind the filter. To exemplify our findings in a concrete manner, while keeping the presentation as simple as possible, we will assume throughout this section that: 
\begin{itemize}
\item The MM has risk aversion parameter $\gamma=0$.
\item There are only two possible states: state 1 represents a `bad' regime with low liquidity taken by clients, and state 2 a `good' regime with high liquidity.
\item The transition rate matrix $Q$ is constant.
\item The intensities are symmetric, proportional and exponential, i.e., $\Lpm_1(\delta)=\Lambda_1(\delta)=ae^{-b\delta}$ and $\Lpm_2(\delta)=\Lambda_2(\delta)=m\Lambda_1(\delta),\mbox{ with }a,b>0,\ m>1$.
\end{itemize}
The latter assumption allows us to perform the optimizations in equation (\ref{HJB_Theta}) analytically. Practically, proportional intensities mean that while there is more active trading on the good regime, the way in which the clients react to movements in the spreads remains unaffected. As in \cite{CJ}, we will allow for the three type of penalties to manage inventory risk: constraints on the maximum long and short positions, accumulated inventory penalty and a quadratic (possibly negligible) terminal penalty (or cost) for the MM. That is, 
\begin{itemize}
\item $\Nmax<+\infty,\ \zeta\geq 0\mbox{ and }\ell(n)=cn^2$, for some $c\geq 0$.
\end{itemize}

Let us write $\pi\defeq \pi_1$ for the conditional probability of being in the bad regime given the observable information. Note that $\pi$ is a scalar in this section, since $\pi_2=1-\pi_1$ neglects the need of the additional variable. The PIDE (\ref{HJB_Theta}) at the point $(t,n,\pi)$ reads:
\begin{equation}
\label{HJB_numerics}
0 = \theta_t + \mu n - \frac{1}{2}\sigma^2 n^2 \zeta+ 2\hat q(\pi)\theta_\pi +\frac{a}{b}\hat m(\pi)\left(\mathbbm 1_{\{n<\Nmax\}} e^{-b\widehat\delm} + \mathbbm 1_{\{n>-\Nmax\}} e^{-b\widehat\delp} \right),
\end{equation}
with terminal condition $\theta(T,n,\pi) = -cn^2$ and partial information optimal spreads given by
\begin{equation}
\label{partial_info_optimal_spreads}
\widehat\delpm(t,n,\pi)=\frac{1}{b} - 2\frac{w(\pi)}{\hat m(\pi)}\theta_\pi(t,n,\pi) - \big(\theta(t,n\mp 1,\pi/\hat m(\pi))-\theta(t,n,\pi)\big),\mbox{ where:}
\end{equation}
\begin{enumerate}[label=(\roman*)]
\item $\hat q(\pi) = q^{11}\pi + q^{21}(1-\pi)$ is the observable transition rate to the bad regime.
\item $\hat m(\pi) = \pi + (1-\pi)m$ is the observable intensity increase from the bad regime (as a ratio).
\item $w(\pi)=(m-1)\pi(1-\pi)$ is the observable variance, of the square root, of the percentage intensity increase from the bad regime; i.e., a measure of observable order flow volatility.
\end{enumerate}
The previous equations are valid for $\delmin\ll 0\ll\delmax$, or more precisely, for $\delmin,\delmax$ such that $\delmin\leq\widehat\delpm\leq\delmax$ holds true over the whole domain. Otherwise, one needs to floor and cap the optimal spreads and change the Hamiltonians accordingly, as done in Proposition \ref{prop_flooring_capping}. 

A finite differences scheme of simple implementation to solve (\ref{HJB_numerics}) consists of reversing time and using an explicit upwind Euler scheme over a uniform grid for $(t,\pi)$ and $n=-\Nmax,-\Nmax+1,\dots,\Nmax$. The terms of the form $\theta(t,n\mp 1,\pi/\hat m(\pi))$, where $\pi/\hat m(\pi)$ will typically fall outside of the grid, can be approximated by linear interpolation as in \cite{DFo}. The limiting equations can be used for $\pi\to 0^+$ and $\pi\to 1^-$ provided that $-q^{11}>0$, which we assume henceforth. Such a scheme can be shown to be consistent, stable and monotone under an appropriate CFL condition. In light of equation (\ref{HJB_Theta}) satisfying a comparison principle (see \cite[Thm.5.3]{CEFS} and why it holds for our model in the proof of Theorem \ref{main_theorem_2}), we know the scheme converges to the unique continuous viscosity solution \cite{BS}, and we can recover the expected penalized P\&L of the MM (Theorems \ref{main_theorem_1} and \ref{main_theorem_2}). The optimal strategy to be followed by the MM is then given in feedback form by $\big(\widehat\delm(t,N_{t^-},\Pi_{t^-}),\widehat\delp(t,N_{t^-},\Pi_{t^-})\big)$.\footnote{As previously mentioned, these are technically only candidates for optimal (or in general, $\ve$-optimal) strategies. We do not rigorously prove their optimality character here, but merely note that well known results of discrete time dynamic programming, together with the convergence of the discrete solutions of (\ref{HJB_numerics}) to the analytical one, suggest that they can be safely used as such.}

We will focus our attention on the optimal ask spread, with analogous observations holding for the bid spread. The parameter values in Table \ref{table_parameters} were used for all the experiments presented in this section. For those present in the classical one regime models, we have chosen values used in previous works (in the $\gamma=0$ case \cite{CJ}) to make the comparison clearer. In particular, the value of $c$ will be taken as either $0$ or $0.01$, to be further specified in each experiment. The time horizon will always be the one displayed on the corresponding axis. Note that we work in a symmetric market, which justifies analyzing one side only.

\begin{table}[h!]
\centering
 \begin{tabular}{||c |c |c |c |c |c |c |c |c |c |c ||} 
 \hline
 Parameter & $\mu$ & $\sigma$ & $\zeta$ & $a$ & $b$ & $\Nmax$ & m & $-q^{11}=q^{21}$ & $-\delmin=\delmax$ \\ [0.5ex] 
 \hline
 Value & 0 & 0.1 & 0.1 & 2 & 25 & 3 & 5 & 5 & 10 \\
 \hline
\end{tabular}
\caption[Parameters for market making numerical tests]{Standing parameter values for numerical tests presented.}
\label{table_parameters}
\end{table}

\subsection{Comparing full and partial information optimal strategies}

Under the standing assumptions of this section, the full information equation (\ref{HJB_Theta_full_info}), for a function $\tilde\theta$, becomes:
\begin{equation}
\label{HJB_full_info_numerics}
0 = \tilde\theta_t + \mu n - \frac{1}{2}\sigma^2 n^2 \zeta+ \tilde q_i \left(\tilde\theta(t,n,1)-\tilde\theta(t,n,2)\right) +
\frac{a}{b} \tilde m_i \left(\mathbbm 1_{\{n<\Nmax\}} e^{-b\widetilde\delm} + \mathbbm 1_{\{n>-\Nmax\}} e^{-b\widetilde\delp} \right),
\end{equation}
with terminal condition $\tilde\theta(T,n,i) = -cn^2$ and full information optimal spreads given by
\begin{equation}
\label{full_info_optimal_spreads}
\widetilde\delpm(t,n,i)=\frac{1}{b} - \big(\tilde\theta(t,n\mp 1,i)-\tilde\theta(t,n,i)\big),\mbox{ where:}
\end{equation}
\begin{enumerate}[label=(\roman*)]
\item $\tilde q_i =q^{11}\mathbbm 1_{\{1\}}(i)+q^{21}\mathbbm 1_{\{2\}}(i)$ is the effective transition rate to the bad regime.
\item $\tilde m_i = \mathbbm 1_{\{1\}}(i) + \mathbbm 1_{\{2\}}(i) m$ is the effective intensity increase from the bad regime (as a ratio).
\end{enumerate}
\noindent The previous equations are valid for $\delmin\ll 0\ll\delmax$, as in the partial information case. 

Although similar, equation (\ref{HJB_full_info_numerics}) for the bad regime $i=1$ (resp. good regime $i=2$) is not the limiting equation of (\ref{HJB_numerics}) for $\pi\to 1^-$ (resp. $\pi\to 0^+$). Indeed, a MM with full information can expect to make a larger profit. Thus, in general, $\tilde\theta>\theta$ even in these extreme cases.\footnote{Our numerical findings were indeed consistent with this intuitive statement.} However, the corresponding optimal strategies do (at least approximately) agree, as Figures \ref{delp_full_vs_partial_running_penalty} and \ref{delp_full_vs_partial_all_penalties} show.

\begin{figure}[H]
\hspace*{-.2cm}
	\includegraphics[scale=.33]{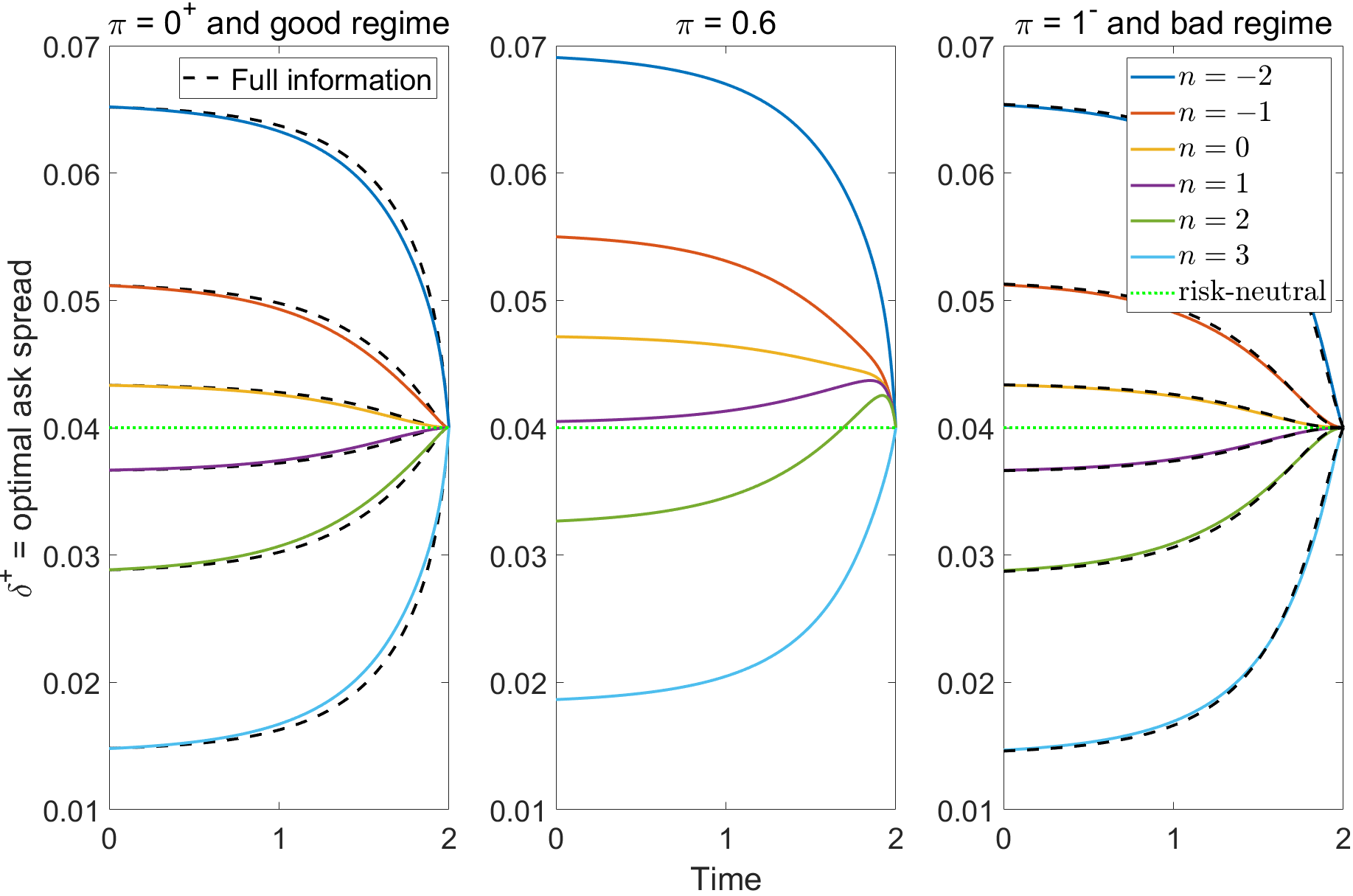}
	\caption[Optimal spreads with full and partial information without terminal penalty]{Optimal ask spread as a function of time, under full and partial information (dashed and solid lines resp.), and for different inventory and filter values. Inventory levels increase from top to bottom in all cases. Terminal execution cost is neglected ($c=0$).}
	\label{delp_full_vs_partial_running_penalty}
\end{figure}

Figure \ref{delp_full_vs_partial_running_penalty} shows the optimal ask spread under partial information as a function of time, for different inventory levels (solid lines). It also displays how it changes for different filter values ($\pi=0,\ 0.6\mbox{ and }1$ from left to right) and how it compares to the optimal ask spread under full information (dashed lines) in the good regime (left) and bad regime (right). The inventory levels increase from top to bottom in all cases, and the terminal execution cost is neglected ($c=0$). We have chosen to display $\pi=0.6$ due to the effect of the filter being more pronounced around this value; it results in an optimal spread which is, for $t\ll T$, between $6\%$ and $30\%$ higher (depending on the inventory level) than the corresponding ones under full information (a sizeable difference in practice). We will come back to this in Section \ref{section_simulations}. All other values show similar intermediate behaviours. Recall that when the inventory reaches the minimum $n=-\Nmax=-3$, the MM will not sell any more (either abstaining from quoting or giving a `stub' quote) until her inventory increases again, which is why there is no spread plotted for this position. We see that some of the features already present in one regime models are preserved for two regimes, both under full and partial information; namely: 

\begin{itemize}

\item For $t\ll T$ the spread does not depend on time (approximately) and its asymptotic value becomes higher as the inventory decreases to the lower constraint $n=-\Nmax$. Indeed, for flat or short positions, any additional ask order will increase inventory risk, moving it closer to the minimum allowed and raising the accumulated inventory penalty. The MM manages this risk by increasing the spread, thus demanding a higher instantaneous profit and reducing the probability of getting executed. Similarly for long positions, the higher the exposure the lower the spread, as the MM seeks to unwind her inventory.

\item In the case of negligible terminal cost, the spread converges to a terminal value independent of the inventory. This is the optimal spread of a fully risk-neutral MM with negligible costs (Corollary \ref{coro_spreads_vanilla_model}), who only maximizes `expected instantaneous margin'. (This value is given by $\widehat\delp(T,\cdot,\cdot) = 1/b$, the reciprocal percentage sensitivity of the liquidity to spread changes; cf. Proposition \ref{prop_flooring_capping}). The reason being that as the time horizon approaches, it becomes less and less likely for the MM to get executed again. The accumulated penalty cannot change too much at this point, and the risk of reaching the inventory constraints diminishes. The MM therefore takes a bit more risk and makes a last attempt at increasing her expected P\&L, either by increasing execution probability (for null or short positions) or increasing her instantaneous profit if executed (for long positions).

\end{itemize}

\noindent Nevertheless, the following differences can be observed:

\begin{itemize}

\item Under full information (and for symmetric markets), the spreads are always symmetric in the inventory with respect the risk-neutral level, i.e., $1/b - \widetilde\delp(\cdot,n,\cdot) = \widetilde\delp(\cdot,1-n,\cdot) - 1/b$, for $0<n\leq\Nmax$. However, under partial information this symmetry is broken, as the MM increases her spreads. This holds in fact for any $0<\pi<1$ and it is suggested by the comparison of equations (\ref{partial_info_optimal_spreads}) and (\ref{full_info_optimal_spreads}), since $\theta_\pi<0$ (an increasing probability of being in the bad regime lowers the expected P\&L; see argument of \cite[Lemma 3.3]{BL0}). As a result, the full information spreads are downward biased when the exact regime is unknown. Loosely speaking, this bias is approximately proportional to the product of $\theta_\pi$ (sensitivity of the expected P\&L to observable regime changes) and $w(\pi)$ (observable order flow volatility), and inversely proportional to the observable intensity increase $\hat m(\pi)$. Intuitively, a partially informed MM faces not only the risk of the market regime shifting but also uncertainty on the current state, and must increase her spreads accordingly. This is done by considering the cost of any observable changes on the P\&L and the fluctuations in the order flow, and discounting by liquidity increases.

\item The good and bad regimes differ in the rate of order flow at the reference price (i.e., the amplitude parameters $a$ and $am$ of the orders intensities). Consequently, one could erroneously think that the partial information strategy should essentially be a one regime strategy for some intermediate parameter, such as $a\hat m(\pi)$. However, in the full information framework, an increase in the liquidity taken by clients results in the asymptotic spreads (i.e., for $t\ll T$) moving closer to the risk-neutral level (see \cite{CJ, GLFT}). On the other hand, this is not true any more in the partial information case, due to the regime risk adjustment previously mentioned that shifts the spreads.

\item Lastly, we remark that there is a distinguished change in behaviour close to expiry, with the spreads overshooting above the risk-neutral level for some inventories, before approaching it again. Intuitively, the overshooting is due to the added regime risk adjustment, and the convergence is due to the vanishing of additional inventory risks. (Recall that the MM's main concern becomes her terminal hedging cost, should there be one, and this does not depend on the market regime.) A somewhat similar effect can be found under full information in asymmetric markets \cite{CJ}.

\end{itemize}
 
Figure \ref{delp_full_vs_partial_all_penalties} serves the same comparison as Figure \ref{delp_full_vs_partial_running_penalty}, but including  in this case a terminal execution cost. The observations remain mostly the same, except for a change in the terminal behaviour of the spreads, just as in the classical one regime case. Here, the spreads diverge from the risk-neutral no-costs value instead of approaching it, as the MM makes a last instant attempt to cover her hedging cost.

\begin{figure}[H]
\hspace*{-.2cm}
	\includegraphics[scale=.33]{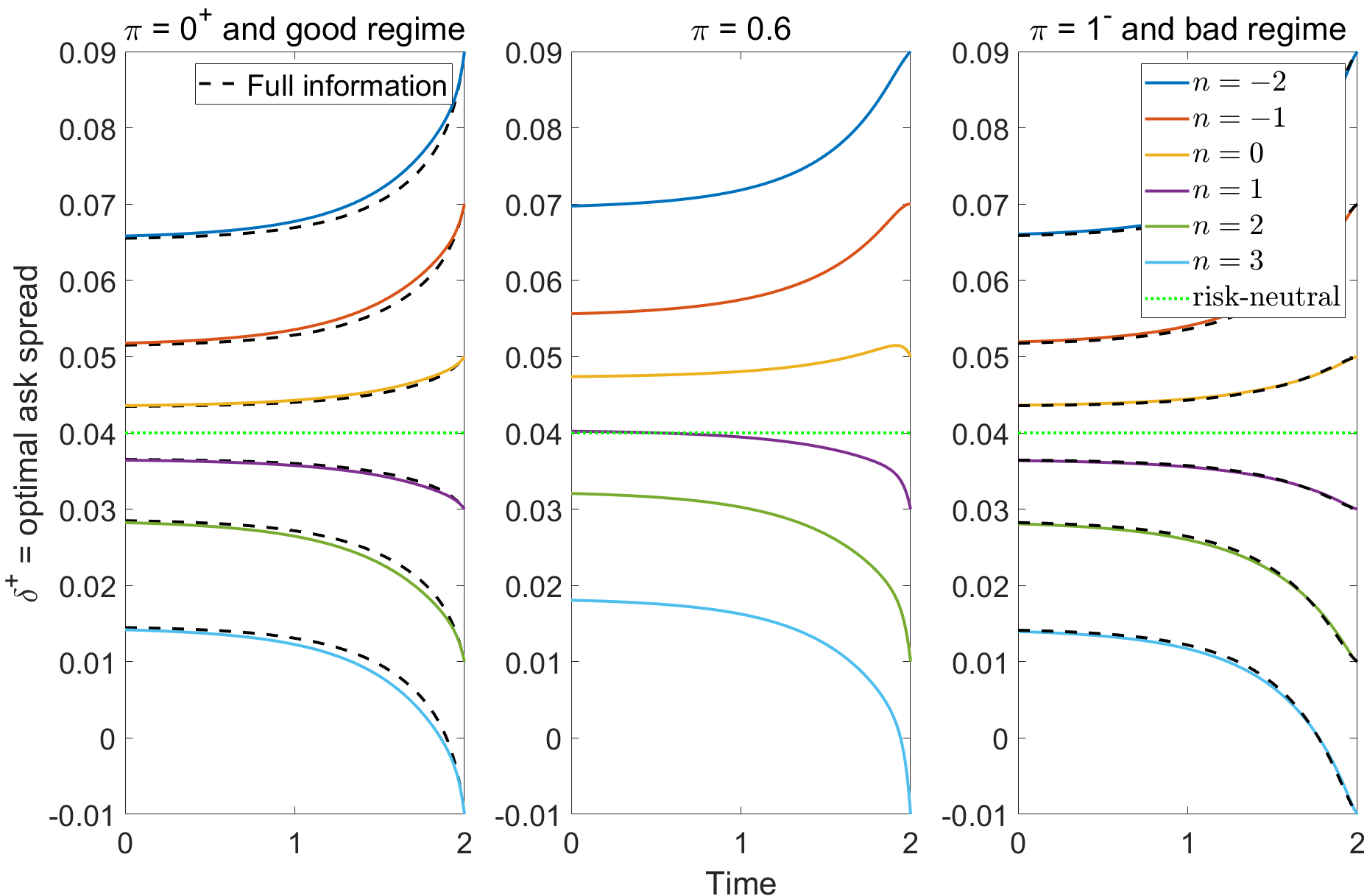}
	\caption[Optimal spreads with full and partial information with terminal penalty]{Optimal ask spread as a function of time, under full and partial information (dashed and solid lines resp.), and for different inventory and filter values. Inventory levels increase from top to bottom in all cases. Terminal execution cost: $\ell(n)=-cn^2$, with $c=0.01$.}
	\label{delp_full_vs_partial_all_penalties}
\end{figure}

Figure \ref{delp_partial_surf} shows the partial information optimal ask spread as a function of time and filter, for three different inventory positions: extreme short (left), flat (center) and extreme long (right), and no terminal penalty. We can see that the spread is concave on the filter, and the maximum concavity is reached for a null inventory and $\pi\approx 0.6$, gradually decreasing as the position becomes short or long. We had observed already in equation (\ref{partial_info_optimal_spreads}) that the partially informed MM increases her spreads to manage higher regime risk, and that this correction is approximately proportional to the expected P\&L sensitivity, $\theta_\pi$, times the observed order flow volatility $w(\pi)=(m-1)\pi(1-\pi)$. Heuristically, it seems reasonable that the change in spread should resemble the concavity of $w(\pi)$ for a risk averse MM, and that the cost of regime uncertainty is the highest for a flat position (since deviations from it increase in turn price exposure). The question of why the maximum variation is reached around $\pi\approx 0.6$ is once again differed to Section \ref{section_simulations}.

\begin{figure}[H]
\hspace*{-.05cm}
	\includegraphics[scale=.33]{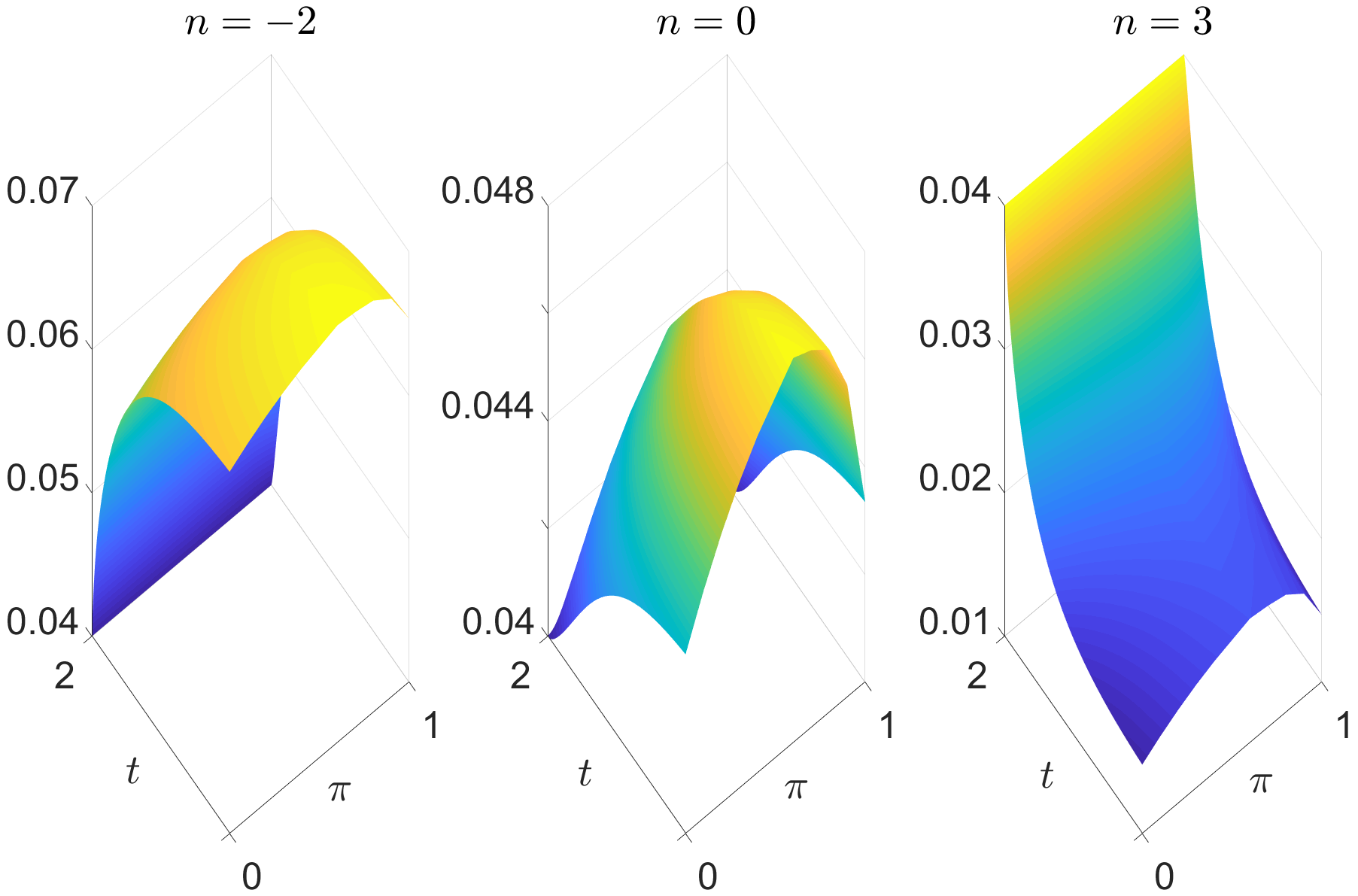}
	\caption[Optimal spreads as functions of time and filter]{Optimal partial information ask spread as a function of time and filter (observable probability of bad regime), for different inventory levels. Terminal execution cost is neglected ($c=0$).}	
	\label{delp_partial_surf}
\end{figure}

\subsection{Sample paths and a closer look at the filter}
\label{section_simulations}

There are different ways to simulate point processes with stochastic intensity. A classical approach of simple implementation is the so-called \textit{thinning method} \cite{O}. This method is particularly well suited to our framework, as it can be combined with the filtering theory developed in Section \ref{s:filtering} by making use of the observable intensity of $N$ (see Theorem \ref{main_theorem_1} and \cite[Alg.3.2]{GKM}). The interested reader is referred to \cite{LTT} for optimizations in terms of the thinning bound. 

Figure \ref{simulations} shows four sample paths resulting from an optimal behaviour of the MM with incomplete information. They were obtained by jointly simulating the inventory $N$ (middle) starting from $n_0=0$, the filter $\Pi=\Pi^{\alpha,1}$ (bottom) starting from $\pi_0=0.5$, and the optimal strategy $\alpha=\big(\widehat\delm(t,N_{t^-},\Pi_{t^-}),\widehat\delp(t,N_{t^-},\Pi_{t^-})\big)$ (top), for $c=0$.

\begin{figure}[H]
\hspace*{-.12cm}
	\includegraphics[scale=.58]{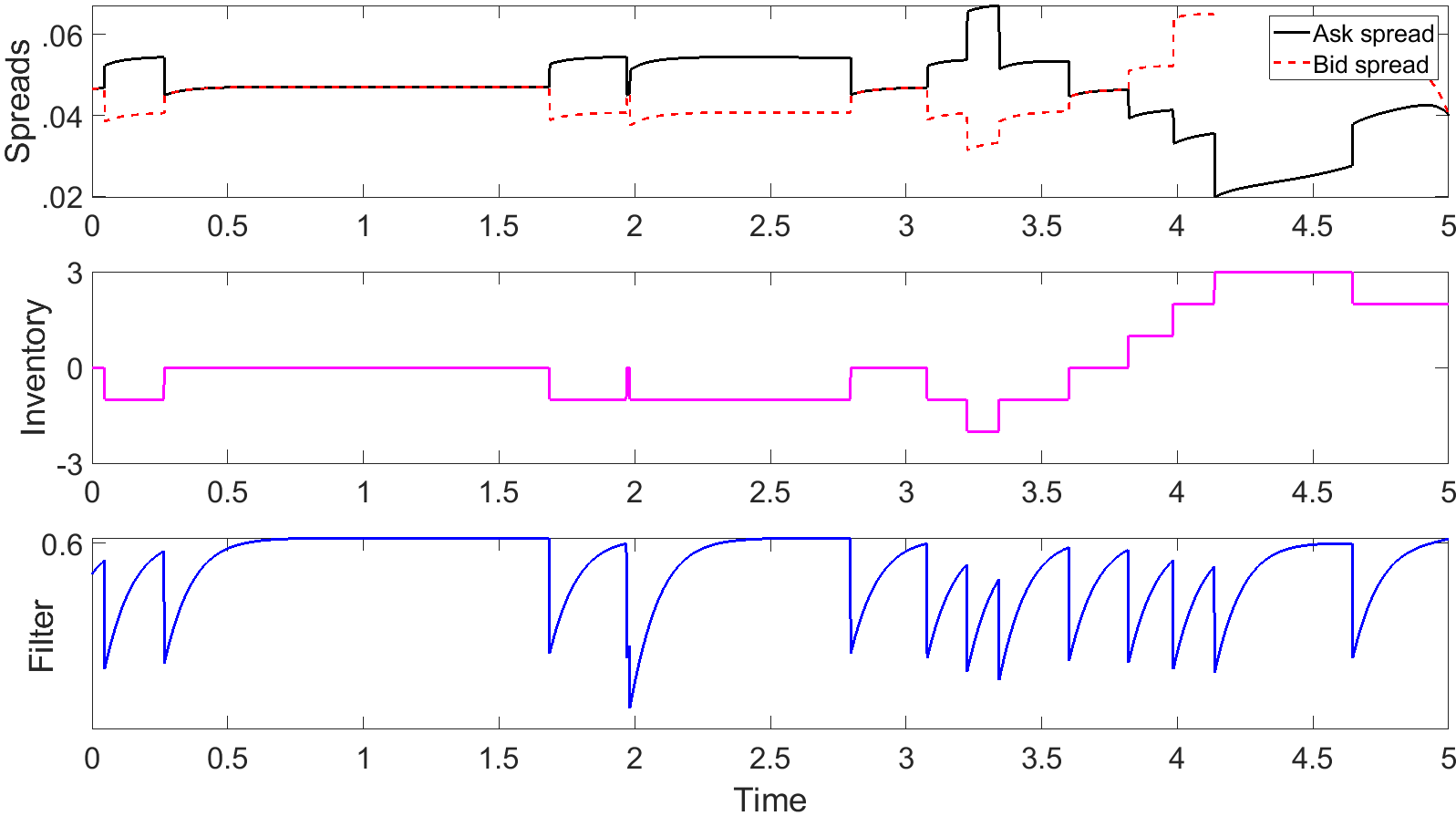}
	\caption[Sample paths of optimal spreads, inventory and filter]{Sample paths of optimal partial information bid/ask spreads (first), inventory (second) and filter (third), with $c=0$, $n_0=0$ and $\pi_0=0.5$.}	
	\label{simulations}
\end{figure}

We want to analyze the behaviour of $\Pi$. Let us recall that $\Pi_t$ represents the probability of being in the bad regime (a slow market with low liquidity) given the information observed by the MM at time $t$. The MM makes her assessment based on the orders (both buy and sell) that she received so far, which is the same as looking at the evolution of her inventory. If we take a close look at what happens between any two consecutive orders, we see that there is a pattern that repeats itself. As time passes without the MM receiving any order, she deems more likely that the market is in the bad regime, and so $\Pi_t$ increases. If enough time goes by in this way, then $\Pi_t$ gets asymptotically closer to some preponderant `equilibrium' value which is around 0.6. (Recall that this is the same value we have encountered before as having the greatest effect on the spreads.) But as soon as an order arrives, the MM revisits her probabilities, as being in the good regime of high liquidity seems a lot more likely. This is why we see that $\Pi_t$ jumps downwards with each trade (i.e., at the jump times of the inventory). 

So what makes $\pi\approx 0.6$ so special? If we look at the SDE (\ref{Pi_equivalent}) governing the filter dynamics, it now reads:

\begin{equation}
d\Pi_t = \left(\hat q(\Pi_t) + w(\Pi_t)a\big(\mathbbm 1^-_t e^{-b\widehat{\delta_t^-}}+\mathbbm 1^+_t e^{-b\widehat{\delta_t^+}}\big)\right)dt + \Pi_{t^-}\left(\frac{1}{\hat m\big(\Pi_{t^-}\big)}-1\right)\Big(d\Nm_t + d\Np_t \Big).
\end{equation}
In between jumps, each path of $\Pi_t$ evolves according to an ODE. If enough time passes by without another order arriving, we can consider the asymptotic behaviour for $t\to T$. For $c=0$, this leads to
$$
\frac{d\Pi_t}{dt}\approx \hat q(\Pi_t) + w(\Pi_t)\frac{a}{e}\beta,\quad\mbox{with }\beta=1\mbox{ or }\beta=2,
$$
depending on the current inventory level. The right-hand side of this ODE is a concave parabola in $\Pi_t$ with a negative root and another one, $\pi^*$, between 0 and 1. For the parameter values in Table \ref{table_parameters}, it is either $\pi^*\approx 0.572$ for $\beta=1$, or $\pi^*\approx 0.636$ for $\beta=2$. Therefore, $\pi^*$ is an attractor (i.e., a stable equilibrium) for the filter, which explains the tendency of $\Pi_t$ towards 0.6 and the central role of this value. Indeed, as the filter approaches one of its attractors, the MM exhausts all the information she has and can make no further assessment on the state of the market until she is hit by another order. In a sense, this is when she faces the most uncertainty, and she increases her spreads accordingly to manage regime risk.

%%%%%%%%%%%%%%%%%%%%%%%%%%%%   CHAPTER TWO    %%%%%%%%%%%%%%%%%%%%%%%%%%%%%%%%%%%%%%%%%%%%%%%%%%%%%%%%%%%%%%%%%%%%%%%%

\chapter[A policy iteration algorithm for nonzero-sum impulse games]{A policy iteration algorithm for nonzero-sum stochastic impulse games}
\chaptermark{An algorithm for impulse control games}
\label{c:2}

\phantomsection
\section*{Introduction}
\addcontentsline{toc}{section}{Introduction}

Stochastic differential games model the interaction between players whose objective functions depend on the evolution of a certain continuous-time stochastic process. The subclass of impulse games focuses on the case where the players only act at discrete (usually random) points in time by shifting the process. In doing so, each of them incurs into costs and possibly generates `gains' (not necessarily nonnegative) for the others at the same time. They constitute a generalization of the well-known (single-player) optimal impulse control problems \cite[Chpt.7-10]{OS}, which have found a wide range of applications in finance, energy markets and insurance \cite{B,CCTZ,EH,K}, among plenty of other fields. 

From a deterministic numerical viewpoint, an impulse control problem entails the resolution of a differential quasi-variational inequality (QVI) to compute the value function and, when possible, retrieve an optimal strategy. Policy-iteration-type algorithms \cite{AF,COS,CMS} undoubtedly occupy an ubiquitous place in this respect, and more so in the infinite horizon case.   

The presence of a second player makes matters much more challenging, as one needs to find two optimal (or \textit{equilibrium}) payoffs dependent on one another, and the optimal strategies take the form of \textit{Nash equilibria} (NEs), i.e., strategies such that no player can benefit from unilaterally changing her own. And while impulse controls give a more realistic setting than `continuous' controls in applications as the aforementioned ones, they normally lead to less tractable and very technical models.

It is not surprising then, that the literature in impulse games is limited and mainly focused on the zero-sum case \cite{A1,C,EM}: when the cost for one player equals the gain for the other, and the same is true for the objective functions. The more general and flexible nonzero-sum instance, which dispenses with the former constraints, has only recently began to receive more attention. The authors of \cite{ABCCV} consider for the first time a general two-player game where both participants act through impulse controls,\footnote{\cite{CWW,WW} also consider nonzero-sum impulse games but assuming the intervention times of the players are known from the outset.} and characterize certain type of equilibrium payoffs and NEs via a system of QVIs by means of a Verification Theorem. Using this result, they provide the first example of an (almost) fully analytically solvable game, motivated by central banks competition over the exchange rate. The result is generalized to arbitrary $N$ players in \cite{BCG}, which also gives a semi-analytical solution (i.e., depending on several parameters found numerically) to a concrete cash management problem.\footnote{\cite{BCG} also studies the mean field limit game.} A different, more probabilistic, approach is taken in \cite{FK} to find a semi-analytical solution of a strategic pollution control problem and to prove another Verification Theorem.  

The previous examples, and the lack of others,\footnote{\cite{ABCCV} also gives semi-analytical solutions to modifications of the linear game when changing the payoffs in a non-symmetric way. To the best of the author's knowledge, these are the collection of examples available at the time of writing.} give testimony of how difficult it is to explicitly solve nonzero-sum impulse games. The analytical approaches require an educated guess to start with and (with the exception of the henceforth referred to as \textit{the linear game} in \cite{ABCCV}) several parameters need to be solved for in general from highly-nonlinear systems of equations coupled with order conditions. All of this can be very difficult, if not prohibitive, when the structure of the game is not simple enough. Further, all of them (as well as the majority of the concrete examples in the impulse control literature) assume linear costs. In general, for nonlinear costs, the state to which each player wants to shift the process when intervening is not unique, but depends on the starting point. This effectively means that infinite parameters may need to be solved for, drastically discouraging this methodology. 

While the need for numerical schemes able to handle nonzero-sum impulse games is obvious, unlike in the single-player case, this is an utterly underdeveloped line of research. Focusing on the purely deterministic approach, solving the system of QVIs derived in \cite{ABCCV} involves handling coupled free boundary problems, further complicated by the presence of nonlinear, nonlocal and noncontractive operators. Additionally, solutions will typically be irregular even in the simplest cases such as the linear game. Moreover, the absence of a viscosity solutions framework such as that of impulse control \cite{S} means that it is not possible to know whether the system of QVIs should have a solution or not (not to mention some form of uniqueness) unless one can explicitly solve it. This is further exacerbated by the fact that even defining such system requires a priori assumptions on the solutions (the \textit{unique impulse property}). This is also the case in \cite{FK}.

In this chapter (based on \cite{ABMZZ,Z}), we make the first attempt (to the best of our knowledge) at numerically solving nonzero-sum impulse games. In a nutshell, the iterative algorithm we put forward treats the game at each iteration as a combination of a fixed point problem and a slowly relaxing single-player impulse control problem. This allows us to take advantage of the machinery for the latter. Because no convergence analysis is provided in this chapter, the proposed algorithm is admittedly heuristic. Instead, we report on a range of numerical experiments which show convergence on fixed grids as well as convergence of the error with respect to the discretization. In fact, errors are quite satisfactory and, in the examples we have tackled, the relative errors easily drop well below $0.1\%$. The algorithm can thus be used to assist further development of the field, as well as to gain insight into applications modelled by nonzero-sum impulse games. In Chapter \ref{c:3}, which focuses on \textit{symmetric} games, a modified algorithm will be presented together with a rigorous convergence analysis. 

Let us outline the organization of the rest of the chapter. We start in Section \ref{s:analytical_problem} by properly setting up nonzero-sum stochastic impulse games and recalling the main result of \cite{ABCCV}, namely, the Verification Theorem with the corresponding system of QVIs. For the sake of completeness, the linear game and the underlying problem of competing central banks are also presented. Section \ref{s:general_algo} motivates, lists, and discusses our algorithm. Section \ref{s:experiments} presents numerical evidence supporting it, in the light of which we draw some conclusions in Section \ref{s:conclusions_chapter2}.

\section{Analytical continuous-space problem}
\label{s:analytical_problem}
In this section we start by reviewing a general formulation of two-player nonzero-sum stochastic differential games with impulse controls, as considered in \cite{ABCCV}, together with the main theoretical result of the authors: a characterization of certain NEs via a deterministic system of QVIs.
The indexes of the players are denoted $i,j\in\{1,2\}$ with $i\neq j$. Since no other type of games is considered in this thesis, we will often speak simply of `games' for brevity.

Throughout the thesis, we restrict our attention to the one-dimensional infinite-horizon case. Some of the most technical details, concerning the well-posedness of the model, are left out for the sake of briefness and can be found in \cite[Sect.2]{ABCCV}.

\subsection{General two-player nonzero-sum impulse games}
\label{s:general_game}
Let $(\Omega, \mathcal F, (\mathcal F_t)_{t\geq 0}, \mathbb P)$ be a filtered probability space under the usual conditions supporting a standard one-dimensional Wiener process $W$. We consider two players that observe the evolution of a \textit{state variable} $X$, modifying it when convenient through \textit{controls} of the form $u_i=\{(\tau_i^k,\delta_i^k)\}_{k=1}^\infty$ for $i=1,2$. The stopping times $(\tau_i^k)$ are their \textit{intervention times} and the $\mathcal F_{\tau_i^k}$-measurable random variables $(\delta_i^k)$ are their \textit{intervention impulses}. Given controls $(u_1,u_2)$ and a starting point $X_{0^-}=x\in\mathbb R$, we assume $X=X^{x;u_1,u_2}$ has dynamics
\begin{equation}
\label{X.1}
X_t=x+ \int_0^t\mu(X_s)ds + \int_0^t\sigma(X_s)dW_s + \displaystyle\sum_{k:\ \tau_1^k\leq t}\delta_1^k + \displaystyle\sum_{k:\ \tau_2^k\leq t}\delta_2^k,
\end{equation}
for some given drift and volatility functions $\mu,\sigma:\mathbb R\to \mathbb R$, locally Lipschitz with linear growth.\footnote{See \cite[Def.2.2]{ABCCV} for a precise recursive definition in terms of the strategies.}

Equation (\ref{X.1}) states that $X$ evolves as an It\^o diffusion in between the intervention times, and that each intervention consists in shifting $X$ by applying an impulse. It is assumed that the players choose their controls by means of threshold-type \textit{strategies} of the form 
$\varphi_i=(\mathcal I_i,\delta_i)$, where $\mathcal I_i\subseteq\mathbb R$ is a closed set called \textit{intervention (or action) region} and $\delta_i:\mathbb R\to \mathbb R$ is an \textit{impulse function} assumed to be continuous. The complement $\mathcal C_i=\mathcal I_i^c$ is called \textit{continuation (or waiting) region}.\footnote{In \cite{ABCCV}, strategies are described in terms of continuation regions instead.} 
That is, player $i$ intervenes if and only if the state variable reaches her intervention region, by applying an impulse $\delta_i(X_{t^-})$ (or equivalently, shifting $X_{t^-}$ to $X_{t^-}+\delta_i(X_{t^-})$). Further, we impose a priori constraints on the impulses: for each $x\in\mathbb R$ there exists a set $\emptyset\neq\mathcal Z_i(x)\subseteq\mathbb R$ such that $\delta_i(x)\in \mathcal Z_i(x)$ if $x\in\mathcal I_i$.\footnote{In \cite{ABCCV}, $\mathcal Z_i(x)$ is the same for every $x\in\mathbb R$. The generalization in this chapter is a standard one in impulse control and will prove useful in the sequel. The results in \cite{ABCCV} still hold with the same proofs.}
We also assume the game has no end and player 1 has the priority should they both want to intervene at the same time.

Given a starting point and a pair strategies, the (expected) \textit{payoff} of player $i$ is given by
\begin{equation*}
\label{Ji}
J_i(x;\varphi_1,\varphi_2) \defeq \mathbb E \left[ \int_0^\infty e^{-\rho_i s} f_i(X_s)ds - \sum_{k=1}^\infty e^{-\rho_i \tau_i^k} c_i \big( X_{(\tau_i^k)^-}, \delta_i^k \big) + \sum_{k=1}^\infty e^{-\rho_i \tau_j^k} g_i \big( X_{(\tau_j^k)^-}, \delta_j^k \big)\right],
\end{equation*}
with $X=X^{x;u_1,u_2}=X^{x;\varphi_1,\varphi_2}$. For player $i$, $\rho_i>0$ represents her (subjective) \textit{discount rate}, $f_i:\mathbb R\to\mathbb R$ her \textit{running payoff}, 
$c_i:\mathbb R^2\to(0,+\infty)$ her \textit{cost of intervention} and $g_i:\mathbb R^2\to\mathbb R$ her \textit{gain due to her opponent's intervention} (not necessarily non-negative).
The functions $f_i,c_i,g_i$ are assumed to be continuous.

Throughout this thesis, only admissible strategies are considered. Briefly, $(\varphi_1,\varphi_2)$ is \textit{admissible} if it gives well-defined payoffs for all $x\in\mathbb R$, $\|X\|_\infty$ has finite moments and, although each player can intervene immediately after the other, infinite simultaneous interventions are precluded.\footnote{More precisely, these would be $\mathbb R$-admissible strategies. See \cite[Def.2.5]{ABCCV} for more details.} As an example, if the running payoffs have polynomial growth, the `never intervene strategies'  $\varphi_1=\varphi_2=(\emptyset,\emptyset\hookrightarrow\mathbb R)$ are admissible and the game can be played.

Given a game, we want to know whether it admits some Nash equilibrium and how to compute it. Recall that a pair of strategies $(\varphi_1^*,\varphi_2^*)$ is a \textit{Nash equilibrium} (NE) if for every admissible $(\varphi_1,\varphi_2)$,
$$
J_1(x;\varphi_1^*,\varphi_2^*)\geq J_1(x;\varphi_1,\varphi_2^*)\quad\mbox{and}\quad J_2(x;\varphi_1^*,\varphi_2^*)\geq J_2(x;\varphi_1^*,\varphi_2),
$$
i.e., no player can gain from a unilateral change of strategy. If one such NE exists, we refer to $(V_1,V_2)$, with $V_i(x)= J_i(x;\varphi_1^*,\varphi_2^*)$, as a pair of \textit{equilibrium payoffs}.  

\subsection{General system of quasi-variational inequalities}
\label{s:QVIs}
To present the system of QVIs derived in \cite{ABCCV}, we need to define first the \textit{intervention operators}. For any $V_1, V_2:\mathbb R\to\mathbb R$ and $x\in\mathbb R$, the \textit{loss operator} of player $i$ is defined as 
\begin{equation}
\label{Mi}
\mathcal M_i V_i(x)\defeq\displaystyle\sup_{\delta\in \mathcal Z_i(x)}\{V_i(x+\delta) - c_i(x,\delta)\}.\footnote{Although we could have $\mathcal M_iV_i(x)=+\infty$, this will be excluded when enforcing the system of QVIs (\ref{QVIs}).}
\end{equation}
When applied to an equilibrium payoff, the loss operator $\mathcal M_i$ gives a recomputed present value for player $i$ due to the cost of her own intervention. Given the optimality of the NEs, one would intuitively expect that $\mathcal M_i V_i\leq V_i$ for equilibrium payoffs and that the equality is attained only when it is optimal for player $i$ to intervene. Under this logic:
\begin{definition}
\label{UIP}
We say that the pair $(V_1,V_2)$ has the \textit{unique impulse property (UIP)} if for each $i=1,2$ and $x\in \{\mathcal M_i V_i= V_i\}$, there exists a unique impulse, denoted $\delta_i^*(x)=\delta_i^*(V_i)(x)\in \mathcal Z_i(x)$, that realizes the supremum in (\ref{Mi}).\footnote{We do not require the UIP to hold outside of $\{\mathcal M_i V_i= V_i\}$, as this is not the case for equilibrium payoffs in many examples, such as the linear game with constant costs/gains. Proofs in \cite{ABCCV} carry through unaltered.}
\end{definition}
If $(V_1, V_2)$ enjoys the UIP, we define the \textit{gain operator} of player $i$ as
\begin{equation}
\label{Hi}
\mathcal H_i V_i(x)\defeq V_i(x+\delta_j^*(x)) + g_i(x,\delta_j^*(x)),\quad\mbox{ for }x\in \{\mathcal M_j V_j= V_j\}
\end{equation}
When applied to equilibrium payoffs, the gain operator $\mathcal H_i$ gives a recomputed present value for player $i$ due to her opponent's intervention.

Finally, let us denote by $\mathcal A$ the infinitesimal generator of $X$ when uncontrolled, i.e., 
\begin{equation*}
\mathcal AV(x)\defeq \frac{1}{2}\sigma^2(x) V''(x) + \mu(x)V'(x),
\end{equation*}
for any $V:\mathbb R\to\mathbb R$ which is $C^2$ at some open neighborhood of a given $x\in\mathbb R$. We assume this regularity holds whenever we compute $\mathcal AV(x)$ for some $V$ and $x$. The following Verification Theorem, due to \cite[Thm.3.3]{ABCCV}, states that if a regular enough solution $(V_1,V_2)$ to a certain system of QVIs exists, then it must be a pair of equilibrium payoffs, and a corresponding NE can be retrieved. We state here a simplified version that applies to the one-dimensional infinite-horizon games at hand.\footnote{Unlike in \cite{ABCCV}, there is no terminal condition in the system of QVIs and the assumption that $\partial \mathcal C^*_i$ be a Lipschitz surface is trivially satisfied for an open $\mathcal C^*_i\subseteq\mathbb R$, as it is a countable union of disjoint open intervals.}

\begin{theorem}[\textbf{General system of QVIs}]
\label{verification}
Given a game as in Section \ref{s:general_game}, let $V_1,V_2:\mathbb R\to\mathbb R$ be pair of functions with the UIP, such that for any $i,j \in \{1,2\}$, $i \neq j$:
\begin{equation}
\label{QVIs}
\begin{cases}
	\begin{aligned}
		& \mathcal M_j V_j - V_j \leq 0 && \text{on} \,\,\, \mathbb R   \\
		& \mathcal H_i V_i- V_i=0 && \text{on} \,\,\, \{\mathcal M_j V_j -  V_j = 0\}\backdefeq\mathcal I^*_j  \\
		& \max\big\{\mathcal A V_i -\rho_i V_i + f_i, \mathcal M_i V_i- V_i \}=0 && \text{on} \,\,\, \{\mathcal M_j V_j - V_j < 0\}\backdefeq\mathcal C^*_j
	\end{aligned}
\end{cases}
\end{equation}
and $V_i\in C^2(\mathcal C^*_j\backslash\partial\mathcal C^*_i)\cap C^1(\mathcal C^*_j)\cap C(\mathbb R)$ has polynomial growth and bounded second derivative on some reduced neighbourhood of $\partial\mathcal C^*_i$. Suppose further $\big((\mathcal I^*_i,\delta_i^*)\big)_{i=1,2}$ are admissible strategies.\footnote{For consistency with the strategies' definition, one should assume that $\delta^*$ has been continuously extended to $\mathbb R$. The conclusion is unaffected by the choice of the extension.} 
$$\mbox{Then,}\quad
(V_1,V_2)\mbox{ are equilibrium payoffs attained at a NE }\big((\mathcal I^*_i,\delta_i^*)\big)_{i=1,2}.$$
\end{theorem}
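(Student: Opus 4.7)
The plan is to run the classical verification argument adapted to impulse games: fix $i\in\{1,2\}$, $x\in\mathbb{R}$, and an arbitrary admissible pair $(\varphi_i,\varphi_j^*)$ with $\varphi_j^* = (\mathcal{I}_j^*,\delta_j^*)$, write $X=X^{x;\varphi_i,\varphi_j^*}$, and apply an appropriate Itô formula to $t\mapsto e^{-\rho_i t}V_i(X_t)$ in order to represent $V_i(x)$ in terms of $J_i(x;\varphi_i,\varphi_j^*)$ plus nonpositive correction terms. Because $V_i$ is only $C^2$ away from $\partial\mathcal{C}_i^*$, $C^1$ across it, and has locally bounded second derivative near that boundary, the classical Itô formula does not apply verbatim; the natural remedy is to use the Itô--Meyer change of variable formula for $C^1$ functions with bounded second derivative (equivalently, mollify $V_i$, apply standard Itô and pass to the limit by dominated convergence, using polynomial growth of $V_i$ together with the finite moments of $\|X\|_\infty$ coming from admissibility). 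The local-time term vanishes since the second derivative is merely locally bounded, not distributional.

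Once Itô is in place, I would split the increments of $e^{-\rho_i\cdot}V_i(X_\cdot)$ into a diffusive part, player $i$'s jumps and player $j$'s jumps, and apply the three QVIs. Between interventions $X_t$ lies in $\mathcal{C}_j^*$, so the third QVI gives $\mathcal{A}V_i-\rho_i V_i\le -f_i$. At each time $\tau_i^k$ at which player $i$ intervenes with impulse $\delta_i^k$, the definition of $\mathcal{M}_i$ yields
\begin{equation*}
V_i(X_{(\tau_i^k)^-}+\delta_i^k)-V_i(X_{(\tau_i^k)^-}) \le -c_i(X_{(\tau_i^k)^-},\delta_i^k) + (\mathcal{M}_i V_i - V_i)(X_{(\tau_i^k)^-}),
\end{equation*}
and the last bracket is $\le 0$ by the first/third QVI (on $\mathcal{I}_j^*$ the priority convention makes player $j$ act first, so the only deviation jumps we need to count happen on $\mathcal{C}_j^*$). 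At each time $\tau_j^k$ at which player $j$ intervenes according to $\varphi_j^*$, we have $X_{(\tau_j^k)^-}\in\mathcal{I}_j^*$, so $\delta_j^k=\delta_j^*(X_{(\tau_j^k)^-})$ and the second QVI gives exactly
$V_i(X_{(\tau_j^k)^-}+\delta_j^k)-V_i(X_{(\tau_j^k)^-}) = -g_i(X_{(\tau_j^k)^-},\delta_j^k)$.
Taking expectation, the dW-integral is a true martingale (polynomial growth of $V_i'$ and $\sigma$, plus finite moments of $\|X\|_\infty$), and letting $t\to\infty$ the term $\mathbb{E}[e^{-\rho_i t}V_i(X_t)]$ vanishes thanks to $\rho_i>0$ and polynomial growth. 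Assembling the pieces produces $V_i(x)\ge J_i(x;\varphi_i,\varphi_j^*)$.

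To obtain equality along $\varphi_i^*=(\mathcal{I}_i^*,\delta_i^*)$, I would revisit each inequality: at player $i$'s interventions, $X_{(\tau_i^k)^-}\in\mathcal{I}_i^*=\{\mathcal{M}_iV_i=V_i\}$ and $\delta_i^k=\delta_i^*(X_{(\tau_i^k)^-})$, which by the UIP attains the supremum in $\mathcal{M}_i V_i$, so the above jump bound becomes equality; between interventions $X$ stays in $\mathcal{C}_i^*\cap\mathcal{C}_j^*$ where the third QVI forces $\mathcal{A}V_i-\rho_iV_i+f_i=0$; and player $j$'s interventions already contributed equalities. Hence $V_i(x)=J_i(x;\varphi_i^*,\varphi_j^*)$, and together with the previous inequality this is the Nash property. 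Repeating for $i=2$ completes the proof. The main obstacles I anticipate are (i) making the Itô step rigorous given the limited regularity of $V_i$ at $\partial\mathcal{C}_i^*$, (ii) a careful bookkeeping of simultaneous interventions under the priority rule to ensure no impulse is double-counted or missed, and (iii) upgrading the stochastic integral from a local to a true martingale and justifying the $t\to\infty$ limit, which is where the polynomial-growth hypothesis and the moment bounds built into admissibility become indispensable.
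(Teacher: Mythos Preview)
The paper does not contain its own proof of this theorem: it is stated as a simplified one-dimensional, infinite-horizon version of \cite[Thm.~3.3]{ABCCV} and is simply attributed to that reference. Your sketch is precisely the standard verification argument that underlies that result, so in that sense you are reproducing the approach of the cited source rather than of this paper.

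Two small remarks on your outline. First, your parenthetical about the priority rule at player $i$'s jumps is unnecessary: since the system (\ref{QVIs}) is stated ``for any $i,j\in\{1,2\}$, $i\neq j$'', the first inequality with the roles swapped already gives $\mathcal M_iV_i\le V_i$ on all of $\mathbb R$, so the bracket is $\le 0$ regardless of whether the state lies in $\mathcal I_j^*$ or $\mathcal C_j^*$. Second, your caveat that ``the priority convention makes player $j$ act first'' is only literally correct when $j=1$; the actual bookkeeping of simultaneous interventions in \cite{ABCCV} is a bit more delicate and uses the recursive construction of the controlled process, but this does not affect the structure of your argument.
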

The first equation of system (\ref{QVIs}) states that at an equilibrium, a player cannot increase her own payoff by a unilateral intervention. One therefore expects that the equality $\mathcal M_j V_j = V_j$ will only hold when player $j$ intervenes, or in other words, when the value she gains can exactly compensate the cost of her intervention. Consequently, the second equation says that a gain results from the opponent's intervention. Finally, the last one, means that when the opponent does not intervene, each player faces a single-player impulse control problem. 

We conclude this section with some final observations that we will come back to in Chapter \ref{c:3}: 

\begin{remark}
\label{r:zero_impulse}
An immediate consequence of assuming strictly positive costs is that intervening at any state with a null impulse reduces the payoff of the acting player and is therefore suboptimal. This is also displayed in system (\ref{QVIs}): if at some state $x$, $\mathcal M_jV_j(x)$ was realized for $\delta=0$, then $\mathcal M_jV_j(x)= V_j(x) + c_j(x,0)<V_j(x)$. At the same time, allowing for vanishing costs often leads to degenerate games in the current framework \cite[Sect.4.4]{ABCCV}. Hence, assuming $c_j>0$ is quite reasonable.
\end{remark}

\begin{remark}
\label{r:concave_costs}
Consider the case of nonegative impulses and cost functions being strictly concave in the impulse, as in \cite{C}. (Concave costs are also assumed in \cite{EM}.) That is, $c_i(x,\delta+\bar\delta)<c_i(x,\delta)+c_i(x+\delta,\bar\delta)$ for all $x\in\mathbb R,\ \delta,\bar\delta\geq 0$. This models the situation in which simultaneous interventions are more expensive than a single one to the same effect. In such cases, it is easy to see that in the context of Theorem \ref{verification}, player $i$ will only shift the state variable towards her continuation region.\footnote{Let $x\in\mathcal I^*_i$ and suppose $y_i^*\defeq x+\delta^*_i(x)\in\mathcal I^*_i$. Set $y_i^{**}\defeq y_i^* + \delta^*_i(y_i^*)$. Then, by the UIP, the definitions of $\delta^*_i(x)$  and $\mathcal I^*_i$, and the concavity of the cost: 
$V_i(y_i^{**})-c_i(x,\delta^*_i(x)+\delta^*_i(y_i^*))\leq V_i(y_i^*)-c_i(x,\delta^*_i(x))=V_i(y_i^{**})-c_i(y_i^*,\delta^*_i(y_i^*))-c_i(x,\delta^*_i(x))<V_i(y_i^{**})-c_i(x,\delta^*_i(x)+\delta^*_i(y_i^*))$, which is a contradiction.}
\end{remark}

%%%%%%%%%%%%%%%%%%%%%%%%%%%%%%%%%%%%%%%%%%%%%%%%%%%%%%%%%%%%%%%%%%%%%%%%%%%%%%%%%%%%%%%%%%%

\subsection{Linear game and motivation}
\label{s:linear_game}

For completeness, we finish this Section by explicitly recalling the linear game of \cite{ABCCV}, the only almost fully analytically solvable game in the literature at the time of writing, and the application that motivates it. Recall that the choice of a linear structure (as opposed to piecewise linear-quadratic as in the original impulse control formulations \cite{AF,CZ,J,MO}) is made for tractability reasons. 
\bigskip

\noindent \textbf{Central banks competing over the foreign exchange rate (FEX):} Consider two central banks that compete to influence the FEX between their respective currencies, both seeking to devalue (resp. appreciate) their own one. In this application, either central bank intervenes when it deems its currency too strong (resp. too weak). Each bank's strategy is made up of the ensuing devaluation (resp. appreciation) amount and the threshold FEX triggering it. Both of these are to be optimally determined in advance, by solving the game. Let the process $X$ model the FEX, evolving as a Brownian motion when there are no interventions. In a simplified model, we assume the running payoffs of the banks are $X-s_1$ and $s_2-X$ respectively ($s_2>s_1$) and that the costs/gains upon interventions have a fixed component and another one proportional to the size of the change induced. We further assume that both banks discount their winnings/losses at the same rate $\rho>0$ and they have the same cost/gain parameters. More specifically,
\begin{equation*}
X_t=x+ \sigma W_t + \displaystyle\sum_{k:\ \tau_1^k\leq t}\delta_1^k + \displaystyle\sum_{k:\ \tau_2^k\leq t}\delta_2^k,\quad\mbox{ and }
\end{equation*}
\begin{equation*}
J_i(x;\varphi_1,\varphi_2) \defeq \mathbb E_x \left[ \int_0^\infty e^{-\rho s} (-1)^{i-1}(X_s - s_i)ds - \sum_{k=1}^\infty e^{-\rho \tau_i^k} (c+\lambda |\delta_i^k|)+ \sum_{k=1}^\infty e^{-\rho \tau_j^k} (\tilde{c}+\tilde{\lambda} |\delta_j^k|)\right],
\end{equation*}
where $0\leq \tilde c\leq c,\ 0\leq\tilde\lambda\leq\lambda$ and $(c,\lambda)\neq (\tilde c,\tilde\lambda)$. These parametric restrictions ensure the existence of a NE (see \cite{ABCCV} and Section \ref{s:games_without_NE} in Chapter \ref{c:3}).\footnote{$1-\rho\lambda>0$ is also imposed in \cite{ABCCV}, but this is only necessary to preclude the case in which the players never intervene.}
\bigskip
 
An exact solution of this game can be found by an application of Theorem \ref{verification}. To this purpose, the system of QVIs (\ref{QVIs}) is heuristically solved, yielding candidates for equilibrium payoffs and a NE. This is done, first, by making some educated guesses. Second, by solving the ordinary differential equations (ODEs) in (\ref{QVIs}) where appropriate. Finally, by imposing the regularity requirements of Theorem \ref{verification} through pasting conditions. Upon verification of the remaining hypotheses, the following turns out to be a solution of the game:

\begin{align*}
&V_2(x)= \left\{
\begin{array}{ll}
\varphi^{A_{1},A_{2}}(x_1^*) + {\tilde c} + {\tilde{\lambda}}(x_1^*-x) &\textrm{ if } x\in(-\infty,{\bar x}_1]\\
\varphi^{A_{1},A_{2}}(x) &\textrm{ if } x\in({\bar x}_1,{\bar x}_2)\\
\varphi^{A_{1},A_{2}}(x_2^*) - c - \lambda(x-x_2^*) &\textrm{ if } x\in[{\bar x}_2,+\infty),
\end{array}
\right.
&V_1(x)= V_2(2{\tilde s}-x),	
\end{align*}
where:
\begin{align*}
&\varphi^{A_{1},A_{2}}(x)= A_{1}e^{\theta x} + A_{2}e^{-\theta x} + \frac{1}{\rho}(s_2-x),\\
&{\tilde s}:= \frac{s_1+s_2}{2}, \qquad
\theta:= \sqrt{2\rho/\sigma^2},\qquad
\eta:= (1-\lambda\rho)/\rho,\\
&{\bar x}_i:= {\tilde s} + \frac{(-1)^i}{\theta}\log{\Bigg(\sqrt{\frac{\eta+\xi}{\eta-\xi}}\Big(\sqrt{\Gamma+1}+\sqrt{\Gamma}\Big)\Bigg)},\\
& x^*_i:= {\tilde s} + \frac{(-1)^i}{\theta}\log{\Bigg(\sqrt{\frac{\eta-\xi}{\eta+\xi}}\Big(\sqrt{\Gamma+1}+\sqrt{\Gamma}\Big)\Bigg)},\\
&A_{i}:=\exp{\Big((-1)^{i}\theta {\tilde s}\Big)}
\frac{\sqrt{\eta^2-\xi^2}}{2\theta}
\Big((-1)^{i+1}\sqrt{\Gamma+1}-\sqrt{\Gamma}\Big),\\
&\Gamma:= \frac{\theta(c-{\tilde c})}{4\xi}+
\frac{\theta c(\lambda-{\tilde{\lambda}})}{4\eta\xi}+
\frac{\lambda-{\tilde\lambda}}{2\eta},
\end{align*}
and $\xi\in[0,\eta]$ is the unique zero of $F(y):= 2y  - \eta\log{\Big(\frac{\eta+y}{\eta-y}\Big)} + \theta c$.

\section{Iterative algorithm for impulse games} 
\label{s:general_algo}
\setcounter{subsection}{1}
Compared with the single-value-function problems of impulse control, general two-player games are distinctly more challenging. Recall that a deterministic approach based on Theorem \ref{verification} entails that:   
\begin{itemize}
	\item two equilibrium payoffs, governed by a system of QVIs, must be solved for,	
	\item the dependence between a pair of solutions $(V_1,V_2)$ is highly nonlinear due to the presence of the gain operators $\mathcal H_i(V_j)V_i$,
	\item the gain operators are only well-defined for pairs with the UIP,
	\item solutions will typically be less regular. For example, if $(V_1,V_2)$ is the solution in Section \ref{s:linear_game} of the linear game, then $V_i$ is singular at $\bar{x}_j$ (i.e., each equilibrium payoff is non differentiable at the border of the opponent's intervention region), in spite of the game having linear running payoffs, costs and gains. Note that Theorem \ref{verification} contemplates this lack of regularity within the smoothness assumptions (compare to the classical Verification Theorems for single-player problems \cite{OS} where greater regularity is assumed). 
\end{itemize}

Algorithm \ref{general_algo} below is, as far as we know, the first ever numerical attempt at two-player nonzero-sum stochastic impulse games. It is admittedly heuristic and supported only by the numerical evidence reported in Section \ref{s:experiments}. A rigorous analysis concerning an improved algorithm, specific to symmetric games, will be carried out in Chapter \ref{c:3}. 

The remainder of this Section is organized as follows. We start by giving a quick overview of the discretization scheme we shall be using for system of QVIs (\ref{QVIs}). (A more detailed description can be found in Chapter \ref{c:3}.) We then explain the idea and motivate the underlying heuristics of our iterative method. Finally, we list the new algorithm.\newline

\begin{notation*}
$\mathbb R^\grid$ denotes the set of functions $v:\grid\to\mathbb R$. We shall identify grid points with indexes, functions in $\mathbb R^\grid$ with vectors and linear operators with matrices. The (partial) order considered in $\mathbb R^\grid$ and $\mathbb R^{\grid\times\grid}$ is the usual pointwise order for functions (elementwise for vectors and matrices), and the same is true for the supremum, maximum and arg-maximum induced by it. All set complements are taken with respect to the grid.
\end{notation*}

\textbf{Discretization:} From now on we localize the domain and work on a discrete finite grid $\grid\subseteq \mathbb R$. In all of the numerical experiments described in the sequel we have taken $\grid$ as an equispaced grid of $M$ steps between certain $x_{\min}<0<x_{\max}$ with $|x_{\min}|,|x_{\max}|$ and $M$ large enough (more about this in Chapter \ref{c:3}).

We approximate the impulse constraint sets $\mathcal Z(x)$ with $x\in\mathbb R$ by finite sets $\emptyset\neq Z(x)$ with $x\in\grid$ and we set $Z\defeq\prod_{x\in\mathbb\grid}Z(x)$. For $i,j\in\{1,2\},\ i\neq j$, $v_1,v_2\in\mathbb R^{\grid}$ and $x\in\grid$, we define discretized versions of the loss and gain operators as
\begin{gather*}
M_iv_i(x)\defeq\max_{\delta\in Z}\big\{v_i[\![x+\delta(x)]\!]-c_i(x,\delta(x))\big\},\qquad H_i(v_j)v_i(x)\defeq v_i[\![x+\delta^*_j(x)]\!]+g_i(x,\delta^*_j(x))\\
\mbox{with}\quad\delta^*_j(x)=\delta^*_j(v_j)(x)\defeq \min\left( \argmax_{\delta\in Z}\big\{v_j[\![x+\delta(x)]\!]-c_j(x,\delta(x))\big\}\right),
\end{gather*}
where $v[\![y]\!]$ denotes linear interpolation of $v$ on $y$ using the closest nodes on the grid, and $v[\![y]\!]=v(x_{\min})$ (resp. $v[\![y]\!]=v(x_{\max})$) if $y<x_{\min}$ (resp. $y>x_{\max}$); i.e., `no extrapolation'. For the experiments presented in Section \ref{s:experiments} we assumed $\mathcal Z(x)=\mathbb R$ for all $x\in\mathbb R$ and we set $Z(x)=\grid-x=\{y-x:\ y\in\grid\}$ for simplicity.

Next, we choose a finite difference scheme for the ODEs
$$
0=\mathcal A V_i -\rho_i V_i + f_i =\frac{1}{2}\sigma^2 V_i''+\mu V_i'-\rho_i V_i + f_i
$$
which is consistent, monotone and stable, adding artificial Dirichlet or Neumann-type boundary conditions (BCs). In all of our experiments, we have chosen an upwind (or positive coefficients) finite difference scheme, where
$$
V_i'(x)\approx \frac{V_i\big(x+\sgn(\mu(x)) h\big)-V_i(x)}{\sgn(\mu(x))h},\qquad V_i''(x)\approx \frac{ V_i(x+ h) - 2 V_i(x) + V_i(x-h)}{h^2}
$$
and $V_i(x_{\min}-h),\tilde V_i(x_{\max}+h)$ were solved for using artificial Neumann conditions. (See Chapter \ref{c:3} for more details on this choice as well as that of the grid extension.) The described procedure leads to a discretization of the ODEs as 
$$L_iv_i+f_i=0,$$ 
with $-L_i$ strictly diagonally dominant (SDD) $\mbox{L}_0$-matrices (see definitions in Appendix \ref{appendix:matrices}). Note that the values of $f_i$ at $x_{\min},x_{\max}$ need to be modified to account for the boundary conditions. 
\bigskip

{\bf Heuristics:} Having discretized the system of QVIs (\ref{QVIs}), we start from an initial guess $(v_1^0,v_2^0)$ to approximate its solution. We seek an iterative procedure to consecutively compute $(v_1^{k+1},v_2^{k+1})$, given  $(v_1^k,v_2^k)$ at the $k$-th iteration.

Let $i,j\in\{1,2\},\ i\neq j$. A natural idea is, first, to partition the grid into the `approximate intervention region' of player $j$, $\{M_jv_j^k-v_j^k\geq 0\}$ (equivalent to $\{M_jv_j^k-v_j^k= 0\}$ for a solution), and its complement, the `approximate continuation region'; and then to compute $v_i^{k+1}$ either by calculating a gain as $H_i(v_j^k)v_i^k$ in the former region, or by solving an impulse control problem in the latter. Unfortunately, our numerical experiments with these hard threshold definitions lead to unstable behaviours more often than not, and a successive relaxation procedure proved to be more useful. Consequently, the approximate intervention region of player $j$ is set as $\{M_jv_j^k-v_j^k\geq -r^k\}$ instead, where $(r^k)$ are small positive numbers decreasing to zero. 

It remains to schedule the relaxation procedure and to specify a stopping criteria for the algorithm. Regarding the former, having computed $(v_1^{k+1},v_2^{k+1})$, $r^k$ is linearly relaxed. (This relaxation procedure is chosen for simplicity.). Then the largest pointwise residual to the system of QVIs (incurred by either payoff) is calculated across the grid, taking into account a numerical tolerance $tol>0$. We denote this residual $R^{k+1}$ and we consider the algorithm has `converged' when the residual drops below a certain threshold. We use the largest residual (instead of the distance between consecutive approximations) because in the absence of a convergence analysis it is a practical metric reflecting whether a solution of the discrete system of QVIs has been found (as opposed to the algorithm stagnating or converging to something other than a solution). Additionally, the residuals give valuable information at each grid node. It is not without its caveats nonetheless, as it will be seen in the experiments that floating point arithmetic can often lead to a `spiking' of the residuals at the nodes where the discrete solutions reproduce singularities of the analytical ones. Thus, the largest residual on its own can at times be artificially big and misleading, and further inspection over all grid nodes is recommended in such cases (see Section \ref{s:experiments} and more details in Chapter \ref{c:3}). We should also mention that in Chapter \ref{c:3}, aided by a convergence analysis, we will take the opposite view and the largest residual will become a tool for secondary assessment.

As a final remark, we note that numerical experiments show Algorithm \ref{general_algo} does not enjoy global convergence (i.e. it is not guaranteed to converge from arbitrary initial guesses). Providing a good enough pair $(v_1^0,v_2^0)$ is thus a practical issue; in Section \ref{s:experiments}, a natural way of constructing educated guesses is explained. 

In the following, $a^+\defeq\max\{a,0\}$ denotes the positive part of a given $a\in\mathbb R$, $\mathbbm 1_{S}$ denotes the indicator function of $S\subseteq\mathbb R$ and $\|\cdot\|_\infty$ is computed over $\grid$. We take for granted that we have a convergent solver for single-player impulse control problems. In Chapter \ref{c:3} this point will be addressed in detail, showing how one can use either classical policy iteration or fixed-point policy iteration (the latter was used in the experiments presented in Section \ref{s:experiments}). Additionally, we note that in practice a maximum number of iterations should be used as an additional stopping criteria. We neglect this to simplify the presentation.

\begin{algorithm}[H]
	\caption{Iterative algorithm for general impulse games}	\label{general_algo}
	
	Set $tol>0$ and relaxation parameters $0<\alpha<1,\ r^0>0$.
	
	\begin{algorithmic}[1]
		\STATE Choose initial guess: $(v_1^0,v_2^0)\in\mathbb R^\grid\times \mathbb R^\grid$
		\STATE Set $R^0=+\infty$ 
		\FOR{$k=0,1,\dots$}
			\FOR{i=$1,2$}
				\STATE $j=3-i$
				\STATE $I_j^k=\{M_jv_j^k-v_j^k\geq-r^k\}$
				\STATE $v_i^{k+1}= H_i(v_j^k)v_i^k$ on $I_j^k$
				\STATE Solve impulse control: $\max{\big\{L_iv_i^{k+1}+ f_i,M_iv_i^{k+1}-v_i^{k+1}\big\}=0}$ on $(I_j^k)^c$
			\ENDFOR
			\STATE $r^{k+1}=\alpha r^k$
			\STATE $I_j^{k+1,tol}=\{M_jv_j^{k+1}-v_j^{k+1}\geq-tol\}$
			\STATE $
			R^{k+1} = \Big\|\max_{\substack{i,j\in\{1,2\},i\neq j}}
			\Big\{\big(M_iv_i^{k+1}-v_i^{k+1}\big)^+, \big|H_i(v_j^{k+1})v_i^{k+1}-v_i^{k+1}\big|\mathbbm 1_{I_j^{k+1,tol}}\allowbreak
			 +\big|\max\big\{L_iv_i^{k+1}+ f_i, M_iv_i^{k+1}-v_i^{k+1}\big\}\big|\mathbbm 1_{(I_j^{k+1,tol})^c}
			\Big\}\Big\|_\infty
			$
		\IF{$R^{k+1}<tol$}
			\STATE break from the iteration
		\ENDIF
	\ENDFOR
	\end{algorithmic}
\end{algorithm}	

Line 8 of Algorithm \ref{general_algo} deserves special attention. Although at this step we want to solve an impulse control problem restricted to the subgrid $D=(I_j^k)^c$ (for fixed $k,j$), we still need the information in $I_j^k$ in two ways; namely: 
\begin{itemize}
\item To compute the non-local operator $M_i$. 
\item To restrict to $D$ the system $L_iv_i^{k+1}+f_i=0$.
\end{itemize}
It is also for this reason that the gain $v_i^{k+1}= H_i(v_j^k)v_i^k(x)$ is computed in advance in line 7, effectively making the QVI in line 8 a constrained QVI over the whole grid. Thus, one needs to algebraically modify it into a classical QVI on $D$ before an impulse control solver can be applied. See more details in Chapter \ref{c:3}.   

\section{Numerical results}
\label{s:experiments}

In this section we assess the performance of Algorithm \ref{general_algo}. The system of QVIs (\ref{QVIs}) is discretized as explained in Section \ref{s:general_algo} on an equispaced grid of $M+1$ nodes, and the computational domain is always the plotted one. We assume $\mathcal Z(x)=\mathbb R$ for all $x\in\mathbb R$ and set $Z(x)=\grid-x=\{y-x:\ y\in\grid\}$. Throughout this section, $tol=10^{-8}$, $\alpha=0.8$ and $r_0=1$. The largest pointwise residual at convergence is denoted by $R^{\infty}$.

As mentioned before, the only almost fully analytically solvable game currently available is the linear game of Section \ref{s:linear_game}. Therefore, we shall eventually focus on that problem for validation purposes. However, we introduce two other games first for which we do not have an analytical solution. For these games, we solve the corresponding QVIs systems on a fixed grid and find approximate NEs (as much as this can be claimed from a numerical viewpoint). Additionally, they illustrate how an initial guess for the linear game can be constructed and provide further numerical evidence supporting Algorithm \ref{general_algo}.

\subsection{Non-symmetric parabolic game}
This is a version of the linear game where the running payoffs are replaced by (non-symmetric) concave parabolae with roots $r_i^L$ and $r_i^R$:
\begin{equation}
\label{eq:parabolic_f}
f_i=-(x-r_i^L)(x-r_i^R),\mbox{ with } r_i^L<r_i^R.
\end{equation}

Let us motivate this game. We seek to use as initial guess the pair of value functions of the single-player versions of the game. That is, the impulse control problems in which one of the players never intervenes. Removing the action of one of the players, however, may not always lead to a well-posed problem. For example, for running payoffs without maxima one can easily end up with `infinite-valued value functions' or `infinite-valued optimal impulses'. This is indeed the case for the linear game. In order to skirt that difficulty it is convenient to consider modifications with running payoffs that attain a maximum value, like those in (\ref{eq:parabolic_f}).

Figure \ref{F:Parabolic_V} shows a numerical solution, $(v_1,v_2)$, to a parabolic game (pair of solid curves) along with the initial guess (pair of dashed curves). The latter are, in turn, the value functions of the corresponding single-player games. It is intuitively clear that $(v_1,v_2)$ approximate, over the grid, functions which indeed satisfy the assumptions of the Verification Theorem \ref{verification}. The NE exhibited in this Theorem, $(\varphi_1^*,\varphi_2^*)$, can be retrieved from the pair of equilibrium payoffs, giving $\varphi_1^*=\big([1.068,+\infty), -1.848-x\big)$ and $\varphi_2^*=\big((-\infty,-3.048],-0.120-x\big)$. 

\begin{figure}[H]
	\centering
	\includegraphics[scale=.4]{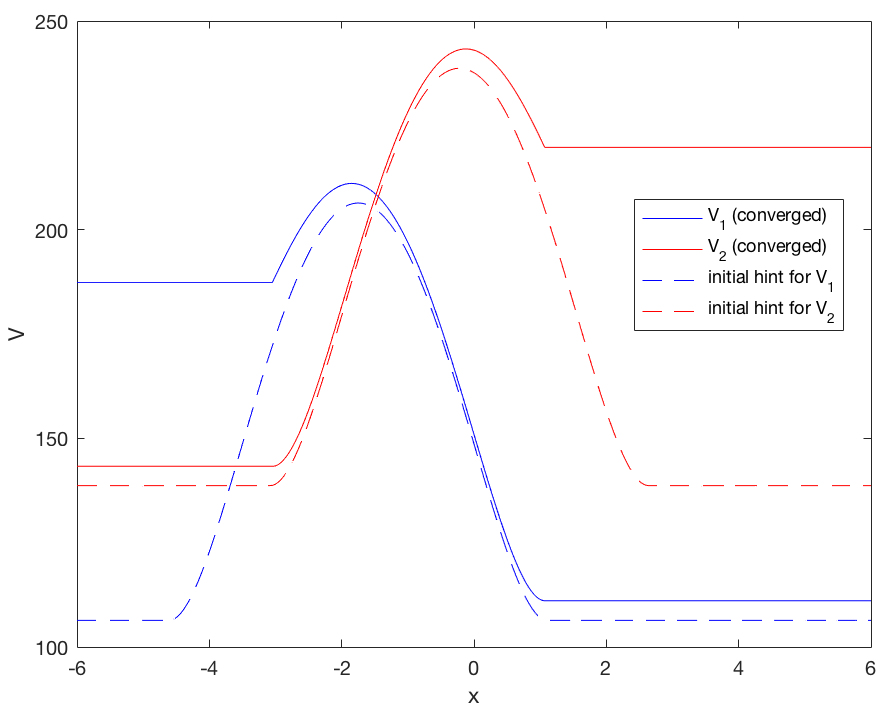}
	\caption[Solution of non-symetric parabolic game]{Equilibrium payoffs of non-symmetric parabolic game: $f_1=-(x+4.5)(x-1)$ and $f_2=-(x+\pi)(x-2.7)$, $\rho=.03$, $\sigma=.25$, $c=100$, ${\tilde c}=30$, $\lambda={\tilde \lambda}=0$. Overlaid, initial guesses (solutions of the respective single-player games). Here, $M=1000$; check Table \ref{t:parabolic} for other values.}
	\label{F:Parabolic_V}
\end{figure}

As it turns out, for this parabolic game Algorithm \ref{general_algo} also converges from the `zero guess' ($v_1^0=v_2^0=0$). On Table \ref{t:parabolic}, the value of $R^\infty$ over a wide range of finite difference grids (with $301,601,\ldots,3001$ nodes) is compiled. In all cases, either initial guess leads to convergence (up to tolerance $tol$). Nonetheless, the algorithm takes in general fewer iterations when starting from the value functions of the single-player parabolic games. (We stress that those on Table \ref{t:parabolic} are the outer iterations of Algorithm \ref{general_algo}. Within each one there is an inner loop of an impulse control solver as in Chapter \ref{c:3}.)

\begin{SCtable}[][h!]
	\centering
	\begin{tabular}{llll}
		M & $R^\infty$ & its.$^a$ &  its.$^b$ \\ 
		\noalign{\smallskip}\hline\noalign{\smallskip}
		300  &  1.4$\times 10^{-12}$ & 54 & 53 \\
		600 &   4.0$\times 10^{-9}$ &  74 & 77 \\
		1200 & 3.3$\times 10^{-9}$ & 144 & 77 \\
		1800 & 9.7$\times 10^{-9}$& 95 & 77 \\
		2400 & 7.3$\times 10^{-9}$ & 123 & 77 \\
		3000 & 5.9$\times 10^{-9}$ & 215 & 103\\
		\noalign{\smallskip}\hline
	\end{tabular}
	\caption[Convergence tests: non-symmetric parabolic game]{Largest residual to QVIs at convergence ($R^\infty$) vs. number of grid nodes ($M+1$) for parabolic game in Figure \ref{F:Parabolic_V}. Iterations to convergence within $tol=10^{-8}$ starting from: zero guess (its.$^a$) and value functions of single-player problems (its.$^b$).}
\label{t:parabolic}
\end{SCtable}

We conclude this example by illustrating the interplay between the equilibrium payoffs found with Algorithm \ref{general_algo}, the NE derived from them and the evolution of the optimally controlled state variable. Once the optimal strategies $(\varphi_1^*,\varphi_2^*)$ are available, they can be executed on specific realizations of the game. Sticking to the parameters and numerical solution in Figure \ref{F:Parabolic_V}, Figure \ref{f:path} depicts one exemplary path of the state process in the time interval $[0,1000]$, starting from $x=0$ and subjected to the pair of optimal strategies. For numerical purposes, let us consider the finite horizon payoffs:
\begin{equation}
\label{J_aux}
\hspace*{-.2cm}
\hat J_i(x;\varphi_1,\varphi_2,T) \defeq \mathbb E \left[ \int_0^T e^{-\rho_i s} f_i(X_s)ds - \sum_{\substack{k:\\ \tau_i^k\leq T}} e^{-\rho_i \tau_i^k} c_i \big( X_{(\tau_i^k)^-}, \delta_i^k \big) + \sum_{\substack{k:\\ \tau_j^k\leq T}} e^{-\rho_i \tau_j^k} g_i \big( X_{(\tau_j^k)^-}, \delta_j^k \big)\right].
\end{equation}

\noindent For `good enough' games and strategies, one intuitively expects that $\hat J_i(x;\varphi_1,\varphi_2,T)\to J_i(x;\varphi_1,\varphi_2)$ as $T\to\infty$. In fact, after $T\gtrsim 300$, the integrals in (\ref{J_aux}) for the parabolic game and NE of Figure \ref{F:Parabolic_V} have essentially attained their asymptotic value. Thus, we simply take $J_i(x;\varphi^*_1,\varphi^*_2)\approx \hat J_i(x;\varphi^*_1,\varphi^*_2,T=1000)$. With this clarification, Figure \ref{F:Ex} shows in particular $v_1(0)=J^1(0;\varphi_1^*,\varphi_2^*)$, $v_2(0)=J^2(0;\varphi_1^*,\varphi_2^*)$,
	$v_1(-1)=J^1(-1;\varphi_1^*,\varphi_2^*)$, and
	$v_2(-1)=J^2(-1;\varphi_1^*,\varphi_2^*)$, obtained by Monte Carlo simulations.\footnote{The expected values in (\ref{J_aux}) are approximated by the mean over $N=200$ realizations integrated with the Euler-Maruyama method with time step $\Delta t=0.001$.} They compare fairly well with the values of $v_1(0)$, $v_2(0)$, $v_1(-1)$ and $v_2(-1)$ in Figure \ref{F:Parabolic_V} obtained by our algorithm. (Even better agreement could be obtained by increasing $M$ in that figure and reducing the discretization bias and statistical error of the Monte Carlo simulation, but this is good enough to make our point.)

\begin{figure}[H]
	\centering
	\includegraphics[scale=.25]{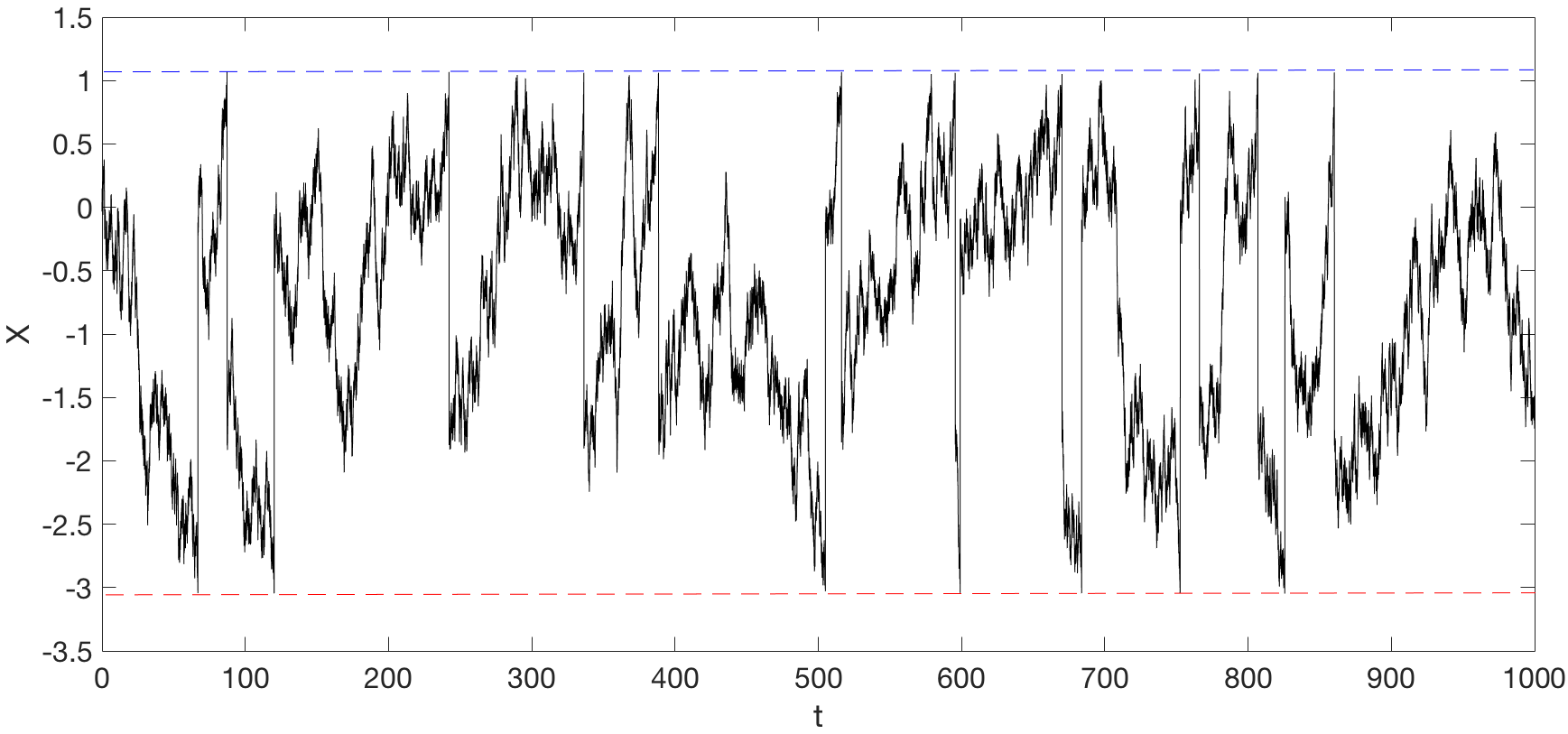}
	\caption[Sample paths of nonsymetric parabolic game]{Sample path from $x=0$ with parameters and solution from Figure \ref{F:Parabolic_V}. Blue and red dashed lines are intervention thresholds for players 1 and 2, respectively.}
	\label{f:path}
\end{figure}

\begin{figure}[H]
	\centering
	\includegraphics[scale=.4]{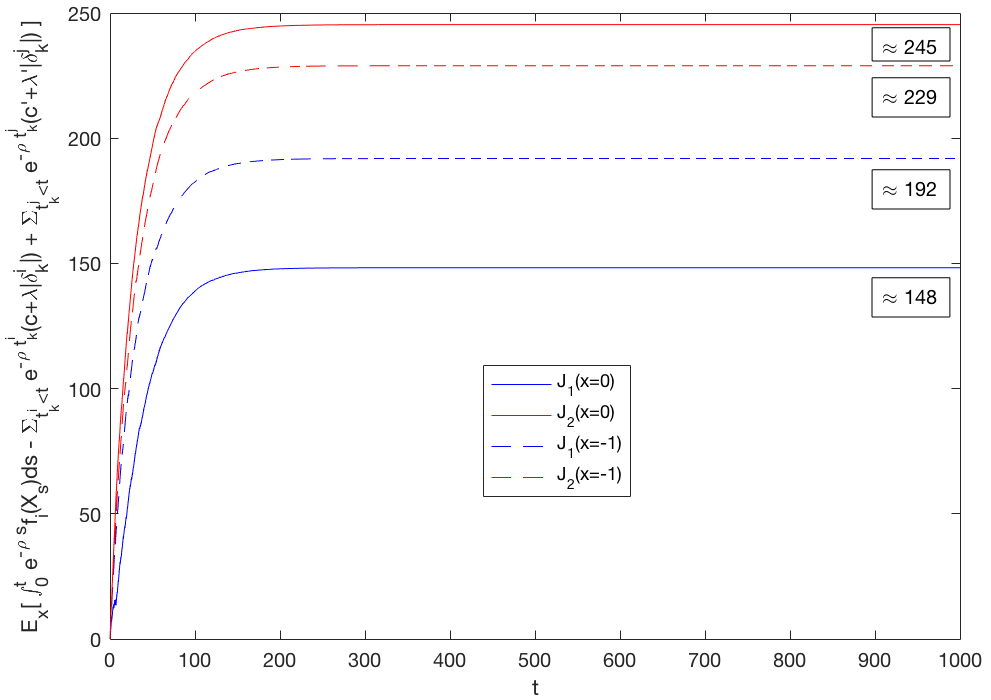}
	\caption[Monte Carlo simulations: equilibrium payoffs of non-symmetric parabolic game]{Approximations of the equilibrium payoffs at $x=0$ (solid curves) and $x=-1$ (dashed curves), obtained by Monte Carlo simulation (see text for details). Parameters and optimal strategies from Figure \ref{F:Parabolic_V}. Compare with $v_1(0)$, $v_2(0)$, $v_1(-1)$ and $v_2(-1)$ there.}
	\label{F:Ex}
\end{figure}

The NE itself can be visually explored in the following way. For a given starting point $x$, we keep the optimal strategy for one of the two players, and slightly alter the strategy of the other one. For concreteness, let us assume that player 1 changes her strategy to $\varphi_1=(1\pm 0.25{\cal U})\varphi^*_1$ while player 2 maintains $\varphi_2=\varphi^*_2$, where ${\cal U}$ is the uniform distribution, `$\pm$' means `with equal chance'.\footnote{Changes are applied to each of the two parameters defining $\varphi^*_1$.} Then, we proceed to calculate $J^1(x;\varphi_1,\varphi_2^*)$ by Monte Carlo simulations as before. By definition of the NE, $J^1(x;\varphi_1,\varphi_2^*)$ cannot be larger than $v_1(x)$. Within numerical tolerance, this is indeed observed in Figure \ref{F:Nash_eq}, where the blue empty circles ($J^1(x;\varphi_1,\varphi_2^*)$) do not lie over the blue curve ($v_1(x)$). Full red circles represent $J^2(x;\varphi_1,\varphi_2^*)$: note that the player who sticks to her optimal strategy may indeed improve over $v_2(x)$, should her opponent depart from a NE. (When player 2 is the one who changes, the red empty circles, red curve and full blue circles apply instead.) We stress, however, that Monte Carlo simulations such as these cannot {\em prove} that a pair of strategies form a NE. (At most, they can disprove it.) This is why Algorithm \ref{general_algo} becomes a valuable tool in solving the underlying system of QVIs and applying the Verification Theorem \ref{verification}.

\begin{figure}[H]
	\centering
	\includegraphics[scale=.32]{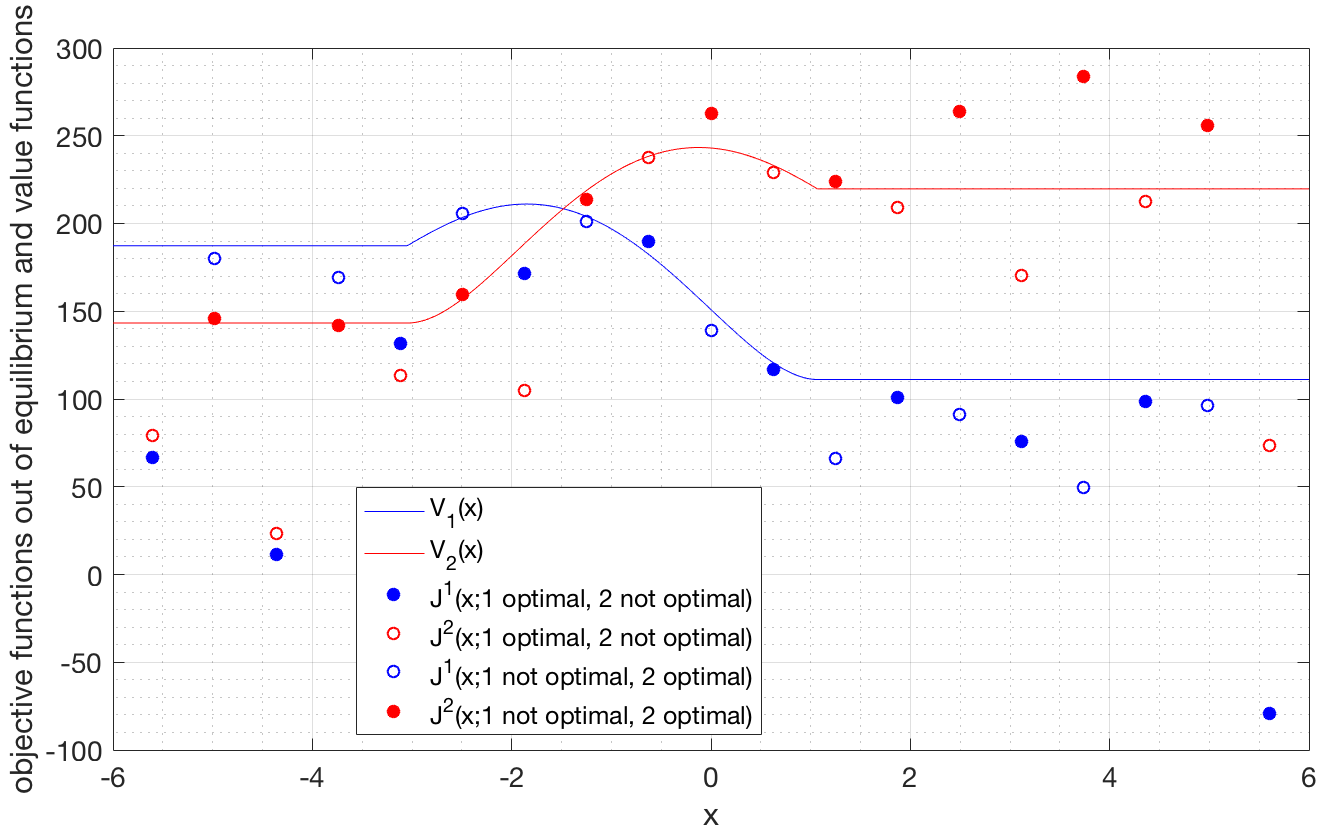}
	\caption[Monte Carlo simulations: Nash equilibrium of non-symmetric parabolic game]{Empty blue (resp. red) circles mean that the player 1 (resp. 2) has departed from her optimal strategy while her opponent has not. By virtue of NE, she cannot be (within numerical tolerance) better off than $v_1$ (resp. $v_2$). Full circles: payoff of the player who does not change her optimal strategy. Parameters and optimal strategies from Figure \ref{F:Parabolic_V}.  (Note that results are subject to numerical error.)}
	\label{F:Nash_eq}
\end{figure}

\setcounter{subsection}{1}
\subsection{Capped linear game}

In this game, we cap the running payoffs of the linear game at $K>0$:
\begin{equation}
\label{eq:capped_f}
f_i=(-1)^{i-1}(x-s_i)\wedge K.\footnote{$a\wedge b$ denotes the minimum of $a$ and $b$ for any $a,b\in\mathbb R$.}
\end{equation}
We shall always take $K=5$. Once again, the corresponding single-player games are well-posed and their solutions can be used as initial guess for the capped linear game. As in the previous example, the capped game also seems to converge from the zero guess. Some convergence results are compiled in Table \ref{T:Capped}. Note that convergence falters with $M=600$ and $M=3300$; we will come back to this later.

\begin{SCtable}[][h!]
	\centering
	\begin{tabular}{lll}
		M & $R^\infty$ & its.\\
		
		\noalign{\smallskip}\hline\noalign{\smallskip}
		600 &---& $\infty$ \\
		900 & $6.5\times 10^{-10}$ & 100 \\
		1200 & $8.8\times 10^{-9}$ & 124 \\
		1500 & $7.4\times 10^{-9}$ & 78 \\
		2700 & $4.1\times 10^{-9}$ & 96 \\
		3000 & $6.7\times 10^{-9}$ & 125 \\
		3300 &---& $\infty$\\
		\noalign{\smallskip}\hline
	\end{tabular}
	\caption[Convergence tests: capped linear game]{Iterations needed for convergence and largest terminal residual ($R^\infty$) in capped linear game using zero initial guess (a hyphen stands for lack of convergence within $tol$). Same parameters as in Figure \ref{F:Capped_error}.}
	\label{T:Capped} 
\end{SCtable}

\begin{figure}[H]
	\centering
	\includegraphics[scale=.33]{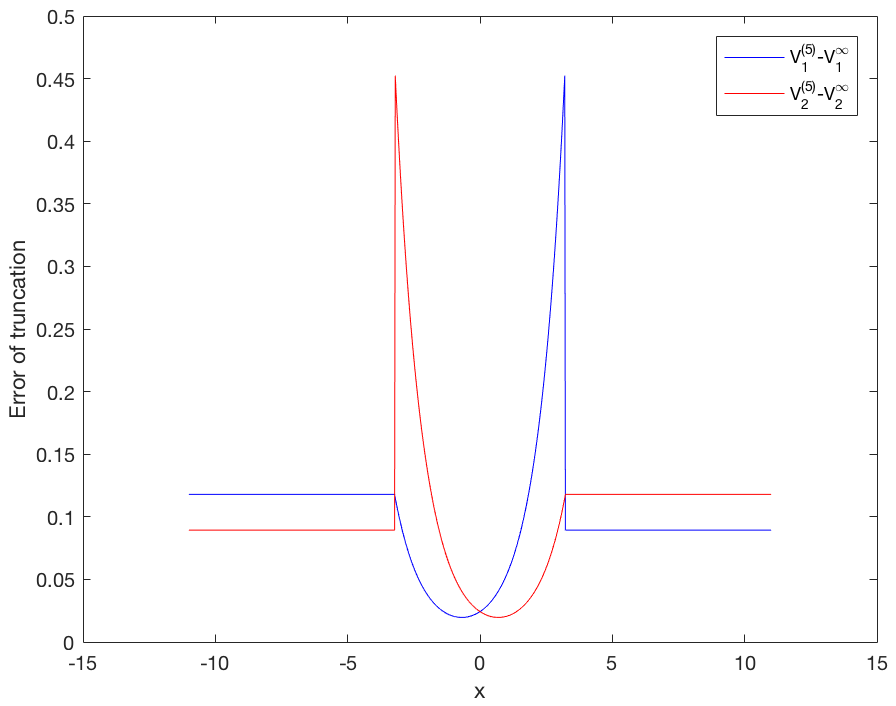}
	\caption[Capped game as approximation of linear game]{Difference (in absolute value, less than $1\%$) between (numerical, with $M=1200$) equilibrium payoffs of the capped linear game and those (exact) of the linear game from Section \ref{s:linear_game}, justifying the former payoffs being used as initial guess for the latter game. Parameters: $\sigma=.25,\ \rho=.03,\ c=100,\ {\tilde c}=30,\ \lambda=0.5,\ {\tilde\lambda}=0.3,\ s_1=-\pi/3,\ s_2=\pi/3,\ K=5$.}
	\label{F:Capped_error}
\end{figure}

The equilibrium payoffs found for the capped linear game are a good approximation to those of the linear game shown in Section \ref{s:linear_game} (see Figure \ref{F:Capped_error}). This seems to make sense: due to the action of the opponent and for $\sigma\ll K$, the discarded portion of the payoff is not very relevant in practice.

\subsection{Linear game with educated initial guess}
Finally, we tackle the linear game, for which an exact solution is available (see Section \ref{s:linear_game}). Contrary to the previous examples, Algorithm \ref{general_algo} does not seem to enjoy unconditional convergence here. In fact, when the zero guess was used, it failed to converge more often than not (not reported). In order to construct an adequate initial guess, we first solve for some equilibrium payoffs of the capped linear game. Using them as the initial guess, convergence was achieved in every experiment.

\begin{figure}[H]
	\hspace*{-.85cm}
	\includegraphics[scale=.33]{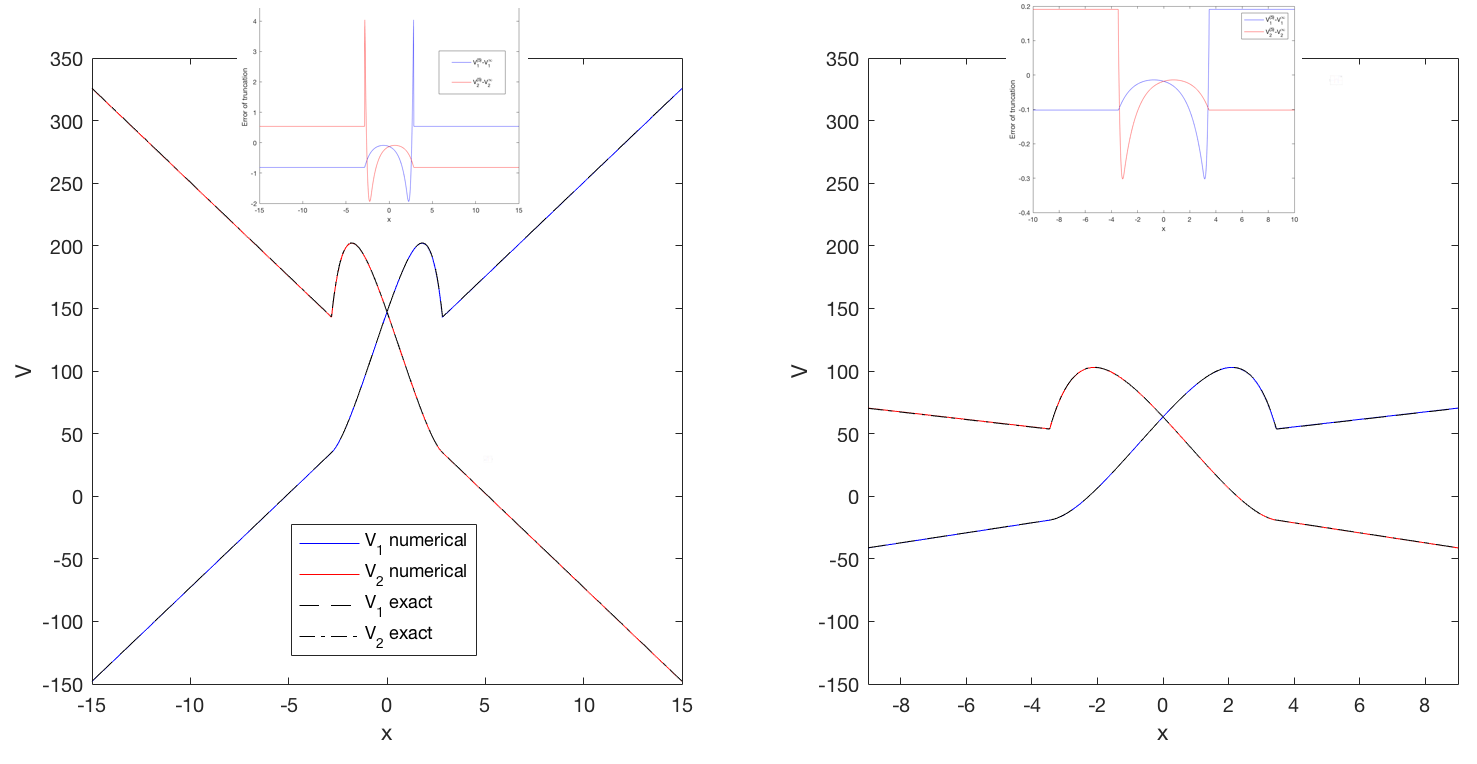}
	\caption[Numerical solution of linear game]{Equilibrium payoffs of two instances of linear game. Initial guess: solutions of capped linear games ($K=5$). Overlaid, error of the initial guess.
		Parameters: $\rho=.02$, $\sigma=.15$, $s_1=-3$, $s_2=3$, $c=100$, ${\tilde c}=0$, $\lambda={\tilde \lambda}=15$,  
		$M=1000$  (left); $\rho=.03$, $\sigma=.25$, $s_1=-2$, $s_2=2$, $c=100$, ${\tilde c}=30$, $\lambda=4$, ${\tilde \lambda}=3$, 
		$M=1000$  (right).} 
	\label{F:Benchmark_V}
\end{figure}

The results of two experiments are plotted on Figure \ref{F:Benchmark_V}. The numerical approximations can hardly be distinguished from the exact solutions with the naked eye. As it was done with the parabolic game, once again we can retrieve an approximate NE from the numerical solution. For the left figure, this gives $\varphi_1^*=\big((-\infty,-2.82],1.53-x\big)$ and $\varphi_2^*=\big([2.82,+\infty),-1.53-x\big)$. When compared with the exact equilibrium, the errors on the corresponding abscissae are smaller than the grid step size.

\begin{SCtable}[][h!]
	\centering
	\begin{tabular}{llll}
		M & $R^\infty$ & $|$error$|$ & its.\\
		\noalign{\smallskip}\hline\noalign{\smallskip}
		500 & 3.8$\times 10^{-10}$ & 0.687 & 126 \\
		1000 & 1.2$\times 10^{-9}$ & 0.805 & 154 \\
		1500 & 7.2$\times 10^{-9}$ & 0.512 & 157 \\
		2000 & 3.5$\times 10^{-9}$ & 0.365 & 172 \\
		2500 &---&---& $\infty$\\ 
		\noalign{\smallskip}\hline
	\end{tabular}
	\caption[Convergence tests: linear game (first)]{Convergence of Algorithm \ref{general_algo} for linear game. Same parameters as in Figure \ref{F:Benchmark_V} (left).}
	\label{T:Benchmark_ejemploB} 
\end{SCtable}

\begin{SCtable}[][h!]
	\centering
	\begin{tabular}{llll}
		M & $R^\infty$ & $|$error$|$ & its.\\
		\noalign{\smallskip}\hline\noalign{\smallskip}
		600 &---&---& $\infty$ \\
		800 & 9.5$\times 10^{-10}$ & 0.023 & 183 \\
		1000 & 3.7$\times 10^{-9}$ & 0.330 & 177 \\
		1400 & 8.7$\times 10^{-9}$ & 0.196 & 159 \\
		1800 & 6.8$\times 10^{-9}$ & 0.121 & 226 \\
		2200 & 5.6$\times 10^{-9}$ & 0.073 & 224 \\
		2600 &---&---& $\infty$\\ 
		\noalign{\smallskip}\hline
	\end{tabular}
	\caption[Convergence tests: linear game (second)]{Convergence of Algorithm \ref{general_algo} for linear game (same parameters as in Figure \ref{F:Benchmark_V} (right).}
	\label{T:Benchmark_ejemploD} 
\end{SCtable}

On Tables \ref{T:Benchmark_ejemploB} and \ref{T:Benchmark_ejemploD}, the convergence of the numerical approximation provided by Algorithm \ref{general_algo} to the true solution is demonstrated. However, $R^\infty$ fails to drop below $tol$ for some discretizations as the algorithm stagnates, with the highest residuals found on and around the nodes where the payoffs display a singularity. Figure \ref{F:Stagnation} illustrates this situation. It can be seen that errors continue to be acceptable (far less than $1\%$ relative error in the worst case), but further refinements of the grid did not yield any major improvements. The algorithm for symmetric games, presented and studied in Chapter \ref{c:3}, is far better-behaved in this respect as well. It will be seen that for games in which one intuitively expects a solution to exist, cases of stagnation were always resolved by refining the grid. 

\begin{figure}[H]
\centering
	\includegraphics[scale=.5]{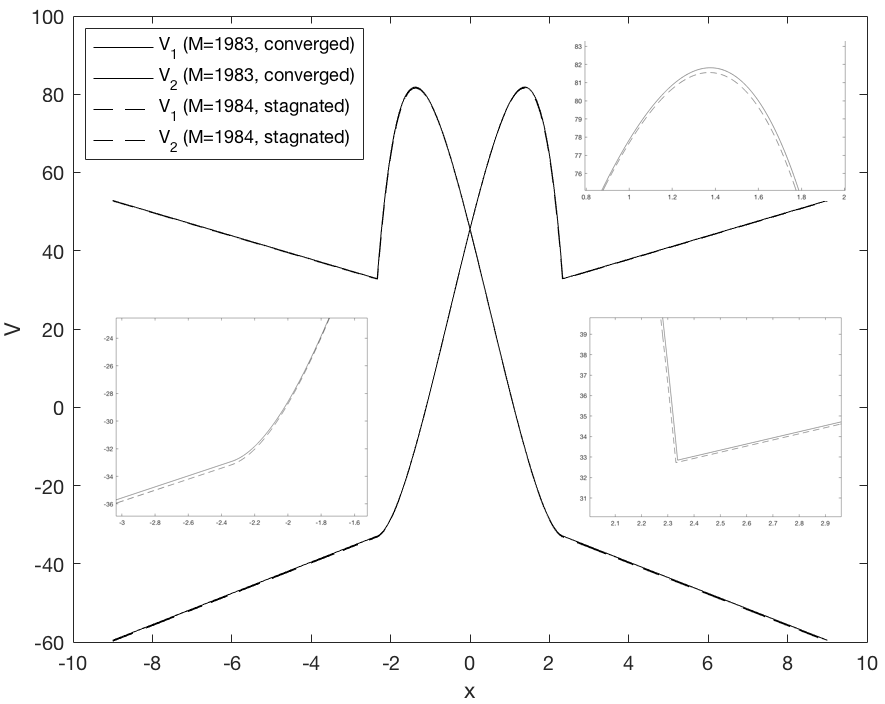}
	\caption[Stagnation of accuracy of non-symmetric algorithm]{Stagnation of accuracy: the pair with $M=1983$ is fully converged (within numerical tolerance); the pair with $M=1984$ (overlaid) is not. The insets zoom in on both pairs of functions. Parameters: $\rho=.02$, $\sigma=.15$, $s_1=-1$, $s_2=1$, $c=100$, ${\tilde c}=30$, $\lambda=4$, ${\tilde\lambda}=3$.} 
	\label{F:Stagnation}
\end{figure}

\section{Concluding remarks}
\label{s:conclusions_chapter2}
In this chapter, we have designed and tested a novel policy iteration algorithm (the first one as far as we know) to numerically solve nonzero-sum stochastic impulse control games. The approach consists in solving a system of QVIs which characterizes certain equilibrium payoffs and NEs, exploiting the Verification Theorem of \cite{ABCCV}.

Our algorithm iteratively computes the approximate
solution by partitioning, for each of the players, the discretized localized spatial domain into approximate continuation and intervention regions of the opponent. These are defined through a relaxation
parameter that evolves along the iterations. In the continuation
region, we solve a constrained single-player impulse control problem, whereas in the complement, a gain is computed. A way of producing an educated initial guess to start the iterations, which relies on solving standard impulse control problems, has been presented along with the new algorithm.
 
The algorithm was validated numerically with satisfactory results, by considering different games and both fixed and refining grids, but it was noted that cases of stagnation could not be resolved by simple grid refinements. In particular, we have tested the convergence to an analytical solution of the linear game, the only one almost fully tractable at the time of writing. Results show that the algorithm offers a means of gaining quantitative insight into applications modelled by general nonzero-sum stochastic impulse games. In the case of symmetric games however, we shall see in the next chapter a modified algorithm which substantially improves the current one, and we will provide the missing convergence analysis.

%%%%%%%%%%%   CHAPTER THREE    %%%%%%%%%%%%%%%%%%%%%%%%%%%%%%%%%%%%%%%%%%%%%%%%%%%%%%%%%%%%%%%%%%%%%%%%

\chapter[A fixed-point policy-iteration-type algorithm for symmetric impulse games]{A fixed-point policy-iteration-type algorithm for symmetric nonzero-sum stochastic impulse games}
\chaptermark{Algorithm for symmetric impulse games}
\label{c:3}

\phantomsection
\section*{Introduction}
\addcontentsline{toc}{section}{Introduction}

To the author's best knowledge, the only numerical method available in the literature of nonzero-sum stochastic impulse games is our algorithm in \cite{ABMZZ} (Algorithm \ref{general_algo} in Chapter \ref{c:2}), which tackles the system of QVIs (\ref{QVIs}) by sequentially solving single-player impulse control problems combined with a relaxation scheme. Unfortunately, the choice of the latter scheme is not obvious in general and it was verified that the convergence of the algorithm was reliant on a good initial guess. It was also observed that stagnation could put a cap on the accuracy of the results, without any simple solution to it. Lastly, while a numerical validation was performed, no rigorous convergence analysis was provided at the time.

Restricting attention to the one-dimensional infinite horizon two-player case, this chapter (based on \cite{Z}) puts the focus on certain nonzero-sum impulse games which display a symmetric structure between the players. This class is broad enough to include many interesting applications; no less than the competing central banks problem (whether in its linear form \cite{ABCCV} or others considered in the single bank formulation \cite{AF,CZ,J,MO}), the cash management problem \cite{BCG} (reducing its dimension by a simple change of variables) and the generalization of many impulse control problems to the two-player case. 

For this class of games, an iterative algorithm is presented which substantially improves Algorithm \ref{general_algo} by harnessing the symmetry of the problem, removing the strong dependence on the initial guess and dispensing with the relaxation scheme altogether. The result is a simpler and more intuitive, precise and efficient routine, for which a convergence analysis is provided. It is shown that the overall routine admits a representation that strongly resembles, both algorithmically and in its properties, that of the combined fixed-point policy-iteration methods \cite{HFL1,Cl0}, albeit with nonexpansive operators. Still, a certain contraction property can still be established.

To perform the analysis, assumptions are imposed on the discretization scheme used on the system of QVIs and the discrete admissible strategies. These naturally generalize those of the impulse control case \cite{AF} and admit graph-theoretic interpretations in terms of weakly chained diagonally dominant (WCDD) matrices and their matrix sequences counterpart \cite{A}. We establish a clear parallel between these discrete type assumptions, the behaviour of the players and the mimicking of the Verification Theorem. 

Section \ref{s:symmetric_games} deals with the analytical problem. We give a precise definition of the class of symmetric nonzero-sum impulse games and establish some preliminary results.

Section \ref{s:discrete_problem} moves on the analogue discrete problem. Section \ref{s:dQVIs} specifies a general and abstract discrete version of the system of QVIs, such that any discretization scheme compliant with the assumptions to be imposed will be subject to the same results. Section \ref{s:sym_algo} presents the iterative algorithm subject of this chapter, and shows how the impulse control problems that need to be sequentially solved have a unique solution and can be handled by policy iteration. An alternative general solver for impulse control problems is also provided in Section \ref{s:solve_one_player}. It consists of an instance of fixed-point policy-iteration that is noncompliant with the standard assumptions \cite{HFL1} and, as far as the author knows, it was not used in the context of impulse control before, other than heuristically in Chapter \ref{c:2}. We prove its convergence under our framework. 

Section \ref{s:FPPI} characterizes the overall iterative algorithm as a fixed-point policy-iteration-type method, allowing for reformulations of the original problem and results pertaining the solutions. (The necessary matrix and graph-theoretic definitions and results are collected in Appendix \ref{appendix:matrices} for the reader's convenience.) Section \ref{s:convergence} carries on with the convergence analysis, and shows to which extent different sets of reasonable assumptions are enough to guarantee convergence to solutions, convergence of strategies and boundedness of iterates. A result of theoretical interest, giving sufficient conditions for convergence, is proved. Discretization schemes within the standing framework are provided in Section \ref{s:discretization}.

Section \ref{s:numerics_games} presents all the numerical results. In Section \ref{s:games_on_fixed_grid}, a variety of symmetric nonzero-sum impulse games, many seemingly too complicated to be handled analytically, are explicitly solved for equilibrium payoffs and NEs strategies with great precision. This is done on a fixed grid, while considering different performance metrics and addressing practical matters of implementation. In the absence of a viscosity solutions framework to establish convergence to analytical solutions as the grid is refined, Section \ref{cv_analytical_sol} performs a numerical validation using the only examples of symmetric solvable games in the literature. Section \ref{s:games_without_NE} addresses the case of games without NEs. Section \ref{s:beyond_verif_theo} tackles games such that the results go beyond the scope the currently available theory, displaying discontinuous impulses and very irregular payoffs. The latter give insight and motivate further research into this field. Lastly, Section \ref{s:conclusions} concludes.

%%%%%%%%%%%%%%%%%%%%%%%%%%%%%%%%%%%%%%  SECTIONS %%%%%%%%%%%%%%%%%%%%%%%%%%%%%%%%%%%%%%%%%%%%%%%%%%%%%%%%%%%%%%%%%%

\section{Analytical continuous-space problem:\\the symmetric case}
\label{s:symmetric_games}
\setcounter{subsection}{1}
In Section \ref{s:analytical_problem} we established a general framework for two-player nonzero-sum stochastic impulse control games. We now want to focus our study on games which present a certain type of symmetric structure between the players, generalising the linear game in Section \ref{s:linear_game} and the cash management game \cite{BCG}.\footnote{The latter can be reduced to one dimension with the change of variables $x=x_1-x_2$. Additionally, we will restrict attention to unidirectional impulses, as these yield the `most relevant' NE found in \cite{BCG}.} We shall make a slight abuse of terminology with respect to the more common use of the term `symmetric' in games theory, but this will be consistent all throughout.

\begin{notation*}
The type of games presented in Section \ref{s:general_game} are fully defined by setting the drift, volatility, impulse constraints, discount rates, running payoffs, costs and gains. In other words, any such game can be represented by a tuple $\mathcal G=(\mu,\sigma,\mathcal Z_i,\rho_i,f_i,c_i,g_i)_{i=1,2}$.
\end{notation*} 

\begin{definitions}
\label{sym_game}
We say that a game $\mathcal G=(\mu,\sigma,\mathcal Z_i,\rho_i,f_i,c_i,g_i)_{i=1,2}$ is \textit{symmetric (with respect to zero)} if
	\begin{enumerate}[label=(S\arabic*)]
		\item \label{sym_dynamics} $\mu$ is odd and $\sigma$ is even (i.e., $\mu(x)=-\mu(-x)$ and $\sigma(x)=\sigma(-x)$ for all $x\in\mathbb R$).
		\item \label{sym_constraints} $-\mathcal Z_2(-x)=\mathcal Z_1(x)\subseteq [0,+\infty)$ for all $x\in\mathbb R$ and $\mathcal Z_1(x)=\{0\}=\mathcal Z_2(-x)$ for all $x\geq 0$.
		\item \label{sym_rest} $\rho_1=\rho_2$, $f_1(x)=f_2(-x)$, $c_1(x,\delta)=c_2(-x,-\delta)$ and $g_1(x,-\delta)=g_2(-x,\delta)$, for all $\delta\in\mathcal Z_1(x),\ x\in\mathbb R$.
	\end{enumerate}
We say that the game is \textit{symmetric with respect to }$s$ (for some $s\in\mathbb R$), if the $s$-\textit{shifted} game $(\mu(x+s),\sigma(x+s),\mathcal Z_i(x+s),\rho_i,f_i(x+s),c_i(x+s,\delta),g_i(x+s,\delta))_{i=1,2}$ is symmetric. We refer to $x=s$ as a \textit{symmetry line} of the game. 
\end{definitions}
Condition \ref{sym_dynamics} is necessary for the state variable to have symmetric dynamics. In particular, together with \ref{sym_rest}, it guarantees symmetry between solutions of the Hamilton--Jacobi--Bellman (HJB) equations of the players when there are no interventions, i.e.,
$$
V=V^*(x)\mbox{ solves }\mathcal AV -\rho_1V +f_1=0\quad\mbox{ if and only if }\quad V=V^*(-x)\mbox{ solves }\mathcal AV -\rho_2V +f_2=0.
$$
\begin{examples}
The most common examples of It\^o diffusions satisfying this assumption are the scaled Brownian motion (symmetric with respect to zero) and the Ornstein--Uhlenbeck (OU) process (symmetric with respect to its long term mean). 
\end{examples}

Condition \ref{sym_rest} is self-explanatory, while \ref{sym_constraints} is only partly so. Indeed, although symmetric constraints on the impulses $\mathcal Z_1(x)=-\mathcal Z_2(-x)$ should clearly be a requirement, the rest of \textit{(ii)} is in fact motivated by the numerical method to be presented and the type of problems it can handle. On the one hand, the third equation of the QVIs system (\ref{QVIs}) implies that a stochastic impulse control problem for player $i$ needs to be solved on $\mathcal C^*_j$. The unidirectional impulses assumption is a common one for the convergence of policy iteration algorithms in impulse control.\footnote{See this assumption in \cite[Sect.4]{CMS} or \cite[Sect.10.4.2]{OS}, its graph-theoretic counterpart in \cite[Asm.(H2) and Thm.4.3]{AF}, and a counterexample of convergence in its absence in \cite[Ex.4.9]{AF}.} However, it is often too restrictive for many interesting applications, such as when the controller would benefit the most from keeping the state variable within some bounded interval instead of simply keeping it `high' or `low' (see, e.g., \cite{B1} and \cite[Sect.6.1]{AF}). Interestingly enough, assuming unidirectional impulses turns out to be less restrictive when there is a second player present, with an opposed objective. Indeed, it is often the case that each player needs not to intervene in one of the two directions, and can instead rely on her opponent doing so, while capitalising a gain rather than paying a cost. See examples in Section \ref{s:games_on_fixed_grid} with quadratic and degree four running payoffs. 

On the other hand, $\mathcal Z_1(x)=\{0\}=\mathcal Z_2(-x)$ for all $x\geq 0$ means that we can assume without loss of generality that the admissible intervention regions do not cross over the symmetry line; i.e., $\mathcal I_1\subseteq(-\infty,0)$ and $\mathcal I_2\subseteq (0,+\infty)$ for every pair of strategies. (See Remark \ref{r:zero_impulse}.) This guarantees in particular that the players never want to intervene at the same time and the priority rule can be disregarded.   

There are different reasons why the last mentioned condition is less restrictive than it first appears to be. It is not uncommon to assume connectedness of either intervention or continuation regions (or other conditions implying them) both in impulse control \cite{E} and nonzero-sum games \cite[Sect.1.2.1]{DFM}. The same can be said for assumptions that prevent the players from intervening in unison \cite[Sect.1.2.1]{DFM},\cite[Rmk.6.5]{C}.
\footnote{In \cite[Rmk.6.5]{C}, assumptions are given to guarantee the zero-sum analogue to $\mathcal M_iV_i\leq \mathcal H_iV_i$, with a strict inequality if such assumptions are slightly strengthened. These inequalities, in the context of Theorem \ref{verification}, imply that the equilibrium intervention regions cannot intersect.} In the context of symmetric games and payoffs (see Lemma \ref{sym_lemma}) such assumptions would necessarily imply the intervention regions need to be on opposed sides of the symmetry line. Additionally, without any further requirements, strategies such that $\mathcal I_1\supseteq(-\infty,0]$ and $\mathcal I_2\supseteq [0,+\infty)$ would be inadmissible in the present framework, as per yielding infinite simultaneous impulses.

\begin{definitions}
\label{sym_strategies}
Given a symmetric game, we say that $\big((\mathcal I_i,\delta_i)\big)_{i=1,2}$ are \textit{symmetric strategies (with respect to zero)} if $\mathcal I_1=-\mathcal I_2$ and $\delta_1(x)=-\delta_2(-x)$. Given a symmetric game with respect to some $s\in\mathbb R$, we say that$\big((\mathcal I_i,\delta_i)\big)_{i=1,2}$ are \textit{symmetric strategies with respect to $s$} if $\big((\mathcal I_i-s,\delta_i(x+s))\big)_{i=1,2}$ are symmetric, and we refer to $x=s$ as a \textit{symmetry line} of the strategies.
\end{definitions}

\begin{definition}
\label{sym_functions}
We say that $V_1,V_2:\mathbb R\to\mathbb R$ are \textit{symmetric functions (with respect to zero)} if $V_1(x)=V_2(-x)$. We say that they are \textit{symmetric functions with respect to $s$} (for some $s\in\mathbb R$) if $V_1(x+s),V_2(x+s)$ are symmetric, and we refer to $x=s$ as a \textit{symmetry line} for $V_1,V_2$. 
\end{definition}

\begin{remark}
\label{r:sym_NE}
Definition \ref{sym_strategies} singles out strategies that share the same symmetry line with the game. For the linear game, for example, the authors find infinitely many NEs \cite[Prop.4.7]{ABCCV}, each presenting symmetry with respect to some point $s$, but only one for $s=0$ (hence, symmetric in the sense of Definition \ref{sym_strategies}). At the same time, the latter is the only one for which the corresponding equilibrium payoffs $V_1,V_2$ have a symmetry line as per Definition \ref{sym_functions}. The same is true for the cash management game \cite{BCG}.
\end{remark}

\begin{remark}
Throughout the chapter we will work only with games symmetric with respect to zero, to simplify the notation. Working with any other symmetry line amounts simply to shifting the game and results back and forth.
\end{remark}

\begin{lemma}
\label{sym_lemma}
For any symmetric game, strategies $(\varphi_1,\varphi_2)$ and functions $V_1,V_2:\mathbb R\to\mathbb R$:
\begin{enumerate}[label=(\roman*)]
\item If $V_1,V_2$ are symmetric, then $\mathcal M_1V_1,\mathcal M_2V_2$ are symmetric.
\item If $V_1,V_2$ are symmetric and have the UIP, then $\delta_1^*(x)=-\delta_2^*(-x)$ and $\mathcal H_1V_1,\mathcal H_2V_2$ are symmetric. 
\item If $(\varphi_1,\varphi_2)$ are symmetric, then $J_1(\cdot;\varphi_1,\varphi_2),J_2(\cdot;\varphi_1,\varphi_2)$ are symmetric. 
\item If $V_1,V_2$ are as in Theorem \ref{verification} and $(\varphi^*_1,\varphi^*_2)$ is the corresponding NE of the theorem, then $(\varphi^*_1,\varphi^*_2)$ are symmetric if and only if $V_1,V_2$ are symmetric. 
\end{enumerate}
\end{lemma}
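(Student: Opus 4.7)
The four parts build on one another, so I would tackle them in order. For (i), the core move is a simple change of variables: unfolding $\mathcal M_2 V_2(-x) = \sup_{\delta\in\mathcal Z_2(-x)}\{V_2(-x+\delta)-c_2(-x,\delta)\}$, substituting $\delta = -\eta$ with $\eta \in \mathcal Z_1(x)$ (justified by \ref{sym_constraints}), and then invoking \ref{sym_rest} to rewrite $V_2(-x-\eta) = V_1(x+\eta)$ and $c_2(-x,-\eta) = c_1(x,\eta)$ produces exactly $\mathcal M_1 V_1(x)$. This is routine.

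For (ii), the same substitution establishes a bijection $\eta \leftrightarrow -\eta$ between the argmax sets of $\mathcal M_1 V_1(x)$ and $\mathcal M_2 V_2(-x)$, so the UIP (which applies on the coincidence sets $\{\mathcal M_i V_i = V_i\}$, themselves reflections of each other by (i)) forces $\delta_1^*(x) = -\delta_2^*(-x)$. Expanding $\mathcal H_2 V_2(-x) = V_2(-x + \delta_1^*(-x)) + g_2(-x, \delta_1^*(-x))$, substituting $\delta_1^*(-x) = -\delta_2^*(x)$, and applying $V_2(-x-\delta) = V_1(x+\delta)$ together with $g_2(-x,-\delta) = g_1(x,\delta)$ from \ref{sym_rest}, reduces this to $V_1(x+\delta_2^*(x)) + g_1(x,\delta_2^*(x)) = \mathcal H_1 V_1(x)$.

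Part (iii) is the key step and where I expect the main subtlety. The plan is to introduce the reflected process $Y_t \defeq -X_t^{x;\varphi_1,\varphi_2}$ and argue it has the same law as $X^{-x;\varphi_1,\varphi_2}$. By \ref{sym_dynamics}, $Y$ satisfies the same SDE driven by $\tilde W = -W$ (still a Brownian motion) starting from $-x$; and by symmetric strategies (Definition \ref{sym_strategies}), player 1's intervention times and impulses on $X$ become, on $Y$, interventions occurring exactly where player 2 would act under $\varphi_2$ (since $-\mathcal I_1 = \mathcal I_2$ and $-\delta_1(-Y^-) = \delta_2(Y^-)$), and vice versa; the disjointness $\mathcal I_1 \cap \mathcal I_2 \subseteq \{0\}$ inherited from \ref{sym_constraints} makes the priority rule irrelevant for the dynamics. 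Substituting $Y$ into the expression for $J_1(x;\varphi_1,\varphi_2)$ and applying \ref{sym_rest} term by term (using $f_1(-y)=f_2(y)$, $c_1(-y,\delta)=c_2(y,-\delta)$ and $g_1(-y,\delta)=g_2(y,-\delta)$) collapses the result to $J_2(-x;\varphi_1,\varphi_2)$. The main bookkeeping challenge will be tracking which sum of terms corresponds to which player after the reflection; I would introduce explicit notation for the intervention times and impulses of $Y$ before computing.

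Finally, (iv) is short once (i)--(iii) are in hand. For the ``$\Leftarrow$'' direction, symmetry of $V_1,V_2$ yields symmetry of $\mathcal I_i^* = \{\mathcal M_i V_i = V_i\}$ by (i) and of $\delta_i^*$ by (ii), hence symmetry of the NE. For ``$\Rightarrow$'', since Theorem \ref{verification} identifies $V_i$ with $J_i(\cdot;\varphi_1^*,\varphi_2^*)$, part (iii) transfers the strategies' symmetry directly to $V_1,V_2$. So the genuine work is concentrated in (iii).
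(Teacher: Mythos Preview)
Your proposal is correct and follows essentially the same route as the paper's proof: parts (i) and (ii) are direct computations from the definitions, part (iii) establishes that $-X^{x;\varphi_1,\varphi_2}$ has the same law as $X^{-x;\varphi_1,\varphi_2}$ and then reads off the payoff identity, and part (iv) is assembled from (i)--(iii). The paper's version is terser (it merely cites the recursive definition of $X$ for the law equality and notes that intervention times and impulses are the jump times and sizes), but your more explicit bookkeeping is exactly what that citation unpacks to.
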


\begin{proof}
\textit{(i)} and \textit{(ii)} are straightforward from the definitions. 

To see \textit{(iii)}, one can check with the recursive definition of the state variable \cite[Def.2.2]{ABCCV} that $X^{-x;\varphi_1,\varphi_2}$ has the same law as $-X^{x;\varphi_1,\varphi_2}$ (recall that the continuation regions are simply disjoint unions of open intervals). Noting also that intervention times and impulses are nothing but jump times and sizes of $X$, one concludes that $J_1(x;\varphi_1,\varphi_2)=J_2(-x;\varphi_1,\varphi_2)$, as intended.  

Finally, \textit{(iv)} is a consequence of \textit{(i)}, \textit{(ii)} and \textit{(iii)}. 
\end{proof}

\begin{convention}
\label{convention}
In light of Lemma \ref{sym_lemma}, for any symmetric game we will often lose the player index from the notations and refer always to 
quantities corresponding to player 1,\footnote{Note that $g$ will denote $g(x,\delta)\defeq g_1(x,-\delta)$, as $\delta\geq 0$ for player 1, yet $g_1$ depends on the (negative) impulse of player 2.} henceforth addressed simply as `the player'. Player 2 shall be referred to as `the opponent'. Statements like `$V$ has the UIP' or `$V$ is a symmetric equilibrium payoff' are understood to refer to $(V(x),V(-x))$. Likewise, `$(\mathcal I,\delta)$ is admissible' or `$(\mathcal I,\delta)$ is a NE' refer to the pair $(\mathcal I,\delta(x)),(-\mathcal I,-\delta(-x))$.
\end{convention}

Due to their general lack of uniqueness, it is customary in game theory to restrict attention to specific type of NEs, depending on the problem at hand (see for instance \cite{HS} for a treatment within the classical theory). Motivated by Lemma \ref{sym_lemma} \textit{(iii)} and \textit{(iv)}, and by Remark \ref{r:sym_NE}, one can arguably state that symmetric NEs are the most meaningful for symmetric games. Furthermore, Lemma \ref{sym_lemma} implies that for symmetric games, one can considerably reduce the complexity of the full system of QVIs (\ref{QVIs}) provided the conjectured NE (or equivalently, the pair of payoffs) is symmetric. Using Convention \ref{convention}, Theorem \ref{verification} and Lemma \ref{sym_lemma} give:
\begin{corollary}[\textbf{Symmetric system of QVIs}]
\label{coro_sym_QVIs}
Given a symmetric game as in Definition \ref{sym_game}, let $V:\mathbb R\to\mathbb R$ be a function with the UIP, such that:
\begin{equation}
\label{sym_QVIs}
\begin{cases}
	\begin{aligned}
		& \mathcal H V- V=0 && \text{on} \,\,\, -\{\mathcal M V -  V = 0\}\backdefeq -\mathcal I^*\\
		& \max\big\{\mathcal A V -\rho V + f, \mathcal M V- V \}=0 && \text{on} \,\,\, -\{\mathcal M V - V < 0\}\backdefeq-\mathcal C^*
	\end{aligned}
\end{cases}
\end{equation} 
and $V\in C^2(-\mathcal C^*\backslash\partial\mathcal C^*)\cap C^1(-\mathcal C^*)\cap C(\mathbb R)$ has polynomial growth and bounded second derivative on some reduced neighbourhood of $\partial\mathcal C^*$. Suppose further that $(\mathcal I^*,\delta^*)$ is an admissible strategy.
$$\mbox{Then,}\quad V\mbox{ is a symmetric equilibrium payoff attained at a symmetric NE }(\mathcal I^*,\delta^*).$$
\end{corollary}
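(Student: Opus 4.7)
The plan is to reduce Corollary \ref{coro_sym_QVIs} to the already established Verification Theorem \ref{verification} by exploiting the symmetry. First, I would set $V_1(x) \defeq V(x)$ and $V_2(x) \defeq V(-x)$, which is symmetric by construction. Under Convention \ref{convention}, the UIP for $V$ is exactly the UIP for $(V_1,V_2)$, and by (the argument of) Lemma \ref{sym_lemma}(ii), the unique maximizers satisfy $\delta_1^*(x)=\delta^*(x)$ and $\delta_2^*(x)=-\delta^*(-x)$.

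Second, I would verify that $(V_1,V_2)$ solves the full system (\ref{QVIs}). By Lemma \ref{sym_lemma}(i), $\mathcal M_2V_2(x)=\mathcal MV(-x)$ and $\mathcal M_1V_1=\mathcal MV$, so $\mathcal I_j^*=\{\mathcal M_jV_j=V_j\}$ coincides with $\mathcal I^*$ for $j=1$ and with $-\mathcal I^*$ for $j=2$; assumption \ref{sym_constraints} places these on opposite sides of the symmetry line, so the priority rule is irrelevant. The inequality $\mathcal M_jV_j-V_j\le 0$ on $\mathbb R$ holds trivially on $\mathcal I_j^*$ and follows from the max-equation of (\ref{sym_QVIs}) on $\mathcal C_j^*$. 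The second line of (\ref{QVIs}) for $i=1,j=2$ is precisely the first equation of (\ref{sym_QVIs}); for $i=2,j=1$, one uses the symmetry in Lemma \ref{sym_lemma}(ii) to reflect the identity $\mathcal HV=V$ on $-\mathcal I^*$ into $\mathcal H_2V_2=V_2$ on $\mathcal I^*$. Finally, the HJB part of the third line of (\ref{QVIs}) is immediate for $i=1$ on $\mathcal C_2^*=-\mathcal C^*$, and for $i=2$ on $\mathcal C_1^*=\mathcal C^*$ it follows by differentiating $V_2(x)=V(-x)$ together with the oddness/evenness in \ref{sym_dynamics} and the symmetry of $f$ and $\rho$ in \ref{sym_rest}, which yield $\mathcal AV_2(x)-\rho V_2(x)+f_2(x)=(\mathcal AV-\rho V+f)(-x)$.

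Third, the regularity hypothesis transfers directly: $V_1=V$ satisfies the prescribed $C^2/C^1/C$ assumptions verbatim, while $V_2$ inherits them by composition with the reflection $x\mapsto -x$, which maps $\mathcal C_1^*\setminus\partial\mathcal C_2^*=\mathcal C^*\setminus(-\partial\mathcal C^*)$ diffeomorphically onto $-\mathcal C^*\setminus\partial\mathcal C^*$; the polynomial growth and the bound on the second derivative near the free boundary are similarly preserved. Admissibility of the symmetric pair of strategies is assumed in the hypothesis.

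All assumptions of Theorem \ref{verification} being verified, $(V_1,V_2)$ are equilibrium payoffs attained at the NE $\big((\mathcal I^*,\delta^*),(-\mathcal I^*,-\delta^*(-\cdot))\big)$, which is symmetric by construction; Convention \ref{convention} then rephrases this as the claim of the corollary. I do not anticipate a real obstacle beyond careful bookkeeping of the symmetry when translating the second equation of (\ref{QVIs}) from player $1$ to player $2$, where one must invoke Lemma \ref{sym_lemma}(ii) to identify $\mathcal H_2V_2$ with the reflected $\mathcal HV$; this is the only step where the UIP is genuinely needed.
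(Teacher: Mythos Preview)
Your approach is exactly the one the paper takes: it states the corollary as an immediate consequence of Theorem~\ref{verification} and Lemma~\ref{sym_lemma} under Convention~\ref{convention}, and your proposal correctly spells out the bookkeeping of how the symmetric system~(\ref{sym_QVIs}) unfolds into the full system~(\ref{QVIs}) for the pair $(V_1,V_2)=(V(\cdot),V(-\cdot))$.

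There is, however, one small gap. Your verification of the first line of~(\ref{QVIs}), namely $\mathcal M_jV_j-V_j\le 0$ on all of $\mathbb R$, is circular as written: you say it ``holds trivially on $\mathcal I_j^*$ and follows from the max-equation of~(\ref{sym_QVIs}) on $\mathcal C_j^*$'', but $\mathcal I_j^*\cup\mathcal C_j^*=\{\mathcal M_jV_j-V_j\le 0\}$ by definition, so this only restates what you want to prove. The max-equation in~(\ref{sym_QVIs}) lives on $-\mathcal C^*$, not on $\mathcal C^*$, and does not a priori cover $\mathbb R$. The paper closes this gap right after the statement of the corollary: assumption~\ref{sym_constraints} forces $\mathcal Z_1(x)=\{0\}$ for $x\ge 0$, so by Remark~\ref{r:zero_impulse} one has $\mathcal MV(x)-V(x)=-c(x,0)<0$ on $[0,+\infty)$; hence $[0,+\infty)\subseteq\mathcal C^*$ and $(-\infty,0]\subseteq-\mathcal C^*$, and then the max-equation on $-\mathcal C^*$ yields $\mathcal MV-V\le 0$ on $(-\infty,0]$ as well. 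This is the missing step; once inserted, your argument is complete and matches the paper's.
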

Note that system (\ref{sym_QVIs}) also omits the equation $\mathcal M V - V \leq 0$ as per being redundant. Indeed, by Definition \ref{sym_game} and Remark \ref{r:zero_impulse}, at a NE the player does not intervene above 0, nor the opponent below it. Thus, $\mathcal M V - V \leq \max\big\{\mathcal A V -\rho V + f, \mathcal M V- V \}=0$ on $-\mathcal C^*\supset (-\infty,0]$ and $\mathcal M V- V < 0$ on $[0,+\infty)$. 

System (\ref{sym_QVIs}) simplifies a numerical problem which is very challenging even in cases of linear structure (see Chapter \ref{c:2}). In light of the previous, we will focus our attention on symmetric NEs only and numerically solving the reduced system of QVIs (\ref{sym_QVIs}).

\section{Numerical discrete-space problem}
\label{s:discrete_problem}
In this section we consider a discrete version of the symmetric system of QVIs (\ref{sym_QVIs}) over a fixed grid, and propose and study an iterative method to solve it. As it is often done in numerical analysis for stochastic control, for the sake of generality we proceed first in an abstract fashion without making reference to any particular discretization scheme. Instead, we give some general assumptions any such scheme should satisfy for the results presented to hold. Explicit discretization schemes within our framework are presented in Section \ref{s:discretization} and used in Section \ref{s:numerics_games}.  

\subsection{Discrete system of quasi-variational inequalities}
\label{s:dQVIs}

From now on we work on a discrete symmetric grid 
\begin{equation*}
\grid:\ x_{-N}=-x_N<\dots<x_{-1}=-x_1<x_0=0<x_1<\dots<x_N.
\end{equation*} 
Recall that $\mathbb R^\grid$ denotes the set of functions $v:\grid\to\mathbb R$ and $S:\mathbb R^\grid\to \mathbb R^\grid$ denotes the symmetry operator, $Sv(x)=v(-x)$. In general, by an `operator' we simply mean some $F:\mathbb R^\grid\to\mathbb R^\grid$, not necessarily linear nor affine unless explicitly stated. We shall identify grid points with indexes, functions in $\mathbb R^\grid$ with vectors and linear operators with matrices; e.g., $S=(S_{ij})$ with $S_{ij}=1$ if $x_i=-x_j$ and 0 otherwise. The (partial) order considered in $\mathbb R^\grid$ and $\mathbb R^{\grid\times\grid}$ is the usual pointwise order for functions (elementwise for vectors and matrices), and the same is true for the supremum, maximum and arg-maximum induced by it.

We want to solve the following discrete nonlinear system of QVIs for $v\in\mathbb R^\grid$:
\begin{equation}
\label{dQVIs}
\begin{cases}
	\begin{aligned}
		& H v- v=0 && \text{on} \,\,\, -\{ M v -  v = 0\}\backdefeq -I^*\\
		& \max\big\{ Lv + f, Mv- v \}=0 && \text{on} \,\,\, -\{ M v - v < 0\}\backdefeq -C^*,
	\end{aligned}
\end{cases}
\end{equation} 
where $f\in\mathbb R^\grid$ and $L:\mathbb R^\grid\to \mathbb R^\grid$ is a linear operator. The nonlinear operators $M,H:\mathbb R^\grid\to\mathbb R^\grid$ are as follows: let $\emptyset\neq Z(x)\subseteq\mathbb R$ be a finite set for each $x\in\grid$, with $Z(x)=\{0\}$ if $x\geq 0$. Set $Z\defeq\prod_{x\in\mathbb\grid}Z(x)$ and for each $\delta\in Z$ let $B(\delta):\mathbb R^\grid\to\mathbb R^\grid$ be a linear operator, $c(\delta)\in\mathbb (0,+\infty)^\grid$ and $g(\delta)\in\mathbb R^\grid$, the three of them being \textit{row-decoupled} (i.e., row $x$ of $B(\delta),c(\delta),g(\delta)$ depends only on $\delta(x)\in Z(x)$). Then 
\begin{gather}
Mv \defeq \max_{\delta\in Z}\big\{B(\delta)v-c(\delta)\big\},\quad Hv=H(\delta^*)v\defeq SB(\delta^*)Sv+g(S\delta^*)\label{discrete_intervention_operators}\\
\mbox{and}\quad\delta^*=\delta^*(v)\defeq\max\Big(\argmax_{\delta\in Z}\big\{B(\delta)v-c(\delta)\big\}\Big).\label{delta_star}
\end{gather}

Some remarks are in order. Firstly, in the same fashion as the continuous-space case, the sets $I^*,C^*$ form a partition of the grid and represent the (discrete) intervention and continuation regions of the player, while $-I^*,-C^*$ are such regions for the opponent. 

Secondly, the general representation of $M$ follows \cite{CMS,AF}. For the standard choices of $B(\delta)$, our definition of $H$ is the only one for which a discrete version of Lemma \ref{sym_lemma} holds true (see Section \ref{s:discretization}). However, since $B$ and $g$ are row-decoupled, $SB(\delta^*)S$ and $g(S\delta^*)$ cannot be, as each row $x$ depends on $\delta^*(-x)$. For this reason and the lack of maximization over $-I^*$, there is no obvious way to reduce problem (\ref{dQVIs}) to a classical Bellman problem:
\begin{equation}
\label{Bellman_problem}
\sup_{\varphi}\big\{-A(\varphi)v+b(\varphi)\big\}=0,
\end{equation}
like in the impulse control case \cite{AF}, to apply Howard's policy iteration \cite[Ho-1]{BMZ}. Furthermore, unlike in the control case, even with unidirectional impulses and good properties for $L$ and $B(\delta)$, system (\ref{dQVIs}) may have no solution as in the analytical case \cite{ABCCV}. 

Thirdly, we have defined $\delta^*$ in (\ref{delta_star}) by choosing one particular maximizing impulse for each $x\in\grid$. The main motivation behind fixing one is to have a well defined discrete system of QVIs for every $v\in\mathbb R^\grid$. (This is not the case for the analytical problem (\ref{sym_QVIs}) where the gain operator $\mathcal H$ is not well defined unless $V$ has the UIP.) Being able to plug in any $v$ in (\ref{dQVIs}) and obtain a residual will be useful in practice, when assessing the convergence of the algorithm (see Section \ref{s:numerics_games}). Whether a numerical solution verifies, at least approximately, a discrete UIP (and the remaining technical conditions of the Verification Theorem) becomes something to be checked separately. 

\begin{remark}
\label{r:maxargmax1}
Choosing the maximum arg-maximum in (\ref{delta_star}) is partly motivated by ensuring a discrete solution will inherit the property of Remark \ref{r:concave_costs}. (The proof remains the same, for the discretizations of Section \ref{s:discretization}.) We will also motivate it in terms of the proposed numerical algorithm in Remark \ref{r:maxargmax2}. Note that in \cite{ABMZZ} (Chapter \ref{c:2}) the minimum arg-maximum was used instead for both players. Nevertheless, the replication of property \textit{(ii)}, Lemma \ref{sym_lemma}, dictates that it is only possible to be consistent with \cite{ABMZZ} for one of the two players (in this case, the opponent).
\end{remark}

\subsection{Iterative algorithm for symmetric games}
\label{s:sym_algo}
This section introduces the iterative algorithm developed to solve system (\ref{dQVIs}), which builds on Algorithm \ref{general_algo} by harnessing the symmetry of the problem and dispenses with the need for a relaxation scheme altogether. It is presented with a pseudocode that highlights the mimicking of system (\ref{dQVIs}) and the intuition behind the algorithm; namely:
\begin{itemize}
\item The player starts with some suboptimal strategy $\varphi^0=(I^0,\delta^0)$ and payoff $v^0$, to which the opponent responds symmetrically, resulting in a gain for the player (first equation of (\ref{dQVIs}); lines 1, 2 and 4 of Algorithm \ref{sym_algo}).
\item The player improves her strategy by choosing the optimal response, i.e., by solving a single-player impulse control problem through a policy-iteration-type algorithm (second equation of ({\ref{dQVIs}}); line 5 of Algorithm \ref{sym_algo}).
\item This procedure is iterated until reaching a stopping criteria (lines 6-8 of Algorithm \ref{sym_algo}).
\end{itemize}
\begin{notation}
In the following: $\grid_{<0}$ and $\grid_{\leq 0}$ represent the sets of grid points which are negative and nonpositive respectively, and $\Phi$ the set of \textit{(discrete) strategies} 
\begin{equation}
\label{discrete_strategies}
\Phi\defeq\{\varphi=(I,\delta):\ I\subseteq\grid_{<0}\mbox{ and }\delta\in Z\}.
\end{equation}
Set complements are taken with respect to the whole grid, $Id:\mathbb R^G\to\mathbb R^G$ is the identity operator; and given a linear operator $O:\mathbb R^\grid\to\mathbb R^\grid\simeq\mathbb R^{\grid\times\grid}$, $v\in\mathbb R^\grid$ and subsets $I,J\subseteq\grid$, $v_I\in\mathbb R^I$ denotes the restriction of $v$ to $I$ and $O_{IJ}\in\mathbb R^{I\times J}$ the submatrix/operator with rows in $I$ and columns in $J$. 
\end{notation} 

\begin{algorithm}[H]
    \caption{Iterative algorithm for symmetric games} \label{sym_algo}
Set $tol,scale>0$. 
		\begin{algorithmic}[1]
	    \STATE Choose initial guess: $v^0\in\mathbb R^\grid$
			\STATE Set $I^0=\{Lv^0+f\leq Mv^0-v^0\}\cap\grid_{<0}$ and $\delta^0=\delta^*(v^0)$
			\FOR{$k=0,1,\dots$}
				\STATE $
									v^{k+1/2}=\begin{cases} 
														H(\delta^k)v^k & \mbox{ on }-I^k\\
													  v^k            & \mbox{ on }(-I^k)^c
														\end{cases}
								$						
				\STATE $(v^{k+1},I^{k+1},\delta^{k+1})=\textsc{SolveImpulseControl}(v^{k+1/2},(-I^k)^c)$
				\IF{$\|(v^{k+1}-v^k)/\max\{|v^{k+1}|,scale\}\|_{\infty}<tol$} 
					\STATE break from the iteration
				\ENDIF	
			\ENDFOR	
    \end{algorithmic}
\end{algorithm}

The $scale$ parameter in line 5 of Algorithm \ref{sym_algo}, used throughout the literature by Forsyth, Labahn and coauthors \cite{AF,FL,HFL1,HFL2}, prevents the enforcement of unrealistic levels of accuracy for points $x$ where $v^{k+1}(x)\approx 0$. Additionally, note that having chosen the initial guess for the payoff $v^0$, the initial guess for the strategy is taken as the one induced by $v^0$. (The alternative expression for the intervention region gives the same as $\big\{M v^0- v^0=0\big\}$ for a solution of (\ref{dQVIs}).)

Line 5 of Algorithm \ref{sym_algo} assumes we have a subroutine $\textsc{SolveImpulseControl}(w,D)$ that solves the constrained QVI problem:
\begin{equation}
\label{constrained_QVI}
\max\{Lv+f,Mv-v\}=0\mbox{ on }D,\quad\mbox{subject to }v=w\mbox{ on }D^c
\end{equation}
for fixed $\grid_{\leq 0}\subseteq D\subseteq\grid$ (approximate continuation region of the opponent) and $w\in\mathbb R^\grid$ (approximate payoff due to the opponent's intervention). Although we only need to solve for $\tilde v=v_D$, the value of $v_{D^c}=w_{D^c}$ impacts the solution both when restricting the equations and when applying the nonlocal operator $M$. Hence, the approximate payoff $v^{k+1/2}$ fed to the subroutine serves to pass on the gain that resulted from the opponent's intervention and as an initial guess if desired (more on this on Remark \ref{r:initial_guess}).

The remaining of this section consists in establishing an equivalence between problem (\ref{constrained_QVI}) and a classical (unconstrained) QVI problem of impulse control. This allows us to prove the existence and uniqueness of its solution. In particular, we will see that \textsc{SolveImpulseControl} can be defined, if wanted, by policy iteration. However, we will present in the next section an alternative method that performs better in many practical situations and, in particular, in the examples treated in Section \ref{s:numerics_games}. Let us suppose from here onwards that the following assumptions hold true (see Appendix \ref{appendix:matrices} for the relevant Definitions \ref{matrix_definitions}):
\begin{enumerate}[label=(A\arabic*), start=0]
\item \label{A0} 
For each strategy $\varphi=(I,\delta)\in\Phi$ and $x\in I$, there exists a walk in graph$B(\delta)$ from row $x$ to some row $y\in I^c$.
\item \label{A1} 
$-L$ is a strictly diagonally dominant (SDD) $\mbox{L}_0$-matrix and, for each $\delta\in Z$, $Id-B(\delta)$ is a weakly diagonally dominant (WDD) $\mbox{L}_0$-matrix.
\end{enumerate}

\begin{remark}[\textit{Interpretation}]
\label{r:interpretation}
Assumptions \ref{A0},\ref{A1} are (H2),(H3) in \cite{AF}. For an impulse operator (say, `$B(\delta)v(x)=v(x+\delta)$'), \ref{A0} asserts that the player always wants to shift states in her intervention region to her continuation region through finitely many impulses. (This does not take into account the opponent's response.) On the other hand, if problem (\ref{constrained_QVI}) was rewritten as a fixed point problem, \ref{A1} would essentially mean that the uncontrolled operator is contractive while the controlled ones are nonexpansive (see \cite{CMS} and \cite[Sect.4]{AF}). 
\end{remark}

\begin{theorem}
\label{solution_constrained_QVI_1}
Assume \emph{\ref{A0}},\emph{\ref{A1}}. Then, for every $\grid_{\leq 0}\subseteq D\subseteq\grid$ and $w\in\mathbb R^\grid$, there exists a unique $v^*\in\mathbb R^\grid$ that solves the constrained QVI problem (\ref{constrained_QVI}). Further, $v^*_D$ is the unique solution of 
\begin{equation}
\label{restricted_QVI}
\max\Big\{\tilde L \tilde v +\tilde f, \tilde M \tilde v - \tilde v\Big\}=0,
\end{equation}
where $\tilde L\defeq L_{DD},\ \tilde f\defeq f_D+L_{DD^c}w_{D^c},\ \tilde Z\defeq \prod_{x\in D}Z(x),\ \tilde c(\tilde\delta)\defeq c(\tilde\delta)-B(\tilde\delta)_{DD^c}w_{D^c}\mbox{ and }\tilde B(\tilde\delta)\defeq B(\tilde\delta)_{DD}\mbox{ for }\tilde\delta\in\tilde Z;\mbox{ and } \tilde M\tilde v\defeq\max_{\tilde\delta\in\tilde Z}\big\{\tilde B(\tilde\delta)\tilde v -\tilde c(\tilde\delta)\big\}\mbox{ for }\tilde v\in\mathbb R^D$.

Additionally, for any initial guess, the sequence $(\tilde v^k)\subseteq\mathbb R^D$ defined by policy iteration \cite[Thm.4.3]{AF} applied to problem (\ref{restricted_QVI}), converges exactly to $v^*_D$ in at most $|\tilde\Phi|$ iterations, with $\tilde\Phi\defeq\{\tilde\varphi=(I,\tilde\delta):\ I\subseteq\grid_{<0}\mbox{ and }\tilde\delta\in\tilde Z\}$  the set of restricted admissible strategies.\footnote{$|A|$ denotes the cardinal of set $A$.}
\end{theorem}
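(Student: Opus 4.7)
The plan is to reduce the constrained QVI (\ref{constrained_QVI}) to the classical impulse control QVI (\ref{restricted_QVI}) on the smaller grid $D$, and then apply the standard policy iteration convergence result of \cite{AF} to the latter. The reduction is purely algebraic. For $v=(v_D,w_{D^c})\in\mathbb R^\grid$ and $x\in D$, splitting $L$ and $B(\delta)$ into blocks indexed by $D$ and $D^c$ and using that $B$, $c$ and $g$ are row-decoupled, one obtains $(Lv+f)(x)=\tilde L\tilde v(x)+\tilde f(x)$ and $(Mv-v)(x)=\tilde M\tilde v(x)-\tilde v(x)$. The latter identity uses that for $y\in D^c\subseteq\grid_{>0}$ the constraint set is the singleton $Z(y)=\{0\}$, so the coordinates $\delta(y)$ for $y\in D^c$ are fixed and the maximization over $\delta\in Z$ collapses to one over $\tilde\delta\in\tilde Z$. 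Hence $v^*$ solves (\ref{constrained_QVI}) iff $v^*_{D^c}=w_{D^c}$ and $\tilde v^*\defeq v^*_D$ solves (\ref{restricted_QVI}).

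Next I would verify that assumptions \ref{A0} and \ref{A1} transfer to the restricted problem. For \ref{A1}, both $-\tilde L=-L_{DD}$ and $Id-\tilde B(\tilde\delta)=(Id-B(\tilde\delta))_{DD}$ are principal submatrices of SDD (resp.\ WDD) $\mathrm{L}_0$-matrices; since the off-diagonal entries removed when passing to the submatrix are nonpositive, their removal increases row sums pointwise, so the $\mathrm{L}_0$ sign pattern and the (strict/weak) diagonal dominance are preserved. For \ref{A0}, any restricted $\tilde\varphi=(I,\tilde\delta)\in\tilde\Phi$ can be extended to $\varphi=(I,\delta)\in\Phi$ by setting $\delta=\tilde\delta$ on $D$ and $\delta=0$ on $D^c$ (permissible because $Z(y)=\{0\}$ there). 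For $x\in I\subseteq\grid_{<0}\subseteq D$, \ref{A0} supplies a walk in $\operatorname{graph}B(\delta)$ from $x$ to some $y\in I^c$; truncating at the first vertex that leaves $I$ yields an initial segment entirely in $I\subseteq D$, and a short case analysis, leveraging $D^c\subseteq\grid_{>0}\subseteq I^c$, produces the required walk in $\operatorname{graph}\tilde B(\tilde\delta)$ to some vertex in $I^c\cap D$.

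Once these assumptions are in place, \cite[Thm.4.3]{AF} applies directly to the restricted problem (\ref{restricted_QVI}) and yields existence and uniqueness of $\tilde v^*\in\mathbb R^D$, together with convergence of the policy iteration sequence $(\tilde v^k)$ to $\tilde v^*$. The finite-step bound follows from the usual monotone-improvement argument: each policy iteration step either produces a strictly dominating iterate (so the policy, being the maximiser over a finite set $\tilde\Phi$, cannot be revisited) or coincides with the previous one (so the stopping test is met), giving termination in at most $|\tilde\Phi|$ iterations. Undoing the reduction gives the unique $v^*\in\mathbb R^\grid$ solving (\ref{constrained_QVI}).

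The step I expect to require the most care is the transfer of \ref{A0} to the restricted graph: a walk guaranteed by \ref{A0} in $\operatorname{graph}B(\delta)$ may exit $D$ into $D^c$, whereas $\operatorname{graph}\tilde B(\tilde\delta)$ only contains edges between vertices of $D$. The truncation argument above is the natural candidate, but it rests on verifying that a walk exiting $D$ always does so through an edge whose source already lies in $I^c\cap D$, which uses the inclusions $\grid_{\leq 0}\subseteq D$, $I\subseteq\grid_{<0}$ and $D^c\subseteq\grid_{>0}$. If in some abstract setting the truncation is not sufficient, the clean remedy is to strengthen \ref{A0} to demand that the guaranteed walk respect the partition $D\cup D^c$; this holds automatically for the concrete discretisations considered in Section \ref{s:discretization}.
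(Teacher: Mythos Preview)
Your reduction of the constrained problem to the restricted QVI, and your treatment of \ref{A1}, match the paper. The gap is in how you handle \ref{A0}. You try to verify \ref{A0} for the restricted data $(\tilde B(\tilde\delta),I)$, claiming that truncation of the walk in $\operatorname{graph}B(\delta)$ yields a walk in $\operatorname{graph}\tilde B(\tilde\delta)$ ending in $I^c\cap D$. This is not true in general: with the first vertex leaving $I$ denoted $y_m$, if $y_m\in D^c$ then the edge $y_{m-1}\to y_m$ is lost upon restriction, and by construction $y_{m-1}\in I$, not $I^c\cap D$. Concretely, take $I=\{x_{-2},x_{-1}\}$, $D^c=\{x_2\}$, and $\delta(-1)$ such that $B(\delta)$ has the single edge $-1\to 2$ from row $-1$; then from $-1$ there is no outgoing edge in $\operatorname{graph}\tilde B(\tilde\delta)$ at all, so \ref{A0} fails for the restricted problem. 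Your fallback of strengthening \ref{A0} is unnecessary.

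The paper's remedy is to stop trying to transfer \ref{A0} and instead verify directly that the policy matrices
\[
A(\tilde\varphi)=-(\tilde{Id}-\tilde\Psi)\tilde L+\tilde\Psi(\tilde{Id}-\tilde B(\tilde\delta))
\]
are WCDD $\mathrm L_0$-matrices, which is all that \cite[Thm.2.1]{BMZ} requires. The key observation you are missing is this: if the walk in $\operatorname{graph}B(\delta)$ first exits $D$ along an edge $y_{m}\to y_{m+1}$ with $y_m\in D$, $y_{m+1}\in D^c$, then $B(\delta)_{y_m,y_{m+1}}\neq 0$, so deleting column $y_{m+1}$ turns the WDD row $y_m$ of $Id-B(\delta)$ into an SDD row of $\tilde{Id}-\tilde B(\tilde\delta)$. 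Since $y_m\in I$ (otherwise we were already done), row $y_m$ of $A(\tilde\varphi)$ coincides with row $y_m$ of $\tilde{Id}-\tilde B(\tilde\delta)$ and is therefore SDD. Hence the truncated subwalk $x\to y_1\to\dots\to y_m$, which lies entirely in $D$ and hence in $\operatorname{graph}A(\tilde\varphi)$, reaches an SDD row of $A(\tilde\varphi)$. This establishes WCDD of $A(\tilde\varphi)$ without \ref{A0} holding for the restriction, and the rest of your argument then goes through.
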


\begin{proof}
The equivalence between problems (\ref{constrained_QVI}) and (\ref{restricted_QVI}) is due to simple algebraic manipulation and $B(\delta),c(\delta)$ being row-decoupled for every $\delta\in Z$. ($B(\tilde\delta),c(\tilde\delta)$ are thus defined in the obvious way for each $\tilde\delta\in\tilde Z$.) 

The rest of the proof is mostly as in \cite[Thm.4.3]{AF}. Let $\tilde{Id}=Id_{DD}$. 
Each intervention region $I$, can be identified with its indicator $\tilde\psi=\mathbbm 1_I\in\{0,1\}^D$ since $D\supseteq I$, and each $\tilde\psi$ can be identified in turn with a diagonal matrix having $\tilde\psi$ as main diagonal: $\tilde\Psi=\mbox{diag}(\tilde\psi)\in\mathbb R^{D\times D}$. Then problem (\ref{restricted_QVI}) takes the form of the classical Bellman problem 
\begin{equation}
\label{equivalent_Bellman}
\max_{\tilde\varphi\in\tilde\Phi}\big\{-A(\tilde\varphi)v+b(\tilde\varphi)\big\}=0,
\end{equation}
if we take 
$$A(\tilde\varphi)=-(\tilde{Id}-\tilde\Psi)\tilde L + \tilde\Psi(\tilde{Id}-\tilde B(\tilde\delta))\quad\mbox{ and }\quad b(\tilde\varphi)=(\tilde{Id}-\tilde\Psi)\tilde f - \tilde\Psi\tilde c.$$ 
Note that $\tilde\Phi$ can be identified with the Cartesian product 
$$\tilde\Phi=\Big(\prod_{x\in\grid_{<0}}\{0,1\}\times Z(x)\Big)\times\Big(\prod_{x\in D\backslash\grid_{<0}}\{0\}\times Z(x)\Big)$$
and $A(\tilde\varphi),b(\tilde\varphi)$ are row-decoupled for every $\tilde\varphi\in\tilde\Phi$. Since $\tilde\Phi$ is finite, all we need to show is that the matrices $A(\tilde\varphi)$ are monotone (Definitions \ref{matrix_definitions} and \cite[Thm.2.1]{BMZ}). Let us check the stronger property (Thm. \ref{characterization_theorem} and Prop. \ref{M-matrix_characterization}) of being weakly chained diagonally dominant (WCDD) $\mbox{L}_0$-matrices (see Definitions \ref{matrix_definitions}).

If \ref{A0} and \ref{A1} also held true for the restricted matrices and strategies, the conclusion would follow. While \ref{A1} is clearly inherited, \ref{A0} may fail to do so, but only in non-problematic cases. To see this, let $\tilde\varphi=(I,\tilde\delta)\in\tilde\Phi,\ x\in I\subseteq D$ and let $\delta\in Z$ be some extension of $\tilde\delta$. Note that row $x$ of $A(\tilde\varphi)$ is WDD. We want to show that there is a walk in graph$A(\tilde\varphi)$ from $x$ to an SDD row.

By \ref{A0} there must exist some walk $x\to y_1\to\dots\to y_n\in I^c$ in graph$B(\delta)$. If this is in fact a walk from $x$ to $I^c\cap D$ in graph$\tilde B(\tilde\delta)$, then it verifies the desired property (just as in \cite[Thm.4.3]{AF}). If not, then there must be a first $1\leq m\leq n$ such that the subwalk $x\to y_1\to\dots\to y_m$ is in graph$\tilde B(\tilde\delta)$ but $y_{m+1}\notin D$. Since $y_m\to y_{m+1}$ is an edge in graph$B(\delta)$, we have $B(\delta)_{y_m,y_{m+1}}\neq 0$ and the WDD row (by \ref{A1}) $y_m$ of $\tilde{Id}-\tilde B(\tilde\delta)$ is in fact SDD. Meaning that the subwalk $x\to y_1\to\dots\to y_m$ verifies the desired property instead.
\end{proof}

\begin{remark}(\textit{Practical considerations}) 
\label{practical_considerations}
\begin{enumerate*}
\item While convergence is guaranteed to be exact, floating point arithmetic can bring about stagnation \cite{HFL2}. A stopping criteria like that of Algorithm \ref{sym_algo} should be used in those cases, with a tolerance $\ll tol$.   
\item \label{lambda} The solution of system (\ref{restricted_QVI}) does not change if one introduces a scaling factor $\lambda>0$: $\max\big\{\tilde L \tilde v +\tilde f, \lambda\big(\tilde M \tilde v - \tilde v\big)\big\}=0$ \cite[Lem.4.1]{AF}. This problem-specific parameter is typically added in the implementation to enhance performance \cite{HFL1,AF}. It can intuitively be thought as a units adjustment.
\end{enumerate*}
\end{remark}

\subsection{Iterative subroutine for impulse control}
\label{s:solve_one_player}
Due to Theorem \ref{solution_constrained_QVI_1}, a sensible choice for \textsc{SolveImpulseControl} is the classical policy iteration algorithm \cite[Thm.4.3]{AF} applied to (\ref{restricted_QVI}) (i.e., \cite[Ho-1]{BMZ} applied to (\ref{equivalent_Bellman})), adding an appropriately chosen scaling factor $\lambda$ to improve efficiency (Remark \ref{practical_considerations} \ref{lambda}). It does, however, bear some drawbacks in practice. At each iteration, one needs to solve the system $-A(\tilde\varphi^k)v^{k+1}+b(\tilde\varphi^k)=0$ for some $\tilde\varphi^k\in\tilde\Phi$. While the matrix $\tilde L$ typically has a good sparsity pattern in applications (often tridiagonal), the presence of $\tilde B(\tilde\delta^k)$ prevents $A(\tilde\varphi^k)$ from inheriting the same structure and makes the resolution of the previous system a lot more costly, even when using a good ordering technique. An exact resolution often becomes prohibitive and an iterative method, such as GMRES or BiCGSTAB with preconditioning, is relied upon.

Motivated by the previous observation, this section considers an alternative choice for \textsc{SolveImpulseControl}: an instance of a very general class of algorithms known as \textit{fixed-point policy iteration} \cite{HFL1,Cl0}. As far as the author knows, this application to impulse control was never done in the past other than heuristically in Chapter \ref{c:2}. Instead of solving $-A(\tilde\varphi^k)v^{k+1}+b(\tilde\varphi^k)=0$ at the $k$-th iteration, we will solve 
\begin{equation}
\label{VI}
\underbrace{\left((\tilde{Id}-\tilde\Psi^k)\tilde L - \tilde\Psi^k\right)}_{-\tilde{\mathbb A}(\tilde\varphi^k)}v^{k+1} + \underbrace{\tilde\Psi^k \tilde B(\tilde\delta^k)}_{\tilde{\mathbb B}(\tilde\varphi^k)}v^k + \underbrace{b(\tilde\varphi^k)}_{\tilde{\mathbb C}(\tilde\varphi^k)}=0,
\end{equation}
(scaled by $\lambda$) where the previous iterate value $v^k$ is given and $\tilde\Psi^k$ is the diagonal matrix with $\psi^k$ as diagonal. In other words, we split the original policy matrix $A(\tilde\varphi)=\tilde{\mathbb A}(\tilde\varphi)-\tilde{\mathbb B}(\tilde\varphi)$ and we apply a one-step fixed-point approximation,
\begin{equation}
\label{FPPI}
\tilde{\mathbb A}\big(\tilde\varphi^k\big)\tilde v^{k+1}=\tilde{\mathbb B}\big(\tilde\varphi^k\big)\tilde v^k +\tilde{\mathbb C}\big(\tilde\varphi^k\big),
\end{equation}
at each iteration of Howard's algorithm. The resulting method can be expressed as follows ($tol$ and $scale$ are as in Algorithm \ref{sym_algo}):

\makeatletter\renewcommand{\ALG@name}{Subroutine}
\begin{algorithm}[H]
    \caption{$(v,I,\delta)=\textsc{SolveImpulseControl}(w,D)$} \label{solve_one_player}
		\textbf{Inputs} $w\in\mathbb R^\grid$ and solvency region $\grid_{\leq 0}\subseteq D\subseteq\grid$
		
		\textbf{Outputs} $v\in\mathbb R^\grid,\ I\subseteq\grid_{<0},\ \delta\in Z$
		\newline
		
		Set scaling factor $\lambda>0$ and $0<\widetilde{tol}\ll tol$. 
		
		\qquad // Restrict constrained problem
		\begin{algorithmic}[1]
			\STATE $\tilde L\defeq L_{DD},\ \tilde f\defeq f_D+L_{DD^c}w_{D^c}$
			\STATE $\tilde Z\defeq \prod_{x\in D}Z(x),\quad \tilde c(\tilde\delta)\defeq c(\tilde\delta)-B(\tilde\delta)_{DD^c}w_{D^c},\ \tilde B(\tilde\delta)\defeq B(\tilde\delta)_{DD}\mbox{ for }\tilde\delta\in\tilde Z$
			\STATE $\tilde M\tilde v\defeq\max_{\tilde\delta\in\tilde Z}\big\{\tilde B(\tilde\delta)\tilde v -\tilde c(\tilde\delta)\big\}\mbox{ for }\tilde v\in\mathbb R^D,\ \tilde{Id}\defeq Id_{DD}$
			\newline
					
// Solve restricted problem
			\STATE Choose initial guess: $\tilde v^0\in\mathbb R^D$, $I^0\subseteq\grid_{<0}$
	  	\FOR{$k=0,1,2,\dots$}
				\STATE $\tilde L^k_{ij}=\begin{cases} 
										\tilde L_{ij} & \mbox{ if }x_i\in D\backslash I^k\\
										-\tilde{Id}_{ij} & \mbox{ if }x_i\in I^k	
										\end{cases}
										\qquad
								\tilde f^k =\begin{cases} 
										\tilde f & \mbox{ on }\in D\backslash I^k\\
										\tilde M\tilde v^k & \mbox{ on }I^k
										\end{cases}
							 $	
				\STATE $\tilde v^{k+1} \mbox{ solution of }\tilde L^k \tilde v + \tilde f^k=0$
				\STATE $I^{k+1}=\big\{\tilde L\tilde v^{k+1}+\tilde f\leq\lambda\big(\tilde M\tilde v^{k+1}-\tilde v^{k+1}\big)\big\}$
				\IF{$\|(\tilde v^{k+1}-\tilde v^k)/\max\{\tilde v^{k+1},scale\}\|_{\infty}<\widetilde{tol}$} 
				\STATE $v=\begin{cases}
									\tilde v^{k+1} & \mbox{ on }D\\
									w_{D^c} & \mbox{ on }D^c
									\end{cases},
				\ I=I^{k+1},\ \delta=\delta^*(v)$ and break from the iteration 
				\ENDIF	
			\ENDFOR
    \end{algorithmic}
\end{algorithm}
\makeatother

Lines 1-3 of Subroutine \ref{solve_one_player} deal with restricting the constrained problem, while the rest give a routine that can be applied to any QVI of the form (\ref{restricted_QVI}). Starting from some suboptimal $\tilde v^0$ and $I^0$, one computes a new payoff $\tilde v^1$ by solving the coupled equations $\tilde M\tilde v^0-\tilde v^1=0$ on $I^0$ and $\tilde L\tilde v^1+\tilde f=0$ outside $I^0$. A new intervention region $I^1=\big\{\tilde L\tilde v^1+\tilde f\leq\lambda\big(\tilde M\tilde v^1-\tilde v^1\big)\big\}$ is defined and the procedure is iterated.

Algorithmically, the difference with classical policy iteration is that $\tilde v^{k+1}$ is computed in Line 7 with a fixed obstacle $\tilde Mv^k$, changing a quasivariational inequality for a variational one. The resulting method is intuitive and simple to implement, and the linear system (\ref{VI}) (Line 7) inherits the sparsity pattern of $\tilde L$. For example, for an SDD tridiagonal $\tilde L$, the system can be solved (exactly in exact arithmetic, and stably in floating point one) in $O(n)$ operations, with $n=|D|$ \cite[Sect.9.5]{H}. The matrix-vector multiply $\tilde B(\tilde\delta^k)\tilde v^k$ can take at most $O(n^2)$ operations, but will reduce to $O(n)$ for standard discretizations of impulse operators.

It is also worth mentioning that Subroutine \ref{solve_one_player} differs from the so-called iterated optimal stopping \cite{COS,OS} in that the latter solves $\max\big\{\tilde L \tilde v^{k+1} +\tilde f, \tilde M \tilde v^k - \tilde v^{k+1}\big\}=0$ exactly at the $k$-th iteration (by running a full subroutine of Howard's algorithm with fixed obstacle), while the former only performs one approximation step. 

To establish the convergence of Subroutine \ref{solve_one_player} in the present framework, we add the following assumption:
\begin{enumerate}[label=(A\arabic*), start=2]
\item \label{A2} $B(\delta)$ has nonnegative diagonal elements for all $\delta\in Z$.
\end{enumerate}

\begin{remark}
\ref{A2} and the requirement of \ref{A1} that $Id-B(\delta)$ be a WDD $\mbox{L}_0$-matrix are equivalent to $B(\delta)$ being substochastic (see Appendix \ref{appendix:matrices}). This is standard for impulse operators (see Section \ref{s:discretization}) and other applications of fixed-point policy iteration \cite[Sect.4-5]{HFL1}.
\end{remark}

\begin{theorem}
\label{solution_constrained_QVI_2}
Assume \emph{\ref{A0}--\ref{A2}} and set $I^0=\emptyset$. Then, for every $\grid_{\leq 0}\subseteq D\subseteq\grid$ and $w\in\mathbb R^\grid$, the sequence $(\tilde v^k)$ defined by \textsc{SolveImpulseControl}$(w,D)$ is monotone increasing for $k\geq 1$ and converges to the unique solution of (\ref{restricted_QVI}).
\end{theorem}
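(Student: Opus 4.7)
I would begin by reformulating Subroutine~\ref{solve_one_player} within the fixed-point policy iteration framework already set up in the proof of Theorem~\ref{solution_constrained_QVI_1}. Problem (\ref{restricted_QVI}), scaled by $\lambda$ (Remark~\ref{practical_considerations}), reads $F(\tilde v)\defeq\max_{\tilde\varphi\in\tilde\Phi}\{-A(\tilde\varphi)\tilde v+b(\tilde\varphi)\}=0$, and the splitting $A(\tilde\varphi)=\tilde{\mathbb A}(\tilde\varphi)-\tilde{\mathbb B}(\tilde\varphi)$ used in (\ref{VI})--(\ref{FPPI}) gives $\tilde{\mathbb A}(\tilde\varphi)=-(\tilde{Id}-\tilde\Psi)\tilde L+\tilde\Psi$ and $\tilde{\mathbb B}(\tilde\varphi)=\tilde\Psi\tilde B(\tilde\delta)$. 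Under \ref{A1}, $-\tilde L$ is an SDD $\mbox{L}_0$-matrix (restriction of an SDD matrix to a principal submatrix is still SDD), so each $\tilde{\mathbb A}(\tilde\varphi)$ is an SDD $\mbox{L}_0$-matrix (its rows are either SDD rows of $-\tilde L$ or trivial identity rows), hence an M-matrix with $\tilde{\mathbb A}(\tilde\varphi)^{-1}\geq 0$. From \ref{A1}--\ref{A2}, $\tilde B(\tilde\delta)\geq 0$, so $\tilde{\mathbb B}(\tilde\varphi)\geq 0$ and $\tilde M$ is monotone. Setting $\tilde\varphi^k=(I^k,\tilde\delta^k)$, the iterates satisfy $\tilde{\mathbb A}(\tilde\varphi^k)\tilde v^{k+1}=\tilde{\mathbb B}(\tilde\varphi^k)\tilde v^k+b(\tilde\varphi^k)$, and subtracting $\tilde{\mathbb A}(\tilde\varphi^k)\tilde v^k$ from both sides, while using that $\tilde\varphi^k$ maximizes $F$ at $\tilde v^k$, yields the key identity
$$\tilde{\mathbb A}(\tilde\varphi^k)\big(\tilde v^{k+1}-\tilde v^k\big)=F(\tilde v^k).$$

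Second, I would establish monotonicity by induction. By the key identity and $\tilde{\mathbb A}(\tilde\varphi^k)^{-1}\geq 0$, the inequality $\tilde v^{k+1}\geq\tilde v^k$ is equivalent to $F(\tilde v^k)\geq 0$, which I prove inductively for $k\geq 1$. The base case uses the choice $I^0=\emptyset$: then $\tilde v^1$ solves $\tilde L\tilde v^1+\tilde f=0$ on all of $D$, so $F(\tilde v^1)=\max\{0,\lambda(\tilde M\tilde v^1-\tilde v^1)\}\geq 0$ componentwise. For the inductive step, assume $F(\tilde v^k)\geq 0$, so $\tilde v^{k+1}\geq\tilde v^k$. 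On $(I^k)^c$, $\tilde L\tilde v^{k+1}+\tilde f=0$, so $F(\tilde v^{k+1})\geq 0$ there. On $I^k$, $\tilde v^{k+1}=\tilde M\tilde v^k$, and the monotonicity of $\tilde M$ guaranteed by \ref{A2} gives $\tilde M\tilde v^{k+1}\geq\tilde M\tilde v^k=\tilde v^{k+1}$, hence $F(\tilde v^{k+1})\geq\lambda(\tilde M\tilde v^{k+1}-\tilde v^{k+1})\geq 0$.

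Third, I would bound the iterates above and conclude convergence. Let $\tilde v^*$ denote the unique solution of (\ref{restricted_QVI}) from Theorem~\ref{solution_constrained_QVI_1}; I show $\tilde v^k\leq\tilde v^*$ for all $k\geq 1$. For $k=1$, $F(\tilde v^*)=0$ implies $\tilde L\tilde v^*+\tilde f\leq 0$, so $-\tilde L\tilde v^*\geq\tilde f$ and nonnegativity of $(-\tilde L)^{-1}$ gives $\tilde v^*\geq(-\tilde L)^{-1}\tilde f=\tilde v^1$. For the inductive step, $F(\tilde v^*)=0$ yields $\tilde{\mathbb A}(\tilde\varphi^k)\tilde v^*\geq\tilde{\mathbb B}(\tilde\varphi^k)\tilde v^*+b(\tilde\varphi^k)\geq\tilde{\mathbb B}(\tilde\varphi^k)\tilde v^k+b(\tilde\varphi^k)=\tilde{\mathbb A}(\tilde\varphi^k)\tilde v^{k+1}$, where I used $\tilde v^k\leq\tilde v^*$ together with $\tilde{\mathbb B}(\tilde\varphi^k)\geq 0$; nonnegativity of $\tilde{\mathbb A}(\tilde\varphi^k)^{-1}$ delivers $\tilde v^{k+1}\leq\tilde v^*$. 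The sequence $(\tilde v^k)_{k\geq 1}$ is therefore monotone and bounded in $\mathbb R^D$, hence convergent to some $\tilde v^\infty\leq\tilde v^*$. Since $\tilde\Phi$ is finite and $\tilde v^{k+1}-\tilde v^k\to 0$, the key identity forces $F(\tilde v^k)\to 0$, and continuity of $F$ gives $F(\tilde v^\infty)=0$; uniqueness in Theorem~\ref{solution_constrained_QVI_1} then yields $\tilde v^\infty=\tilde v^*$. The main subtlety to handle is the inductive propagation $F(\tilde v^k)\geq 0\Rightarrow F(\tilde v^{k+1})\geq 0$, which crucially requires both the seeding choice $I^0=\emptyset$ and the monotonicity of $\tilde M$ secured by \ref{A2}.
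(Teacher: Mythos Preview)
Your proof is correct and follows essentially the same route as the paper: cast the subroutine as fixed-point policy iteration, establish monotonicity of $(\tilde v^k)_{k\geq 1}$ using $I^0=\emptyset$, bound the iterates above, and conclude. The technical details differ slightly in instructive ways. For monotonicity, the paper derives the recursive increment inequality $\tilde{\mathbb A}(\tilde\varphi^k)(\tilde v^{k+1}-\tilde v^k)\geq\tilde{\mathbb B}(\tilde\varphi^{k-1})(\tilde v^k-\tilde v^{k-1})$ and uses $\tilde{\mathbb B}(\tilde\varphi^0)=0$ to start the induction, whereas you work directly with the residual via your key identity $\tilde{\mathbb A}(\tilde\varphi^k)(\tilde v^{k+1}-\tilde v^k)=F(\tilde v^k)$ (valid for $k\geq 1$) and induct on $F(\tilde v^k)\geq 0$; these are two faces of the same computation. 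For the upper bound, the paper uses the uniform bound $\max_{\tilde\varphi\in\tilde\Phi}(\tilde{\mathbb A}-\tilde{\mathbb B})^{-1}(\tilde\varphi)\tilde{\mathbb C}(\tilde\varphi)$, while you bound directly by the solution $\tilde v^*$, which is tighter and more natural. Finally, you explicitly pass to the limit and invoke uniqueness to identify $\tilde v^\infty=\tilde v^*$, a step the paper leaves implicit; your treatment is more self-contained in this respect.
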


\begin{proof}
We can assume without loss of generality that $\lambda=1$. Subroutine \ref{solve_one_player} takes the form of a fixed-point policy iteration algorithm as per (\ref{FPPI}). Assumptions \ref{A0},\ref{A1} ensure (\ref{restricted_QVI}) has a unique solution (Theorem \ref{solve_one_player}) and that this scheme satisfies \cite[Cond.3.1 (i),(ii)]{HFL1}. That is, $\tilde{\mathbb A}(\tilde\varphi)$ and $\tilde{\mathbb A}(\tilde\varphi)-\tilde{\mathbb B}(\tilde\varphi)$ are nonsingular $M$-matrices (see proof of Theorem \ref{solve_one_player} and Appendix \ref{appendix:matrices}) and all coefficients are bounded since $\tilde\Phi$ is finite. In \cite[Thm.3.4]{HFL1} convergence is proved under one additional assumption of $\|\cdot\|_{\infty}$-contractiveness \cite[Cond.3.1 (iii)]{HFL1}, which is not verified in our case. However, the same computations show that the scheme satisfies
\begin{equation}
\tilde{\mathbb A}(\tilde\varphi^k)(\tilde v^{k+1}-\tilde v^k)\geq \tilde{\mathbb B}(\tilde\varphi^{k-1})(\tilde v^k-\tilde v^{k-1})\mbox{ for all }k\geq 1.
\end{equation}
Since $I^0=\emptyset$, and due to \ref{A1} and \ref{A2}, $\tilde{\mathbb B}(\tilde\varphi^0)=0$ and $\tilde{\mathbb B}(\tilde\varphi^k)\geq 0$ for all $k$. Thus, $(\tilde v^k)_{k\geq 1}$ is increasing by monotonicity of $\tilde{\mathbb A}(\tilde\varphi^k)$. Furthermore, it must be bounded, since for all $k\geq 1$:
$$
\tilde{\mathbb A}\big(\tilde\varphi^k\big)\tilde v^{k+1}=\tilde{\mathbb B}\big(\tilde\varphi^k\big)\tilde v^k +\tilde{\mathbb C}\big(\tilde\varphi^k\big)\leq\tilde{\mathbb B}\big(\tilde\varphi^{k}\big)\tilde v^{k+1} +\tilde{\mathbb C}\big(\tilde\varphi^k\big),
$$
which gives $\tilde v^{k+1}\leq (\tilde{\mathbb A}(\tilde\varphi^k)-\tilde{\mathbb B}(\tilde\varphi^k))^{-1}\tilde{\mathbb C}\big(\tilde\varphi^k\big)\leq \max_{\tilde\varphi\in\tilde\Phi}(\tilde{\mathbb A}(\tilde\varphi)-\tilde{\mathbb B}(\tilde\varphi))^{-1}\tilde{\mathbb C}\big(\tilde\varphi\big)$. Hence, $(\tilde v^k)$ converges.
\end{proof}

\begin{remark}
\label{r:initial_guess}
In light of Theorem \ref{solution_constrained_QVI_2}, moving forward we will set $I^0=\emptyset$ in Subroutine \ref{solve_one_player}. It is natural however to choose $\tilde v^0=w_D$ and $I^0=\big\{\tilde L\tilde v^0+\tilde f\leq\lambda\big(\tilde M\tilde v^0-\tilde v^0\big)\big\}$. The experiments performed with the latter choice displayed (non-monotone) convergence and usually a faster one; but this is not proved here. Additionally, exact convergence was often observed.
\end{remark}

\subsection{Overall routine as a fixed-point policy-iteration-type method}
\label{s:FPPI}

The system of QVIs (\ref{dQVIs}) cannot be reduced in any apparent way to a Bellman formulation (\ref{Bellman_problem}) (see comments preceding equation). Notwithstanding, we shall see that Algorithm \ref{sym_algo} does take a very similar form to a fixed-point policy iteration algorithm as in (\ref{FPPI}) for some appropriate $\mathbb A,\mathbb B,\mathbb C$. Further, assumptions resembling those of the classical case \cite{HFL1} will be either satisfied or imposed to study its convergence. This is independent of whether \textsc{SolveImpulseControl} is chosen as in Subroutine \ref{solve_one_player} or Howard's algorithm (Theorem \ref{solution_constrained_QVI_1}), although we shall assume that the outputs $I^k,\delta^k$ in the latter case are defined in the same way as in the former. The matrix and graph-theoretic definitions and properties used throughout this section can be found in Appendix \ref{appendix:matrices}.

\begin{notation}
We identify 
each intervention region $I\subseteq\grid_{<0}$ with its indicator function $\psi=\mathbbm 1_I\in\{0,1\}^\grid$ and each $\psi$ with a diagonal matrix having $\psi$ as main diagonal: $\Psi=\mbox{diag}(\psi)\in\mathbb R^{\grid\times \grid}$.
The sequences $(v^k)$ and $(\varphi^k)$, with $\varphi^k=(\psi^k,\delta^k)$, are the ones generated by Algorithm \ref{sym_algo}. We consider $v^*\in\mathbb R^\grid$ fixed and $\varphi^*=(\psi^*,\delta^*(v^*))$ the induced strategy with $\psi^*\defeq\{Lv^*+f\leq Mv^*-v^*\}\cap\grid_{<0}$.
\end{notation}

\begin{proposition}
\label{FPPI-like_result}
Assume \emph{\ref{A0}--\ref{A2}}. Then, 
\begin{equation}
\label{FPPI-like}
\mathbb A\big(\varphi^k,\varphi^{k+1}\big)v^{k+1}=\mathbb B\big(\varphi^k\big)v^k +\mathbb C\big(\varphi^k,\varphi^{k+1}\big),\mbox{ where:}
\end{equation}
\begin{enumerate}[label=(\roman*)]
\item $\psi^k=\mathbbm 1_{\{Lv^k+f\leq Mv^k-v^k\}\cap\grid_{<0}}\mbox{ and } \delta^k\in\argmax_{\delta\in Z}\big\{B(\delta)v^k-c(\delta)\big\}$.   
\item $\mathbb A\big(\varphi,\overline\varphi\big)\defeq Id -\big(Id - \overline\Psi-S\Psi S\big)(Id+L) - \overline\Psi B(\overline\delta)$ is a WCDD $\mbox{L}_0$-matrix, and thus a nonsingular M-matrix.  
\item $\mathbb B\big(\varphi\big)\defeq S\Psi B(\delta)S=diag(S\psi) S B(\delta)S$ is substochastic.
\item $\mathbb C\big(\varphi,\overline\varphi\big)\defeq \big(Id-\overline\Psi-S\Psi S\big)f - \overline\Psi c(\overline\delta) + S\Psi S g(S\delta)$.
\end{enumerate}
\end{proposition}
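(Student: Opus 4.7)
My plan is to verify all four claims by a direct row-by-row computation exploiting the fact that the supports of $\Psi^{k+1}$ and $S\Psi^{k}S$, namely $I^{k+1}\subseteq\grid_{<0}$ and $-I^{k}\subseteq\grid_{>0}$, are disjoint; together with their complement they partition $\grid$ into three pieces on which Algorithm~\ref{sym_algo} and Subroutine~\ref{solve_one_player} act in qualitatively different ways. The idea is that restricting $\mathbb A$, $\mathbb B$ and $\mathbb C$ to each piece will recover exactly the three assignments performed by the algorithm.

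Carrying out this computation, one checks that $\mathbb A(\varphi,\overline\varphi)$ restricted to rows in $-I$, $\overline I$ and $\big((-I)\cup\overline I\big)^c$ coincides respectively with $Id$, $Id-B(\overline\delta)$ and $-L$, while $\mathbb C(\varphi,\overline\varphi)$ on the same pieces equals $g(S\delta)$, $-c(\overline\delta)$ and $f$, and $\mathbb B(\varphi)v = SB(\delta)Sv$ on $-I$ and vanishes elsewhere. Substituting $\varphi=\varphi^{k},\ \overline\varphi=\varphi^{k+1}$, equation (\ref{FPPI-like}) thus reduces on $-I^{k}$ to $v^{k+1}=H(\delta^{k})v^{k}$ (line~4 of Algorithm~\ref{sym_algo}); on $I^{k+1}$ to $v^{k+1}=B(\delta^{k+1})v^{k+1}-c(\delta^{k+1})=Mv^{k+1}$ (the obstacle part of the QVI solved by Subroutine~\ref{solve_one_player}, using that $\delta^{k+1}=\delta^{*}(v^{k+1})$ attains the maximum in $Mv^{k+1}$); and elsewhere to $Lv^{k+1}+f=0$ (the differential part). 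This simultaneously verifies (\ref{FPPI-like}) and part \emph{(iv)}. Part \emph{(i)} then follows directly from the definition of $\delta^{*}$ in (\ref{delta_star}) and from line~8 of Subroutine~\ref{solve_one_player}, after setting $\lambda=1$ as is permissible by Remark~\ref{practical_considerations}\emph{(\ref{lambda})} and the proof of Theorem~\ref{solution_constrained_QVI_2}; the inclusion $I^{k+1}\subseteq\grid_{<0}$ is automatic since $Z(x)=\{0\}$ and $c(0)>0$ on $\grid_{\geq 0}$. For part \emph{(iii)}, I would observe that \ref{A1} and \ref{A2} together make $B(\delta)$ entrywise nonnegative with row sums at most one, hence substochastic; its conjugate $SB(\delta)S$ shares this property, and left-multiplying by the diagonal $\{0,1\}$-matrix $\mathrm{diag}(S\psi)$ only zeros out some rows.

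The heart of the argument, and the main obstacle, is \emph{(ii)}. The row-wise description of $\mathbb A$ above, combined with \ref{A1}, instantly shows that $\mathbb A$ is an $\mbox{L}_0$-matrix with nonnegative diagonal, weakly diagonally dominant throughout, and in fact strictly diagonally dominant on every row outside $\overline I$. What remains is the delicate weak chaining property: every row of $\mathbb A$ that is only weakly (not strictly) diagonally dominant must be linkable by a walk in graph$\,\mathbb A$ to some SDD row. Since such rows can only lie in $\overline I$, I would invoke \ref{A0} applied to the strategy $(\overline I,\overline\delta)$: for each $x\in\overline I$ it supplies a walk $x=x_{0}\to x_{1}\to\cdots\to x_{n}$ in graph$\,B(\overline\delta)$ with $x_{n}\in\overline I^{c}$. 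Because on rows belonging to $\overline I$ the off-diagonal entries of $\mathbb A$ coincide up to sign with those of $B(\overline\delta)$, the initial portion of that walk, terminated the first time it exits $\overline I$, is also a walk in graph$\,\mathbb A$; its endpoint lies either in $-I$ or in $\big((-I)\cup\overline I\big)^{c}$, both of whose rows have just been shown to be SDD. This establishes the WCDD property, and the nonsingular M-matrix conclusion follows from the equivalences collected in Appendix~\ref{appendix:matrices}.
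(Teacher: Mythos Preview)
Your proof is correct and follows essentially the same approach as the paper's own argument, which simply states that the recurrence ``results from simple algebraic manipulation'' and that \ref{A0},\ref{A1} ensure the WCDD $\mbox{L}_0$-property while \ref{A1},\ref{A2} give substochasticity. Your row-by-row partition into $-I^k$, $I^{k+1}$ and their complement, together with the walk argument for \emph{(ii)}, are exactly the details the paper leaves implicit (the latter mirroring the proof of Theorem~\ref{solution_constrained_QVI_1}).
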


\begin{proof}
Using that $(v^{k+1},I^{k+1},\delta^{k+1})=\textsc{SolveImpulseControl}(v^{k+1/2},(-I^k)^c)$ solves the constrained QVI problem (\ref{constrained_QVI}) for $D=(-I^k)^c$ and $w=v^{k+1/2}$ (Theorem \ref{solution_constrained_QVI_1} or \ref{solution_constrained_QVI_2}), the recurrence relation (\ref{FPPI-like}) results from simple algebraic manipulation.

Given $\varphi,\overline\varphi\in\Phi$, \ref{A0} and \ref{A1} ensure $\mathbb A\big(\varphi,\overline\varphi\big)$ is a WCDD $\mbox{L}_0$-matrix, while \ref{A1} and \ref{A2} imply $\mathbb B\big(\varphi\big)$ is substochastic.
\end{proof}

The following corollary is immediate by induction. It gives a representation of the sequence of payoffs in terms of the improving strategies throughout the algorithm.

\begin{corollary}
Assume \emph{\ref{A0}--\ref{A2}}. Then, 
\begin{equation}
\label{sequence_representation}
v^{k+1}=\left(\prod_{j=k}^0\mathbb A^{-1}\mathbb B\big(\varphi^j,\varphi^{j+1}\big)\right)v^0 + \sum_{n=0}^k\left(\prod_{j=k}^{n+1}\mathbb A^{-1}\mathbb B\big(\varphi^j,\varphi^{j+1}\big)\right)\mathbb A^{-1}\mathbb C\big(\varphi^n,\varphi^{n+1}\big).\footnote{For any index $i\leq k$, $\prod_{j=k}^iA^j\defeq A^kA^{k-1}\dots A^i$.}
\end{equation}
\end{corollary}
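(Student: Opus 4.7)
The statement is marked as ``immediate by induction,'' so the plan is a straightforward induction on $k\geq 0$, using the one-step recurrence
\[
v^{k+1}=\mathbb A\bigl(\varphi^k,\varphi^{k+1}\bigr)^{-1}\mathbb B\bigl(\varphi^k\bigr)\,v^k + \mathbb A\bigl(\varphi^k,\varphi^{k+1}\bigr)^{-1}\mathbb C\bigl(\varphi^k,\varphi^{k+1}\bigr)
\]
which follows from Proposition \ref{FPPI-like_result} by inverting $\mathbb A(\varphi^k,\varphi^{k+1})$ (a nonsingular M-matrix under assumptions \ref{A0}--\ref{A2}, hence invertible). Here, as the notation in the corollary is compact, I interpret the product $\mathbb A^{-1}\mathbb B(\varphi^j,\varphi^{j+1})$ as the matrix $\mathbb A(\varphi^j,\varphi^{j+1})^{-1}\mathbb B(\varphi^j)$, and analogously for $\mathbb A^{-1}\mathbb C(\varphi^n,\varphi^{n+1})$.

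For the base case $k=0$, the formula reduces to
\[
v^1 = \mathbb A^{-1}\mathbb B\bigl(\varphi^0,\varphi^1\bigr)\,v^0 + \mathbb A^{-1}\mathbb C\bigl(\varphi^0,\varphi^1\bigr),
\]
which is exactly the one-step recurrence above for $k=0$, using the convention that the empty product (from $j=0$ down to $j=1$) equals $Id$ in the summation term for $n=0$.

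For the inductive step, assuming the formula holds for some $k\geq 0$, I substitute the expression for $v^{k+1}$ into the one-step recurrence to obtain $v^{k+2}$:
\begin{align*}
v^{k+2} &= \mathbb A^{-1}\mathbb B\bigl(\varphi^{k+1},\varphi^{k+2}\bigr)\,v^{k+1} + \mathbb A^{-1}\mathbb C\bigl(\varphi^{k+1},\varphi^{k+2}\bigr)\\
&= \mathbb A^{-1}\mathbb B\bigl(\varphi^{k+1},\varphi^{k+2}\bigr)\left(\prod_{j=k}^{0}\mathbb A^{-1}\mathbb B\bigl(\varphi^j,\varphi^{j+1}\bigr)\right)v^0 \\
&\quad + \sum_{n=0}^{k}\mathbb A^{-1}\mathbb B\bigl(\varphi^{k+1},\varphi^{k+2}\bigr)\left(\prod_{j=k}^{n+1}\mathbb A^{-1}\mathbb B\bigl(\varphi^j,\varphi^{j+1}\bigr)\right)\mathbb A^{-1}\mathbb C\bigl(\varphi^n,\varphi^{n+1}\bigr) \\
&\quad + \mathbb A^{-1}\mathbb C\bigl(\varphi^{k+1},\varphi^{k+2}\bigr).
\end{align*}
Absorbing the leading factor $\mathbb A^{-1}\mathbb B(\varphi^{k+1},\varphi^{k+2})$ into each product and treating the last term as the $n=k+1$ summand (where the associated product is empty and equals $Id$), this is precisely the claimed formula with $k$ replaced by $k+1$.

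Since no technical obstacle arises beyond the noncommutativity of matrix multiplication (which is already reflected in the ordered products $\prod_{j=k}^{0}$ and $\prod_{j=k}^{n+1}$), and the invertibility of $\mathbb A(\varphi^j,\varphi^{j+1})$ is guaranteed by Proposition \ref{FPPI-like_result}\,(ii), the induction closes with no genuine obstruction.
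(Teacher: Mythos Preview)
Your proof is correct and follows exactly the approach the paper indicates: the corollary is stated as ``immediate by induction'' from the one-step recurrence in Proposition~\ref{FPPI-like_result}, and you have carried out precisely that induction, including the correct handling of the empty product as $Id$ and the invertibility of $\mathbb A(\varphi^j,\varphi^{j+1})$ via its WCDD $\mbox{L}_0$-matrix property.
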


We now establish some properties of the strategy-dependent matrix coefficients that will be useful in the sequel. Given a WDD (resp. substochastic) matrix $A\in \mathbb R^{\grid\times\grid}$, we define its set of `non-trouble states' (or rows) as 
$$
J[A]\defeq \{x\in\grid:\mbox{ row }x\mbox{ of }A\mbox{ is SDD}\}\mbox{ (resp. }\hat J[A]\defeq \{x\in\grid:\mbox{ row }x\mbox{ of }A\mbox{ sums less than one}\}),
$$ 
and its \textit{index of connectivity} $\mbox{con}A$ (resp. \textit{index of contraction} $\widehat{\mbox{con}}A$) by computing for each state the least length that needs to be walked on graph$A$ to reach a non-trouble one, and then taking the maximum over all states (more details in Appendix \ref{appendix:matrices}). This recently introduced concept gives an equivalent charaterization of the WCDD property for a WDD matrix as con$A<+\infty$, and can be efficiently checked for sparse matrices in $O(|\grid|)$ operations \cite{A}. On the other hand, if $A$ is substochastic then $\widehat{\mbox{con}}A<\infty$ if and only if its spectral radius verifies $\rho(A)<1$ (Theorem \ref{contraction}). The proof of the following lemma can be found in Appendix \ref{appendix:matrices}.

\begin{lemma}
\label{coefficients_properties}
Assume \emph{\ref{A0}--\ref{A2}}. Then for all $\varphi,\overline\varphi\in\Phi$, $\mathbb A^{-1}\mathbb B\big(\varphi,\overline\varphi\big)$ is substochastic, $(\mathbb A-\mathbb B)\big(\varphi,\overline\varphi\big)$ is a WDD $L_0$-matrix and $\widehat{\emph{con}}\big[\mathbb A^{-1}\mathbb B\big(\varphi,\overline\varphi\big)\big]\leq\emph{con}\big[(\mathbb A-\mathbb B)\big(\varphi,\overline\varphi\big)\big]$.
\end{lemma}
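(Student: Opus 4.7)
The plan is to handle items 2, 1 and 3 in that order, since items 2 and 1 amount to row-by-row bookkeeping while item 3 is the real content. For item 2, I would compute $\mathbb{A}-\mathbb{B}$ block-diagonally. Observe that $\overline{\Psi}$ is a diagonal matrix supported on $\overline{I}\subseteq\grid_{<0}$, while $S\Psi S$ is a diagonal matrix supported on $-I$, which lies in the strictly positive part of $\grid$; in particular the two supports are disjoint and $\overline{\Psi}\cdot S\Psi S=0$. Partitioning $\grid$ into $\overline{I}$, $-I$ and the remainder $R:=(\overline{I}\cup(-I))^{c}$ and substituting the expressions of Proposition \ref{FPPI-like_result}, one checks directly that $\mathbb{A}-\mathbb{B}$ equals $-L$ on $R$, $Id-B(\overline{\delta})$ on $\overline{I}$ and $Id-SB(\delta)S$ on $-I$ (with $\mathbb{B}$ contributing only on the last block). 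By \ref{A1} the first two pieces are respectively SDD $L_0$ and WDD $L_0$; by \ref{A1}--\ref{A2}, $SB(\delta)S$ is substochastic with nonnegative diagonal (reflection by $S$ preserves these properties), so the third piece is WDD $L_0$ as well.

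Item 1 is then an immediate consequence. Since $\mathbb{A}$ is a nonsingular M-matrix by Proposition \ref{FPPI-like_result}, $\mathbb{A}^{-1}\geq 0$, and $\mathbb{B}$ is substochastic, so in particular $\mathbb{A}^{-1}\mathbb{B}\geq 0$. Applying $\mathbb{A}^{-1}$ to the inequality $(\mathbb{A}-\mathbb{B})\mathbbm{1}\geq 0$ produced by item 2 yields $\mathbbm{1}-\mathbb{A}^{-1}\mathbb{B}\mathbbm{1}\geq 0$, so the row sums of $\mathbb{A}^{-1}\mathbb{B}$ all lie in $[0,1]$.

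The hard part is item 3. Write $C:=\mathbb{A}-\mathbb{B}$, $A:=\mathbb{A}^{-1}\mathbb{B}$, $r:=\mathbbm{1}-A\mathbbm{1}$ and $s:=C\mathbbm{1}$, so that $\mathbb{A}r=s$ with $r,s\geq 0$. The argument rests on two structural observations. First, a \emph{backward $\mathbb{A}$-propagation of $r$}: from
\[
\mathbb{A}_{xx}\,r_x \;=\; s_x \;+\; \sum_{y\neq x}(-\mathbb{A}_{xy})\,r_y
\]
together with $\mathbb{A}_{xx}>0$, $s_x\geq 0$ and $-\mathbb{A}_{xy}\geq 0$, one sees that whenever $x\to y$ is an edge of graph$\mathbb{A}$ and $r_y>0$ then $r_x>0$; in particular $r_x>0$ whenever $x\in J[C]$, since then $s_x>0$. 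Second, a \emph{compression} property: using the Neumann expansion $\mathbb{A}^{-1}=D^{-1}\sum_{k\geq 0}(D^{-1}N)^{k}$ arising from the splitting $\mathbb{A}=D-N$ (with $D>0$ diagonal and $N\geq 0$), one reads off that $A_{xy}>0$ iff there is some $z$ with a walk $x\to\cdots\to z$ in graph$\mathbb{A}$ (possibly $z=x$) together with an edge $z\to y$ in graph$\mathbb{B}$; equivalently, every ``graph$\mathbb{A}$-run followed by a single graph$\mathbb{B}$-edge'' in graph$C$ compresses into a single edge of graph$A$. Now fix $x_0\in\grid$ and a shortest walk $x_0\to x_1\to\cdots\to x_k$ in graph$C$ with $x_k\in J[C]$, so $k\leq \emph{con}[C]$. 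Since the off-diagonals of $\mathbb{A}$ and $\mathbb{B}$ have opposite signs, every step of this walk can be unambiguously typed as an $\mathbb{A}$-edge or a $\mathbb{B}$-edge; letting $i_1<\cdots<i_m$ be the indices of the $\mathbb{B}$-edges (so $m\leq k$), compression packs the walk into a walk of length exactly $m$ in graph$A$ from $x_0$ to $x_{i_m+1}$, while backward propagation along the trailing $\mathbb{A}$-only tail $x_{i_m+1}\to\cdots\to x_k$, starting from $r_{x_k}>0$, yields $r_{x_{i_m+1}}>0$; that is, $x_{i_m+1}\in\hat{J}[A]$. The degenerate case $m=0$ (no $\mathbb{B}$-edges in the walk) is handled by backward propagation alone and gives $r_{x_0}>0$ directly. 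Taking the supremum over $x_0$ concludes $\widehat{\emph{con}}[A]\leq \emph{con}[C]$.

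The main obstacle will be the graph-theoretic bookkeeping in item 3: formalising the typing of each edge of graph$C$ as an $\mathbb{A}$- or $\mathbb{B}$-edge, identifying the graph of $\mathbb{A}^{-1}$ with the transitive closure of graph$\mathbb{A}$ via the Neumann series, and checking that the compression of a walk of length $k$ in graph$C$ is indeed a well-defined walk of length $m\leq k$ in graph$A$ terminating in $\hat{J}[A]$. Once those are in place, the three items follow as outlined.
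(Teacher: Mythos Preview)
Your argument is correct. Items 1 and 2 coincide with the paper's in substance: the paper also derives the substochasticity of $\mathbb A^{-1}\mathbb B$ from $\mathbb A^{-1}\geq 0$, $\mathbb B\geq 0$ and $(\mathbb A-\mathbb B)\mathbf 1\geq 0$, and attributes the WDD $L_0$-property of $\mathbb A-\mathbb B$ to \ref{A1} (you make the row-block decomposition explicit, which is helpful).

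For item 3 the two routes diverge. The paper works at the level of row sums: from $\mathbb A r=s$ (your notation) and the strict positivity of the diagonal of $\mathbb A^{-1}$ it deduces the contrapositive inclusion $J[\mathbb A-\mathbb B]\subseteq \hat J[\mathbb A^{-1}\mathbb B]$ directly, and then finishes by appealing to the identity $\mathbb A(Id-\mathbb A^{-1}\mathbb B)=\mathbb A-\mathbb B$ without writing out the walk argument. Your proof is more constructive: the backward $\mathbb A$-propagation of $r$ recovers the paper's inclusion as a special case (take the trivial walk), while the Neumann-series ``compression'' step gives an explicit mechanism that turns any length-$k$ walk in graph$(\mathbb A-\mathbb B)$ into a walk of length $m\leq k$ in graph$(\mathbb A^{-1}\mathbb B)$ terminating in $\hat J[\mathbb A^{-1}\mathbb B]$. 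Your approach is more self-contained and makes the inequality of indices transparent; the paper's is terser but leaves the graph bookkeeping to the reader.

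One small wording issue: ``opposite signs of the off-diagonals'' only guarantees that every edge of graph$(\mathbb A-\mathbb B)$ admits \emph{at least one} valid typing (no cancellation), not that the typing is unique. Uniqueness does hold here, but for the structural reason you already established in item 2: $\mathbb B$ is supported on rows in $-I$ while $\mathbb A$ equals $Id$ there, so the off-diagonal supports of $\mathbb A$ and $\mathbb B$ are row-disjoint. Either way your compression/propagation argument only needs \emph{some} valid typing, so this does not affect correctness.
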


As previously mentioned, system (\ref{dQVIs}) may have no solution. The matrix coefficients introduced in this section allow us to algebraically characterize the existence of such solutions through strategy-dependent linear systems of equations. For each strategy $\varphi\in\Phi$, let $\mathbb O\big(\varphi\big):\mathbb R^\grid\to\mathbb R^\grid$ be the operator that applies $Id+L,\ M$ and $H$ on the continuation, intervention and opponent's intervention regions, respectively. That is, $\mathbb O\big(\varphi\big)= Id-(\mathbb A-\mathbb B)(\varphi^*,\varphi^*)=\big(Id - \Psi-S\Psi S\big)(Id+L) + \Psi B(\delta) + S \Psi B(\delta) S$. Then the following equivalences are immediate.

\begin{proposition}
\label{equivalent_dQVIs1}
Assume \emph{\ref{A0}--\ref{A2}}. Then the following statements are equivalent:
\begin{enumerate}[label=(\roman*)]
\item $v^*$ solves the system of QVIs (\ref{dQVIs}).
\item $\mathbb A\big(\varphi^*,\varphi^*\big)v^*=\mathbb B\big(\varphi^*\big)v^* +\mathbb C\big(\varphi^*,\varphi^*\big)$.
\item $v^*=\mathbb O\big(\varphi^*\big)v^*+\mathbb C\big(\varphi^*,\varphi^*\big)$.
\end{enumerate}
\end{proposition}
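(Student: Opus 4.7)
The plan is to prove the two equivalences separately, with (ii)$\Leftrightarrow$(iii) being a direct algebraic rearrangement and (iii)$\Leftrightarrow$(i) following from a row-by-row inspection that mirrors the piecewise definition of system (\ref{dQVIs}).

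First I would check that substituting the definitions yields the identity $Id-\mathbb O(\varphi^*) = \mathbb A(\varphi^*,\varphi^*)-\mathbb B(\varphi^*)$: indeed, the blocks $(Id-\Psi^*-S\Psi^*S)(Id+L)$ and $\Psi^*B(\delta^*)$ appear with opposite signs in the two expressions modulo $Id$, while the off-diagonal coupling $S\Psi^*B(\delta^*)S$ is precisely $\mathbb B(\varphi^*)$. Applying both sides to $v^*$ and rearranging then gives (ii)$\Leftrightarrow$(iii) at once.

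For (iii)$\Leftrightarrow$(i), the key observation is that since $\psi^*$ is supported in $\grid_{<0}$, the partition $\grid = I^* \cup (-I^*) \cup (I^*\cup -I^*)^c$ is disjoint, and on each piece exactly one of the three diagonal blocks of $\mathbb O(\varphi^*)$ is active. I would compute the right-hand side of (iii) row by row: on $I^*$ only $\Psi^*B(\delta^*)$ and $-\Psi^*c(\delta^*)$ survive, giving $B(\delta^*)v^* - c(\delta^*) = Mv^*$ by the arg-maximum choice \eqref{delta_star}; on $-I^*$ only $S\Psi^*B(\delta^*)S$ and $S\Psi^*Sg(S\delta^*)$ survive, giving $SB(\delta^*)Sv^* + g(S\delta^*) = Hv^*$; on the remaining rows only $(Id+L)$ and $f$ survive, giving $Lv^*+f = 0$ after cancellation of the $v^*$. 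Thus (iii) is equivalent to the triple of equations
\begin{equation*}
Mv^*-v^*=0 \text{ on } I^*, \qquad Hv^*-v^*=0 \text{ on } -I^*, \qquad Lv^*+f=0 \text{ on } (I^*\cup -I^*)^c.
\end{equation*}

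The final step is to match these pieces with the regions in (\ref{dQVIs}). Here I would invoke the fact that $Z(x)=\{0\}$ on $\grid_{\geq 0}$ combined with $c(\cdot)>0$ (and $B(0)=Id$ for standard discretizations, Section \ref{s:discretization}) forces $Mv^*<v^*$ on $\grid_{\geq 0}$, so that $\{Mv^*-v^*=0\}\subseteq \grid_{<0}$ and the player's and opponent's intervention regions of (i) are disjoint. Under this, the max equation of (\ref{dQVIs}) on $-C^*$ splits into $Mv^*=v^*$ on $\{Mv^*-v^*=0\}$ and $Lv^*+f=0$ on the joint continuation region, and moreover $\{Lv^*+f\leq Mv^*-v^*\}\cap\grid_{<0}$ coincides with $\{Mv^*-v^*=0\}$ (one inclusion by definition of the max being zero, the other by combining $Lv^*+f\leq Mv^*-v^*\leq 0$). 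Hence the intervention region $\psi^*$ used in (ii)/(iii) agrees with the one used in (i), and the equivalence of the three piecewise equations with the system (\ref{dQVIs}) follows.

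The work is not mathematically deep but will require careful bookkeeping: the only non-mechanical point is the identification of $\psi^* = \mathbbm{1}_{\{Lv^*+f\leq Mv^*-v^*\}\cap \grid_{<0}}$ with the player's intervention region $\mathbbm{1}_{\{Mv^*-v^*=0\}}$ under either (i) or (iii), and this is the step where one must lean on the positivity of $c(0)$ and the singleton constraint $Z(x)=\{0\}$ on $\grid_{\geq 0}$ to preclude spurious intervention points in the nonnegative half-grid.
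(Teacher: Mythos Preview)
Your proof is correct and, in fact, supplies all the details that the paper omits: the paper's own proof of this proposition is simply the one-line assertion that ``the following equivalences are immediate.'' Your row-by-row unpacking of $\mathbb O(\varphi^*)$ and the identification $Id-\mathbb O(\varphi^*)=\mathbb A(\varphi^*,\varphi^*)-\mathbb B(\varphi^*)$ is exactly the computation one has to carry out, and your care in matching the two descriptions of the intervention region (via $\{Lv^*+f\le Mv^*-v^*\}\cap\grid_{<0}$ versus $\{Mv^*-v^*=0\}$) is the only point with any content, which you handle correctly.
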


As mentioned in Remark \ref{r:interpretation}, Assumption \ref{A0} constrains the type of strategies the player can use, but without taking into account the opponent's response. This is enough for the single-player constrained problems to have a solution and, therefore, for Algorithm \ref{sym_algo} to be well defined. But we cannot expect this restriction to be sufficient in the study of the two-player game and the convergence of the overall routine. 

In order to improve the result of Proposition \ref{equivalent_dQVIs1} let us consider the following stronger version of \ref{A0} reflecting the interaction between the player and the opponent.

\begin{enumerate}[label=(A\arabic*'),start=0]
\item \label{A0'} 
For each pair of strategies $\varphi,\overline\varphi\in\Phi$, and for each $x\in \overline I\cup (-I)$, there exists a walk in graph$(\overline\Psi B(\overline\delta)+S\Psi B(\delta)S)$ from row $x$ to some row $y\in \overline C\cap  C$, where $\overline C={\overline I}^c,\ C=I^c$.
\end{enumerate} 

\begin{remark}(\textit{Interpretation})
\label{r:interpretation2}
If $\overline\varphi,-\varphi$ are the strategies used by the player and the opponent respectively,\footnote{The slight abuse of notation $-\varphi$ stands for the strategy symmetric to $\varphi$, i.e., $-\varphi=(-I,-\delta(-x))$.} then \ref{A0'} asserts that states in their intervention regions will eventually be shifted to the common continuation region. This precludes infinite simultaneous interventions and emulates the admissibility condition of the continuous-state case. Fixing $I=\emptyset$ we recover \ref{A0}. Additionally, \ref{A0'} together with \ref{A1} imply that $(\mathbb A-\mathbb B)(\varphi,\overline\varphi)$ is a WCDD $\mbox{L}_0$-matrix, hence an $M$-matrix. This is another one of the assumptions of the classical fixed-point policy iteration \cite{HFL1}.  
\end{remark}

Under this new assumption, the $\varphi^*=\varphi^*(v^*)$-dependent systems of Proposition \ref{equivalent_dQVIs1} will admit a unique solution. Then solving the original problem (\ref{dQVIs}) amounts to finding $v^*\in\mathbb R^\grid$ that solves its induced linear system of equations.
\begin{proposition}
\label{equivalent_dQVIs2}
Assume \emph{\ref{A0'},\ref{A1},\ref{A2}}. In the context of Proposition \ref{equivalent_dQVIs1}, the following statements are also equivalent:
\begin{enumerate}[label=(\roman*),start=4]
\item $v^*=(\mathbb A-\mathbb B)^{-1}\mathbb C\big(\varphi^*,\varphi^*\big)$.
\item $v^*= (Id-\mathbb A^{-1}\mathbb B)^{-1}\mathbb A^{-1}\mathbb C\big(\varphi^*,\varphi^*\big)=\sum_{n\geq 0}\big(\mathbb A^{-1}\mathbb B\big)^n\mathbb A^{-1}\mathbb C\big(\varphi^*,\varphi^*\big)$. (cf. equation (\ref{sequence_representation}).)
\end{enumerate}
\end{proposition}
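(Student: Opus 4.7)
The plan is to show that (iv) and (v) are each equivalent to statement (ii) of Proposition \ref{equivalent_dQVIs1}, by invoking the invertibility of $(\mathbb A - \mathbb B)(\varphi^*,\varphi^*)$ and of $Id - \mathbb A^{-1}\mathbb B(\varphi^*,\varphi^*)$, both of which are consequences of the strengthened assumption \ref{A0'} together with \ref{A1},\ref{A2}. The main technical input has already been packaged into Lemma \ref{coefficients_properties} and (implicitly) Theorem \ref{contraction} of the appendix, so the remaining argument is essentially just algebraic rearrangement plus the Neumann series expansion.

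First I would establish that $(\mathbb A - \mathbb B)(\varphi^*,\varphi^*)$ is invertible. As noted in Remark \ref{r:interpretation2}, \ref{A0'} combined with \ref{A1} guarantees that $(\mathbb A-\mathbb B)(\varphi,\overline\varphi)$ is a WCDD $\mbox{L}_0$-matrix for every $\varphi,\overline\varphi\in\Phi$, and in particular for $\varphi=\overline\varphi=\varphi^*$. By the characterization recalled in Appendix \ref{appendix:matrices} (Theorem \ref{characterization_theorem} and Proposition \ref{M-matrix_characterization}), any WCDD $\mbox{L}_0$-matrix is a nonsingular $M$-matrix, hence invertible. The equivalence (ii)$\Leftrightarrow$(iv) is then immediate: (ii) can be rewritten as $(\mathbb A-\mathbb B)(\varphi^*,\varphi^*)v^*=\mathbb C(\varphi^*,\varphi^*)$, and multiplying both sides by $(\mathbb A-\mathbb B)^{-1}(\varphi^*,\varphi^*)$ yields (iv), the reverse implication being the same computation read backwards.

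Next I would establish the invertibility of $Id-\mathbb A^{-1}\mathbb B(\varphi^*,\varphi^*)$ together with the Neumann series representation. By Lemma \ref{coefficients_properties}, $\mathbb A^{-1}\mathbb B(\varphi^*,\varphi^*)$ is substochastic with $\widehat{\mathrm{con}}\big[\mathbb A^{-1}\mathbb B(\varphi^*,\varphi^*)\big]\leq \mathrm{con}\big[(\mathbb A-\mathbb B)(\varphi^*,\varphi^*)\big]<+\infty$, where the finiteness of the latter is precisely the WCDD property established in the previous paragraph. By Theorem \ref{contraction} of Appendix \ref{appendix:matrices}, finite index of contraction for a substochastic matrix is equivalent to spectral radius strictly less than one; hence $\rho\big(\mathbb A^{-1}\mathbb B(\varphi^*,\varphi^*)\big)<1$. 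Standard results then give that $Id-\mathbb A^{-1}\mathbb B(\varphi^*,\varphi^*)$ is invertible with
\[
\big(Id-\mathbb A^{-1}\mathbb B(\varphi^*,\varphi^*)\big)^{-1}=\sum_{n\geq 0}\big(\mathbb A^{-1}\mathbb B(\varphi^*,\varphi^*)\big)^n,
\]
the series converging in any matrix norm.

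Finally, I would close the chain (ii)$\Leftrightarrow$(v) by multiplying (ii) through by $\mathbb A^{-1}(\varphi^*,\varphi^*)$, which is well defined by Proposition \ref{FPPI-like_result} (ii), to obtain $v^*=\mathbb A^{-1}\mathbb B(\varphi^*,\varphi^*)v^*+\mathbb A^{-1}\mathbb C(\varphi^*,\varphi^*)$, i.e. $\big(Id-\mathbb A^{-1}\mathbb B(\varphi^*,\varphi^*)\big)v^*=\mathbb A^{-1}\mathbb C(\varphi^*,\varphi^*)$. Applying the inverse just computed and the Neumann series identity gives (v); again, the reverse direction is identical. Since (i)$\Leftrightarrow$(ii)$\Leftrightarrow$(iii) was already shown in Proposition \ref{equivalent_dQVIs1}, all five statements are equivalent. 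There is no real obstacle here beyond correctly invoking the appendix results; the conceptual work is done by Lemma \ref{coefficients_properties} and the graph-theoretic equivalence between WCDD-ness of $\mathbb A-\mathbb B$ and spectral radius less than one for $\mathbb A^{-1}\mathbb B$.
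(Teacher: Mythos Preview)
Your proposal is correct and follows essentially the same approach as the paper: invoke \ref{A0'} and \ref{A1} to get that $(\mathbb A-\mathbb B)(\varphi^*,\varphi^*)$ is WCDD and hence nonsingular, then use Lemma \ref{coefficients_properties} and Theorem \ref{contraction} to obtain $\rho(\mathbb A^{-1}\mathbb B)<1$ and the Neumann series for $(Id-\mathbb A^{-1}\mathbb B)^{-1}$. Your write-up simply spells out in more detail what the paper compresses into two sentences.
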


\begin{proof}
Both expressions result from rewriting and solving the systems of Proposition \ref{equivalent_dQVIs1}. Assumptions \ref{A0'},\ref{A1} guarantee that $(\mathbb A-\mathbb B)\big(\varphi^*,\varphi^*\big)$ is WCDD and, hence, nonsingular. Then {\textit{(v)}} is due to Lemma \ref{coefficients_properties}, Theorem \ref{contraction} and the matrix power series expansion $(1-X)^{-1}=\sum_{n\geq 0}X^n$, when $\rho(X)<1$.  
\end{proof}

\subsection{Convergence analysis}
\label{s:convergence}

We now study the convergence properties of Algorithm \ref{sym_algo}. Henceforth,  the UIP refers to the obvious discrete analogue of Definition \ref{UIP}, where we replace the domain $\mathbb R$, the impulse constraints $\mathcal Z$ and the operator $\mathcal M$ by their discretizations $\grid,\ Z$ and $M$ respectively.

The obvious first question to address is whether when Algorithm \ref{sym_algo} converges, it does so to a solution of the system of QVIs (\ref{dQVIs}). Unlike in the classical Bellman problem (\ref{Bellman_problem}), problem (\ref{dQVIs}) is intrinsically dependent on the particular strategy chosen by the player (see Propositions \ref{equivalent_dQVIs1} and \ref{equivalent_dQVIs2}). Accordingly, we start with a lemma addressing what can be said about the convergence of the strategies $(\varphi^k)$ when the payoffs $(v^k)$ converge.

\begin{notation}
$\partial I^*\defeq \{Lv^*+f=Mv^*-v^*\}\cap\grid_{<0}$ denotes the `border' of the intervention region $\{Lv^*+f\leq Mv^*-v^*\}\cap\grid_{<0}$ defined by $v^*$.
\end{notation}
  
\begin{lemma}
\label{convergence_strategies}
Assume \emph{\ref{A0}-\ref{A2}} and suppose $v^k\to v^*$. Then:
\begin{enumerate}[label=(\roman*)]
\item $\psi^k\to\psi^*$ in $(\partial I^*)^c$ and $Mv^k\to Mv^*$.
\item If $\overline\psi,\overline\delta$ are any two limit points of $(\psi^k),(\delta^k)$ resp.,\footnote{By `limit point' we mean the limit of a convergent subsequence.} then 
$$
\overline\delta\in\argmax_{\delta\in Z}\big\{B(\delta)v^*-c(\delta)\big\},\quad\overline\psi=0\mbox{ on }\grid_{>=0}\quad\mbox{and}\quad\overline\psi\in\argmax_{i\in\{0,1\}}\big\{O_iv^*\big\}\mbox{ on }\grid_{<0},
$$
with $O_0v=Lv+f$ and $O_1v=Mv-v$.
\item If $v^*$ has the UIP, then $\delta^k\to\delta^*(v^*)$ and $Hv^k\to Hv^*$.
\end{enumerate}
\end{lemma}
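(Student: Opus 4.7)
All three parts exploit the underlying fact that $v \mapsto Mv$ is continuous on $\mathbb R^\grid$: since $Z$ is finite and each map $v \mapsto B(\delta)v - c(\delta)$ is affine, $Mv = \max_{\delta \in Z}\{B(\delta)v - c(\delta)\}$ is a pointwise maximum of finitely many continuous functions. For part (i), continuity immediately gives $Mv^k \to Mv^*$, and together with $Lv^k \to Lv^*$, the residual $F^k \defeq Lv^k + f - Mv^k + v^k$ converges to $F^* \defeq Lv^* + f - Mv^* + v^*$. On $\grid_{<0} \cap (\partial I^*)^c$ one has $F^*(x) \neq 0$ by definition of $\partial I^*$, so the indicator $\mathbbm 1_{\{F^k(x) \leq 0\}} = \psi^k(x)$ is eventually constant and equal to $\psi^*(x)$; on $\grid_{\geq 0}$, $\psi^k$ and $\psi^*$ vanish identically by construction.

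For part (ii), I use that $Z$ and $\{0,1\}^\grid$ are finite, so any subsequence of $(\delta^k)$ or $(\psi^k)$ admits a further eventually-constant subsequence. If $\delta^{k_n} = \overline\delta$ for all large $n$, then the optimality inequality $B(\overline\delta)v^{k_n} - c(\overline\delta) \geq B(\delta')v^{k_n} - c(\delta')$ holds for every $\delta' \in Z$; passing to the limit yields $\overline\delta \in \argmax_{\delta \in Z}\{B(\delta)v^* - c(\delta)\}$. Applying the same argument row-by-row to an eventually-constant subsequence $\psi^{k_n} = \overline\psi$, and recalling that $\psi^k(x) = 1$ on $\grid_{<0}$ precisely when $O_0 v^k(x) \leq O_1 v^k(x)$, I pass to the limit in each of the two possibilities to conclude that $\overline\psi(x) \in \argmax_{i \in \{0,1\}}\{O_i v^*(x)\}$ on $\grid_{<0}$; on $\grid_{\geq 0}$, $\overline\psi = 0$ is automatic.

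For part (iii), the UIP upgrades (ii) into genuine convergence. At any $x \in \{Mv^* = v^*\}$, the UIP asserts that the row-$x$ argmax realizing $Mv^*(x)$ is the singleton $\{\delta^*(v^*)(x)\}$; combined with (ii) and the finiteness of $Z(x)$, the only admissible limit point of the scalar sequence $(\delta^k(x))$ is $\delta^*(v^*)(x)$, so the whole sequence converges. Using the row-decoupled decomposition and the fact that row $x$ of $Hv^k = SB(\delta^k)Sv^k + g(S\delta^k)$ depends on the data only through $\delta^k(-x)$ and the values of $Sv^k$, the convergences $Sv^k \to Sv^*$ together with $\delta^k(-x) \to \delta^*(v^*)(-x)$ for every $-x \in \{Mv^* = v^*\}$ deliver $Hv^k(x) \to Hv^*(x)$ on $-\{Mv^* = v^*\} = -I^*$, which is precisely the domain on which $H$ enters the system of QVIs (\ref{dQVIs}).

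\textbf{Main obstacle.} The core analytic difficulty is that the map $v \mapsto \delta^*(v) = \max\bigl(\argmax_{\delta \in Z}\{B(\delta)v - c(\delta)\}\bigr)$ is generically discontinuous: arbitrarily small perturbations of $v$ can break ties in the argmax and make $\max\argmax$ jump. Consequently, neither $\delta^k \to \delta^*(v^*)$ nor $Hv^k \to Hv^*$ can be obtained from (ii) alone, and the UIP is precisely the minimal uniqueness hypothesis that restores convergence on the subset of $\grid$ where the gain operator is actually used in the game.
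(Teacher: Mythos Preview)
Your proof is correct and follows essentially the same approach as the paper's (very terse) argument: continuity of $M$ from the finiteness of $Z$, sign stability of $Lv^*+f-(Mv^*-v^*)$ away from $\partial I^*$ for (i), and passing to the limit along eventually-constant subsequences for (ii). Your explicit restriction of (iii) to $\{Mv^*=v^*\}$ (and of $Hv^k\to Hv^*$ to its reflection) is in fact more careful than the paper's unqualified statement---it is all that the UIP literally grants and all that is needed downstream.
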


\begin{proof}
That $Mv^k\to Mv^*$ is clear by continuity of the operators $B(\delta)$ and finiteness of $Z$. 

Let $x\in(\partial I^*)^c$ and suppose $Lv^*(x)+f(x)<Mv^*(x)-v^*(x)$ (the other case being analogue). By continuity of $L$ and $M$ there must exist some $k_0$ such that $Lv^k(x)+f(x)<Mv^k(x)-v^k(x)$ for all $k\geq k_0$, which implies $\psi^k(x)=1=\psi^*(x)$ for $k\geq k_0$.

The statement about $\overline\psi,\overline\delta$ is proved as before by considering appropriate subsequences. Consequently, if $v^*$ has the UIP, then necessarily $\delta^k\to\delta^*(v^*)$ and $Hv^k\to Hv^*$.
\end{proof}

As a corollary we can establish that, should the sequence $(v^k)$ converge, its limit must solve problem (\ref{dQVIs}). If convergence is not exact however (i.e., in finite iterations), then we will ask that $v^*$ verifies some of the properties of the Verification Theorem in Corollary \ref{coro_sym_QVIs}. Namely, the UIP and a discrete analogue of the continuity in the border of the opponent's intervention region. 
We emphasize that our main motivation in solving system (\ref{dQVIs}) relies in Corollary \ref{coro_sym_QVIs} and its framework. Additionally, in most practical situations and for fine-enough grids, one can intuitively expect the discretization of an equilibrium payoff as in Corollary \ref{coro_sym_QVIs} to inherit the UIP. Lastly, we note that the exact equality $Lv^*+f=Mv^*-v^*$ will typically not be verified for any point in the grid in practice, giving $\partial I^*=\emptyset$.

\begin{corollary}
\label{convergence_to_solution}
Assume \emph{\ref{A0}--\ref{A2}} and suppose $v^k\to v^*$. Then:
\begin{enumerate}[label=(\roman*)]
\item If the convergence is exact, then $v^*$ solves the system of QVIs (\ref{dQVIs}).
\item If $v^*$ has the UIP and $Lv^*+f=Hv^*-v^*$ on $-\partial I^*$, then $v^*$ solves (\ref{dQVIs}).
\end{enumerate}
\end{corollary}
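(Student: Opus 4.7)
The plan is, in both parts, to verify pointwise that $v^*$ satisfies the two branches of system (\ref{dQVIs}). A structural observation used throughout is that $Z(x)=\{0\}$ on $\grid_{\geq 0}$, together with $c>0$ and $B(0)$ being the identity (as in the standard discretizations of Section \ref{s:discretization}), yields $Mv^*-v^*<0$ strictly on $\grid_{\geq 0}$; hence the QVI's intervention set $\{Mv^*-v^*=0\}$ necessarily lies in $\grid_{<0}$, so every $x\in\grid_{\leq 0}$ automatically belongs to $-\{Mv^*-v^*<0\}$ and, by the analogous argument applied to the algorithm's iterates, to $(-I^k)^c$ for every $k$.

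For part (i), I would argue algebraically. Since $\psi^k$ and $\delta^k$ are deterministic functions of $v^k$ (Proposition \ref{FPPI-like_result}(i) together with the choice of maximal arg-maximum in Algorithm \ref{sym_algo}), exact convergence $v^{k_0}=v^{k_0+1}=v^*$ forces $\varphi^{k_0}=\varphi^{k_0+1}=\varphi^*$. Substituting in recurrence (\ref{FPPI-like}) collapses it to $\mathbb A(\varphi^*,\varphi^*)v^*=\mathbb B(\varphi^*)v^*+\mathbb C(\varphi^*,\varphi^*)$, which is condition (ii) of Proposition \ref{equivalent_dQVIs1}, and the equivalence there concludes that $v^*$ solves (\ref{dQVIs}).

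For part (ii), I would pass to the limit in the algorithm's update rules ($v^{k+1}=H(\delta^k)v^k$ on $-I^k$ and $\max\{Lv^{k+1}+f,Mv^{k+1}-v^{k+1}\}=0$ on $(-I^k)^c$), using Lemma \ref{convergence_strategies}. Fix $x\in\grid$. If $x\leq 0$, the second rule applies at $x$ for every $k$, and continuity yields $\max\{Lv^*(x)+f(x),Mv^*(x)-v^*(x)\}=0$, which is the required equation. If $x>0$, the same rule applied at $-x\in\grid_{<0}$ (also always in $(-I^k)^c$) gives $\max\{Lv^*(-x)+f(-x),Mv^*(-x)-v^*(-x)\}=0$, and the vanishing argument of this max identifies whether $-x\in\{Mv^*-v^*=0\}$, hence which branch of (\ref{dQVIs}) must be verified at $x$. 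If moreover $-x\in(\partial I^*)^c$, Lemma \ref{convergence_strategies}(i),(iii) makes $\psi^k(-x)$ stabilise to $\psi^*(-x)$, so the algorithm's branch at $x$ stabilises and the UIP-driven convergences $Hv^k\to Hv^*$, $Mv^k\to Mv^*$ deliver the correct limit equation at $x$ directly.

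The main obstacle will be the boundary case $x\in -\partial I^*$, where $\psi^k(-x)$ may oscillate and Lemma \ref{convergence_strategies}(i) gives no information. At such $x$, the definition $-x\in\partial I^*=\{Lv^*+f=Mv^*-v^*\}\cap\grid_{<0}$ combined with the already-established identity $\max=0$ at $-x$ forces $Lv^*(-x)+f(-x)=Mv^*(-x)-v^*(-x)=0$, so $-x\in\{Mv^*-v^*=0\}$ and the target equation at $x$ is $Hv^*(x)-v^*(x)=0$. To establish it, I would extract the subsequences $K_1=\{k:x\in -I^k\}$ and $K_2=\{k:x\in(-I^k)^c\}$, at least one being infinite. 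Along $K_1$, $v^{k+1}(x)=H(\delta^k)v^k(x)$ together with $Hv^k\to Hv^*$ gives $v^*(x)=Hv^*(x)$. Along $K_2$, passage to the limit yields $\max\{Lv^*(x)+f(x),Mv^*(x)-v^*(x)\}=0$; since $x>0$ forces $Mv^*(x)-v^*(x)<0$ strictly, the maximum reduces to $Lv^*(x)+f(x)=0$, and the standing hypothesis $Lv^*+f=Hv^*-v^*$ on $-\partial I^*$ translates this into the required identity $Hv^*(x)-v^*(x)=0$. Either subsequence thus delivers the result, closing the verification.
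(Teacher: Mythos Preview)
Your argument for part (i) is correct and essentially matches the paper's one-line claim that this is ``immediate from the definition of Algorithm~\ref{sym_algo}''; you have simply made the route through Proposition~\ref{equivalent_dQVIs1} explicit.

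For part (ii), your approach is genuinely different from the paper's and is largely correct, but with one caveat. The paper does \emph{not} work pointwise from the algorithm's update rules. Instead, it exploits the finiteness of $\{0,1\}^\grid$ to extract a single subsequence along which the \emph{pair} $(\psi^k,\psi^{k+1})$ is constant, equal to some $(\psi,\overline\psi)$. Passing to the limit in the global recurrence~(\ref{FPPI-like}) then gives $\mathbb A(\varphi,\overline\varphi)v^*=\mathbb B(\varphi)v^*+\mathbb C(\varphi,\overline\varphi)$ with $\varphi=(\psi,\delta^*(v^*))$, $\overline\varphi=(\overline\psi,\delta^*(v^*))$; since $\psi,\overline\psi$ agree with $\psi^*$ off $\partial I^*$, this row-by-row coincides with the system in Proposition~\ref{equivalent_dQVIs1}(ii) at every $x\notin\partial I^*\cup(-\partial I^*)$, and the boundary points are then handled separately. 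What this buys the paper is that at a boundary point $x\in-\partial I^*$, the equation automatically delivered by the recurrence is either $Hv^*(x)=v^*(x)$ (if $\psi(-x)=1$) or $Lv^*(x)+f(x)=0$ (if $\psi(-x)=0$, since $x>0$ forces $\overline\psi(x)=0$); no appeal to the sign of $Mv^*(x)-v^*(x)$ is needed.

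By contrast, your pointwise subsequence argument at $x\in-\partial I^*$ along $K_2$ yields only $\max\{Lv^*(x)+f(x),Mv^*(x)-v^*(x)\}=0$, and to isolate $Lv^*(x)+f(x)=0$ you invoke $Mv^*(x)-v^*(x)<0$ via ``$B(0)$ being the identity''. You also use this to identify the correct QVI branch at every $x\leq 0$. That property is true for the concrete discretizations of Section~\ref{s:discretization}, but it is \emph{not} part of the abstract hypotheses \ref{A0}--\ref{A2} under which the corollary is stated. So in full generality your argument has a small gap; the paper's route through~(\ref{FPPI-like}) and Proposition~\ref{equivalent_dQVIs1} avoids having to invoke this separately. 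Your approach is more elementary in that it bypasses the $\mathbb A,\mathbb B,\mathbb C$ machinery entirely, at the cost of this extra structural assumption on $B(0)$.
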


\begin{proof}
\textit{(i)} is immediate from the definition of Algorithm \ref{sym_algo}.

In the general case, since $\{0,1\}^\grid$ is finite, there is a subsequence of $\big(\psi^k,\psi^{k+1}\big)$ that converges to some pair $(\psi,\overline\psi)$. Passing to such subsequence, by Lemma \ref{convergence_strategies}, the UIP of $v^*$ and equation (\ref{FPPI-like}), we get that $v^*$ solves the system $\mathbb A\big(\varphi,\overline\varphi\big)v^*=\mathbb B\big(\varphi\big)v^* +\mathbb C\big(\varphi,\overline\varphi\big)$ for $\varphi=\big(\psi,\delta^*(v^*)\big),\overline\varphi=\big(\overline\psi,\delta^*(v^*)\big)$ and $\psi,\overline\psi$ coincide with $\psi^*$ except possibly on $\partial I^*$. Thus, it only remains to show that $v^*$ also solves the equations of the system (\ref{dQVIs}) for any $x\in\partial I^*\cup(-\partial I^*)$. 

For $x\in\partial I^*$, the previous is true by definition. Suppose now $x\in-\partial I^*\subseteq {\overline I}^c$. We have $\psi^*(-x)=1$. If $\psi(-x)=1$, there is nothing to prove. If $\psi(-x)=0$, then $x\in{\overline I}^c\cap(-I)^c$ and $0=Lv^*(x)+f(x)=Hv^*(x)-v^*(x)$, 
where the last equality holds true by assumption.
\end{proof}

Lemma \ref{convergence_strategies} shows to what extent the convergence of the payoffs imply the convergence of the strategies. The following theorem, of theoretical interest, establishes a reciprocal under the stronger assumption \ref{A0'}. In general, since the set of strategies $\Phi$ is finite, the sequence of strategy-dependent coefficients of the fixed-point equations (\ref{FPPI-like}) will always be bounded and with finitely many limit points. However, if the approximating strategies are such that the former coefficients convergence, then Algorithm \ref{sym_algo} is guaranteed to converge. Further, instead of looking at the convergence of $\big(\mathbb A,\mathbb B,\mathbb C\big)\big(\varphi^k,\varphi^{k+1}\big)$, we can instead consider the weaker condition of $\big(\mathbb A^{-1}\mathbb B,\mathbb A^{-1}\mathbb C\big)\big(\varphi^k,\varphi^{k+1}\big)$ converging.

\begin{theorem}
\label{convergence_payoffs}
Assume \emph{\ref{A0'},\ref{A1},\ref{A2}}. If $\big(\mathbb A^{-1}\mathbb B\big(\varphi^k,\varphi^{k+1}\big)\big)$ and $\big(\mathbb A^{-1}\mathbb C\big(\varphi^k,\varphi^{k+1}\big) \big)$ converge, then $(v^k)$ converges.
\end{theorem}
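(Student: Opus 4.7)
\textbf{Proof proposal for Theorem \ref{convergence_payoffs}.}
The plan is to first exploit the finiteness of the strategy space $\Phi$ to reduce the assumed convergence of the coefficient sequences to \emph{eventual stabilization}, and then to analyze the resulting affine linear recurrence via the spectral properties established in Lemma \ref{coefficients_properties} together with the characterization of the index of contraction in Theorem \ref{contraction}.

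First, I observe that $\Phi$ is a finite set: indeed, $\grid_{<0}$ is finite so the collection of subsets $I \subseteq \grid_{<0}$ is finite, and $Z = \prod_{x\in\grid}Z(x)$ is a finite product of finite sets. Hence the pairs $(\varphi^k,\varphi^{k+1})$ take values in the finite set $\Phi\times\Phi$, so the matrices $\mathbb A^{-1}\mathbb B(\varphi^k,\varphi^{k+1})$ and vectors $\mathbb A^{-1}\mathbb C(\varphi^k,\varphi^{k+1})$ take values in finite sets. A convergent sequence in a finite metric space must be eventually constant; therefore there exist $T^*\in\mathbb R^{\grid\times\grid}$, $d^*\in\mathbb R^\grid$ and $k_0\in\mathbb N$ such that
\[
\mathbb A^{-1}\mathbb B\big(\varphi^k,\varphi^{k+1}\big)=T^*,\qquad \mathbb A^{-1}\mathbb C\big(\varphi^k,\varphi^{k+1}\big)=d^*\qquad \text{for all } k\geq k_0.
\]
By Proposition \ref{FPPI-like_result}, the recurrence (\ref{FPPI-like}) can be rewritten as $v^{k+1}=\mathbb A^{-1}\mathbb B(\varphi^k,\varphi^{k+1})v^k + \mathbb A^{-1}\mathbb C(\varphi^k,\varphi^{k+1})$, and hence for all $k\geq k_0$ it reduces to the \emph{time-invariant} affine recurrence
\[
v^{k+1}=T^* v^k + d^*.
\]

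Next, I would show $\rho(T^*)<1$, which is the crux allowing us to conclude convergence. By Lemma \ref{coefficients_properties}, $T^*$ is substochastic and satisfies $\widehat{\mathrm{con}}[T^*]\leq \mathrm{con}[(\mathbb A-\mathbb B)(\varphi^{k_0},\varphi^{k_0+1})]$. Under Assumption \ref{A0'} combined with \ref{A1}, $(\mathbb A-\mathbb B)(\varphi,\overline\varphi)$ is a WCDD $\mathrm{L}_0$-matrix for every pair $\varphi,\overline\varphi\in\Phi$ (cf.\ Remark \ref{r:interpretation2}), so its index of connectivity is finite. Consequently $\widehat{\mathrm{con}}[T^*]<\infty$, and by Theorem \ref{contraction} this is equivalent to $\rho(T^*)<1$. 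Therefore $Id-T^*$ is invertible, the affine map $v\mapsto T^*v+d^*$ admits the unique fixed point $v^*\defeq (Id-T^*)^{-1}d^*$, and the error satisfies $v^{k+1}-v^*=T^*(v^k-v^*)$ for $k\geq k_0$, iterating to $v^{k_0+n}-v^*=(T^*)^n(v^{k_0}-v^*)$. Since $\rho(T^*)<1$, $(T^*)^n\to 0$, yielding $v^k\to v^*$, as required.

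The argument is essentially reduced to two steps: (i) passing from coefficient convergence to coefficient stabilization, which is a direct consequence of $|\Phi|<\infty$; and (ii) spectral analysis of the limiting affine map, which is handled by combining Lemma \ref{coefficients_properties} with Theorem \ref{contraction}. In this sense there is no major obstacle: both (A0') and the finiteness of $\Phi$ are exactly the ingredients needed to upgrade the nonexpansive substochastic structure into a genuine contraction on the subspace on which the recurrence eventually lives. The only mildly delicate point worth double-checking is that the \emph{pair} $(\varphi^k,\varphi^{k+1})$ (not just the coefficient images) stabilizes insofar as is needed — but since only the images $T^k,d^k$ enter the recurrence, eventual stabilization of these images alone (guaranteed by their living in finite sets) suffices.
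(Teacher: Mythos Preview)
Your proof is correct and follows essentially the same approach as the paper: use finiteness of $\Phi$ to upgrade convergence of the coefficient sequences to eventual stabilization, then apply Lemma \ref{coefficients_properties} together with (A0') and Theorem \ref{contraction} to obtain $\rho(T^*)<1$, so the resulting constant-coefficient affine recurrence is contractive and converges. Your write-up is in fact slightly more explicit than the paper's (e.g., you spell out why $T^*$ has the form $\mathbb A^{-1}\mathbb B(\varphi^{k_0},\varphi^{k_0+1})$ so that Lemma \ref{coefficients_properties} applies), but the argument is the same.
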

\begin{proof}
Set $b=\lim_k \mathbb A^{-1}\mathbb C\big(\varphi^k,\varphi^{k+1}\big)$. Since $\Phi$ is finite, there must exist $k_0\in\mathbb N$ and $\varphi,\overline\varphi\in\Phi$ such that $\mathbb A^{-1}\mathbb B\big(\varphi^k,\varphi^{k+1}\big)=\mathbb A^{-1}\mathbb B\big(\varphi,\overline\varphi\big)$ and $\mathbb A^{-1}\mathbb C\big(\varphi^k,\varphi^{k+1}\big)=b$ for all $k\geq k_0$. Moreover, under our assumptions, $(\mathbb A-\mathbb B)\big(\varphi,\overline\varphi\big)$ is a WCDD $\mbox{L}_0$-matrix. Then Lemma \ref{coefficients_properties} and Theorem \ref{contraction} imply that $\mathbb A^{-1}\mathbb B\big(\varphi,\overline\varphi\big)$ is contractive for some matrix norm. Lastly, note that the sequence of payoffs $(v^k)_{k\geq k_0}$ now satisfies the classical (constant-coefficients) contractive fixed-point recurrence $v^{k+1}= \mathbb A^{-1}\mathbb B\big(\varphi,\overline\varphi\big)v^k + b$, which converges to the unique fixed-point of the equation.
\end{proof}

The classical fixed-point policy-iteration framework \cite{HFL1,Cl0} assumes uniform contractiveness in $\|\cdot\|_\infty$ of the sequence of operators. This is a natural norm to consider in a context where matrices have properties defined row by row, such as diagonal dominance.\footnote{Recall that this norm can be computed as the maximum absolute value row sum.} However, the authors mention convergence in experiments where only $\|\cdot\|_\infty$-non-expansiveness held true. The latter is the typical case in our context, for the matrices $\mathbb A^{-1}\mathbb B\big(\varphi^k,\varphi^{k+1}\big)$, which is why Theorem \ref{convergence_payoffs} relies on the fact that a spectral radius strictly smaller than one guarantees contractiveness in some matrix norm. 

It is natural to ask whether there is some contractiveness condition that may account for the observations in \cite{HFL1,Cl0} and that can be generalized to our context to further the study of Algorithm \ref{sym_algo}. Imposing a uniform bound on the spectral radii would not only be hard to check, but also difficult to manipulate, as the spectral radius is not sub-multiplicative.\footnote{$\rho(AB)\leq \rho(A)\rho(B)$ does not hold in general when the matrices $A$ and $B$ do not commute.} Instead, we can consider the sequential indexes of contraction and connectivity, which naturally generalize those of the previous section by means of walks in the graph of a sequence of matrices (see Appendix \ref{appendix:matrices} for more details). As before, they can be identified with one another (see Lemma \ref{substochastic_WDD_link}) and, given substochastic matrices, the sequential index of contraction tells us how many we need to multiply before the result becomes $\|\cdot\|_\infty$-contractive (Theorem \ref{sequence_contraction}). Thus, let us consider a uniform bound on the following sequential indexes of connectivity: 

\begin{enumerate}[label=(A\arabic*''),start=0]
\item \label{A0''} 
There exists $m\in\mathbb N_0$ such that for any sequence of strategies $(\overline\varphi^k)\subseteq\Phi$, 
$$
\mbox{con}\left[\Big(\mathbb A-\mathbb B)\big(\overline\varphi^k,\overline\varphi^{k+1}\big)\Big)_k\right]\leq m.
$$
\end{enumerate} 

\begin{remark}
Given $\varphi,\overline\varphi\in\Phi$, by considering the sequence $\varphi,\overline\varphi,\varphi,\overline\varphi,\dots$, we see that \ref{A0''} implies \ref{A0'}. In fact, \ref{A0''} can be interpreted as precluding infinite simultaneous impulses even when the players can adapt their strategies (cf. Remark \ref{r:interpretation2}) and imposing that the number of shifts needed for any state to reach the common continuation region is bounded.
\end{remark}

Under this stronger assumption, we have:
\begin{proposition}
\label{bounded_iterates}
Assume \emph{\ref{A0''},\ref{A1},\ref{A2}}. Then $(v^k)$ is bounded.
\end{proposition}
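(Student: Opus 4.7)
The plan is to exhibit a uniform $\|\cdot\|_\infty$-contractiveness for finite blocks of consecutive operators $\mathbb A^{-1}\mathbb B\big(\varphi^k,\varphi^{k+1}\big)$ and then exploit the series-type representation (\ref{sequence_representation}) of $v^{k+1}$. Set $P_k\defeq \mathbb A^{-1}\mathbb B\big(\varphi^k,\varphi^{k+1}\big)$ and $b_k\defeq \mathbb A^{-1}\mathbb C\big(\varphi^k,\varphi^{k+1}\big)$. Since $\Phi$ is finite, the sets $\{P_k:k\geq 0\}$ and $\{b_k:k\geq 0\}$ are finite; in particular, $\sup_k\|b_k\|_\infty<+\infty$ and, by Lemma \ref{coefficients_properties} applied pointwise, every $P_k$ is substochastic.

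Next, I would promote Lemma \ref{coefficients_properties} to its sequential counterpart: namely, that the sequential index of contraction of $(P_k)_k$ is dominated by the sequential index of connectivity of $\big((\mathbb A-\mathbb B)\big(\varphi^k,\varphi^{k+1}\big)\big)_k$. This is the natural strengthening of the single-matrix estimate $\widehat{\mbox{con}}[\mathbb A^{-1}\mathbb B(\varphi,\overline\varphi)]\leq\mbox{con}[(\mathbb A-\mathbb B)(\varphi,\overline\varphi)]$ used before, and it goes through with the same graph-theoretic walk argument because the non-zero pattern of $\mathbb A^{-1}\mathbb B$ and the sub-stochastic defect in each row are controlled by the WDD defect of $\mathbb A-\mathbb B$ on the same row. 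Assumption \ref{A0''} then yields $\widehat{\mbox{con}}\big[(P_k)_k\big]\leq m$, and invoking the sequential version of Theorem \ref{contraction} furnishes a constant $\gamma\in [0,1)$ such that
\begin{equation*}
\Big\| P_{k+m}P_{k+m-1}\cdots P_k\Big\|_\infty\leq \gamma\qquad\mbox{for every }k\geq 0.
\end{equation*}

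From here boundedness follows by a standard geometric argument. Writing the representation (\ref{sequence_representation}) as
\begin{equation*}
v^{k+1}=\Big(\prod_{j=k}^0 P_j\Big)v^0+\sum_{n=0}^{k}\Big(\prod_{j=k}^{n+1}P_j\Big)b_n,
\end{equation*}
each $P_j$ is $\|\cdot\|_\infty$-nonexpansive (substochasticity), and any block of $m+1$ consecutive factors contracts by at least $\gamma$. Grouping the factors into consecutive blocks of length $m+1$ (with a residual block of length at most $m$ at the end) gives, for $k+1=(m+1)q+r$ with $0\leq r\leq m$,
\begin{equation*}
\Big\|\prod_{j=k}^{n+1}P_j\Big\|_\infty\leq \gamma^{\lfloor (k-n)/(m+1)\rfloor},
\end{equation*}
so that $\|v^{k+1}\|_\infty\leq \gamma^{\lfloor (k+1)/(m+1)\rfloor}\|v^0\|_\infty+(m+1)\big(\sup_n\|b_n\|_\infty\big)\sum_{\ell\geq 0}\gamma^\ell$, which is bounded uniformly in $k$.

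The main obstacle is the sequential upgrade of Lemma \ref{coefficients_properties}: one has to verify that a walk in the graph of the sum $\overline\Psi^{k+1}B(\overline\delta^{k+1})+S\Psi^k B(\delta^k)S$ from an intervention state to a common continuation state (provided by \ref{A0''}) produces, after multiplication by $\mathbb A^{-1}$, a genuine strictly sub-stochastic row in the corresponding product $P_{k+m}\cdots P_k$, with a defect bounded away from zero uniformly in $k$. Because $\Phi$ is finite there are only finitely many possible block products, so the resulting defect takes finitely many values and can be bounded below by some $1-\gamma>0$; this is where finiteness of $\Phi$ is essential in turning a qualitative contraction into a quantitative one.
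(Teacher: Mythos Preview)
Your proof is correct and follows essentially the same route as the paper: upgrade Lemma \ref{coefficients_properties} to its sequential form so that \ref{A0''} yields a uniform bound on the sequential index of contraction of $(\mathbb A^{-1}\mathbb B(\varphi^k,\varphi^{k+1}))_k$, invoke Theorem \ref{sequence_contraction} together with finiteness of $\Phi$ to obtain a uniform $\|\cdot\|_\infty$-contraction constant for blocks of $m+1$ consecutive factors, and then bound the series representation (\ref{sequence_representation}) geometrically. The paper's argument is more compressed but otherwise identical in structure and ingredients.
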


\begin{proof}
In a similar way to Lemma \ref{coefficients_properties}, one can check that under \ref{A0''},\ref{A1},\ref{A2} we have the following uniform bound for the sequential indexes of contraction:
$$
\widehat{\mbox{con}}\left[\Big(\mathbb A^{-1}\mathbb B)\big(\varphi^k,\varphi^{k+1}\big)\Big)_k\right]\leq m,
$$
for any sequence of strategies $(\varphi^k)\subseteq\Phi$. In other words, multiplying any $m+1$ of the previous substochastic matrices results in a $\|.\|_{\infty}$-contractive one. Furthermore, since $\Phi^{m+1}$ is finite, there must a be uniform uniform contraction constant $C_1<1$. Let $C_2>0$ be a uniform bound for $\mathbb A^{-1}\mathbb C$. By the representation in Corollary \ref{sequence_representation},
\begin{equation*}
\|v^{k+1}\|_\infty\leq \|v^0\|_\infty + C_2\sum_{n=0}^k C_1^{\big[\frac{k-n}{m+1}\big]}
\leq\|v^0\|_\infty + (m+1)C_2\sum_{n=0}^\infty C_1^n<+\infty.\footnote{For any $x\in\mathbb R$, $[x]$ denotes its integer part.}
\end{equation*}
\end{proof}

Given $n_0\in\mathbb N$ and $k>(m+1)n_0$, the same argument of the previous proof shows that one can decompose $(v^k)$ as 
$
v^{k+1}=u^k + F(\varphi^{k-(m+1)n_0},\dots,\varphi^k) + w^k,
$
 for a fixed function $F$, $\|u^k\|_\infty\leq C_1^{[k/(m+1)]}\|v^0\|\to 0$ and $\|w^k\|_\infty\leq (m+1)C_2\sum_{n=n_0}^\infty C_1^n$. The latter is small if $n_0$ is large. Hence, one could heuristically expect that the trailing strategies are often the ones dominating the convergence of the algorithm. In fact, in all the experiments carried out with a discretization satisfying \ref{A0''},\ref{A1},\ref{A2}, a dichotomous behaviour was observed: the algorithm either converged or at some point reached a cycle between a few payoffs. In the latter case, and restricting attention to instances in which one heuristically expects a solution to exist (more details in Section \ref{s:numerics_games}), it was possible to reduce the residual to the QVIs and the distance between the iterates by refining the grid. 

The previous motivates the study of Algorithm \ref{sym_algo} when the grid is sequentially refined, instead of fixed. Such an analysis however, would likely entail the need of a viscosity solutions framework as in \cite{ABL,BS}, which does not currently exist in the literature of nonzero-sum stochastic impulse games. Consequently, this analysis and the stronger convergence results that may come out of it are inevitably outside the scope of this thesis. 

\subsection{Discretization schemes}
\label{s:discretization}
Let us conclude this section by showing how one can discretize the symmetric system of QVIs (\ref{sym_QVIs}) to obtain (\ref{dQVIs}) in a way that satisfies the assumptions present throughout the chapter. Recall that we work on a given symmetric grid $\grid:\ x_{-N}=-x_N<\dots<x_{-1}=-x_1<x_0=0<x_1<\dots<x_N$.

Firstly, we want a discretization $L$ of the operator $\mathcal A-\rho Id$ such that $-L$ is an SDD $L_0$-matrix as per \ref{A1}. A standard way to do this is to approximate the first (resp. second) order derivatives with forward and backward (resp. central) differences in such a way that we approximate the ODE $\frac{1}{2}\sigma^2 V''+\mu V' - \rho V + f=0$ with an \textit{upwind} (or \textit{positive-coefficients}) scheme. More precisely, for each $x=x_i\in\grid$ we approximate the first derivative with a forward (resp. backward) difference if its coefficient in the previous equation is nonegative (resp. negative) in $x_i$,
\begin{equation*}
V'(x_i)\approx\frac{V(x_{i+1})-V(x_i)}{x_{i+1}-x_i}\quad\mbox{ if }\mu(x_i)\geq 0\quad\mbox{ and }\quad V'(x_i)\approx\frac{V(x_i)-V(x_{i+1})}{x_i-x_{i+1}}\quad\mbox{ if }\mu(x_i)<0					
\end{equation*}
and the second derivative by
$$
V''(x_i)\approx\frac{V(x_{i+1})-V(x_i)}{(x_{i+1}-x_i)(x_{i+1}-x_{i-1})}-\frac{V(x_i)-V(x_{i-1})}{(x_i-x_{i-1})(x_{i+1}-x_{i-1})}.						
$$
In the case of an equispaced grid with step size $h$, this reduces to 
\begin{equation*}
V'(x)\approx\frac{V\big(x+\sgn(\mu(x))h\big)-V(x)}{\sgn(\mu(x))h}					
\quad\mbox{ and }\quad		
V''(x)\approx\frac{V(x+h)-2V(x)+V(x-h)}{h^2}.\footnote{$\sgn$ denotes the sign function, $\sgn(x)=1$ if $x\geq 0$ and $-1$ otherwise.}						
\end{equation*}
For the previous stencils to be defined in the extreme points of the grid, we consider two additional points $x_{-N-1},x_{N+1}$ and replace $V(x_{-N-1}),V(x_{N+1})$ in the previous formulas by some values resulting from artificial boundary conditions. A common choice is to impose Neumann conditions to solve for $V(x_{-N-1}),V(x_{N+1})$ using the first order differences from before. For example, in the equispaced grid case, 
given $\mbox{LBC},\mbox{RBC}\in\mathbb R$ we solve for $V(x_{-N}-h)$ (resp. $V(x_N+h)$) from the Neumann condition 
$$\mbox{LBC}=V'\left(x_{-N}-h\mathbbm 1_{\big\{\mu(x_{-N})\geq 0\big\}}\right)\quad \mbox{(resp. } \mbox{RBC}=V'\left(x_N+h\mathbbm 1_{\big\{\mu(x_N)< 0\big\}}\right)),$$
yielding $V(x_{-N}-h)\approx V(x_{-N})-\mbox{LBC}h$ (resp. $V(x_N+h)\approx  V(x_N)+ \mbox{RBC}h$). The choice of $\mbox{LBC},\mbox{RBC}$ is problem-specific and intrinsically linked to that of $x_N$, although it does not affect the properties of the discrete operators. See more details in Section \ref{s:numerics_games}.

The described procedure leads to a discretization of the ODE as $Lv+f=0$, with $L$ satisfying the properties we wanted. (The \emph{strict} diagonal dominance is a consequence of $\rho>0$.) Note that the values of $f$ at $x_{-N},x_N$ need to be modified to account for the boundary conditions. 

\begin{remark}
One could increase the overall order of approximation by using central differences as much as possible for the first order derivatives, provided the scheme remains upwind (see \cite{FL, WF} for more details). This is not done here in order to simplify the presentation. 
\end{remark}

We now approximate the impulse constraint sets $\mathcal Z(x)$ ($x\in\mathbb R$) by finite sets $\emptyset\neq Z(x)\subseteq[0,+\infty)$ ($x\in\grid$), such that $Z(x)=\{0\}$ if $x\geq 0$, and define the impulse operators 
$$B(\delta)v(x) = v[\![x+\delta(x)]\!],\mbox{ for }v\in\mathbb R^\grid,\ \delta\in Z,\ x\in\grid,$$
where $v[\![y]\!]$ denotes linear interpolation of $v$ on $y$ using the closest nodes on the grid, and $v[\![y]\!]=v(x_{\pm N})$ if $\pm y>\pm x_{\pm N}$ (i.e., `no extrapolation'). This univocally defines the discrete loss and gain operators $M$ and $H$ as per (\ref{discrete_intervention_operators}), as well as the optimal impulse $\delta^*$ according to (\ref{delta_star}). The set of discrete strategies $\Phi$ is defined as in (\ref{discrete_strategies}). 

This general discretization scheme satisfies assumptions \ref{A0}--\ref{A2} and one can impose some regularity conditions on the sets $\mathcal Z(x)$ and $Z(x)$ such that the solutions of the discrete QVI problems (\ref{constrained_QVI}) converge locally uniformly to the unique viscosity solution of the analytical impulse control problem, as the grid is refined.\footnote{Additional technical conditions include costs bounded away from zero and a comparison principle for the analytical QVI.} See \cite{A0,ABL} for more details.

\begin{example}
\label{ex:arbitrary_impulses}
In the case where $\mathcal Z(x)=[0,+\infty)$ for $x<0$, a natural and most simple choice for $Z(x)$ is $Z(x_i)=\{0,x_{i+1}-x_i,\dots,x_N-x_i\}$ for $i<0$. In this case, $B(\delta)v(x) = v(x+\delta(x))$ and $Hv(x)=v(x-\delta^*(-x))+g(x,-\delta^*(-x))$. This choice, however, does not satisfy \ref{A0'}. 
\end{example}

In order to preclude infinite simultaneous interventions it is enough to constrain the size of the impulses so that the symmetric point of the grid cannot be reached. That is, $Z(x)\subseteq[0,-2x)$ for any $x\in\grid_{<0}$. In this case, the scheme satisfies the stronger conditions \ref{A0''},\ref{A1},\ref{A2} (and in particular, \ref{A0'}). Note that we can take $m=N$ in \ref{A0''}, as each positive impulse will lead to a state which is at least one node closer to $x_0=0$, where no player intervenes. Practically, it makes sense to make this choice when one suspects (or wants to check whether) there is a symmetric NE with no `far-reaching impulses', in the previous sense.

\begin{example}
\label{ex:impulses_constrained_size}
If $\mathcal Z(x)=[0,+\infty)$ for $x<0$, the analogous of Example \ref{ex:arbitrary_impulses} is now $Z(x_i)=\{0,x_{i+1}-x_i,\dots,x_{-i-1}-x_i\}$ for $i<0$. 
\end{example}

\begin{remark}
\label{r:maxargmax2}
Consider Example \ref{ex:impulses_constrained_size} in the context of Remark \ref{r:maxargmax1}. As in Proposition \ref{bounded_iterates} and due to Theorem \ref{sequence_contraction}, the less impulses needed between the two players to reach the common continuation region, the faster that the composition of the fixed-point operators of Algorithm \ref{sym_algo} becomes contractive. Hence, one could intuitively expect that when close enough to the solution, the choice of the maximum arg-maximum in (\ref{delta_star}) improves the performance of Algorithm \ref{sym_algo}. This is another motivation for such choice.  
\end{remark}

\section{Numerical results}
\label{s:numerics_games}
This section presents numerical results obtained on a series of experiments. See the introduction of Chapter \ref{c:2}, Section \ref{s:linear_game} and Section \ref{s:symmetric_games} for the motivation and applications behind some of them. We do not assume additional constraints on the impulses in the analytical problem. 
All the results presented were obtained on equispaced grids with step size $h>0$ (to be specified) and with a discretization scheme as in Section \ref{s:discretization} and Example \ref{ex:impulses_constrained_size}. The extreme points of the grid are displayed on each graph. 

For the games with linear costs and gains of the form $c(x,\delta)=c^0+c^1\delta$ and $g(x,\delta)=g^0+g^1\delta$, with $c^0,c^1,g^0,g^1$ constant, the artificial boundary conditions were taken as LBC $=c^1$ and RBC $=g^1$ for a sufficiently extensive grid. They result from the observation that on a hypothetical symmetric NE of the form $\varphi^*=\big((-\infty, \overline x], \delta^*(x)=y^*-x\big)$, with $\overline x<0,\ \overline x< y^*\in\mathbb R$, the equilibrium payoff verifies $V(x)=V(y^*)-c^0-c^1(y^*-x)$ for $x<\overline x$ and $V(x)=V(-y^*)+g^0+g^1(x+y^*)$ for $x>-\overline x$. For other examples, LBC, RBC and the grid extension were chosen by heuristic guesses and/or trial and error. However, in all the examples presented the error propagation from poorly chosen LBC,RBC was minimal. 

The initial guess was set as $v_0=0$ and its induced strategy in all cases. \textsc{SolveImpulseControl} was chosen as Subroutine \ref{solve_one_player} with $\widetilde {tol}=10^{-15}$ and $\lambda=1$.\footnote{For very fine grids, one should increase the value of $\widetilde {tol}$ to avoid stagnation as per Remark \ref{practical_considerations}, and the same being true for Subroutine \ref{solve_one_player}.}  Its convergence was exact however, in all the examples, and faster (in terms of time elapsed and number of operations) when it was compared with Howard's policy iteration (not reported). Instead of fixing a terminal tolerance $tol$ beforehand, we display the highest accuracy that was attained in each case and the number of iterations needed for it.

Section \ref{s:games_on_fixed_grid} considers a fixed grid and games where the results point to the existence of a symmetric NE as per Corollary \ref{coro_sym_QVIs}. Not having an analytical solution to compare with, results are assessed by means of the percentage difference between the iterates 
$$\mbox{Diff}\defeq\left\|(v^{k+1}-v^k)/\max\{|v^{k+1}|,scale\}\right\|_{\infty},$$ 
with $scale=1$ as in \cite{AF}, and the maximum pointwise residual to the system of QVIs (\ref{dQVIs}), defined for $v\in\mathbb R^\grid$ by setting $I=\{Lv+f\leq Mv-v\}\cap\grid_{<0}$, $C=I^c$ and
$$
\mbox{maxResQVIs}(v)\defeq\left\|  \max\{Lv+f,Mv-v\}\mathbbm 1_{-C} + (Hv-v)\mathbbm 1_{-I} \right\|_{\infty}.
$$
Section \ref{cv_analytical_sol} considers the only symmetric games in the literature with (semi-) analytical solution: the central bank linear game of Section \ref{s:linear_game} and the cash management game \cite{BCG}, and computes the errors made by discrete approximations, showing in particular the effect of refining the grid. Not considered here is the strategic pollution control game \cite{FK}, due to its inherent non-symmetric nature. Section \ref{s:games_without_NE} comments on results obtained for games without NEs. Finally, Section \ref{s:beyond_verif_theo} shows results that go beyond the scope of the currently available theory for impulse games. 

\subsection{Convergence to discrete solution on a fixed grid}
\label{s:games_on_fixed_grid}

Throughout this section the grid step size is fixed as $h=0.01$, unless otherwise stated (although results where corroborated by further refinements). Each figure specifies the structure, $\mathcal G=(\mu,\sigma,\rho,f,c,g)$, of the symmetric game solved and shows the numerical solutions at the terminal iteration for the equilibrium payoff, $v^k$, and NE. Graphs plot payoff versus state of the process. The intervention region is displayed in red over the graph of the payoff for presentation purposes.

As a general rule, we focus on games with higher costs $c$ than gains $g$, as the opposite typically leads to players attempting to apply infinite simultaneous impulses \cite{ABCCV} (i.e., inducing a gain from the opponent's intervention is `cheap') leading to degenerate games. The following games resulted in exact convergence in finite iterations, which guarantees a solution of (\ref{dQVIs}) was reached (Corollary \ref{convergence_to_solution}), although very small acceptable errors were reached much sooner.
\begin{figure}[H]
\hspace*{.2cm}
	\includegraphics[scale=.36]{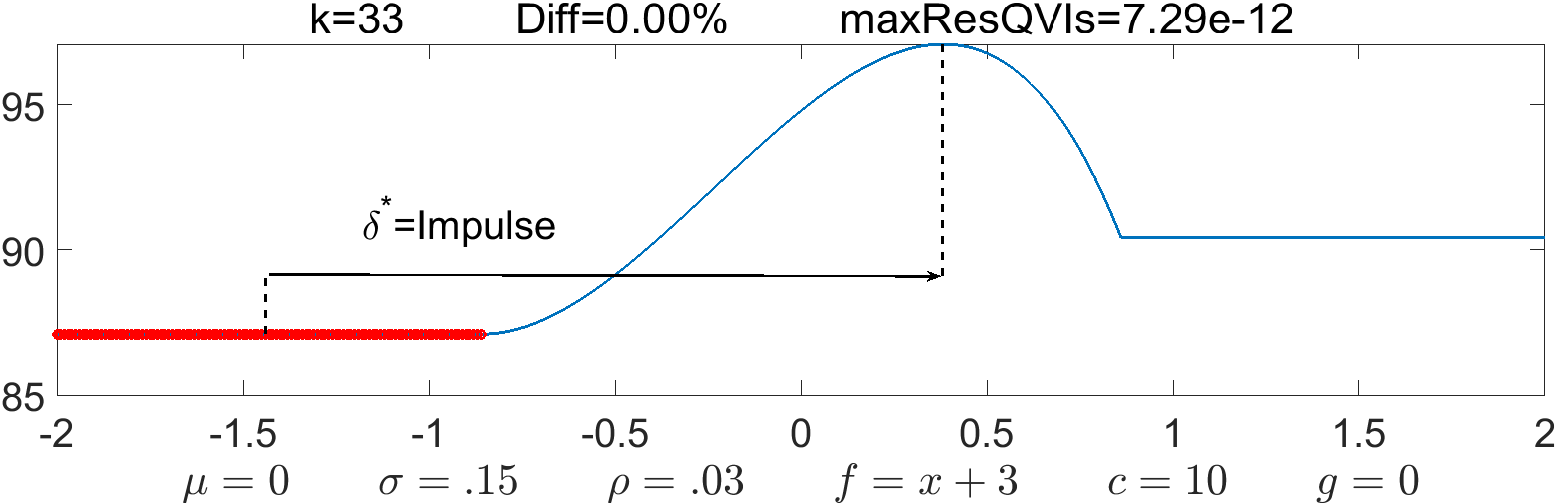}
	\caption[Nash equilibrium and payoff: linear game]{}
\end{figure}
\begin{figure}[H]
\hspace*{.2cm}
	\includegraphics[scale=.36]{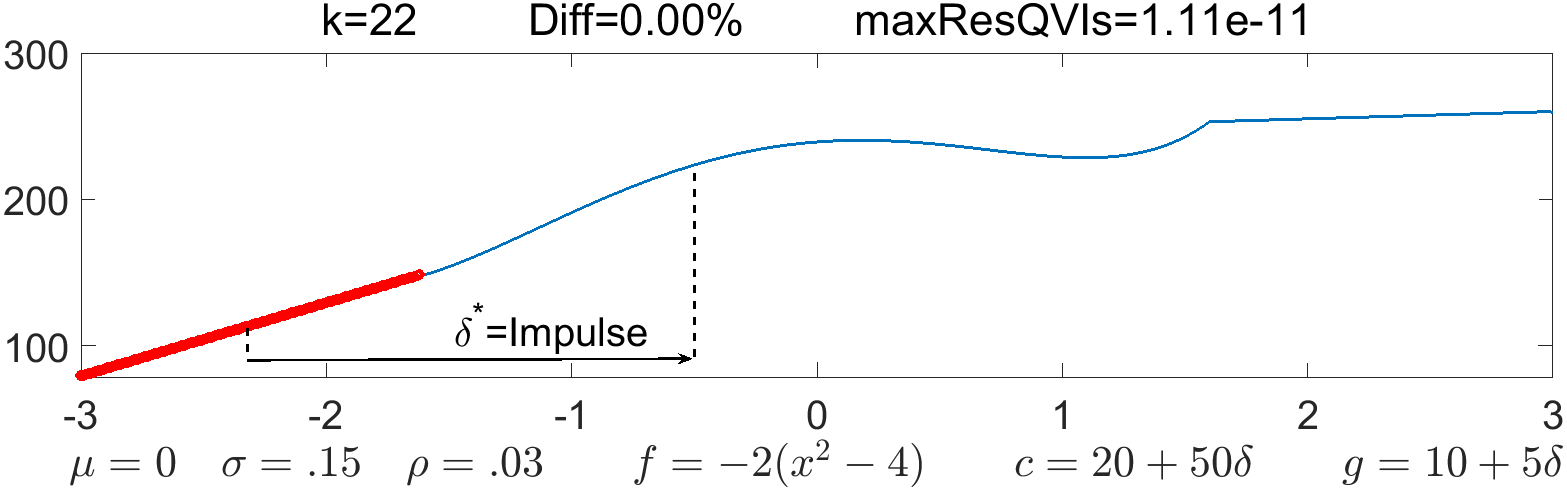}
	\caption[Nash equilibrium and payoff: quadratic running payoff]{}
\end{figure}
\begin{figure}[H]
\hspace*{.2cm}
	\includegraphics[scale=.36]{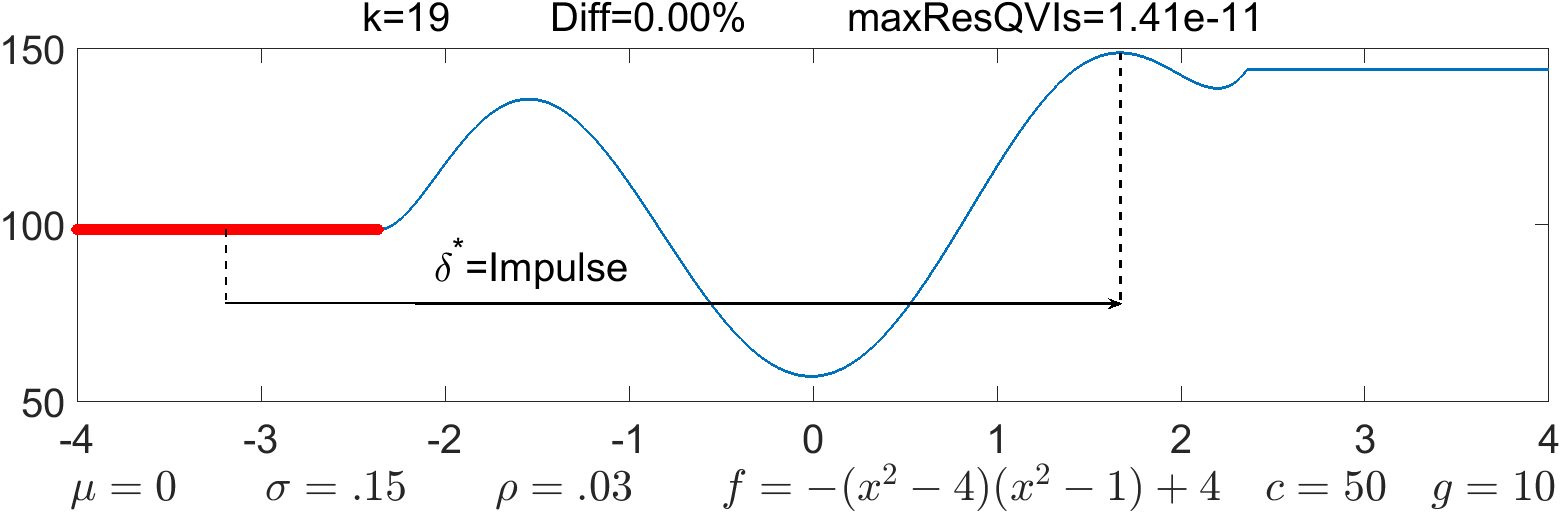}
	\caption[Nash equilibrium and payoff: degree four running payoff]{}
\end{figure}
The following is an example in which the accuracy stagnates. At that point, the iterates start going back and forth between a few values. Although we cannot guarantee that we are close to a solution of (\ref{dQVIs}), the results seem quite convincing, with both Diff and maxResQVIs reasonably low. In fact, simply halving the step size to $h=0.005$ produces a substantial improvement of Diff=9.16e-11\% and maxResQVIs=9.19e-11 in $k=$33 iterations.
\begin{figure}[H]
\hspace*{.3cm}
	\includegraphics[width=.9\linewidth]{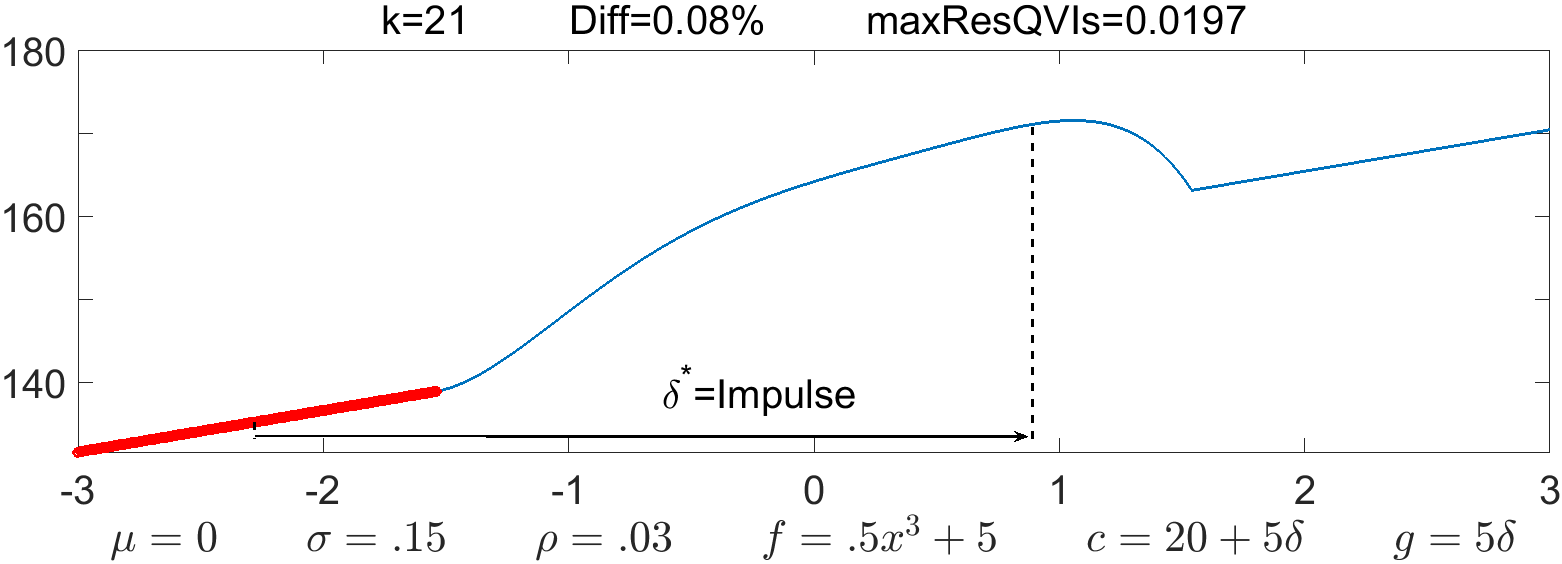}
	\caption[Nash equilibrium and payoff: cubic running payoff]{}
\end{figure}
The previous games have a state variable evolving as a scaled Brownian motion. We now move on to a mean reverting OU process with zero long term mean. (Recall that any other value can be handled simply by shifting the game.) In general, the experiments with these dynamics converged exactly in a very small number of iterations.
\begin{figure}[H]
\hspace*{.2cm}
	\includegraphics[scale=.36]{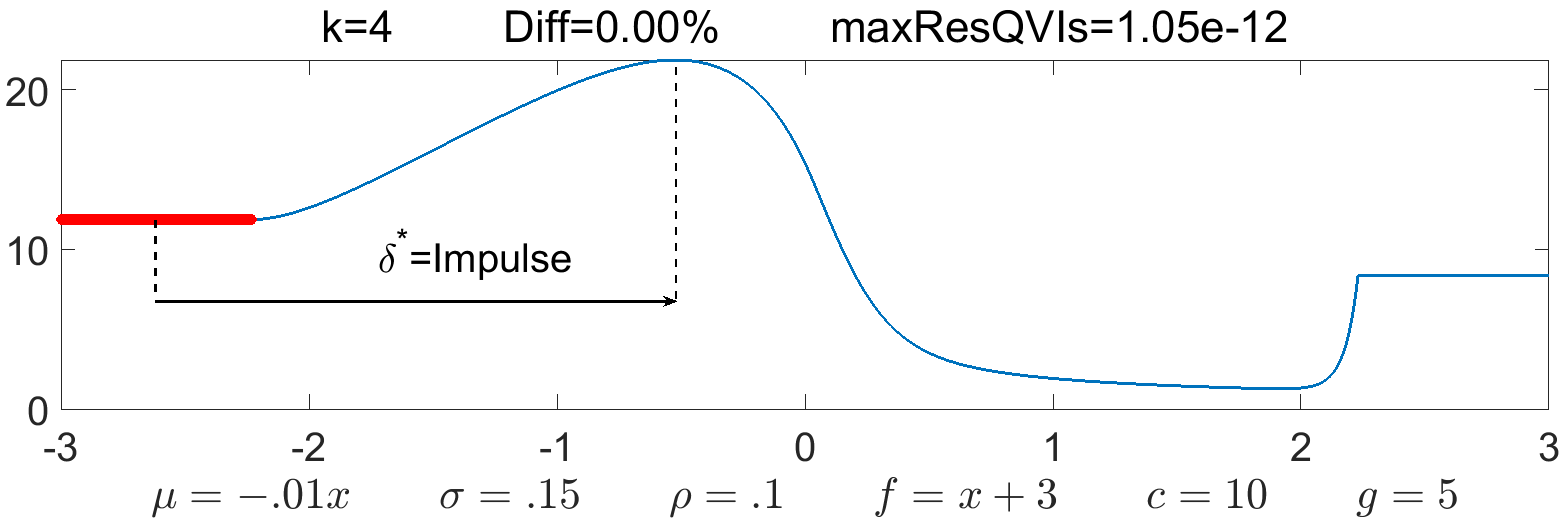}
	\caption[Nash equilibrium and payoff: Ornstein--Uhlenbeck dynamics]{}
\end{figure}
\begin{figure}[H]
\hspace*{.2cm}
	\includegraphics[scale=.36]{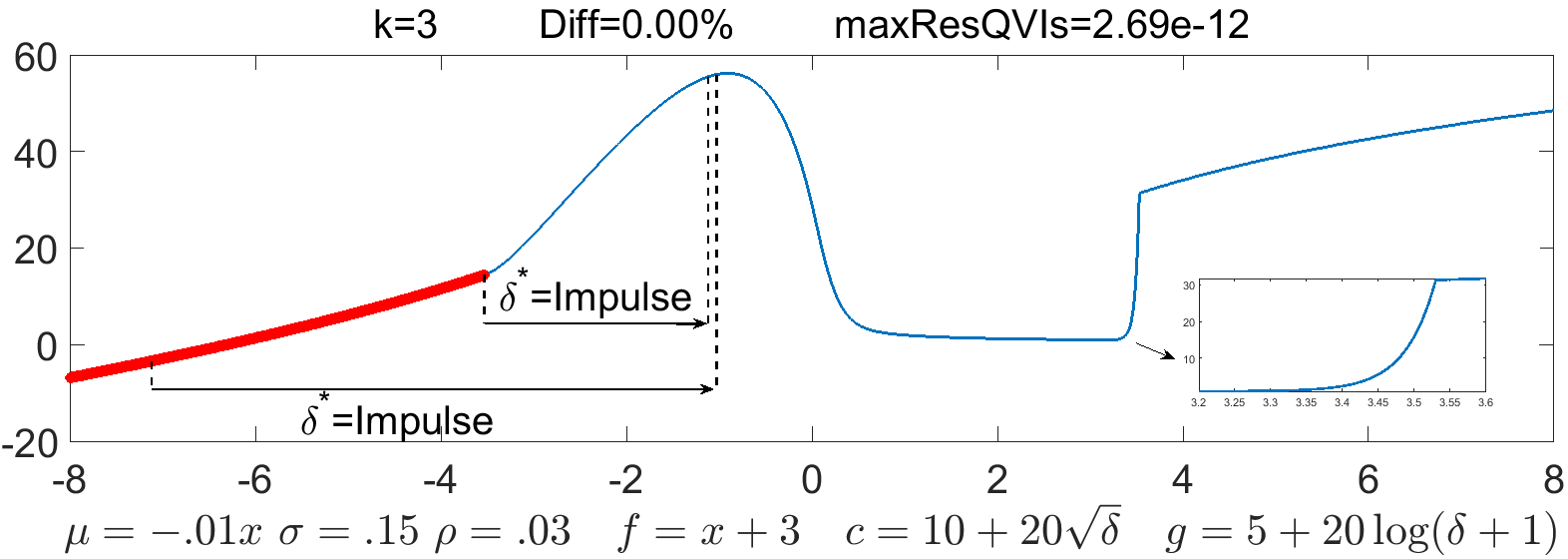}
		\caption[Nash equilibrium and payoff: Ornstein--Uhlenbeck dynamics with concave costs and gains]{}
\end{figure}
Note how all the games treated in this section exhibit a typical feature known to hold for simpler symmetric games (see Chapter \ref{c:2} and \cite{ABCCV,BCG}): the equilibrium payoff of the player is only $C^1$ at the border of the intervention region $\partial\mathcal I^*=\{\mathcal AV-\rho V+f = \mathcal MV-V \}\cap\mathbb R_{<0}$, and only continuous at the border of the opponent's intervention region $-\partial \mathcal I^*$. In floating point arithmetic, the former makes the discrete approximation of $\partial\mathcal I^*$ particularly elusive, while the latter can lead to high errors when close to $-\partial\mathcal I^*$. As a consequence, Subroutine \ref{solve_one_player} (or any equivalent) will often misplace a few grid nodes between intervention and continuation regions, which will in turn make the residual resQVIs `spike' on the opponent's side. Thus, a large value of maxResQVIs can at times be misleading, and further inspection of the pointwise residuals is advisable. 

As a matter of fact, halving the step size to $h=0.005$ in the last example results in exact convergence with terminal maxResQVIs$=2510$, but the residuals on all grid nodes other than the `border' of the opponent's intervention region and a contiguous one are less than 1.43e-11. This is an extreme example propitiated by the almost vertical shape of the solution close to such border. Thus, while it is useful in practice to consider a stopping criteria for Algorithm \ref{sym_algo} based on maxResQVIs, this phenomenon needs to be minded. 

The last example also shows how impulses at a NE can lead to different endpoints depending on the state of the process. This is often the case when costs are nonlinear. In fact:

\begin{lemma}
\label{nonunique_endpoint}
Let $(\mathcal I^*,\delta^*)$ and $V$ be a symmetric NE and equilibrium payoff as in Corollary \ref{coro_sym_QVIs}. Assume that $\mathcal Z(x)=[0,+\infty)$ for $x<0$ and $c=c(x,\delta)\in C^2\big(\mathbb R\times (0,+\infty)\big)$, and consider the re-parametrization $\overline c=\overline c(x,y) \defeq c(x,y-x)$. Suppose that $y^*\defeq x+\delta^*(x)$ is constant for all $x\in{(\mathcal I^*)}^\circ$.\footnote{$A^\circ$ denotes the interior of the set $A\subseteq \mathbb R$.} Then $\overline c_{xy}(x,y^*)\equiv 0$ on ${(\mathcal I^*)}^\circ$. 
\end{lemma}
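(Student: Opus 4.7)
The plan is to reformulate the intervention step in the $y$-variable, locate $y^*$ as a strict interior maximizer where $V$ is smooth, and close with a first-order condition.

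First, on $(\mathcal I^*)^\circ$, the condition $\mathcal M V=V$ reads, in the new variable,
\[
V(x)=\sup_{y\ge x}\bigl\{V(y)-\overline c(x,y)\bigr\},
\]
attained uniquely (by the UIP) at $y=y^*$. Since $c(x,0)>0$ makes a null impulse suboptimal (Remark \ref{r:zero_impulse}), we have $\delta^*(x)=y^*-x>0$ for every $x\in(\mathcal I^*)^\circ$, so $y^*$ lies strictly in the interior of the feasible set $[x,+\infty)$ for each such $x$.

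Second, I would show $V$ is $C^1$ at $y^*$ by arguing $y^*\in-\mathcal C^*$, where Corollary \ref{coro_sym_QVIs} supplies $V\in C^1(-\mathcal C^*)$. Suppose for contradiction that $y^*\in-\mathcal I^*$, equivalently $-y^*\in\mathcal I^*$. Passing if needed to the closure of $(\mathcal I^*)^\circ$ and using continuity of $\delta^*$, the identity $\delta^*(x)=y^*-x$ extends to $x=-y^*$, so the player at $-y^*$ shifts back to $y^*$; simultaneously, by Lemma \ref{sym_lemma}\,(ii), $\delta_2^*(y^*)=-\delta^*(-y^*)=-2y^*$ sends the opponent's target to $-y^*$. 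The two then alternate interventions at the same instant, contradicting the admissibility of $(\mathcal I^*,\delta^*)$. Hence $y^*\in-\mathcal C^*$ and $V$ is $C^1$ at $y^*$.

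Finally, since both $V$ and $\overline c(x,\cdot)$ are $C^1$ near $y^*$ and $y^*$ is an interior maximizer of $y\mapsto V(y)-\overline c(x,y)$ for every $x\in(\mathcal I^*)^\circ$, the first-order condition gives
\[
V'(y^*)=\overline c_y(x,y^*)\quad\text{for all }x\in(\mathcal I^*)^\circ.
\]
The left-hand side being independent of $x$ makes $x\mapsto\overline c_y(x,y^*)$ constant on $(\mathcal I^*)^\circ$, and differentiating in $x$ (legitimate by $\overline c\in C^2$) yields $\overline c_{xy}(x,y^*)\equiv 0$ there. I expect the main obstacle to lie in the second step: verifying $V$'s smoothness at $y^*$ requires using admissibility to rule out immediate retaliation by the opponent; once this is secured, the first-order condition and the remainder are routine.
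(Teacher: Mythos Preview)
Your proof is correct and follows essentially the same approach as the paper: rewrite $\mathcal MV(x)=V(y^*)-\overline c(x,y^*)$, observe that $y^*\in-\mathcal C^*$ (or else infinite simultaneous impulses occur), use $V\in C^1(-\mathcal C^*)$ to apply the first-order condition $V'(y^*)=\overline c_y(x,y^*)$, and differentiate in $x$. The paper compresses all of this into two lines, whereas you spell out the admissibility argument for $y^*\notin-\mathcal I^*$ in more detail, but the route is the same.
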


The result is immediate since $\mathcal MV(x)=\sup_{y>x}\{V(y)-\overline c(x,y)\} = V(y^*)-\overline c(x,y^*)$ on $(\mathcal I^*)^\circ$ and $V\in C^1\big(-\mathcal C^*)$. ($y^*\notin -\mathcal I^*$ or there would be infinite simultaneous impulses.) While the sufficient condition $\overline c_{xy}(x,y^*)\equiv 0$ for some $y^*$ is verified for linear costs, it is not in general and certainly not for $c(x,\delta)=10+20\sqrt\delta$ as in the last example.

\subsection{Convergence to analytical solution with refining grids}
\label{cv_analytical_sol}
A convergence analysis from discrete to analytical solution with refining grids is outside the scope of this thesis, and seems to be far too challenging when a viscosity solutions framework is yet to be developed. Instead, we present here a numerical validation using the solutions of the linear and cash management games. We focus first and foremost in the former, as it has an almost fully analytical solution, with only one parameter to be found numerically as opposed to four for the latter. The structure of the game is defined with parameter values used in \cite{ABCCV} (also in Chapter \ref{c:2}). To minimize rounding errors from floating point arithmetic, we proceed as in \cite{AF} considering grids made up entirely of machine numbers. The results are displayed in Table \ref{table}. 

For each step size, Algorithm \ref{sym_algo} either converged or was terminated upon stagnation. Regardless, we can see the errors made when approximating the analytical solution are quite satisfactory in all cases, and overall decrease as $h\to 0$. Moreover, we see once again how the `spiking' of the residual can be misleading (the highest value was always at the `border' of the opponent's intervention region). 

\begin{SCtable}[][h!]
	\begin{tabular}{llll}
		h  &   $\%\mbox{error}$  &  Its.  &  maxResQVIs \\ 
		\noalign{\smallskip}\hline\noalign{\smallskip}
		1  &      6.67$\%$       &   17  &\quad    8.88e-16 \\
	 1/2 &      8.33$\%$       &   13  &\quad    5.33e-15 \\
	 1/4 &      0.23$\%$       &    4  &\quad     13.2    \\
	 1/8 &      0.21$\%$       &    8  &\quad     15.1    \\
	1/16 &      0.16$\%$       &    8  &\quad      30     \\
	1/32 &      0.07$\%$       &   21  &\quad     21.2    \\
	1/64 &      0.0043$\%$     &   37  &\quad     0.343   \\
		\noalign{\smallskip}\hline
	\end{tabular}
	\caption[Convergence of symmetric algorithm to analytical solution]{Convergence to analytical solution when refining equispaced symmetric grid with step size $h$ and endpoint $x_N=4$. Game: $\mu=0,\ \sigma=.15,\ \rho=.02,\ f=x+3,\ c=100+15\delta,\ g=15\delta$. \%error $\defeq \|(v-V)/V\|_{\infty}$, with $V$ exact solution and $v$ discrete approximation after Its iterations. $\|\cdot\|_\infty$ is computed over the grid.}
	\label{table} 
\end{SCtable}
An exact symmetric NE for this game (up to five significant figures) is given by the intervention region $\mathcal I^*=(-\infty,-2.8238]$ and impulse function $\delta^*(x)=1.5243-x$, while the approximation given by Algorithm \ref{sym_algo} with $h=1/64$ is $\big((-\infty, -2.8125],1.5313 - x\big)$, with absolute errors on the parameters of no more than the step size.

The cash management game \cite{BCG} with unidirectional impulses can be embedded in our framework by reducing its dimension with the change of variables $x=x_1-x_2$, changing minimization by maximization and relabelling the players. With the parameter values of \cite[Fig.1b]{BCG}, it translates into the game: $\mu=0,\ \sigma=1,\ \rho=.5,\ f=-|x|,\ c=3+\delta,\ g=-1$. The authors found numerically a symmetric NE approximately equal to $\big((-\infty, -5.658], 0.686 - x\big)$, while Algorithm \ref{sym_algo} with $x_N=8$ and $h=1/64$ gives $\big((-\infty, -5.6563], 0.6875 - x\big)$. The absolute difference on the parameters is once again below the grid step size.

\subsection{Games without Nash equilibria}
\label{s:games_without_NE}
It is natural to ask how Algorithm \ref{sym_algo} behaves on games without symmetric NEs, and whether anything can be inferred from the results. For the linear game, two cases without NEs (symmetric or not) are addressed in \cite{ABCCV}: `no fixed cost' and `gain greater than cost'. Both of them yield degenerate `equilibria' where the players perform infinite simultaneous interventions. When tested for several parameters, Algorithm \ref{sym_algo} converged in finitely many iterations (although rather slowly) and yielded the exact same type of `equilibria'.\footnote{There were also cases of stagnation, improved by refining the grid as in Section \ref{s:games_on_fixed_grid}.} For a fine enough grid, the previous can be identified heuristically by some node in the intervention region that would be shifted to its symmetric one (\textit{infinite alternated interventions}), or to its immediate successor over and over again, until reaching the continuation region (\textit{infinite one-sided interventions}).\footnote{More precisely, due to our choice of $Z(x)$ a node can be shifted at most to that immediately preceding its symmetric one.}

In the first case, we recovered the limit `equilibrium payoffs' of \cite[Props.4.10,4.11]{ABCCV}. Intuitively, the players in this game take advantage of free interventions, whether by no cost or perfect compensation, in order to shift the process as desired. Note that when $c=g$, the impulses that maximimize the net payoff are not unique.

In the second case, grid refinements showed the discrete payoffs to diverge towards infinity at every point. This is again consistent with the theory: each player forces the other one to act, producing a positive instantaneous profit. Iterating this procedure infinitely often leads to infinite payoffs for every state.

Tested games in which Algorithm \ref{sym_algo} failed to converge (and not due to stagnation nor a poor choice of the grid extension) were characterized by iterates reaching a cycle, typically with high values of Diff and maxResQVIs regardless of the grid step size. In many cases, the cycles would visit at least one payoff inducing infinite simultaneous impulses. While this might, potentially and heuristically, be indicative of the game admitting no symmetric NE, there is not much more than can be said at this stage.

\subsection{Beyond the Verification Theorem}
\label{s:beyond_verif_theo}
We now present two games in which the solution of the discrete QVIs system (\ref{dQVIs}) found with Algorithm \ref{sym_algo} (by exact convergence) does not comply with the continuity and smoothness assumptions of Corollary \ref{coro_sym_QVIs}. However, the results are sensible enough to heuristically argue they may correspond to NEs beyond the scope of the Verification Theorem. In both cases $h=0.01$. Finer grids yielded the same qualitative results. 

The first game considers costs convex in the impulse. When far enough from her continuation region, it is cheaper for the player to apply several (finitely many) simultaneous impulses instead of one, to reach the state she wishes to (cf. Remark \ref{r:concave_costs}). 
In this game, the optimal impulse $\delta^*$ becomes discontinuous, and its discontinuity points are those in the intervention region where the equilibrium payoff is non-differentiable. These, in turn, translate into discontinuities on the opponent's intervention region (one smoothness degree less, as with the border of the regions).  
\begin{figure}[H]
\hspace*{.2cm}
	\includegraphics[scale=.36]{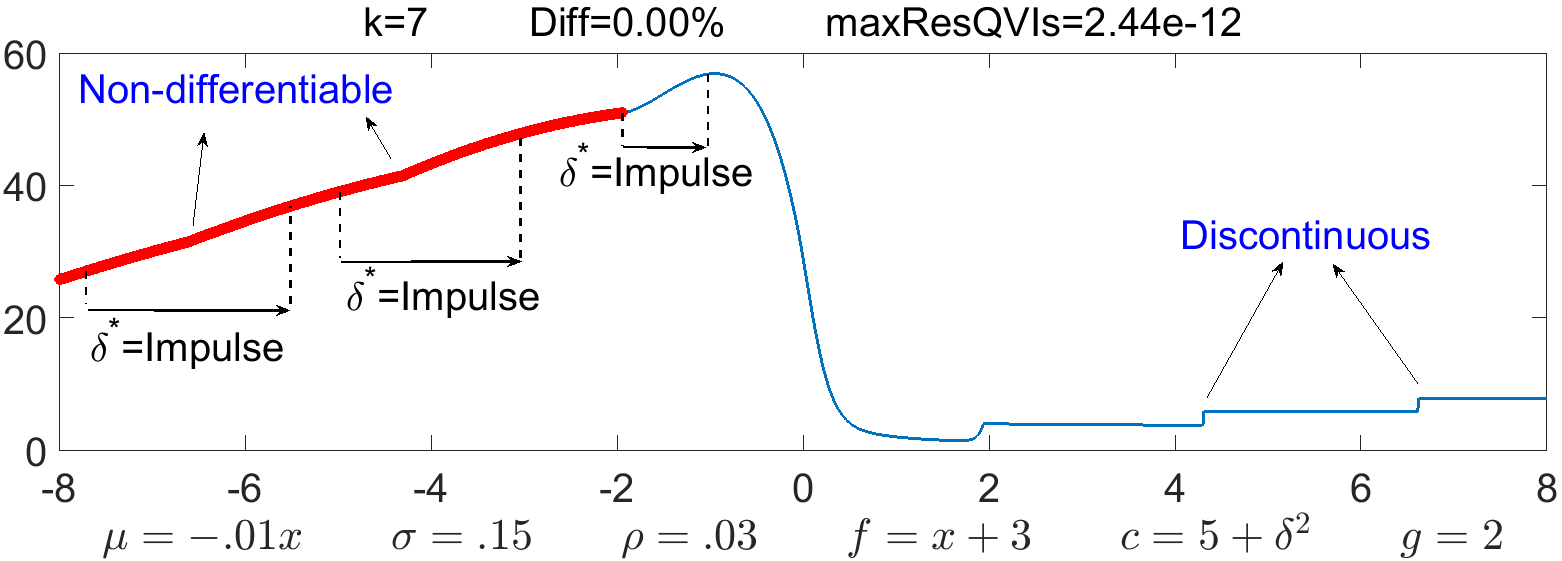}
	\caption[Discontinuous impulses and irregular discontinuous payoffs (first)]{}
\end{figure}

The second game considers linear gains, quadratic running payoffs and costs concave on the impulse. The latter makes the player shift the process towards her continuation region (cf. Remark \ref{r:concave_costs}). However, when far enough from the border, instead of shifting the process directly to her `preferred area', the player chooses to pay a bit more to force her opponent's intervention, inducing a gain and letting the latter pay for the final move. Once again, this causes $\delta^*$ to be discontinuous and leads to a non-differentiable (resp. discontinuous) point for the equilibrium payoff in the intervention (resp. opponent's intervention) region.
\begin{figure}[H]
\hspace*{.2cm}
	\includegraphics[scale=.36]{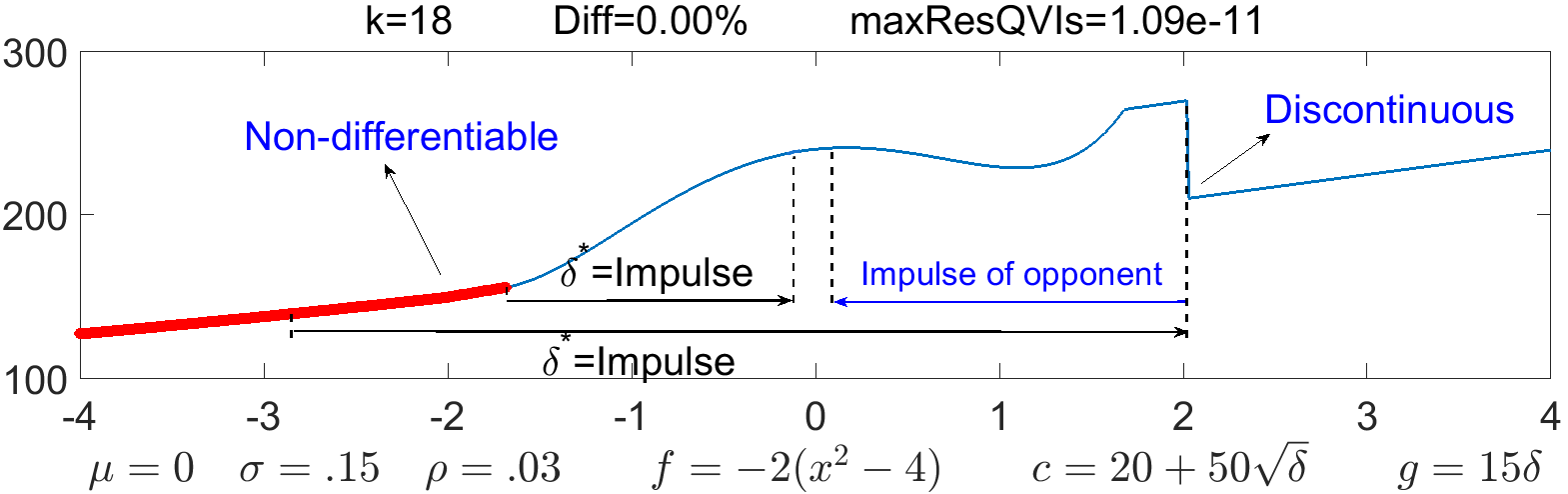}
		\caption[Discontinuous impulses and irregular discontinuous payoffs (second)]{}
\end{figure}

Under the previous reasoning, one could intuitively guess that setting $g=0$ in this game should remove the main incentive the player has to force her opponent to act. This is in fact the case, as shown below. As a result, $\delta^*$ becomes continuous and the equilibrium payoff falls back into the domain of the Verification Theorem. 
\begin{figure}[H]
\hspace*{.2cm}
	\includegraphics[scale=.36]{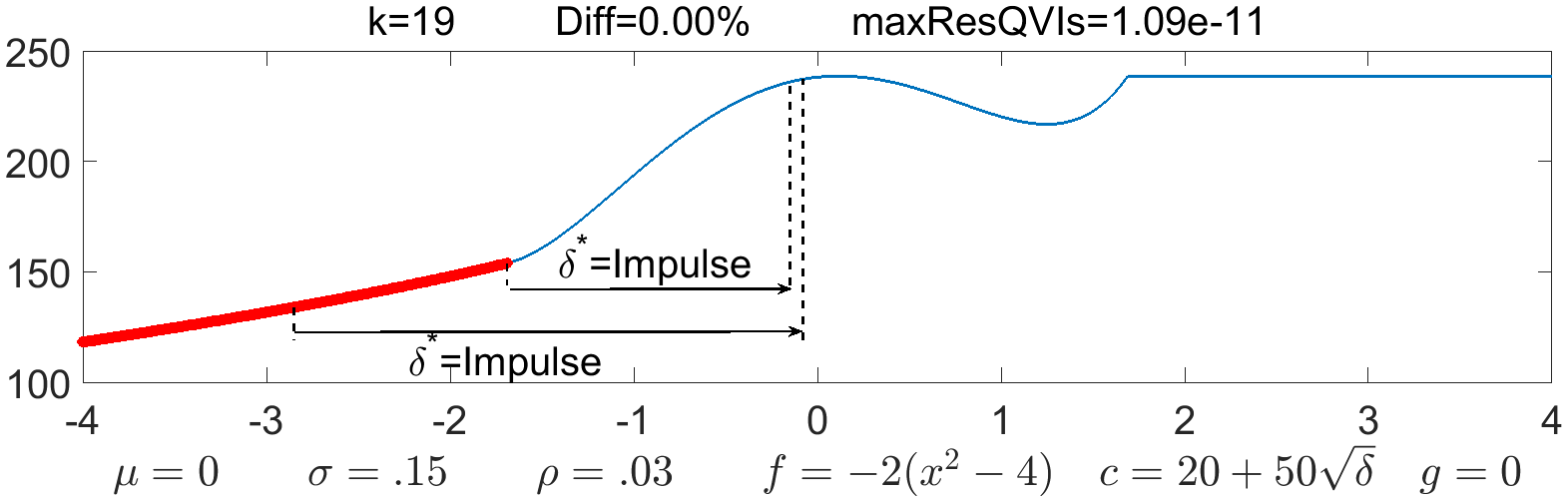}
		\caption[Nash equilibrium and payoff: concave costs, quadratic running payoff and zero gain]{}
\end{figure}
 
We remark that, should the previous solutions correspond indeed to NEs, then the alternative semi-analytical approach of \cite{FK} could not have produced them either, as that method can only yield continuous equilibrium payoffs. 
	
%%%%%%%%%%%%%%%%%%%%%%%%%%%%%%%%%%%%%%%%%%%  CONCLUSIONS %%%%%%%%%%%%%%%%%%%%%%%%%%%%%%%%%%%%%%%%%%%%%%%%%%%%%%%%%%%%%

\section{Concluding remarks}
\label{s:conclusions}
This chapter presents a fixed-point policy-iteration-type algorithm to solve systems of quasi-variational inequalities resulting from a Verification Theorem for symmetric nonzero-sum stochastic impulse games. In this context, the method substantially improves the only algorithm available in the literature (Chapter \ref{c:2}) while providing the convergence analysis that we were missing. Graph-theoretic assumptions relating to weakly chained diagonally dominant matrices, which naturally parallel the admissibility of the players strategies, allow to prove properties of contractiveness, boundedness of iterates and convergence to solutions. A result of theoretical interest giving sufficient conditions for convergence is also proved. Additionally, a provably convergent impulse control solver is provided for added efficiency over classical policy iteration. 

Equilibrium payoffs and Nash equilibria of games too challenging for the available analytical approaches are computed with high precision on a discrete setting. Numerical validation with analytical solutions is performed when possible, with reassuring results, but it is noted that grid refinements may be needed at times to overcome stagnation. Thus, formalising the approximating properties of the discrete solutions as well as deriving stronger convergence results for the algorithm may need a viscosity solutions framework currently missing in the theory. This is further substantiated by the irregularity of the solutions, particularly those found which escape the available theoretical results. This motivates further research while providing a tool that can effectively be used to gain insight into these very challenging problems.

%%%%%%%%%%%%%%%%%%%%%%%%%%%%%%%%%%%%%%%%%%%%%%% APPENDIX %%%%%%%%%%%%%%%%%%%%%%%%%%%%%%%%%%%%%%%%%%%%%%%%%%%%%%%%%%%%%%
\appendix

\chapter[Appendix: Matrix and graph-theoretic definitions and results]{Matrix and graph-theoretic definitions and results}
\chaptermark{Appendix}
\label{appendix:matrices}
\setcounter{section}{1}
\setcounter{subsection}{1}

For the reader's convenience, this appendix summarizes some important algebraic and graph-theoretic definitions and results used primarily throughout Chapter \ref{c:3}. More details can be found in the references given below. Henceforth, $A\in\mathbb R^{N\times N}$ is a real matrix, $Id\in\mathbb R^{N\times N}$ is the identity, $\rho(\cdot)$ denotes the spectral radius and $\mathbb R^{N},\mathbb R^{N\times N}$ are equipped with the elementwise order. We talk about rows and `states' interchangeably.
\begin{definitions}
\label{matrix_definitions}
\begin{enumerate}[label=(D\arabic*)]
\item $A$ is a \textit{Z-matrix} if it has nonpositive off-diagonal elements.
\item $A$ is an \textit{$L$-matrix} (resp. \textit{$L_0$-matrix}) if it is a Z-matrix with positive (resp. nonnegative) diagonal elements.
\item $A$ is an \textit{$M$-matrix} if $A=sId - B$ for some matrix $B\geq 0$ and scalar $s\geq \rho(B)$.
\item $A$ is \textit{monotone} if it is nonsingular and $A^{-1}\geq 0$. Equivalently, $A$ is monotone if $Ax\geq 0$ implies $x\geq 0$ for any $x\in\mathbb R^N$. 
\item The $i$-th row of $A$ is \textit{weakly diagonally dominant (WDD)} (resp. \textit{strictly diagonally dominant or SDD}) if $|A_{ii}|\geq\sum_{j\neq i}|A_{ij}|$ (resp. $>$). 
\item $A$ is \textit{WDD} (resp. \textit{SDD}) if every row of $A$ is WDD (resp. SDD). 
\item The \textit{directed graph of }$A$ is the pair graph$A\defeq(V,E)$, where $V\defeq\{1,\dots,N\}$ is the set of \textit{vertexes} and $E\subseteq V\times V$ is the set of \textit{edges}, such that $(i,j)\in E$ iff $A_{ij}\neq 0$.
\item A \textit{walk} $p$ in graph$A=(V,E)$ from vertex $i$ to vertex $j$ is a nonempty finite sequence $(i,i_1),(i_1,i_2),\dots,(i_{k-1},j)\subseteq E$, which we denote by $i\to i_1\to\dots\to i_{k-1}\to j$. $|p|\defeq k$ is called the \textit{length} of the walk $p$.
\item $A$ is \textit{weakly chained diagonally dominant (WCDD)} if it is WDD and for each WDD row of $A$ there is a walk in graph$A$ to an SDD row (identifying vertexes and rows).
\item $A$ is (right) \textit{substochastic or sub-Markov} (resp. \textit{stochastic or Markov}) if $A\geq 0$ and each row sums at most one (resp. exactly one). Equivalently, $A$ is substochastic if $A\geq 0$ and $\|A\|_\infty\leq 1$. (Recall that $\|A\|_\infty$ is the maximum row-sum of absolute values.)
\item If $A$ is a WDD (resp. substochastic) matrix, its set of `non-trouble states' (or rows) is $J[A]\defeq \{i:\mbox{the }i\mbox{-th row of }A\mbox{ is SDD}\}$ (resp. $\hat J[A]\defeq \{i:\sum_j A_{ij}<1\}$). For each $i$, we write $P_i[A]\defeq\{\mbox{walks in graph}A\mbox{ from }i\mbox{ to some }j\in J[A]\}$ (resp. analogously for $\widehat P_i[A]$). The \textit{index of connectivity} (resp. \textit{index of contraction}) of $A$ \cite{A} is 
$$
\mbox{con}A\defeq\left(\sup_{i\notin J[A]}\left\{ \inf_{p\in P_i[A]}|p|\right\}\right)^+\mbox{ (resp. analogously for $\widehat{\mbox{con}}A$).}\footnote{$(\cdot)^+$ denotes positive part, $\inf\emptyset=+\infty$ and $\sup\emptyset=-\infty$. The index is the least length that needs to be walked on graph$A$ to reach the non-trouble states when starting from an arbitrary trouble one.}
$$
\end{enumerate}
\end{definitions} 

It is clear that SDD$\implies$WCDD$\implies$WDD, and by definition, L-matrix$\implies\mbox{L}_0$-matrix$\implies$Z-matrix. Also by definition, if $A$ is WDD then: $A$ is WCDD $\iff$ con$A<+\infty$.

\begin{proposition}(e.g., \cite[Lem.3.2]{AF})
\label{WCDD_is_nonsing}
Any WCDD matrix is nonsingular.
\end{proposition}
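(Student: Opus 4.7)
The plan is to argue by contradiction, exploiting a maximum-modulus-type propagation along walks in $\text{graph}\,A$. Suppose $A$ is WCDD but singular. Then there exists a nonzero vector $x\in\mathbb R^N$ with $Ax=0$. Let $S\defeq\{i:|x_i|=\|x\|_\infty\}$, which is nonempty since $x\neq 0$. The goal is to show that $S$ must contain an SDD row, and that this is impossible.

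First, I would show that if $i\in S$, then every index $j$ with $A_{ij}\neq 0$ also lies in $S$, and moreover the $i$-th row of $A$ cannot be SDD. This is the standard diagonal-dominance argument: from $Ax=0$ one has
\begin{equation*}
|A_{ii}|\,|x_i|\ =\ \Bigl|\sum_{j\neq i}A_{ij}x_j\Bigr|\ \leq\ \sum_{j\neq i}|A_{ij}|\,|x_j|\ \leq\ \sum_{j\neq i}|A_{ij}|\,|x_i|.
\end{equation*}
If row $i$ were SDD, the last inequality would be strict, giving $|A_{ii}|\,|x_i|<|A_{ii}|\,|x_i|$, a contradiction. Hence row $i$ is WDD but not SDD, i.e.\ $i\notin J[A]$. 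Moreover, since row $i$ is WDD, the two inequalities above are in fact equalities, so $|A_{ij}|\,|x_j|=|A_{ij}|\,|x_i|$ for every $j\neq i$; consequently $A_{ij}\neq 0$ forces $|x_j|=|x_i|=\|x\|_\infty$, i.e.\ $j\in S$.

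Now pick any $i_0\in S$. By the WCDD hypothesis, since $i_0\notin J[A]$, there is a walk $i_0\to i_1\to\cdots\to i_k$ in $\text{graph}\,A$ ending at some $i_k\in J[A]$. Applying the previous step inductively along the edges of the walk, each $i_m$ belongs to $S$: indeed $i_m\in S$ and $A_{i_m,i_{m+1}}\neq 0$ imply $i_{m+1}\in S$. In particular $i_k\in S$, and by the first step row $i_k$ cannot be SDD, contradicting $i_k\in J[A]$. Therefore no such $x$ exists and $A$ is nonsingular.

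The argument is essentially self-contained once one has the two ingredients (equality-propagation of $|x_j|$ across nonzero entries in a WDD row attaining $\|x\|_\infty$, and the walk supplied by WCDD). The only mild subtlety I anticipate is bookkeeping the propagation cleanly: one must verify that the walk can indeed be followed step by step, i.e.\ that the WDD equality case at each visited row licenses the jump to the next vertex. This is immediate from the definition of $\text{graph}\,A$ (edges correspond precisely to nonzero off-diagonal entries), so no real obstacle arises.
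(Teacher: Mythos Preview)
Your proof is correct and is precisely the standard maximum-modulus propagation argument for this result. The paper does not supply its own proof but merely cites \cite[Lem.3.2]{AF}, whose argument is essentially the one you have given.
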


\begin{proposition}(e.g., \cite[Prop.2.15 and 2.17]{A}) \label{M_monotone_equiv}
\label{M-matrix_characterization}

Nonsingular M-matrix $\iff$ monotone L-matrix $\iff$ monotone Z-matrix.
\end{proposition}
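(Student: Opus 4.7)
I will prove the equivalences by establishing the cycle of implications: nonsingular M-matrix $\Rightarrow$ monotone L-matrix $\Rightarrow$ monotone Z-matrix $\Rightarrow$ nonsingular M-matrix. The second implication is trivial from the definitions, so the work is concentrated in the first and last.

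First, suppose $A$ is a nonsingular M-matrix, so $A = sId - B$ with $B \geq 0$ and $s \geq \rho(B)$. Since $\rho(B)$ is an eigenvalue of any nonnegative matrix (Perron--Frobenius), and $A$ is nonsingular, we must in fact have $s > \rho(B)$. Then $B/s$ has spectral radius strictly less than $1$, so the Neumann series gives
\begin{equation*}
A^{-1} = s^{-1} \sum_{k \geq 0} (B/s)^k \geq 0,
\end{equation*}
establishing monotonicity. The off-diagonal entries of $A$ are $-B_{ij} \leq 0$, so $A$ is a Z-matrix. For the diagonal, note that for any nonnegative matrix $B$ one has $B_{ii} \leq \rho(B)$ (the scalar $B_{ii}$ is the Perron root of a principal $1\times 1$ submatrix, which is bounded by $\rho(B)$), hence $A_{ii} = s - B_{ii} \geq s - \rho(B) > 0$, so $A$ is an L-matrix.

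Now suppose $A$ is a monotone Z-matrix. I will first argue its diagonal is positive (hence $A$ is an L-matrix). Let $x := A^{-1}\mathbf{1} \geq 0$ where $\mathbf{1}$ is the all-ones vector. For each $i$, $1 = (Ax)_i = A_{ii}x_i + \sum_{j\neq i} A_{ij} x_j$, and since $A$ is a Z-matrix and $x \geq 0$ the second sum is $\leq 0$. Thus $A_{ii} x_i \geq 1$, forcing both $A_{ii} > 0$ and $x_i > 0$.

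The main obstacle, and the last step, is showing that a monotone Z-matrix is a nonsingular M-matrix. Set $s := \max_i A_{ii} > 0$ and $B := sId - A$. Then $B \geq 0$: its diagonal is $s - A_{ii} \geq 0$ by choice of $s$, and its off-diagonal entries are $-A_{ij} \geq 0$ since $A$ is a Z-matrix. It remains to check $s \geq \rho(B)$. By Perron--Frobenius, there exists $v \geq 0$, $v \neq 0$, with $Bv = \rho(B) v$, and then
\begin{equation*}
A v = (sId - B)v = (s - \rho(B))\, v.
\end{equation*}
If $s = \rho(B)$ then $Av = 0$, contradicting nonsingularity of $A$, so $s \neq \rho(B)$ and we may write $A^{-1} v = (s - \rho(B))^{-1} v$. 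Since $A^{-1} \geq 0$ and $v \geq 0$ with at least one strictly positive entry, the scalar $(s - \rho(B))^{-1}$ must be positive, yielding $s > \rho(B)$. Hence $A$ is a nonsingular M-matrix, closing the cycle. The hard part will be keeping the Perron--Frobenius argument clean; everything else reduces to direct manipulation of the definitions.
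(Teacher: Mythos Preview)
Your proof is correct. Each implication is handled cleanly: the Neumann series argument for the first implication is standard, the diagonal positivity argument via $x=A^{-1}\mathbf{1}$ is a nice touch, and the Perron--Frobenius eigenvector trick for the last implication is exactly the right tool.

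There is nothing to compare against, however: the paper does not give its own proof of this proposition but simply cites it from an external reference (Propositions 2.15 and 2.17 of \cite{A}). So your write-up actually supplies more than the paper does here.
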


\begin{theorem}(e.g., \cite[Thm.2.24]{A})
\label{characterization_theorem}
WCDD $\mbox{L}_0$-matrix $\iff$ WDD nonsingular M-matrix.\footnote{\cite[Thm.2.24]{A} is formulated in terms of L-matrices instead. However, it is trivial to see that: WCDD $\mbox{L}_0$-matrix $\iff$ WCDD L-matrix.}
\end{theorem}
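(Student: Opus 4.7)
Both directions will be handled through the monotonicity characterization of nonsingular $M$-matrices supplied by Proposition \ref{M_monotone_equiv}, combined with a contradiction argument that exploits the out-edge structure of $\mathrm{graph}A$. The key technical device in each direction is to extract from a hypothetical bad vector $x$ (or from a hypothetical ``trapping set'' of rows) an ``out-closed'' set of non-SDD rows and then derive a contradiction with either WCDD or monotonicity.

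\textbf{Direction ($\Rightarrow$).} Assume $A$ is a WCDD $L_0$-matrix. Proposition \ref{WCDD_is_nonsing} gives nonsingularity; since $A$ is WDD and $L_0$, any zero diagonal entry would force the whole row to vanish (it would also dominate a row sum of nonpositive numbers), contradicting nonsingularity, so in fact $A$ is an L-matrix and in particular a Z-matrix. By Proposition \ref{M_monotone_equiv} it suffices to show $A$ is monotone, i.e.\ that $Ax\geq 0$ implies $x\geq 0$. Suppose for contradiction that $m:=\min_i x_i<0$ and let $S:=\{i:x_i=m\}$. For $i\in S$, writing $(Ax)_i=A_{ii}m+\sum_{j\neq i}A_{ij}x_j$, using $A_{ij}\leq 0$ (Z-matrix) together with $x_j-m\geq 0$ yields $A_{ij}x_j\leq A_{ij}m$, and hence
\[
0\leq (Ax)_i \leq A_{ii}m - m\sum_{j\neq i}|A_{ij}| = m\Big(A_{ii}-\sum_{j\neq i}|A_{ij}|\Big).
\]
Since $m<0$ and WDD gives $A_{ii}-\sum_{j\neq i}|A_{ij}|\geq 0$, both inequalities must be equalities. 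Equality on the right says row $i$ is non-SDD; equality on the left, combined with $A_{ij}x_j\leq A_{ij}m$ for each individual $j$, forces $x_j=m$ whenever $A_{ij}\neq 0$, i.e.\ every out-neighbour of $i$ in $\mathrm{graph}A$ lies in $S$. Thus $S$ is out-closed and consists entirely of non-SDD rows, which contradicts WCDD applied to any $i\in S$.

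\textbf{Direction ($\Leftarrow$).} Assume $A$ is a WDD nonsingular M-matrix. Proposition \ref{M_monotone_equiv} gives that $A$ is a monotone L-matrix, in particular an $L_0$-matrix. Suppose for contradiction that $A$ is not WCDD: there exists a non-SDD row $i$ from which no walk in $\mathrm{graph}A$ reaches an SDD row. Let $S$ consist of $i$ together with every vertex reachable from $i$ by a walk; then $S$ is out-closed and every row indexed by $S$ is non-SDD. Set $x:=\mathbf 1_S\in\{0,1\}^N$. For $k\in S$, out-closure gives $(Ax)_k=A_{kk}-\sum_{j\neq k}|A_{kj}|=0$ (non-SDD), while for $k\notin S$ we have $x_k=0$ and $(Ax)_k=-\sum_{j\neq k,\,j\in S}|A_{kj}|\leq 0$. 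Hence $A(-x)\geq 0$, and monotonicity yields $-x\geq 0$, contradicting $x_i=1$.

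\textbf{Main obstacle.} Neither direction is conceptually deep: both reduce to an interplay between the graph structure of $A$ and the sign pattern of a candidate vector. The subtlety I expect to have to handle carefully lies in the ($\Rightarrow$) direction, where one must simultaneously extract two conclusions from a single chain of inequalities becoming equalities---namely that rows in $S$ are non-SDD and that $S$ is out-closed---to set up the WCDD contradiction. The ($\Leftarrow$) direction is cleaner once the correct ``trap'' $S$ and the indicator test vector $\mathbf 1_S$ are identified.
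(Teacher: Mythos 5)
Your proof is correct, but note that the paper itself does not prove this statement at all: Theorem \ref{characterization_theorem} is quoted directly from \cite[Thm.2.24]{A}, so there is no internal proof to match against. What you have produced is a self-contained elementary derivation that uses only the two facts the appendix does state, namely Proposition \ref{WCDD_is_nonsing} (WCDD $\Rightarrow$ nonsingular) and the monotonicity characterization of Proposition \ref{M_monotone_equiv}. Both halves check out: in ($\Rightarrow$), the minimum-principle argument correctly extracts from the forced equalities both that every row of $S=\{i: x_i=m\}$ fails to be SDD and that $S$ is out-closed, which kills any walk to an SDD row; in ($\Leftarrow$), the reachable set from a ``trapped'' non-SDD row is out-closed and consists of non-SDD rows, so $A\mathbbm 1_S$ vanishes on $S$ and is nonpositive off $S$, and monotonicity gives the contradiction. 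Compared with the route in the cited reference (which goes through nonsingularity of the shifted matrices $A+tId$ and the spectral characterization of nonsingular M-matrices, respectively a structural/singular-block argument for the converse), your version trades those tools for the monotonicity test $Ax\geq 0\Rightarrow x\geq 0$, which keeps everything at the level of sign patterns and walks in $\mathrm{graph}A$ — arguably closer in spirit to how WCDD matrices are used elsewhere in the thesis (e.g.\ the proof of Lemma \ref{coefficients_properties}). Two cosmetic remarks: the detour through L-matrices in ($\Rightarrow$) is unnecessary, since Proposition \ref{M_monotone_equiv} already equates nonsingular M-matrices with monotone Z-matrices; and in ($\Leftarrow$) one can shortcut the indicator-vector step by observing that out-closedness makes $A_{SS}$ a principal block with zero row sums sitting in a block-triangular position, so $A$ would be singular outright — but your monotonicity version is equally valid.
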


\begin{proposition} (see proof of \cite[Lem.2.22]{A})
\label{substochastic_WDD_link}
$A$ is substochastic if and only if $Id-A$ is a WDD $\mbox{L}_0$-matrix and $A$ has non-negative diagonal elements. In such case, $\hat J[A]=J[Id-A]$, they have the same directed graphs (except possibly for self-loops $i\to i$) and $\widehat{\mbox{con}}A=\mbox{con}[Id-A]$.
\end{proposition}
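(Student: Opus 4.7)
The plan is to verify each of the four claims in the statement by direct unpacking of the definitions; nothing here requires any deep machinery, as every assertion reduces to an elementary algebraic identity applied to $(Id-A)_{ij} = \delta_{ij} - A_{ij}$.

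First I would establish the equivalence. For the forward implication, assume $A$ is substochastic. Then $A_{ij} \geq 0$ for all $i,j$ gives $(Id-A)_{ij} = -A_{ij} \leq 0$ for $i\neq j$, so $Id-A$ is a Z-matrix; since $A_{ii} \leq \sum_j A_{ij} \leq 1$, we have $(Id-A)_{ii} = 1-A_{ii} \geq 0$, making $Id-A$ an $\mbox{L}_0$-matrix, and the nonnegativity of $A_{ii}$ is built into substochasticity. The WDD inequality $|(Id-A)_{ii}| = 1-A_{ii} \geq \sum_{j\neq i}A_{ij} = \sum_{j\neq i}|(Id-A)_{ij}|$ is precisely the substochastic row-sum bound rearranged. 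Reciprocally, if $Id-A$ is a WDD $\mbox{L}_0$-matrix and $A_{ii}\geq 0$, then the Z-matrix property gives $A_{ij} \geq 0$ for $i\neq j$, so $A\geq 0$, and reading the WDD inequality backwards yields $\sum_j A_{ij} \leq 1$.

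Next I would address the two auxiliary identities. Since $(Id-A)_{ij} = -A_{ij}$ for $i\neq j$, we have $(Id-A)_{ij}\neq 0 \iff A_{ij}\neq 0$ for $i\neq j$, so the edge sets of graph$A$ and graph$(Id-A)$ agree except possibly on self-loops (which record whether $A_{ii}\neq 0$ versus $A_{ii}\neq 1$). For $\hat J[A] = J[Id-A]$, observe $\sum_j A_{ij} < 1 \iff 1 - A_{ii} > \sum_{j\neq i} A_{ij}$, which is exactly the SDD condition for row $i$ of $Id-A$; hence the sets of non-trouble rows coincide.

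Finally, for $\widehat{\mbox{con}}A = \mbox{con}[Id-A]$, note that for any trouble row $i$, a walk from $i$ to a non-trouble row $j\in \hat J[A] = J[Id-A]$ must have $j\neq i$, and its edges lie in the common edge set of the two graphs minus self-loops, because replacing a shortest walk by one that uses a self-loop $k\to k$ would only increase its length. Therefore the infimum of walk lengths $\inf_{p\in \widehat P_i[A]} |p|$ equals $\inf_{p\in P_i[Id-A]} |p|$, and taking $(\cdot)^+$ and supremum over $i$ gives the equality of indices. I do not anticipate any real obstacle: the only minor subtlety is the self-loop bookkeeping in the last step, which is resolved by the simple remark that shortest walks between distinct vertices never use self-loops.
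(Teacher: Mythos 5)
Your proof is correct, and since the paper offers no proof of its own for this proposition (it simply defers to the proof of Lem.~2.22 in the cited reference), your direct unpacking of the definitions is exactly the intended elementary argument. The only delicate point, the self-loop bookkeeping in the equality $\widehat{\mbox{con}}A=\mbox{con}[Id-A]$, is handled adequately by your observation that shortest walks from a trouble row to the (coinciding) sets of non-trouble rows never use self-loops, so the relevant infima agree edge-for-edge on the common graph.
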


For the following theorem, recall the characterization of the spectral radius $\rho(A)=\inf\{\|A\|:\|\cdot\|\mbox{ is a matrix norm}\}$ and Gelfand's formula $\rho(A)=\lim_{n\to+\infty}\|A^n\|^{1/n}$, for any matrix norm $\|\cdot\|$. Note also that if $A$ is substochastic, then $A^n$ is also substochastic for any $n\in\mathbb N_0$, $\|A^n\|_{\infty}\leq 1$ and $\rho(A)\leq 1$. 
\begin{theorem}(\cite[Thm.2.5 and Cor.2.6]{A})
\label{contraction}
Suppose $A$ is substochastic. Then 
$$\widehat{\mbox{con}}A=\inf\{n\in\mathbb N_0:\|A^{n+1}\|_\infty<1\}.$$ In particular, $\widehat{\mbox{con}}A<+\infty$ if and only if $\rho(A)<1$.
\end{theorem}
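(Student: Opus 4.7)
The plan is to convert the identity into an equivalent statement about row sums of iterates of $A$, and then to prove that statement by induction on the exponent. Throughout, write $r^{(i)}_m \defeq \sum_j (A^m)_{ij}$ and $\mathbf r_m=(r^{(i)}_m)_i\in\mathbb R^N$. Since $A\geq 0$ we have the recurrence $\mathbf r_{m+1}=A\mathbf r_m$ with $\mathbf r_0=\mathbf 1$, and since $A$ is substochastic, $0\leq \mathbf r_{m+1}\leq \mathbf r_m\leq \mathbf 1$. In particular $\|A^m\|_\infty=\max_i r^{(i)}_m$, so $\|A^m\|_\infty<1$ is equivalent to $r^{(i)}_m<1$ for every $i$. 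Note also that $r^{(i)}_1=\sum_j A_{ij}<1$ precisely when $i\in\hat J[A]$. Define, for each $i$, the graph distance
\[
d(i)\defeq\inf\{|p|:p\text{ is a walk in }\mathrm{graph}A\text{ from }i\text{ to some }k\in\hat J[A]\},
\]
with $d(i)=0$ by convention if $i\in\hat J[A]$, and $d(i)=+\infty$ if no such walk exists.

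First I would prove the key lemma: \emph{for every $m\geq 1$ and every $i$, $r^{(i)}_m<1$ if and only if $d(i)\leq m-1$.} The base case $m=1$ is immediate from the definition of $\hat J[A]$. For the inductive step, assume the statement for $m$. Using the recurrence,
\[
r^{(i)}_{m+1}=\sum_k A_{ik}\,r^{(k)}_m.
\]
If $i\in\hat J[A]$ then $r^{(i)}_{m+1}\leq \sum_k A_{ik}=r^{(i)}_1<1$ and $d(i)=0\leq m$, so both sides hold. If instead $i\notin\hat J[A]$, then $\sum_k A_{ik}=1$, so
\[
1-r^{(i)}_{m+1}=\sum_k A_{ik}(1-r^{(k)}_m),
\]
and, since each term is nonnegative, $r^{(i)}_{m+1}<1$ iff there exists $k$ with $A_{ik}>0$ and $r^{(k)}_m<1$. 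By the inductive hypothesis the latter is equivalent to the existence of an edge $i\to k$ in $\mathrm{graph}A$ with $d(k)\leq m-1$, which in turn is equivalent to $d(i)\leq m$ (using that $i\notin\hat J[A]$ forces $d(i)\geq 1$). This closes the induction.

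Next I would identify $\widehat{\mathrm{con}}A$ with $\max_i d(i)$. For $i\in\hat J[A]$ we have $d(i)=0$, so the maximum over all $i$ coincides with the supremum over $i\notin\hat J[A]$ whenever that supremum is at least $0$; when $\hat J[A]=\{1,\dots,N\}$, both quantities equal $0$ (the positive part takes care of the empty supremum). Consequently $\widehat{\mathrm{con}}A=\max_i d(i)$ in every case. Combining this with the lemma:
\[
\|A^{n+1}\|_\infty<1\iff \max_i d(i)\leq n \iff \widehat{\mathrm{con}}A\leq n,
\]
which yields $\widehat{\mathrm{con}}A=\inf\{n\in\mathbb N_0:\|A^{n+1}\|_\infty<1\}$, interpreting $\inf\emptyset=+\infty$ to cover the case $\widehat{\mathrm{con}}A=+\infty$.

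For the spectral radius equivalence, if $\widehat{\mathrm{con}}A<+\infty$ pick $m$ with $c\defeq\|A^m\|_\infty<1$; then $\|A^{km}\|_\infty\leq c^k$ and Gelfand's formula gives $\rho(A)\leq c^{1/m}<1$. Conversely, if $\rho(A)<1$ then $\|A^n\|_\infty^{1/n}\to\rho(A)<1$, so $\|A^n\|_\infty<1$ for $n$ sufficiently large, hence $\widehat{\mathrm{con}}A<+\infty$. The main technical point I expect to require care is the inductive step distinguishing $i\in\hat J[A]$ from $i\notin\hat J[A]$, together with the book-keeping around the positive part in the definition of $\widehat{\mathrm{con}}A$ when every row is already non-trouble; the rest is routine manipulation of substochastic iterates.
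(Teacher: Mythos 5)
Your proof is correct. Note, however, that the paper does not prove this statement at all: it is quoted verbatim from the cited reference \cite[Thm.2.5 and Cor.2.6]{A}, so there is no in-paper argument to compare against; what you have produced is a self-contained derivation of a result the thesis only imports. Your route is the natural one and it holds up under scrutiny: the row-sum recursion $\mathbf r_{m+1}=A\mathbf r_m$, $\mathbf r_0=\mathbf 1$, together with the induction showing that $r^{(i)}_m<1$ exactly when the graph distance $d(i)$ from $i$ to $\hat J[A]$ is at most $m-1$, is precisely the mechanism that links $\|A^{m}\|_\infty<1$ to reachability of non-trouble rows; the case split $i\in\hat J[A]$ versus $i\notin\hat J[A]$ (where $\sum_k A_{ik}=1$ turns $1-r^{(i)}_{m+1}$ into a nonnegative combination of the $1-r^{(k)}_m$) is exactly where the substochasticity is used, and your bookkeeping for the conventions in the definition of $\widehat{\mbox{con}}A$ (positive part when $\hat J[A]$ is all of $\{1,\dots,N\}$, $\inf\emptyset=+\infty$ when some trouble row cannot reach $\hat J[A]$) correctly yields $\widehat{\mbox{con}}A=\max_i d(i)$ and hence the stated infimum identity, including the degenerate cases $\widehat{\mbox{con}}A=0$ and $\widehat{\mbox{con}}A=+\infty$. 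The spectral-radius equivalence via $\|A^{km}\|_\infty\leq\|A^m\|_\infty^k$ and Gelfand's formula (which the paper itself recalls just before the statement) is likewise fine; one could shorten it slightly by noting $\rho(A)^{m}=\rho(A^{m})\leq\|A^{m}\|_\infty<1$, but that is cosmetic.
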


The indexes of contraction and connectivity can be generalized in a natural way to sequences $(A_k)\subseteq\mathbb R^{N\times N}$ by considering walks $i_1\to i_2\to\dots$ such that $i_k\to i_{k+1}$ is an edge in graph$A_k$ (see \cite[App.B]{A} for more details). Theorem \ref{contraction} extends in the following way:

\begin{theorem}(\cite[Thm.B.2]{A})
\label{sequence_contraction}
Suppose $(A_k)$ are substochastic matrices and consider the sequence of products $(B_k)$, where $B_k\defeq A_1\dots A_k$. Then,
$$\widehat{\mbox{con}}\big[(A_k)_k\big]=\inf\{k\in\mathbb N_0:\|B_{k+1}\|_\infty<1\}.$$ 
\end{theorem}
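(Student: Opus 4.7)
\textbf{Proof proposal for Theorem \ref{sequence_contraction}.}

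The plan is to mimic the proof of Theorem \ref{contraction} (the single-matrix case) by exploiting the fact that the $\|\cdot\|_\infty$-norm of a substochastic product is controlled by its row sums, and then analyse these row sums through a backward recursion that makes walks in the sequence graph appear naturally. Throughout, let $\mathbf{1}$ denote the all-ones vector, and note that for any substochastic $A$ one has $\|A\|_\infty = \|A\mathbf{1}\|_\infty = \max_i (A\mathbf{1})_i$, with $\|A\|_\infty < 1$ if and only if $(A\mathbf{1})_i < 1$ for every row $i$.

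First, I would fix $k \in \mathbb{N}_0$ and introduce the backward sequence
\[
w^{(k+2)} \defeq \mathbf{1}, \qquad w^{(l)} \defeq A_l\, w^{(l+1)} \quad \text{for } l = k+1, k, \ldots, 1,
\]
so that $w^{(1)} = A_1 A_2 \cdots A_{k+1}\mathbf{1} = B_{k+1}\mathbf{1}$. A straightforward induction using substochasticity gives $0 \le w^{(l)} \le \mathbf{1}$ for every $l$, and $\|B_{k+1}\|_\infty < 1$ is equivalent to $w^{(1)}(i) < 1$ for all rows $i$.

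The heart of the argument is then a backward induction on $l$ establishing the characterisation
\[
w^{(l)}(i) < 1 \iff \exists\ \text{sequence walk } i = i_1 \to i_2 \to \cdots \to i_m,\ l \le l+m-1 \le k+1,
\]
where the $p$-th edge lies in $\mathrm{graph}\,A_{l+p-1}$ and the endpoint satisfies $i_m \in \hat{J}[A_{l+m-1}]$. The base case $l = k+1$ reduces to $w^{(k+1)}(i) = \sum_j (A_{k+1})_{ij} < 1 \iff i \in \hat{J}[A_{k+1}]$. The inductive step uses the key decomposition $w^{(l)}(i) = \sum_j (A_l)_{ij}\, w^{(l+1)}(j)$: since $\sum_j (A_l)_{ij} \le 1$ and $w^{(l+1)}(j) \le 1$, strict inequality $w^{(l)}(i) < 1$ occurs precisely when either $i \in \hat{J}[A_l]$ (the row $i$ of $A_l$ already loses mass) or there exists $j$ with $(A_l)_{ij} > 0$ and $w^{(l+1)}(j) < 1$ (the mass already lost deeper in the recursion gets transported back along an edge of $\mathrm{graph}\,A_l$). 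Unrolling this dichotomy down to $l=1$ produces exactly the sequence walks asserted.

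Applying this at $l=1$ yields: $\|B_{k+1}\|_\infty < 1$ if and only if every vertex $i$ admits a sequence walk of length at most $k$ ending at a trouble state for the matrix indexed by the endpoint. By the natural sequential generalisation of Definition \ref{matrix_definitions}(xi) (as used in \cite[App.~B]{A}), the smallest such $k$ is exactly $\widehat{\mathrm{con}}\big[(A_k)_k\big]$, which gives the claimed identity. The main subtlety — and hence the step I expect to be the most delicate — is keeping the index bookkeeping consistent between the recursion variable $l$, the walk length $m-1$, and the shift of matrix indices along a walk; one must verify that the quantifier structure "for every starting vertex, there exists a walk" matches the sup-inf in the definition of $\widehat{\mathrm{con}}$, and handle the degenerate cases $\hat{J}[A_l] = \emptyset$ for all $l$ (giving both sides equal to $+\infty$) and $\hat{J}[A_1] = \{1,\dots,N\}$ (giving both sides equal to $0$) separately to confirm the $(\cdot)^+$ convention and the convention $\inf\emptyset = +\infty$ are consistent across the equality.
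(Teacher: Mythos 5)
Your proposal is correct. Note that the thesis itself offers no proof of Theorem \ref{sequence_contraction} --- it is quoted verbatim from \cite[Thm.B.2]{A} --- so the comparison can only be with the argument in that reference, and your backward recursion $w^{(k+2)}=\mathbf{1}$, $w^{(l)}=A_l w^{(l+1)}$ is essentially a clean reorganisation of the same walk-expansion of the row sums of $B_{k+1}$ used there (and for the single-matrix case, Theorem \ref{contraction}); the key dichotomy --- for a row of $A_l$ of full mass, $w^{(l)}(i)<1$ iff some edge of graph$A_l$ out of $i$ leads to a vertex with $w^{(l+1)}(j)<1$, while rows in $\hat J[A_l]$ lose mass outright --- is exactly right, and the fact that full-mass rows always possess an outgoing edge is what makes the unrolling terminate at a vertex in some $\hat J[A_{l+m-1}]$. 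You also correctly isolate the only remaining bookkeeping: since $d(i)=0$ for $i\in\hat J[A_1]$, the supremum over all starting vertices coincides with the positive part of the supremum over $i\notin\hat J[A_1]$ appearing in the definition of $\widehat{\mbox{con}}$, which covers the degenerate cases consistently with $\inf\emptyset=+\infty$ and $(\sup\emptyset)^+=0$. One slip of terminology only: in your final paragraph you say the walk ends ``at a trouble state'', but in the paper's convention $\hat J[A]$ is the set of \emph{non-trouble} rows (those summing to strictly less than one); your displayed condition $i_m\in\hat J[A_{l+m-1}]$ is the correct one, so this does not affect the argument.
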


\begin{proof}[\textbf{Proof of Lemma \ref{coefficients_properties}}]
For briefness, we omit the dependence on $\varphi,\overline\varphi$ from the notation.

$\mathbb A^{-1}\mathbb B\geq 0$ since $\mathbb B\geq 0$ and $\mathbb A$ is monotone. To see that its rows sum up to one, let $\boldsymbol{1}\in\mathbb R^\grid$ be the vector of ones. It is easy to check that under \ref{A1}--\ref{A2}, $\mathbb A\mathbb A^{-1}\mathbb B\boldsymbol{1}=\mathbb B\boldsymbol{1}\leq \mathbb A\boldsymbol{1}$, which implies $\mathbb A^{-1}\mathbb B\boldsymbol{1}\leq \boldsymbol{1}$. The WDD $\mbox{L}_0$-property of $\mathbb A-\mathbb B$ is due to \ref{A1}. 

Note that by Proposition \ref{substochastic_WDD_link}, we can consider $\mathbb A^{-1}\mathbb B$ and $Id-\mathbb A^{-1}\mathbb B$ interchangeably. Let us show that $J\big[(\mathbb A-\mathbb B)\big(\varphi,\overline\varphi\big)\big]\subseteq\hat J\big[\mathbb A^{-1}\mathbb B\big(\varphi,\overline\varphi\big)\big]$. 
By monotonicity and $\mbox{L}_0$-property, $\mathbb A^{-1}$ must have strictly positive diagonal elements. Indeed, if there was some index $i$ such that $\mathbb A^{-1}_{ii}=0$, then $1=[\mathbb A\mathbb A^{-1}]_{ii}=\sum_{j\neq i}\mathbb A_{ij}\mathbb A^{-1}_{ji}\leq 0$. Consider now some $x_i\in \hat J\big[\mathbb A^{-1}\mathbb B\big]^c$, i.e., $\sum_j [\mathbb A^{-1}\mathbb B]_{ij}=1$. We want to see that $\sum_j[\mathbb A-\mathbb B]_{ij}=0$.  
We have $0 = \sum_j\big[Id-\mathbb A^{-1}\mathbb B\big]_{ij}=\sum_j\big[\mathbb A^{-1}(\mathbb A-\mathbb B)\big]_{ij} = \sum_k \mathbb A^{-1}_{ik}\sum_{j}\big[\mathbb A-\mathbb B\big]_{kj}$. Since $\mathbb A^{-1}_{ik}\mbox{ and }\sum_{j}\big[\mathbb A-\mathbb B\big]_{kj}$ are non-negative for all $k$, one of the two must be zero for each $k$. But $\mathbb A^{-1}_{ii}>0$, giving what we wanted. 

The final conclusion follows from the previous properties and the fact that $\mathbb A(Id-\mathbb A^{-1}\mathbb B)=\mathbb A-\mathbb B$.
\end{proof}

%%%%%%%%%%%%%%%%%%%%%%%%%%%%%%%%   BIBLIOGRAPHY    %%%%%%%%%%%%%%%%%%%%%%%%%%%%%%%%%%%%%%%%%%%%%%%%%%%%%%%%%%%%%%%%%%%%%%%%
\clearpage
\phantomsection
\renewcommand*{\bibname}{References}
\addcontentsline{toc}{chapter}{\textbf{References}}
\bibliographystyle{amsalpha}
\bibliography{references}
\end{document}